
\documentclass[english,11pt]{smfartVScomptage}
\usepackage[latin1]{inputenc}
\usepackage[T1]{fontenc}
\usepackage{lmodern}
\usepackage{smfenum,smfthm,amssymb,mathrsfs,euscript,color}
\usepackage{stmaryrd,mathabx,upgreek,mathdots,mathtools,amscd}
\input{xypic}

\setlength{\textheight}{21cm} 
\setlength{\textwidth}{16cm} 
\setlength{\oddsidemargin}{0cm}
\setlength{\evensidemargin}{0cm}
\setlength{\topmargin}{1cm}

\numberwithin{equation}{section} 
\bibliographystyle{smfplain}
\theoremstyle{plain}


\def\ZZ{\mathbb{Z}} 
\def\FF{\mathbb{F}}


\def\A{{\rm A}}
\def\B{{\rm B}}

\def\D{{\rm D}}
\def\E{{\rm E}}
\def\F{{\rm F}}
\def\G{{\rm G}}
\def\H{{\rm H}}

\def\J{{\rm J}}
\def\K{{\rm K}}
\def\L{{\rm L}}
\def\M{{\rm M}}
\def\N{{\rm N}}
\def\P{{\rm P}}
\def\Q{{\rm Q}}
\def\R{{\rm R}}
\def\SS{{\rm S}}
\def\T{{\rm T}}
\def\U{{\rm U}}
\def\V{{\rm V}}
\def\W{{\rm W}}
\def\X{{\rm X}}


\def\Cc{\EuScript{C}}

\def\Ee{\mathscr{E}}
\def\Ff{\mathscr{F}}

\def\Kk{{\rm K}}
\def\Ll{\mathscr{L}}

\def\Oo{\EuScript{O}}

\def\Vv{{\rm V}}


\def\Ga{\Gamma}


\def\a{\alpha} 
\def\b{\beta}
\def\d{\delta}

\def\g{\gamma}
\def\h{\varphi}
\def\k{\kappa}
\def\l{\lambda}
\def\n{\eta}

\def\s{\sigma}
\def\t{\theta}


\def\>{\geqslant}
\def\<{\leqslant}


\def\Hom{{\rm Hom}}
\def\End{{\rm End}}

\def\Mat{{\rm M}}
\def\GL{{\rm GL}}
\def\Gal{{\rm Gal}}
\def\Ker{{\rm Ker}}

\def\tr{{\rm tr}}

\def\Ind{{\rm Ind}}

\def\mult#1{{#1}^{\times}}


\def\kk{\boldsymbol{k}}
\def\ee{\boldsymbol{l}}

\def\ll{\boldsymbol{t}}

\def\bk{\boldsymbol{\k}}
\def\bl{\boldsymbol{\l}}
\def\bt{\boldsymbol{\rho}}

\def\BJ{\boldsymbol{\J}}

\def\0{\boldsymbol{0}}

\def\aa{\mathfrak{a}}
\def\aam{\mathfrak{a}_{{\rm m}}}
\def\bb{\mathfrak{b}}
\def\bbm{\mathfrak{b}_{{\rm m}}}

\def\pp{\mathfrak{p}}

\def\VV{\mathbb{V}}

\def\GG{\EuScript{G}}
\def\JJ{\mathfrak{j}}
\def\HH{\mathfrak{h}}
\def\XX{\mathbb{W}}
\def\YY{\mathbb{Y}}

\def\wn{\mu} 

\def\qlb{{\overline{\mathbf{Q}}_\ell}}
\def\zlb{{\overline{\mathbf{Z}}_\ell}}
\def\flb{{\overline{\mathbf{F}}_{\ell}}}

\def\FC{\R}
\def\AP{\D}
\def\ap{\delta}
\def\ep{\omega}
\def\zero{0}
\def\xx{\mu}

\def\GB{{\bf G}}
\def\aaa{\varepsilon}

\def\GM{{\G}}
\def\PM{{\P}}
\def\UM{{\U}}


\long\def\MSC#1\EndMSC{\def\arg{#1}\ifx\arg\empty\relax\else
     {\par\narrower\noindent%
     2010 Mathematics Subject Classification: #1\par}\fi}

\long\def\KEY#1\EndKEY{\def\arg{#1}\ifx\arg\empty\relax\else
	{\par\narrower\noindent Keywords and Phrases: #1\par}\fi}


\linespread{1.1}

\title{Supercuspidal representations of $\GL_n(\F)$ distinguished by a Galois 
involution}

\author{Vincent Sécherre}
\address{Laboratoire de Mathémati\-ques de Versailles\\
UVSQ\\
CNRS\\
Université Paris-Saclay\\
78035, Versailles, France}
\email{vincent.secherre@math.uvsq.fr}

\begin{abstract}
Let $\F/\F_0$ be a quadratic extension of non-Archimedean locally compact 
fields of resi\-dual cha\-rac\-teristic $p\neq2$,
and let $\s$ denote its non-trivial automorphism.
Let $\FC$ be an algebraically clo\-sed field of characteristic different from 
$p$. 
To any cuspidal representation $\pi$ of $\GL_n(\F)$, 
with coef\-fi\-cients in $\FC$, 
such that $\pi^\s\simeq\pi^\vee$ 
(such a representation is said to be $\s$-selfdual)
we associate~a~quadra\-tic extension $\AP/\AP_0$,
where $\AP$ is a tamely ramified extension of $\F$ and 
$\AP_0$ is a tamely ramified~exten\-sion of $\F_0$,
together with a quadratic character of $\AP_0^\times$.
When $\pi$ is supercuspidal,
we give a necessary and sufficient condition,
in terms of these data,
for $\pi$ to be $\GL_n(\F_0)$-distinguished.
When the charac\-teristic $\ell$ of $\FC$ is not $2$,
denoting by $\ep$ the non-trivial $\FC$-character of $\F_0^\times$
trivial on $\F/\F_0$-norms, 
we prove that any $\s$-selfdual supercuspidal $\FC$-representation 
is ei\-ther distinguished or $\ep$-distinguished,
but not both.
In the modular case, that is when $\ell>0$, 
we give exam\-ples of $\s$-selfdual cuspidal~non-supercuspidal 
representations which are not distinguished nor $\ep$-distinguished. 
In the particular case where $\FC$ is the field of complex numbers, 
in which case all cuspidal representations are supercuspi\-dal,
this gives a complete distinction criterion 
for arbitrary~com\-plex cuspidal representations,
as~well as a purely local proof, 
for cuspidal representations,
of the~di\-chotomy and disjunction theorem due to Kable and 
Anandavardhanan--Kable--Tandon, when $p\neq2$.
\end{abstract}


\begin{document} 

\maketitle

\MSC 
22E50, 11F70, 11F85
\EndMSC
\KEY 
Cuspidal representation,
Distinguished representation,
Galois involution,
Modular representation,
$p$-adic reductive group
\EndKEY

\thispagestyle{empty}

\section{Introduction}

\subsection{}

Let $\F/\F_0$ be a separable quadratic extension of non-Archimedean
locally compact fields of~re\-si\-dual cha\-rac\-teristic $p$,
and let $\s$ denote its non-trivial automorphism.
Let $\GB$ be a connected~reduc\-tive group defined over $\F_0$,
let $\G$ denote the locally profinite group $\GB(\F)$ 
equipped with the~na\-tu\-ral action of $\s$
and $\G^\s=\GB(\F_0)$ be the $\s$-fixed~points subgroup.
The study~of those irreducible (smooth) complex representations of $\G$ which 
are $\G^\s$-distinguished,
that is which carry a non-zero $\G^\s$-in\-variant linear form,
goes back to the 1980's.
We refer to \cite{HLR,JY} for the initial~moti\-vation for~dis\-tin\-guished 
representations in a global context, 
and to~\cite{Hakim,Flicker} in a non-Archimedean context.

\subsection{}
\label{PAR12i}

In this work, we will consider the case where $\GB$ is the general
linear group $\GL_n$ for $n\>1$.~We 
thus have $\G=\GL_n(\F)$ and $\G^\s=\GL_n(\F_0)$.
In this case, it is well-known (see \cite{PrasadCMAT90,Flicker,PrasadDMJ01})
that any~dis\-tinguished irreducible complex representa\-tion $\pi$
of $\G$ is $\s$-\textit{selfdual},
that is, the contragre\-dient~$\pi^\vee$ of $\pi$ is isomorphic to 
$\pi^\s=\pi\circ\s$, 
and the space $\Hom_{\G^{\s}}(\pi,1)$~of all
$\G^\s$-invariant linear forms 
on $\pi$ has dimension $1$.
Also, the cen\-tral character of $\pi$ is trivial on~$\F_0^\times$.
This gives us two necessary~con\-di\-tions for an irreducible complex 
representation of $\G$ to be distinguished, and
it~is natural to ask whe\-ther or not they are sufficient. 

\subsection{}

First, let us consider the case 
where $\F/\F_0$ is replaced by a qua\-dratic extension $\kk/\kk_0$
of finite fields of characteristic $p$. 
In this case, Gow~\cite{Gow} proved that an irreducible complex representation 
of $\GL_n(\kk)$ is $\GL_n(\kk_0)$-distinguished if and only if $\pi$ is 
$\s$-selfdual.
(Note that the latter condition automatically implies that the central 
character is trivial on $\kk_0^\times$.)~Be\-sides,
if $p\neq2$,
the $\s$-self\-dual irreducible representations of $\GL_n(\kk)$ are 
those which arise from some irreducible representation of the 
unitary group $\U_n(\kk/\kk_0)$ by base change
(see~Kawana\-ka~\cite{Kawanaka}). 

\subsection{}
\label{guderianintro}

We now go back to the non-Archimedean setting of \S\ref{PAR12i}
and consider a $\s$-selfdual 
irreducible complex represen\-ta\-tion $\pi$ of $\G$
whose central character is trivial on $\F_0^\times$.
When $\pi$ is cuspidal~and
$\F/\F_0$ is unramified, Prasad \cite{PrasadDMJ01}
proved that,
if $\ep=\ep_{\F/\F_0}$ denotes the non-trivial character of $\F_0^\times$ 
trivial on $\F/\F_0$-norms, 
then $\pi$ is either distinguished or $\ep$-distinguished,
the latter case meaning that the complex vector space 
$\Hom_{\G^{\s}}(\pi,\ep\circ\det)$ is non-zero.

When $p\neq2$ and
$\pi$ is an \textit{essentially tame} cuspidal representation, 
that is, when the number~of unramified cha\-rac\-ters $\chi$ of $\G$ 
such that $\pi\chi\simeq\pi$ is prime to $p$, 
Hakim and Murnaghan~\cite{HMIMRN}~gave {sufficient} con\-ditions 
for $\pi$ to be distinguished.
These conditions are stated in terms of admissible pairs~\cite{Howe}, 
which parametrize essentially tame cuspidal complex representations of 
$\G$ (\cite{Howe,BHETLC1}).
Note that they assume $\F$ has characteristic $0$, 
but their approach also works in characteristic $p$.

When $\pi$ is a discrete series representation and
$\F$ has characteristic $0$,
Kable~\cite{Kable} proved that if $\pi$ is $\s$-selfdual,
then it is either~dis\-tin\-guished or $\ep$-distinguished:
this is the \textit{Dichotomy~Theorem}.
In addition, Anandavardhanan, Kable and Tandon \cite{AKT} 
proved that $\pi$ can't be both dis\-tinguished and 
$\ep$-distinguished:
this is the \textit{Disjunction~Theorem}.
The proofs use global arguments, which~is why 
$\F$ were assu\-med to have characteristic $0$,
and the Asai $\L$-function of $\pi$.
However, these~results still hold 
when $\F$ has characteristic $p\neq2$, 
as is explained in \cite{AKMSS} {Appendix A}.
Note that:
\begin{itemize}
\item
the dis\-junc\-tion theorem 
implies that the sufficient~con\-di\-tions 
given by Hakim and Murna\-ghan in~\cite{HMIMRN}~in 
the essentially tame cuspidal case 
are necessa\-ry con\-di\-tions as well;
\item 
the dichotomy theorem implies that,
when $n$ is odd, any $\s$-selfdual discrete
series representa\-tion of $\G$
with central character trivial on $\F_0^\times$ is automatically 
distinguished.
Indeed, an $\ep$-dis\-tinguished irreducible
representation has a central character
whose restriction to $\F_0^\times$ is $\ep^n$.
\end{itemize}

When $p\neq2$ and
$\pi$ is cuspidal of level zero 
-- in particular $\pi$ is essentially tame -- 
Coniglio~\cite{Coniglio} gave a~neces\-sa\-ry and sufficient condition of
distinction in terms of admissible pairs.
Her proof~is purely local, 
and does not use the disjunction theorem.~(In~fact,
she considers the more general case where $\GB$ is an inner form
of $\GL_{n}$ over $\F_0$,
and~repre\-sen\-ta\-tions of level zero of $\GB(\F)$ 
whose local Jacquet-Langlands transfer to $\GL_n(\F)$ is cus\-pi\-dal.)

If one takes the classification of distinguished cuspidal representations 
of general linear groups for granted,
and assuming $\F$ has characteristic $0$,
Anandavardhanan and Rajan~\cite{AR,AnandIMRN08}
clas\-si\-fied all distinguished discrete series representations of $\G$ in terms 
of~the distinction~of their cus\-pidal support~(see~also \cite{MatringeDS})
and Matringe~\cite{MatringeG,MatringeU} classified distinguished generic, 
as well as distingui\-shed unitary, re\-pre\-sen\-ta\-tions of $\G$ in terms
of the Langlands 
classifica\-tion.
This provides a class of representations 
for which the dicho\-to\-my and disjunction theorems do not hold: 
some $\s$-self\-dual generic irreducible re\-presentations are 
nor~dis\-tinguished nor $\ep$-distinguished, and some are both.

In \cite{Gurevich}
Max~Gurevich extended the dichotomy theorem to the class of
\textit{ladder} representations,
which contains all discrete series representations: 
a $\s$-selfdual ladder representation of $\G$ is~either
distinguished~or $\ep$-distinguished, but it may be both
(see \cite{Gurevich} Theorem 4.6).
Here again, $\F$ is~as\-su\-med to have characteristic $0$.

Finally,
one can deduce from these works 
the connection between distinction for generic~irre\-du\-ci\-ble
re\-pre\-sen\-ta\-tions of $\G$
and base change from a quasi-split unitary group:
see \cite{GanRaghuram} Theorem 6.2,
\cite{AnandPrasad} Theorem 2.3 or \cite{GMM}.

\subsection{}
\label{introQ}

The discussion above leaves us with an open problem 
about cuspidal representations:
find a dis\-tinction criterion for an \textit{arbitrary} 
$\s$-selfdual cuspidal~re\-pre\-sentation $\pi$,
with no assumption~on the characteristic of $\F$, 
on the ramification of $\F/\F_0$, 
on $n$ nor on
the torsion number of $\pi$
(that~is,
the number of unramified cha\-rac\-ters $\chi$ 
such that $\pi\chi\simeq\pi$).

In this paper,
\textit{assuming that $p\neq2$},
we propose an approach which allows us to generalize both 
Hakim-Murnaghan's and Coniglio's distinction criterions to \textit{all} 
cuspidal irreducible complex~re\-pre\-sentations of $\G$ and which works:
\begin{itemize}
\item 
  with no assumption on the characteristic of $\F$
  (apart from the assumption ``$p\neq2$''),
\item
with purely local methods,
\item
not only for complex representations, but more generally 
for representations with coefficients in an algebraically closed 
field of arbitrary characteristic $\ell\neq p$. 
\end{itemize}

We thus give a complete solution to the problem above for 
cuspidal complex representa\-tions when $p\neq2$.
We actually do more:
we solve this problem in the larger context of 
\textit{supercuspidal}~re\-presen\-ta\-tions
with coefficients in an algebraically closed 
field of arbitrary characteristic $\ell\neq p$. 

\subsection{}

First, let us say a word about the third item above.
The theory of smooth~representations~of $\GL_n(\F)$ 
with coefficients in an algebraically closed 
field of characteristic $\ell\neq p$
has been initia\-ted by Vignéras \cite{Vigb,Vigs}
in view to extend the local Langlands programme to representations~with 
coefficients in a field -- or a ring -- as general as possible 
(see for instance \cite{Vigi,HelmMoss}). 
Inner~forms have also been taken into account (\cite{MSc,SSb})
and the congruence properties of the~local Jacquet--Langlands correspondence 
have been studied in \cite{Datj,MSj}.
It is thus natural to ex\-tend the study of distinguished representations to 
this wider context, where the field of complex numbers is~re\-pla\-ced by a more 
general field.
Very little has been done about distinction of 
modular re\-pre\-sen\-ta\-tions so far:
a first study can be found in \cite{SVdml}.

An important phenomenon in the modular~case,
that is when $\ell>0$, is that 
a~cuspi\-dal~repre\-sentation $\pi$ may occur as a 
subquotient of a proper parabo\-lically~in\-du\-ced representation 
(see~\cite{VigCMAT89} Corollaire~5).
When this is not the case, that is when $\pi$ does not occur as a 
subquotient of a~pro\-per parabo\-lically~in\-du\-ced representation, 
$\pi$~is said to be \textit{super\-cus\-pi\-dal}.

\subsection{}

\textit{From now on, we fix an algebraically closed field $\FC$ of arbitrary 
char\-ac\-teristic $\ell\neq p$, and~consi\-der 
irreducible smooth representations of $\G$ with coefficients in $\FC$.}
Note that $\ell$ can be $0$.

We first notice that, as in the complex case,
any distinguished irreducible representation $\pi$~of $\G$ with coefficients 
in $\FC$ is $\s$-selfdual and $\Hom_{\G^\s}(\pi,1)$ is $1$-dimensional
(see Theorem \ref{FP}).

We prove that, if $\ell\neq2\neq p$,
the dichotomy~and disjunction theorems hold for all
\textit{supercuspidal} re\-pre\-sen\-tations with coefficients in $\FC$.
In~particular, when $\FC$ is the field of com\-plex numbers,~in
which case any cuspidal representation is super\-cus\-pi\-dal, 
we get a 
purely local proof of the~dicho\-tomy and~disjunc\-tion theorems
for cuspidal representations in the case where $p\neq2$.

When $\ell=2\neq p$, 
in which case there is no character of order $2$ on $\F_0^\times$, 
the dichotomy~theo\-rem takes a simplified form: 
any $\s$-selfdual super\-cuspidal representation is distinguished.
Let~us~sum\-ma\-ri\-ze this first series of results in the theorem below.

\begin{theo}[Theorem \ref{MAINTH1DD}]
  \label{MAINTH1DDintro}
  Suppose that $p\neq2$,
  and let $\pi$ be a $\s$-selfdual supercuspidal irreducible
  $\FC$-representation of $ \G$.
\begin{enumerate}
\item
If $\ell=2$, then $\pi$ is distinguished.
\item
If $\ell\neq2$, 
then $\pi$ is either distinguished or $\omega$-distinguished, 
but not both.
\end{enumerate}
\end{theo}

In the modular case, for $\ell>2$,
we give exam\-ples of $\s$-selfdual cuspidal, 
non-super\-cus\-pi\-dal~re\-presentations which are not distinguished 
nor $\ep$-distinguished 
(see Remarks \ref{poussinnoir} and \ref{mabille}). 

\begin{center}
$\bullet$
\end{center}

\textit{From now on, until the end of this introduction,
we will assume that $p\neq2$.}

\subsection{}
\label{intro5}

The dichotomy and disjunction theorem
stated in Theorem \ref{MAINTH1DDintro} 
relies on~a distinction criterion,
which we state in Theorem \ref{MAINTH1intro}.
The basic idea~is that we canonical\-ly asso\-cia\-te to any $\s$-selfdual 
supercuspidal representation $\pi$ of $\G$ 
a finite~ex\-ten\-sion $\AP$ of $\F$
equip\-ped with an $\F_0$-involution extending $\s$
and a quadratic character $\ap_0$ of the fixed points 
of $\AP^\times$~; 
it~is these data which govern the distinction of $\pi$.
The character $\ap_0$ refines the information given~by the central character
of $\pi$ in the sense that they coincide on $\F_0^\times$,
the latter one being not enough in general to determine whether $\pi$ is 
distinguished or not.
To state our distinction criterion,
let us write $\AP_0$ for the fixed points subfield of $\AP$.

\begin{theo}[Theorem \ref{MAINTH1}]
  \label{MAINTH1intro}
A $\s$-selfdual supercuspidal $\FC$-representation of $\G$ 
is distinguished if and only if:
\begin{enumerate}
\item
either $\ell=2$,
\item
or $\ell\neq2$ and:
\begin{enumerate}
\item
if $\F/\F_0$ is ramified, $\AP/\AP_0$ is unramified and
$\AP_0/\F_0$ has odd ramification order, 
then the character $\ap_0$ is non-trivial,
\item
otherwise, $\ap_0$ is trivial.
\end{enumerate}
\end{enumerate}
\end{theo}

Even in the complex case, 
this is the first time a necessary and~suf\-ficient distinction 
criterion is exhibited for 
an \textit{arbitrary} cuspidal representation of $\GL_n(\F)$
in odd residual characteristic,
in the spirit of \cite{HMIMRN,HMIMRP,Coniglio}.

\subsection{}

The starting point of our strategy for proving Theorem \ref{MAINTH1intro} is 
to use Bushnell-Kutzko's cons\-truc\-tion of cuspidal representa\-tions of 
$\G$ via compact induction. 
This construction, elaborated~in the complex case by Bushnell and 
Kutzko~\cite{BK}, has been extended to the modular case 
by~Vi\-gné\-ras \cite{Vigb} and Minguez--Sécherre~\cite{MSt}.
There is a family of pairs $(\BJ,\bl)$, made of certain com\-pact 
mod centre open subgroups $\BJ$ of $\G$ and certain irreducible
representations $\bl$ of $\BJ$, such that:
\begin{itemize}
\item
for any such pair $(\BJ,\bl)$, the compact induction of $\bl$ to $\G$
is irreducible and cuspidal;
\item
any irreducible cuspidal representation of $\G$ occurs in this way,
for a pair $(\BJ,\bl)$ uniquely~de\-termined up to $\G$-conjugacy. 
\end{itemize}
Such pairs are called \textit{extended maximal simple types} in \cite{BK}, 
which we will abbreviate to \textit{types}~for simplicity.
We need to give more details about the structure of these types:
\begin{enumerate}
\item 
The group $\BJ$ has a unique maximal compact subgroup $\J$,
and a unique maximal normal pro-$p$-group $\J^1$.
\item 
There is a group isomorphism $\J/\J^1\simeq\GL_{m}(\ee)$ for some 
divisor $m$ of $n$ and finite extension $\ee$ of the residual field 
$\kk$ of $\F$.
\item 
The restriction of $\bl$ to $\J^1$ is made of copies of a single 
irreducible representation $\n$, which extends 
(not uniquely, nor canonically) to $\BJ$.
\item 
Given a representation $\bk$ of $\BJ$ extending $\n$,
there is a unique irreducible representation~$\bt$ of $\BJ$ trivial on $\J^1$ 
such that $\bl$ is isomorphic to $\bk\otimes\bt$,
and $\bt$ restricts irreducibly to $\J$.
\item 
The representation of $\GL_{m}(\ee)$ obtained by
restricting $\bt$ to $\J$ is~cus\-pidal.
\end{enumerate}

The integer~$m$,
called the \textit{relative degree} of $\pi$, 
is uniquely determined by $\pi$.
There~is another type-theoretical invariant called the 
\textit{tame parameter field} 
of $\pi$: 
this is a tamely ramified extension $\T$ of $\F$,
unique\-ly determined up to $\F$-isomorphism,
whose degree divides $n/m$ and whose residual field is 
$\ee$ (see \cite{BHEffectiveLC} for more details). 
Note that $\pi$ if essentially tame if and only if $[\T:\F]=n/m$.

\subsection{}

Now consider a $\s$-selfdual cuspidal $\FC$-representation $\pi$ of $\G$. 
The starting point of all~our~work is \cite{AKMSS} {Theorem 4.1}, 
which asserts that among all the types contained in $\pi$,
there is a type $(\BJ,\bl)$ which is $\s$-selfdual, 
that is $\BJ$ is $\s$-stable and 
$\bl^\vee$ is isomorphic to $\bl^\s$. 
Moreover, 
the tame~para\-meter field $\T$ of $\pi$ is equipped with an 
$\F_0$-involution. 
If $\T_0$ denotes the fixed points subfield of $\T$,
then $\T/\T_0$ is a quadratic extension, 
uniquely determined up to $\F_0$-isomorphism.
The invariants $m$ and $\T/\T_0$ associated with $\pi$
will play a central role in what follows. 

First,
the following result says that the distinction of $\pi$ can be detected by a 
$\s$-selfdual type. 

\begin{theo}[Theorem \ref{DSTT}]
\label{DSTTintro}
Let $\pi$ be a $\s$-selfdual cuspidal representation of $\G$.
Then~$\pi$ is dis\-tin\-guished~if and only if it contains 
a dis\-tin\-guished $\s$-selfdual type,
that is a $\s$-selfdual type $(\BJ,\bl)$ 
such that $\Hom_{\BJ\cap\G^\s}(\bl,1)$ is non-zero.
\end{theo}

The proof of this theorem 
-- which occupies Section~\ref{SEC6} --
is the most technical part of the paper:
starting with a $\s$-selfdual type $(\BJ,\bl)$ contained in $\pi$
and $g\in\G$, one has to prove that, if the~type $(\BJ^g,\bl^g)$ 
is distinguished, then it is $\s$-selfdual, that is $\s(g)g^{-1}\in\BJ$. 
First, one determines the~set of the $g\in\G$ such that $\n^g$ is 
distinguished, as well as the dimension of the space of 
invariant~linear forms (Paragraphs \ref{P51} to \ref{P53}); 
then, one analyzes the dis\-tinc\-tion of $\bk^g$
(Paragraphs \ref{Felicite} and \ref{DCT}); 
one obtains the final statement by using the cuspidality of the representation 
of $\GL_m(\ee)$ induced by $\bt$ (see Theorem \ref{DCT1}).

\subsection{}

When $\T$ is unramified over $\T_0$, 
the $\s$-selfdual types contained in $\pi$ form a single $\G^\s$-conjugacy 
class. 
When $\T$ is ramified over $\T_0$, 
the $\s$-selfdual types contained in $\pi$ form $\lfloor m/2\rfloor+1$ 
different
$\G^\s$-conjugacy classes, characterized by an integer 
$i\in\{0,\dots,\lfloor m/2\rfloor\}$ called the \textit{index} of the~class.
Since the space $\Hom_{\G^\s}(\pi,1)$ has dimension $1$, 
only one of these conjugacy classes can contribute to distinction:
we prove that it is the one with maximal index.
This gives us the following~refine\-ment of Theorem \ref{DSTTintro}.

\begin{prop}[Corollary \ref{zarathoustra} and Proposition \ref{MAINTHM3r}]
\label{classesdetypesstables7intro}
Let $\pi$ be a $\s$-selfdual cuspidal representa\-tion of $\G$.
Let $m$ be its relative degree and $\T/\T_0$ be its associated quadratic 
extension. 
\begin{enumerate}
\item
If $\T$ is unramified over $\T_0$, then $\pi$ is distinguished if and only if 
any of its $\s$-selfdual types is distinguished.
\item
If $\T$ is ramified over $\T_0$, then $\pi$ is distinguished if and only if 
any of its $\s$-selfdual types of index $\lfloor m/2\rfloor$ is distinguished.
\end{enumerate}
\end{prop}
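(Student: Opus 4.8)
The plan is to deduce Proposition \ref{classesdetypesstables7intro} from Theorem \ref{DSTTintro} together with the description of how the $\s$-selfdual types contained in $\pi$ are distributed among $\G^\s$-conjugacy classes. The first step is purely formal: by Theorem \ref{DSTTintro}, $\pi$ is distinguished if and only if it contains a distinguished $\s$-selfdual type $(\BJ,\bl)$. So I need to decide, when I am handed the list of $\G^\s$-conjugacy classes of $\s$-selfdual types in $\pi$, which classes can contain a distinguished member. The key observation is that distinction of a type is a $\G^\s$-conjugacy invariant: if $(\BJ,\bl)$ is distinguished and $h\in\G^\s$, then $\Hom_{\BJ^h\cap\G^\s}(\bl^h,1)\cong\Hom_{\BJ\cap\G^\s}(\bl,1)\neq 0$ because conjugation by $h$ preserves $\G^\s$ and carries $\BJ\cap\G^\s$ onto $\BJ^h\cap\G^\s$. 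Hence being distinguished is a property of the whole $\G^\s$-conjugacy class, and the question reduces to identifying the distinguished classes among the finitely many that exist.

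For the unramified case, there is only one $\G^\s$-conjugacy class of $\s$-selfdual types, so $\pi$ contains a distinguished $\s$-selfdual type if and only if that unique class consists of distinguished types, i.e.\ if and only if any (equivalently every) $\s$-selfdual type of $\pi$ is distinguished; this is exactly assertion (1). For the ramified case I would argue as follows. Suppose $\pi$ is distinguished. Then it contains a distinguished $\s$-selfdual type, which lies in the class of some index $i\in\{0,\dots,\lfloor m/2\rfloor\}$; I must show $i=\lfloor m/2\rfloor$. The input here is the statement, quoted from the body of the paper (Propositions \ref{classesdetypesstables7} and \ref{MAINTHM3r}), that detects which index is compatible with non-vanishing of $\Hom_{\G^\s}(\pi,1)$: since this space is one-dimensional by Theorem \ref{MAINTH1DDintro}(1), at most one conjugacy class of $\s$-selfdual types can carry a non-zero $\BJ\cap\G^\s$-invariant form, and the content of \ref{MAINTHM3r} is that this distinguished class, if it exists, is the one of maximal index $\lfloor m/2\rfloor$. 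Conversely, if the $\s$-selfdual types of index $\lfloor m/2\rfloor$ are distinguished, then $\pi$ contains a distinguished $\s$-selfdual type, hence is distinguished by Theorem \ref{DSTTintro}. This gives assertion (2).

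The step I expect to be the main obstacle — and the one that genuinely requires the type-theoretic machinery rather than formal manipulation — is the claim that it is precisely the class of \emph{maximal} index that contributes to distinction. The one-dimensionality of $\Hom_{\G^\s}(\pi,1)$ only tells us that \emph{at most one} class can be distinguished; pinning down \emph{which} one forces us to run a Mackey-type decomposition of $\mathrm{Ind}_{\BJ}^{\G}\bl$ restricted to $\G^\s$, or equivalently a double-coset analysis of $\BJ\backslash\G/\G^\s$, and to track how the index of the $\s$-selfdual type on a given double coset interacts with the existence of an invariant form. Concretely, one exhibits a distinguished $\s$-selfdual type of index $\lfloor m/2\rfloor$ when $\pi$ is distinguished, using the explicit realization of these types over the hereditary order attached to $\T/\T_0$ and the corresponding self-dual lattice chain, and one shows the lower-index classes cannot be distinguished because the associated $\G^\s$-intertwining vanishes — this is where the cuspidality of the representation of $\GL_m(\ee)$ induced by $\bt$ is used, exactly as in the proof of Theorem \ref{DCT1}. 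Since all of this is carried out in Sections \ref{SEC6} and the sections establishing Propositions \ref{classesdetypesstables7} and \ref{MAINTHM3r}, the proof of \ref{classesdetypesstables7intro} itself is then just the assembly described above.
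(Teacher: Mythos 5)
Your overall architecture matches the paper's: Theorem \ref{DSTT} reduces the problem to determining which $\G^\s$-conjugacy classes of $\s$-selfdual types can be distinguished; distinction is a class invariant; and Proposition \ref{classesdetypesstables7} supplies the class count (one class unramified, $\lfloor m/2\rfloor+1$ classes ramified), so the unramified half is immediate. That is exactly the paper's Corollary \ref{zarathoustra}.

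For the ramified case, however, your argument is partly circular and partly misdirected. You cite ``Propositions \ref{classesdetypesstables7} and \ref{MAINTHM3r}'' as the input that singles out the maximal-index class, but the ramified half of Proposition \ref{MAINTHM3r} \emph{is} the ramified half of the statement you are trying to prove, so it cannot be used as an ingredient. When you then sketch what the real argument must be, you point to Theorem \ref{DCT1} and say the cuspidality of $\rho$ is used ``exactly as there.'' That is not where it happens. Theorem \ref{DCT1} uses cuspidality to force $\bbm=\bb$ (no nontrivial invariants under $\U^1(\bbm)/\U^1$, a unipotent subgroup of $\GL_m(\ee)$) and is already absorbed into the proof of Theorem \ref{DSTT}. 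The index pinning in the ramified case is a separate argument: one chooses a $\s$-selfdual extension $\bk$ of $\n$ (Lemma \ref{GuyLodrant5}), writes $\bl\simeq\bk\otimes\bt$, invokes Lemma \ref{passypassion} to see that $\bt$ — hence a cuspidal $\rho$ of $\GL_m(\ee)$ — is $\chi$-distinguished by the $\s$-fixed subgroup $(\GL_i\times\GL_{m-i})(\ee)$, and then applies Proposition \ref{prs} (the Matringe-style $\Phi^+$ / mirabolic analysis of Lemmas \ref{LNM1}--\ref{ChouBiDooBah} and Lemma \ref{L71r}) to conclude $m=1$ or $m=2i$. Also note: the paper does not ``exhibit'' a distinguished type of maximal index; it shows that any distinguished $\s$-selfdual type necessarily has index $\lfloor m/2\rfloor$ (Lemma \ref{topchef5}), which suffices given Theorem \ref{DSTT}.
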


Note that this proposition is proved in \cite{AKMSS} 
in a~diffe\-rent manner, based on a result of Ok~\cite{Ok}.
However, the proof given in the present article is more likely to generalize 
to other situations.

When $\T/\T_0$ is ramified, one can be more precise
(see Proposition \ref{MAINTHM3r}):
if $\pi$ is distinguished,~$m$ is either even or equal to $1$. 
It is not difficult to construct $\s$-selfdual 
cuspidal~re\-pre\-sen\-ta\-tions of $\G$
such that $\T/\T_0$ is ramified and $m>1$ is odd:
such cuspidal representations are not distin\-gui\-shed nor 
$\omega$-distinguished (see Remark \ref{poussinnoir}). 

In the case where $\T/\T_0$ is unramified,
$m$ is odd for any super\-cuspidal $\s$-selfdual representation
(see~Pro\-position \ref{MAINTHM3nr}).
This does not hold
for $\s$-selfdual cuspidal representations (which is easy~to see),
and this does not even hold for distinguished cuspidal representations:
Kurinczuk and~Ma\-trin\-ge recently pro\-ved that,
when $\F/\F_0$ is unramified and $n=\ell=2$,
any~$\s$-selfdual~non-super\-cuspidal cuspidal representation of
$\GL_2(\F)$ of level zero (thus of rela\-tive degree~$2$)
is distingui\-shed.

\subsection{}
\label{P110}

As in the previous paragraph,
$\pi$ is a $\s$-selfdual cuspidal $\FC$-representation of $\G$.
The following definition will be convenient to us 
(see Remark \ref{cariocarema} for the connection with the usual notion~of
a generic representation).

\begin{defi}[Definition \ref{carioca}]
A $\s$-selfdual type $(\BJ,\bl)$ in $\pi$ is 
\textit{generic} if either $\T/\T_0$ is unramified, 
or $\T/\T_0$ is ramified and this type has index $\lfloor m/2\rfloor$. 
\end{defi}

Proposition \ref{classesdetypesstables7intro} 
thus says that, up to $\G^\s$-conjugacy, a $\s$-selfdual 
cuspidal~re\-pre\-sentation $\pi$~con\-tains a unique generic $\s$-selfdual type,
and that $\pi$ is distinguished if and only if such a 
type~is~dis\-tin\-guished 
(see Theorem \ref{genericDSTT}).
This uniqueness property is crucial to the proof of the disjunction part
of Theorem \ref{MAINTH1DDintro}. 

Let us~fix a generic $\s$-selfdual type $(\BJ,\bl)$ in $\pi$.
Recall that, by construction, 
$\bl$ can be~decom\-po\-sed (non ca\-no\-ni\-cal\-ly)
as $\bk\otimes\bt$.
However, not any of these decompositions are suitable for our purpose. 
It is not difficult to prove that $\bk$ can be chosen to be $\s$-selfdual, 
but this is not~enough: 
we need to prove that $\bk$ can be chosen to be both $\s$-selfdual 
and distinguished. 
The stra\-te\-gy of the proof depends on the ramification of $\T$ over $\T_0$.
This is why we treat separately the ramified and the unramified cases,
in Sections \ref{SECDISTTR}
and \ref{SECDIST}, respectively.

The easiest case is when $\T/\T_0$ is ramified. 
Using the fact that $m$ is either even or equal to~$1$,
we prove that $\bk$ can be chosen to be distinguished
by adapting a result of Matringe~\cite{Matringe}
(which we do in Paragraph \ref{anais}).

When $\T/\T_0$ is unramified, 
the existence of a distinguished $\bk$ is more difficult to establish.~Our
proof~re\-qui\-res $\pi$ to be supercuspidal, since in that case $m$ is 
known to be odd,
thus $\GL_m(\ee)$~has $\GL_m(\ee_0)$-distinguished
supercuspidal representations in characteristic $0$,
where $\ee_0$ is the residual field of $\T_0$
(see the proof of Proposition \ref{HoMcKay}).

In~both cases, a distinguished $\bk$ is automatically $\s$-selfdual,
and $\pi$ is distinguished if and~only if $\bt$ is distinguished.
Considering $\bt$ as a ($\s$-selfdual) level zero type,
we are then reduced to the level zero case, which has been treated by 
Coniglio in~the complex case. 
We thus have to~extend her results to the modular case, 
which we know how~to~do when $\pi$ is supercuspidal only.

To sum\-marize, we need the assumption that $\pi$ is supercuspidal 
in Theorems \ref{MAINTH1DDintro} and \ref{MAINTH1intro}
for two reasons: 
for the existence of a distinguished $\bk$ in the case
when $\T/\T_0$ is unramified, 
and for the level zero case. 

\subsection{}
\label{P111}
\label{P12}

To study the distinction of $\bt$ when $\pi$ is supercuspidal, 
we use admissible pairs of level zero as in Coniglio \cite{Coniglio}. 
We attach to $\bt$ a pair $(\AP/\T,\ap)$ 
made of an unramified extension of~degree~$m$ equipped with an involutive 
$\T_0$-algebra homomorphism, non-trivial on $\T$,
denoted by $\s$, together with a cha\-rac\-ter $\ap$ of $\AP^\times$ such that 
$\ap\circ\s=\ap^{-1}$.
(See Paragraphs \ref{pullfroid}, \ref{P45}
although the result is presented in a different way there.)

However, the distinguished representation $\bk$ of Paragraph \ref{P110} is not 
unique in general, thus neither $\bt$ nor $\ap$ are.
Write $\AP_0$ for the $\s$-fixed points of $\AP$,
and $\ap_0$ for the restriction of $\ap$ to $\AP_0^\times$.
This is a quadratic character, trivial on $\AP/\AP_0$-norms. 
We prove in Proposition \ref{rummo} that the~pair $(\AP/\AP_0,\ap_0)$ 
is uniquely determined by $\pi$ up to $\F_0$-isomorphism.
This is the one occuring in our distinction criterion theorem \ref{MAINTH1intro}.

It remains to explain our strategy to prove the distinction criterion for 
$\bt$, in the modular case, in terms of the character $\ap_0$, 
as well as the dimension of the space of invariant linear forms.
This depends on the ramification of $\T/\T_0$.

The easiest case is when $\T/\T_0$ is unramified.
In this case, 
we are reduced to studying the~dis\-tinc\-tion of supercuspidal representations 
of $\GL_m(\ee)$ by $\GL_m(\ee_0)$.
That any distin\-gui\-shed ir\-re\-du\-ci\-ble representation is 
$\s$-selfdual follows from a finite and $\ell$-modular version of
Theorem~\ref{FP}~(see Remark~\ref{bananejaune}).
For the converse statement, we use a lifting argument to characteristic $0$, 
based~on the fact that any $\s$-selfdual supercuspidal
$\flb$-representation has a $\s$-selfdual $\qlb$-lift,
where~$\qlb$~is an algebraic closure of the field of $\ell$-adic
numbers and $\flb$ is its residual field.
This does not~hold for $\s$-selfdual non-supercuspidal representations:
by Remark \ref{mabille},
there are $\s$-selfdual cuspidal~re\-presentations, with $m$ even,
which are not distinguished.

In the case where $\T/\T_0$ is ramified,
we are reduced to studying the distinction of supercuspidal representations 
of $\GL_m(\ee)$ by either $\GL_1(\ee)$ if $m=1$, 
or $\GL_{r}(\ee)\times\GL_{r}(\ee)$ if $m=2r$ is~even.~It is more difficult,
as we do not have an analogue of Theorem \ref{FP}.
Our proof relies on the~structure~of the projective envelope 
of a supercuspidal representation of $\GL_m(\ee)$,
as well as a lifting argument to characteristic $0$.
We prove that a supercuspidal representation is distinguished if and 
only~if~it is selfdual.
Unlike the complex case, 
one can find $\s$-self\-dual cuspidal representations, 
with $m>1$ odd,
which are not distinguished (see Remark \ref{carracci}).

In both cases, 
we prove that a $\s$-selfdual supercuspidal representation of 
$\GL_m(\ee)$ is distingui\-shed if any only if it admits a distinguished lift 
to characteristic $0$.
We conclude by the following theorem.

\begin{theo}[Theorem \ref{MAINTH2}]
Let $\pi$ be a $\s$-selfdual supercuspidal representation of the group $\G$ 
with coefficients in $\flb$.
\begin{enumerate}
\item
The representation $\pi$ admits a $\s$-selfdual supercuspidal lift to $\qlb$.
\item
Let $\widetilde{\pi}$ be a $\s$-selfdual lift of $\pi$,
and suppose that $\ell\neq2$.
Then $\widetilde{\pi}$ is distinguished if and only if 
$\pi$ is distinguished.
\end{enumerate}
\end{theo}

\subsection{}
\label{pis2}

In this paragraph, we discuss the assumption $p\neq2$.
In Section \ref{FFC}, in which we study the~finite field case,
we assume that $p\neq2$ in Paragraphs \ref{anais} and \ref{etrange} only:
see Remark \ref{camille}.
Note~that~nor Gow's results \cite{Gow} nor their modular version
established in \S\ref{level0nr1} require $p$ to be odd.
The~same~is true of the results of Prasad and Flicker -- 
as well as their modular version proved in Section~\ref{COE} -- 
asserting that 
any distinguished irreducible representation $\pi$~of $\G$
is $\s$-selfdual and $\Hom_{\G^\s}(\pi,1)$ has dimension $1$.

From Paragraph \ref{S2} the assumption $p\neq2$ is~cru\-cial
(as in \cite{HMIMRN,HMIMRP} and to a lesser extent~\cite{Coniglio}).
We use at many places,
in particular in Section \ref{SEC6} 
and in the proof of the $\s$-selfdual type~theo\-rem \ref{PIMAIN}, 
the fact that the first cohomology set of 
$\Gal(\F/\F_0)$ in a pro-$p$-group is trivial.
I do not know whether or not Theorem \ref{PIMAIN} still holds when $p=2$.

I also do not now whether the dichotomy and disjunction theorems
hold when $\F$ has characte\-ris\-tic $2$.
The only exception is Prasad's dichotomy theorem~\cite{PrasadDMJ01}
for cuspidal complex representations when $\F/\F_0$ is unramified,
which remains the only known distinction criterion for cuspidal 
representations in arbitrary residual characterictic.
Note that Prasad's approach does not work in the modular case,
for \cite{PrasadDMJ01} Theorem 1 does not hold in characteristic $\ell>0$.

\subsection{}

Finally,
let us mention that the methods developed in this paper are 
expected to generali\-ze to other groups.
The distinction of supercuspidal representations of $\GL_n(\F)$ 
by a~unitary~group~is currently explored by Jiandi~Zou in his ongoing
PhD thesis at Université de Versailles St-Quentin.

\subsection*{Acknowledgements}

I thank D.~Prasad for his invitation to TIFR Mumbai
where part of this work has been~written, 
and U.~K.~Anandavardhanan for his invitation to give a talk about this work 
at IIT Bombay, in January 2018. 
I also thank N.~Matringe for stimulating discussions about this work,
as well as an anonymous referee for valuable comments and suggestions.

This work has been partly supported by the 
Laboratoire International Associé
Centre Franco-Indien pour les Mathémati\-ques, 
and by the Institut Universitaire de France.

\section{The finite field case}
\label{FFC}

The aim of this section is to extend to the modular case some results about 
distinction of~cus\-pidal representations of $\GL_n$ over a finite field
which are know in the complex case only.
These results will be needed in Sections \ref{SECDISTTR0} and \ref{SECDIST},
but this section can also be read independently from the rest of the
article.

In this section,
$\kk$ is a finite field whose characteristic is an arbitrary prime number $p$.
In~Para\-graphs \ref{anais} and \ref{etrange},
we will assume that $p$ is odd.
Let $q$ denote~the cardinality of~$\kk$.

Let $\FC$ be an algebraically closed field of characteristic different 
from $p$, denoted $\ell$. 
(Note that $\ell$ can be $0$.)
We say that we are in the ``modular case'' when we consider the case where 
$\ell>0$. 
By \textit{representation} of a finite group, 
we mean a representation on an $\FC$-vector space.

Given~a re\-pre\-sentation $\pi$ of a finite group $\G$, 
we write $\pi^\vee$ for the contra\-gredient of $\pi$.
Given~a subgroup $\H$ of $\G$,
we say that $\pi$ is $\H$-\textit{distinguished} if the 
space $\Hom_{\H}(\pi,1)$ is non-zero,
where $1$ denotes the trivial character of $\H$. 

Given $n\>1$,
an irreducible representation of $\GL_n(\kk)$ is said to be \textit{cuspidal} 
if it has no non-zero invariant vector under the unipotent radical of any
proper parabolic subgroup or,
equivalently,~if it does not occur as a subrepresentation of any proper 
parabolically~indu\-ced 
representation.~It~is \textit{supercuspidal} 
if it does not occur as a subquotient of~a proper parabolically~induced
represen\-ta\-tion.
When $\FC$ has characteristic $0$,
any cuspidal representation is supercuspidal. 

When $\ell>0$,
we denote by $\qlb$ an algebraic closure of the field 
of $\ell$-adic numbers, 
by $\zlb$ the ring of $\ell$-adic integers in $\qlb$
and by $\flb$ the residue field of $\zlb$.
We refer to \cite{Serre} \S 15 for a definition of the reduction mod 
$\ell$~of a $\qlb$-re\-pre\-sentation of a finite group.

\subsection{Parametrisation of supercuspidal representations}
\label{claudine}

For the results stated in this paragraph, we refer to 
\cite{Green,Dipper,DJ,James}
(see also \cite{Vigb} III.2, and \cite{MSf} \S 2.6).

Let $\ll/\kk$ be an extension of degree $n\>1$ of finite fields of 
characteristic $p$.
Fix a $\kk$-embedding of $\ll$ in the matrix algebra $\Mat_n(\kk)$,
and consider $\ll^\times$ as a maximal torus in $\GL_{n}(\kk)$.
An $\FC$-character $\xi$ of $\ll^\times$ is said to be $\kk$-\textit{regular}
if the characters $\xi$, $\xi^q,\dots,\xi^{q^{n-1}}$ are all distinct. 

Let $\xi$ be a $\kk$-regular $\FC$-character of $\ll^\times$.
By Green \cite{Green} when $\FC$ has characteristic $0$ 
and~James \cite{James} when $\FC$ has positive charac\-te\-ris\-tic
$\ell\neq p$, there is a supercuspidal irreducible~re\-pre\-sentation
$\rho_\xi$ of $\GL_n(\kk)$, uniquely determined up to isomorphism, such that:
\begin{equation}
\label{tracerhoxi}
\tr\ \rho_\xi(g) = (-1)^{n-1}\cdot\sum\limits_{\g} \xi^\g(g)
\end{equation}
for all $g\in\ll^\times$ with irreducible characteristic polynomial,
where $\g$ runs over $\Gal(\ll/\kk)$.
This~indu\-ces a surjective map:
\begin{equation}
\label{GreenJames}
\xi \mapsto \rho_{\xi}
\end{equation}
between $\kk$-regular characters of $\ll^\times$ 
and isomorphism classes of supercuspidal irreducible
represen\-tations of $\GL_n(\kk)$, whose fibers are 
$\Gal(\ll/\kk)$-orbits.

Suppose that $\FC$ is the field $\qlb$.
In the next proposition, we record the main properties of the reduction 
mod $\ell$ of supercuspidal $\qlb$-representations of $\GL_{n}(\kk)$.

\begin{prop}[\cite{Dipper,DJ,James}]
\label{saintfargeau}
Let $\xi$ be a $\kk$-regular $\qlb$-character of $\ll^\times$
and $\rho$ be the super\-cuspidal irredu\-cible
repre\-sentation which~cor\-res\-ponds to it.
\label{valencia}
\begin{enumerate}
\item 
The reduction mod $\ell$ of $\rho$, denoted $\overline{\rho}$, is irreducible
and cuspidal.
\item
The representation $\overline{\rho}$ is supercuspidal if and only if the 
reduction mod $\ell$ of $\xi$, denoted~$\overline{\xi}$, is $\kk$-regular. 
\item
When $\overline{\xi}$ is $\kk$-regular,
the super\-cuspidal irreducible $\flb$-re\-pre\-sentation of $\GL_n(\kk)$ 
which~cor\-res\-ponds to it is $\overline{\rho}$.
\end{enumerate}
\end{prop}

Moreover,
for any cuspidal irreducible $\flb$-re\-pre\-sentation 
$\pi$ of $\GL_n(\kk)$,~there is a supercuspidal irre\-ducible
$\qlb$-re\-pre\-sentation ${\rho}$ of $\GL_n(\kk)$
whose reduction mod $\ell$ is $\pi$.
Such a represen\-ta\-tion~${\rho}$ is said to be a \textit{lift} of 
$\pi$. 

\begin{rema}
\label{claudinerem}
Here are a couple of additional properties which we will use at various places.
\begin{enumerate}
\item 
The identity \eqref{tracerhoxi} shows that,
if $\xi$ is a $\kk$-regu\-lar character of $\ll^\times$
and $\rho$ is the supercuspi\-dal~re\-presentation
corresponding to $\xi$ by \eqref{GreenJames}, 
then its contragredient $\rho^\vee$ corresponds to $\xi^{-1}$.
\item
Let $\iota:\FC\to\FC'$ be an embedding of algebraically closed fields of 
characteristic $\ell\neq~p$.~Then 
any irreducible $\FC'$-representation $\pi'$ of $\GL_n(\kk)$ is
isomorphic to $\pi\otimes\FC'$ for a uniquely determi\-ned 
irreducible~$\FC$-re\-pre\-sentation $\pi$ of $\GL_n(\kk)$, 
which follows from the fact that, since~$\GL_n(\kk)$~is~fini\-te,
the trace of $\pi'$ takes values in $\iota(\FC)$.
Given a subgroup $\H$ of $\GL_n(\kk)$, 
the representation $\pi$ is cuspidal 
(respectively supercuspidal, $\H$-distinguished)
if and only if $\pi'$ is cuspidal 
(respectively supercuspidal, $\H$-distinguished).
Moreover, by \eqref{tracerhoxi},
if $\pi$ is supercuspidal and corresponds to~the $\kk$-re\-gu\-lar 
$\FC$-character~$\xi$, 
then $\pi'$ corresponds to the $\kk$-regular $\FC'$-character
$\xi'=\iota\circ\xi$.
\end{enumerate}
\end{rema}

\subsection{The Galois case}
\label{level0nr1}

Recall that $p$ is an arbitrary prime number.
Let $\kk/\kk_0$ be a quadratic extension of finite fields of characteristic 
$p$.
Write $\s$ for the non-trivial $\kk_0$-automorphism of $\kk$,
and $q_0$ for the cardinality of $\kk_0$.
We thus have $q_0^2=q$.

If $\pi$ is an irreducible representation of $\GL_n(\kk)$,
we write $\pi^\s$ for the representation $\pi\circ\s$,
and we say that $\pi$ is \textit{$\s$-selfdual} if
$\pi^\s$, $\pi^\vee$ are isomorphic.

\begin{lemm}
\label{L71}
Let $n\>1$ be a positive integer.
\begin{enumerate}
\item
If there is a $\s$-selfdual supercuspidal irredu\-ci\-ble representation 
of $\GL_n(\kk)$, then $n$ is odd.
\item
Suppose that $\FC$ has characteristic $0$ and $n$ is odd.
Then there is a $\s$-selfdual supercuspidal irredu\-ci\-ble representation of 
$\GL_n(\kk)$.
\end{enumerate}
\end{lemm}

\begin{proof}
Let $\xi$ be a $\kk$-regular character of $\ll^\times$,
and let $\rho$ denote the supercuspidal
irredu\-ci\-ble~re\-pre\-sentation of $\GL_n(\kk)$ corresponding to it by
\eqref{GreenJames}.
The identity \eqref{tracerhoxi} shows that $\rho^\s$~cor\-res\-ponds
to $\xi^{q_0}$.
Indeed, for all $g\in\ll^\times\subseteq\GL_n(\kk)$
with irreducible characteristic polynomial,
$\s(g)$ and $g^{q_0}$ have the same characteristic 
polynomial, thus they are conjugate in~$\GL_n(\kk)$.
It follows that:
\begin{equation*}
\tr\ \rho^\s(g) = \tr\ \rho(g^{q_0})
= (-1)^{n-1}\cdot\sum\limits_{\g} \xi^\g(g^{q_0})
=  (-1)^{n-1}\cdot\sum\limits_{\g} (\xi^{q_0})^\g(g)
\end{equation*}
for all $g\in\ll^\times$ with irreducible characteristic polynomial.
Thus $\rho$ is $\s$-selfdual if and only if:
\begin{equation}
\label{preramanujanguesthouse}
\xi^{-q_0} = \xi^{q_0^{2i}},
\quad
\text{for some } i\in\{0,\dots,n-1\}.
\end{equation}
Expo\-nen\-tiating to $-q_0$ again gives us 
the equality $\xi^{q} = \xi^{q^{2i}}$.
The $\kk$-regularity assumption on~$\xi$ implies that $n$
divides $2i-1$, thus $n$ is odd.
Besides, since $0\<i\<n-1$, we have $n=2i-1$.
It follows that:
\begin{equation}
\label{ramanujanguesthouse}
\text{$\rho$ is $\s$-selfdual} 
\ \Leftrightarrow\ 
\xi^{-1} = \xi^{q_0^{n}}.
\end{equation}
This is also equivalent to $\xi$ being trivial on $\ll^\times_0$, 
where $\ll_0^{}$ is the subfield of $\ll$ with $q_0^n$ elements.

Conversely, suppose that $\FC$ has characteristic $0$ and $n$ is odd.
Let $\xi$ be an $\FC$-character of~$\ll^\times$~of order $q_0^n+1$, 
which exists since $\ll^\times$ has order $q^n-1=(q_0^{n}-1)(q_0^{n}+1)$.
It is thus trivial on $\ll^\times_0$.
On ther other hand, $q_0$ has order $2n$ mod $q_0^n+1$, 
thus $q$ has order $n$ mod $q_0^n+1$.
It follows that the character $\xi$ is $\kk$-regular.
\end{proof}

\begin{rema}
\label{chaussures}
When $\FC$ has characteristic $\ell>0$,
the group $\GL_n(\kk)$ may have $\s$-selfdual cus\-pi\-dal 
(non supercuspidal) representations for $n$ even,
and it may have no $\s$-selfdual 
supercus\-pi\-dal~represen\-ta\-tion for $n$ odd.
\begin{enumerate}
\item
For an example of $\s$-selfdual cus\-pi\-dal 
non supercuspidal representation for $n$ even,
let~$e$~be the order of $q$ mod~$\ell$,
and suppose that $n=e\ell^u$ for some integer $u\>0$.
Then, by \cite{MSf} Théorème 2.4,
the unique generic subquotient $\pi$ of the representa\-tion in\-duced 
from the trivial character of a Borel subgroup of $\GL_n(\kk)$ 
is cuspidal and $\s$-selfdual.
One may choose $q$, $\ell$ and $u$ such that $n$ is even.
For instance, this is the case when $n=2$ and $\ell\neq2$ divides $q+1$.
\item
The group $\GL_n(\kk)$ may even have no 
supercus\-pi\-dal~represen\-ta\-tion at all:
this is the case, for instance, when $n=3$, $q=2$ and $\ell=7$.
\end{enumerate}
\end{rema}

\begin{lemm}
\label{L72}
Let $\rho$ be a supercuspidal  representation of $\GL_n(\kk)$ 
for some odd integer $n\>1$.
The following assertions are equivalent:
\begin{enumerate}
\item
The representation $\rho$ is $\s$-selfdual.
\item
The representation $\rho$ is $\GL_n(\kk_0)$-distinguished.
\item
The space $\Hom_{\GL_n(\kk_0)}(\rho,1)$ has dimension $1$.
\end{enumerate}
\end{lemm}

\begin{proof}
When $\FC$ has characteristic $0$, this is due to Gow \cite{Gow}.
Suppose that $\FC$ has characte\-ris\-tic $\ell>0$ prime to $q$.
We postpone to Section~\ref{COE}
the proof of the fact that $(2)$ implies $(1)$ and~is
equivalent~to $(3)$,
since the proof of Theorem \ref{FP}
works in both the finite and non-Archimedean cases
(see Remark \ref{bananejaune}).
Here we prove that (1) implies (2).
For this, we use the following general lemma. 

\begin{lemm}
\label{aimee}
Let ${\sf G}$ be a finite group and ${\sf H}$ be a subgroup of ${\sf G}$.
Let $\pi$ be an irreducible~repre\-sentation of ${\sf G}$ on a
$\qlb$-vector space $\V$ such that $\Hom_{{\sf H}}(\pi,1)$ is non-zero. 
Let $\L\subseteq\V$ be a ${\sf G}$-stable $\zlb$-lattice.
Then $\L\otimes\flb$ has at least one irreducible ${\sf G}$-subquotient
$\tau$ such that $\Hom_{{\sf H}}(\tau,1)\neq\{0\}$.
\end{lemm}

\begin{proof}
Let $\h$ be a non-zero ${\sf H}$-invariant linear form on $\V$.
The image of $\L$ by $\h$, denoted $\M$, is a $\zlb$-lattice in $\qlb$. 
Reducing mod the maximal ideal of $\zlb$ gives a non-zero 
${\sf H}$-invariant $\flb$-linear map $\overline{\h}$
from $\L\otimes\flb$ to $\M\otimes\flb\simeq\flb$.
Let $\W$ be the largest subrepresentation of $\L\otimes\flb$ contained
in the kernel of $\overline{\h}$.
Then any irreducible subrepresentation $\tau$ of $(\L\otimes\flb)/\W$
satisfies the required condition. 
\end{proof}

Let $\xi$ be a $\kk$-regular character of $\ll^\times$ parametrizing some 
$\s$-selfdual supercuspidal repre\-senta\-tion $\rho$ of $\GL_n(\kk)$. 
By \eqref{ramanujanguesthouse}, we have $\xi^{-1}=\xi^{q_0^n}$.
Let us fix a field embedding $\iota:\flb\to\FC$.~Since $\xi$ has finite 
image,
there is a $\kk$-regular character 
$\widetilde{\xi}$ of $\ll^\times$ with values~in $\zlb$ such that:
\begin{itemize}
\item 
the character $\widetilde{\xi}$ satifies the identity 
$\widetilde{\xi}^{-1}=\widetilde{\xi}^{q_0^n}$, and
\item 
one has $\xi=\iota\circ\xi_0$
where $\xi_0$ is the reduction mod $\ell$ of $\widetilde{\xi}$.
\end{itemize}
The character $\widetilde{\xi}$ corresponds to a $\s$-selfdual
supercuspidal $\qlb$-re\-pre\-sen\-ta\-tion $\widetilde{\rho}$ of 
$\GL_n(\kk)$.
Let $\V$ denote the vector space of $\widetilde{\rho}$
and fix a $\GL_n(\kk)$-stable $\zlb$-lattice $\L$ in $\V$.
By Proposition \ref{claudine},~the 
representation of $\GL_n(\kk)$~on~the $\flb$-vector 
space $\L\otimes\flb$ is isomorphic to the 
supercuspidal~re\-pre\-sen\-ta\-tion 
$\rho_0$ corresponding to $\xi_0$,
and it is distinguished by Lemma \ref{aimee}.
The result now fol\-lows from Remark \ref{claudinerem},
which tells us that $\rho_0\otimes\FC$ is distinguished and 
isomorphic to $\rho$.
\end{proof}

\begin{rema}
\label{HenryDurie}
If $\FC$ is the field $\flb$, we proved that the representation 
$\rho$ is distin\-guished if and only if it~has a distinguished lift to 
$\qlb$.
\end{rema}

\begin{rema}
\label{mabille}
We give an example of a $\s$-selfdual cuspidal non supercuspidal 
representation of $\GL_n(\kk)$ which is not distinguished. 
With the notation of Remark \ref{chaussures},
assume that $n=e=2$. 
Thus $\pi$ is a $\s$-selfdual cuspidal (non supercuspidal) 
representation of $\GL_2(\kk)$.
Let $\widetilde{\pi}$ be an $\ell$-adic lift of $\pi$
(see Remark \ref{claudinerem}),
and decompose its restriction to $\GL_2(\kk_0)$ 
as a direct sum:
\begin{equation*}
\V_1\oplus\dots\oplus\V_r
\end{equation*} 
of irreducible components. 
Since the order of $\GL_2(\kk_0)$ is prime to $\ell$, 
reduction mod $\ell$ preserves irreducibility, 
and the restriction of $\pi$ to $\GL_2(\kk_0)$ 
is semisimple.
It follows that $\pi$ decomposes as 
$\W_1\oplus\dots\oplus\W_r$,
where $\W_i$ is irreducible and is the reduction mod $\ell$ of $\V_i$ 
for each $i=1,\dots,r$.
Suppose that $\pi$ is distinguished.
Then $\W_i$ is the trivial character for some $i$. 
Thus $\V_i$ is a~cha\-rac\-ter, and it must be trivial since $\GL_2(\kk_0)$ 
has no non-trivial character of order a power of $\ell$,
which implies that $\widetilde{\pi}$ is distinguished.
This is impossible, since $n=2$ is even.
\end{rema}

\begin{rema}
\label{mabille2}
More generally, the argument of Remark~\ref{mabille} shows that,
if $\H$ is a subgroup of $\GL_n(\kk)$ whose order is prime to $\ell$,
then a cuspidal representation $\pi$ of $\GL_n(\kk)$ is $\H$-distinguished 
if and only if any $\ell$-adic lift of $\pi$ is $\H$-distinguished.
\end{rema}

\subsection{A mirabolic interlude}
\label{anais}

This~para\-graph is inspired from Matringe \cite{Matringe}. 
We assume that $p\neq2$.
Let 
$\GM$ denote the~group $\GL_n(\kk)$ for some $n\>2$.
Write $\PM$ for the mirabolic subgroup of $\GM$,
which is 
made of all matrices in $\GM$ whose last row is $(0\ \dots\ 0\ 1)$.
Let $\UM$ be the unipotent radical of $\PM$,
and $\GM'$ be the image of $\GL_{n-1}(\kk)$ in $\GM$ under 
the group homo\-morphism:
\begin{equation*}
g\mapsto
\begin{pmatrix}
g&0\\0&1
\end{pmatrix}.
\end{equation*}
We thus have $\PM=\GM'\UM$,
and we write $\PM'$ for the mirabolic subgroup of $\GM'$.
Let $\N$ be the maximal unipotent subgroup of $\GM$ made of 
all upper trangular unipotent matrices, and $\psi$ be a non-trivial 
character of $\kk$.
We still write $\psi$ for the non-degenerate character 
\begin{equation*}
x \mapsto \psi(x_{1,2}+\dots+x_{n-1,n})
\end{equation*}
of $\N$.
We have a functor:
\begin{equation*}
\pi \mapsto\Ind^\PM_{\PM'\UM}(\pi\otimes\psi)
\end{equation*}
denoted $\Phi^+$,
from $\FC$-representations of $\PM'$ to $\FC$-representations of $\PM$,
where $\pi\otimes\psi$ is the representation of $\PM'\UM$ defined by 
$xu\mapsto\pi(x)\psi(u)$ for all $x\in\PM'$ and $u\in\UM$.

Given integers $r\>s\>0$ such that $r+s=n$,
let $\H_{r,s}$ be the subgroup of $\GM$ defined in~\cite{Matringe}.~It
is the conjugate of the Levi subgroup $\GL_r(\kk)\times\GL_s(\kk)$ 
of $\GM$ under the per\-mu\-ta\-tion matrix $w_{r,s}$ defined 
by the permutation:
\begin{equation*}
\left(\ 
\begin{matrix}
  1 & \cdots & t & \cdots & t+i & \cdots & r & r+1 & \cdots & r+1+j \\
  1 & \cdots & t & \cdots & t+2i & \cdots & n-1 & t+1 & \cdots & t+1+2j 
\end{matrix}
\quad
\begin{matrix}
\cdots & n-1 & n \\
\cdots & n-2 & n
\end{matrix}
\ \right)
\end{equation*}
where $t=r-s+1$.
If $s\>1$, let $\H'_{r,s}$ be the subgroup $\GM'\cap\H_{r,s}$ 
(denoted $\H_{r,s-1}$ in \cite{Matringe}).

\begin{lemm}
\label{LNM1}
Let $\pi$ be a representation of $\PM'$,
and $\chi$ be a character of $\H_{r,s}$.
Suppose that the vector space
$\Hom_{\PM\cap\H_{r,s}}(\Phi^+(\pi),\chi)$ is non-zero.
Then it is isomorphic to $\Hom_{\PM'\cap\H'_{r,s}}(\pi,\chi)$.
\end{lemm}

\begin{proof}
Given $g\in\G$ and a representation $\tau$ of a subgroup $\H$ of $\G$,
we will write $\H^g=g^{-1}\H g$, and $\tau^g$ for the representation
$x\mapsto\tau(gxg^{-1})$ of $\H^g$.
Applying the Mackey formula, and since $\GM'$ normalizes $\UM$, 
the restriction of $\Phi^+(\pi)$ to $\PM\cap\H_{r,s}$ decomposes as
the direct sum:
\begin{equation*}
\bigoplus\limits_{g} 
\Ind^{\PM\cap\H_{r,s}}_{\PM\cap\H_{r,s}\cap\PM'^g\UM}(\pi^g\otimes\psi^g)
\end{equation*}
where $g$ ranges over a set of representatives of 
$(\PM'\UM,\PM\cap\H_{r,s})$-double cosets in $\PM$.
Since $\PM=\GM'\UM$, we may assume $g$ ranges over a set of 
representatives of 
$(\PM',\H'_{r,s})$-double cosets in $\GM'$.
For each~$g$, let us write
the following isomorphism of representations of $\PM\cap\H_{r,s}$:
\begin{equation}
\label{benoui}
\Ind^{\PM\cap\H_{r,s}}_{\UM\cap\H_{r,s}}(\pi^g\otimes\psi^g) \\
\simeq
\Ind^{\PM\cap\H_{r,s}}_{\PM\cap\H_{r,s}\cap\PM'^g\UM}
\left((\pi^g\otimes\psi^g)\otimes
\Ind^{\PM\cap\H_{r,s}\cap\PM'^g\UM}_{\UM\cap\H_{r,s}}(1)\right).
\end{equation}
Since the induced representa\-tion 
$\Ind^{\PM\cap\H_{r,s}\cap\PM'^g\UM}_{\UM\cap\H_{r,s}}(1)$
cano\-nically surjects onto the trivial character of 
$\PM\cap\H_{r,s}\cap\PM'^g\UM$ 
by~Fro\-be\-nius reciprocity, the 
right hand side of \eqref{benoui} surjects onto:
\begin{equation*}
\Ind^{\PM\cap\H_{r,s}}_{\PM\cap\H_{r,s}\cap\PM'^g\UM}(\pi^g\otimes\psi^g).
\end{equation*}
On the other hand, since $\pi^g$ is trivial on $\UM\cap\H_{r,s}$,
the left hand side of \eqref{benoui} is a sum of fini\-tely many
copies of $\Ind^{\PM\cap\H_{r,s}}_{\UM\cap\H_{r,s}}(\psi^g)$.
It follows that,
if $\Hom_{\PM\cap\H_{r,s}}(\Phi^+(\pi),\chi)$ is non-zero,
then there is a $g\in\GM'$ such that:
\begin{equation*}
\Hom_{\PM\cap\H_{r,s}}\left(\Ind^{\PM\cap\H_{r,s}}_{\UM\cap\H_{r,s}}
(\psi^g),\chi\right)\neq\{0\}.
\end{equation*}
By Frobenius reciprocity, this implies that $\psi^g=\chi$ on 
$\UM\cap\H_{r,s}$.
Since we assumed that $p\neq2$, the field $\kk$ has at least three
elements, thus any character of $\H_{r,s}\simeq \GL_r(\kk)\times\GL_s(\kk)$ 
is trivial on unipotent elements.
We thus get $\psi^g=1$ on $\UM\cap\H_{r,s}$. 
By \cite{Matringe} Lemma 3.1, this implies that $g\in\PM'\H'_{r,s}$, that is,
we may assume that $g=1$.
We thus have:
\begin{equation*}
\Hom_{\PM\cap\H_{r,s}}(\Phi^+(\pi),\chi)=
\Hom_{\PM\cap\H_{r,s}}\left(\Ind^{\PM\cap\H_{r,s}}_{\PM'\UM\cap\H_{r,s}}
(\pi\otimes\psi),\chi\right) 
\simeq\Hom_{\PM'\UM\cap\H_{r,s}}(\pi\otimes\psi,\chi).
\end{equation*}
The result now follows from the fact that
$\PM'\UM\cap\H_{r,s}=\PM'\cap\H'_{r,s}$.
(This latter equality has been pointed out to me by N.~Matringe.)
\end{proof}

\begin{rema}
\label{camille}
\begin{enumerate}
\item 
If $\chi$ is assumed to be trivial on unipotent elements,
or if $\kk$ has at least three elements,
then Lemma \ref{LNM1} holds without assuming that $p\neq2$.
\item 
If $\kk$ has cardinality $2$,
the group $\GL_2(\kk)$ has a cuspidal character.
Thus, when $r\>2$, the group $\H_{r,2}$ 
has a character which is non-trivial on $\U\cap\H_{r,2}$.
\end{enumerate}
\end{rema}

Now write $\GM''$ for the copy of $\GL_{n-2}(\kk)$ in the upper left block 
of $\GM'$ and $\PM''$ for the mira\-bo\-lic subgroup of $\GM''$.

\begin{lemm}
\label{LNM2}
Let $\pi'$ be a representation of $\PM''$,
and $\chi'$ be a character of $\H'_{r,s}$.
Suppose~that $s\>1$.
If $\Hom_{\PM'\cap\H'_{r,s}}(\Phi^+(\pi'),\chi')$ is non-zero,
then it is isomorphic to
$\Hom_{\PM''\cap\H_{r-1,s-1}}(\pi',\chi')$.
\end{lemm}

\begin{proof}
It is similar to that of Lemma \ref{LNM1}, 
replacing \cite{Matringe} Lemma 3.1 by 
\cite{Matringe} Lemma 3.2.
\end{proof}

Let $\Ga$ denote the mirabolic representation of $\PM$.
Recall that it is defined as the representation of $\PM$ induced from the 
character $\psi$ of $\N$.

\begin{lemm}
\label{ChouBiDooBah}
Let $n\>2$ and $r\>s\>0$ be such that $r+s=n$.
Let $\chi$ be a character of $\H_{r,s}$.
If the space $\Hom_{\PM\cap\H_{r,s}}(\Ga,\chi)$ is non-zero, then $r=s$.
\end{lemm}

\begin{proof}
If $s=0$, then $\H_{r,s}=\GM$ and the result follows from the fact that
the representation $\Ga$ is irreducible of di\-mension greater than $1$.

Suppose that $s\>1$ and that the space 
$\Hom_{\PM\cap\H_{r,s}}(\Ga,\chi)$ is non-zero.
The mirabolic representa\-tion $\Ga$ is isomorphic to 
$\Phi^+(\Ga')$, where $\Ga'$ denotes the mira\-bolic 
representation of $\PM'$. 
By Lemma \ref{LNM1}, the space 
$\Hom_{\PM'\cap\H'_{r,s}}(\Ga',\chi')$ is non-zero,
where $\chi'$ is the restriction of $\chi$ to $\H'_{r,s}$.
Now iden\-ti\-fy $\Ga'$ with $\Phi^+(\Ga'')$, 
where $\Ga''$ is the mira\-bolic representation of $\PM''$. 
By Lemma \ref{LNM2}, the space
$\Hom_{\PM''\cap\H_{r-1,s-1}}(\Ga'',\chi'')$ is non-zero,
where $\chi''$ is the restriction of $\chi'$ to $\H_{r-1,s-1}$.
By induction on $n$, the fact that 
$\Hom_{\PM''\cap\H_{r-1,s-1}}(\Ga'',\chi'')$ is non-zero
implies that $r-1=s-1$, thus $r=s$.
\end{proof}

\begin{prop}
\label{prs}
Let $n\>2$ and $r,s\>0$ be such that $r+s=n$.
Let $\rho$ be a cuspidal~re\-pre\-sentation of $\GM$,
and $\chi$ be a character of $\M=(\GL_r\times\GL_s)(\kk)$.
Suppose $\Hom_{\M}(\rho,\chi)$ is non-zero. 
Then $r=s$. 
\end{prop}

\begin{proof}
Conjugating $\M$ and $\chi$ if necessary,
we may assume that $r\>s$.
The result then follows from Lemma \ref{ChouBiDooBah} and the fact 
that the restriction of $\rho$ to $\PM$ is isomorphic to $\Ga$.
This latter fact is well-known when $\FC$ has characteristic $0$,
and  is given  by \cite{Vigb}  III.1  when $\FC$  is equal  to $\flb$.~For  an
arbitrary $\FC$ of characteristic $\ell>0$,
fix a field embedding of $\flb$ in $\FC$
and write $\rho$ as $\rho_0\otimes\FC$ for some 
cuspidal~ir\-re\-du\-ci\-ble 
$\flb$-repre\-sentation $\rho_0$ of
$\G$ as in Remark \ref{claudinerem}.
Since the restriction of $\rho_0$ to $\PM$ is isomorphic to $\Ga_0$, 
the mirabolic $\flb$-representation of $\PM$,
the restriction of $\rho$ to $\PM$ is isomorphic to 
$\Ga_0\otimes\FC\simeq\Ga$.
\end{proof}

\begin{rema}
\label{pierrepiau}
Suppose that $r=s\>1$.
Putting Lemmas \ref{LNM1} and \ref{LNM2} 
together, we get:
\begin{equation*}
\Hom_{\PM\cap\H_{r,r}}((\Phi^+)^2(\pi),1) 
\simeq \Hom_{\PM''\cap\H_{r-1,r-1}}(\pi,1)
\end{equation*}
for any representation $\pi$ of $\PM''$.
By induction, we get an isomorphism
$\Hom_{\PM\cap\H_{r,r}}(\Ga,1) \simeq \FC$.
\end{rema}

\begin{coro}
  \label{rescousse}
  Suppose that $n=2r$ for some $r\>1$,
  and let $\rho$ be a cuspidal repre\-sentation of $\GL_n(\kk)$. 
  Then the $\FC$-vector space $\Hom_{(\GL_{r}\times\GL_{r})(\kk)}(\rho,1)$
  has dimension at most $1$. 
\end{coro}

\begin{proof}
  This follows from Remark \ref{pierrepiau},
  together with the fact that the restriction of $\rho$ to $\PM$~is
  isomorphic to $\Ga$.
\end{proof}

\subsection{The Levi case}
\label{etrange}

In this paragraph,
we consider the supercuspidal irredu\-ci\-ble representations of
$\GL_n(\kk)$ distin\-guished by some maximal Levi sub\-group. 
As in Paragraph \ref{anais}, we assume that $p\neq2$.

\begin{lemm}
\label{L71tr}
Let $n\>1$ be a positive integer.
\begin{enumerate}
\item
If there is a selfdual supercuspidal irredu\-ci\-ble representation 
of $\GL_n(\kk)$, then either $n=1$ or $n$ is even. 
\item
Suppose that $\FC$ has characteristic $0$, and that either 
$n=1$ or $n$ is even. 
Then there exists a selfdual supercuspidal irredu\-ci\-ble representation of 
$\GL_n(\kk)$.
\end{enumerate}
\end{lemm}

\begin{proof}
If $n=1$, the trivial character of $\kk^\times$ is selfdual 
and supercuspidal. 
Suppose that $n\>2$.
Let us fix an extension $\ll$ of $\kk$ of degree $n$, 
and identify $\ll^\times$ with a maximal torus in $\GL_n(\kk)$.~We 
consider the Green-James
parametrisation \eqref{GreenJames} of isomorphism classes of 
supercuspidal irre\-du\-ci\-ble represen\-tations of $\GL_n(\kk)$ by 
$\kk$-regular characters of $\ll^\times$.

Given a $\kk$-regular character $\xi$ of $\ll^\times$,
let $\rho$ denote the re\-pre\-sentation corresponding to it.
Recall (see Remark \ref{claudinerem}) that $\rho^\vee$
corresponds to $\xi^{-1}$.
Thus $\rho$ is selfdual if and only if:
\begin{equation}
\label{preramanujanguesthousetri}
\xi^{-1} = \xi^{q^{i}},
\quad
\text{for some } i\in\{0,\dots,n-1\}.
\end{equation}
Taking the contragredient again gives us 
the equality $\xi = \xi^{q^{2i}}$.
The regularity assumption on $\xi$ implies that $n$
divides $2i$ and, since $0\<i\<n-1$, we get $n=2i$.
It follows that:
\begin{eqnarray}
\label{ramanujanguesthousetr}
\text{$\rho_\xi$ is selfdual} 
& \Leftrightarrow &
\xi^{-1} = \xi^{q^{n/2}}.
\end{eqnarray}
This is also equivalent to $\xi$ being trivial on $\ll'^\times$, 
where $\ll'$ is the subfield of $\ll$ with cardinality $q^{n/2}$.

Conversely, suppose that $\FC$ has characteristic $0$ and $n=2r$ for some 
$r\>1$. 
Let us consider an $\FC$-character $\xi$ of~$\ll^\times$~of order $q^r+1$, 
which exists since $\ll^\times$ has order $q^n-1=(q^r-1)(q^r+1)$.
It is trivial on $\ll'^\times$.
On ther other hand, $q$ has order $n=2r$ mod $q^r+1$,
which implies that $\xi$ is $\kk$-regular.
\end{proof}

\begin{rema}
\label{carracci}
When $\FC$ has characteristic $\ell>0$,
the group $\GL_n(\kk)$ may have selfdual~cus\-pidal 
(non supercuspidal) representations even if $n$ is odd and $>1$.
Indeed, let $e$ be the order~of $q$ mod $\ell$,
and suppose that $n=e\ell^u$ for some $u\>0$.
The unique generic irreducible~sub\-quotient of 
the re\-pre\-sentation in\-duced 
from the trivial character of a Borel subgroup of $\GL_n(\kk)$ 
is then both cuspi\-dal (see Remark \ref{chaussures}) and self\-dual.
One then can choose $q$, $\ell$ and $u$ such that 
$n$ is odd and $>1$. 
For instance, this is the case when
$\ell\neq2$ divides $q-1$ and $n=\ell$.

Also, as in Remark \ref{chaussures},
the group $\GL_n(\kk)$ may even have no 
supercus\-pi\-dal~represen\-ta\-tion at all, 
which is the case, for instance, when $n=q=2$ and $\ell=3$.
\end{rema}

\begin{lemm}
\label{L72r}
Suppose that $n=2r$ with $r\>1$,
and let $\rho$ be a supercuspidal  repre\-sentation of $\GL_n(\kk)$. 
The following assertions are equivalent:
\begin{enumerate}
\item
The representation $\rho$ is selfdual.
\item
The representation $\rho$ is $(\GL_{r}\times\GL_{r})(\kk)$-distinguished.
\item
The $\FC$-vector space $\Hom_{(\GL_{r}\times\GL_{r})(\kk)}(\rho,1)$ has 
dimension $1$. 
\end{enumerate}
\end{lemm}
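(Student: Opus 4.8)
The plan is to reduce the equivalence to a statement about the Green--James parameter of $\rho$ and to settle that statement by passing to characteristic $0$. That (2) and (3) are equivalent is immediate from Remark~\ref{pierrepiau}: an $(\GL_r\times\GL_r)(\kk)$-invariant linear form on $\rho$ is in particular invariant under $\P\cap\H_{r,r}\subseteq\H_{r,r}$, and $\dim\Hom_{\P\cap\H_{r,r}}(\rho,1)=1$ by that remark; so the space in (2) has dimension at most $1$, whence (2)$\Leftrightarrow$(3).

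For (1)$\Leftrightarrow$(2) write $\rho=\rho_\xi$ via \eqref{GreenJames}, with $\xi$ an $\kk$-regular character of $\ll^\times$ for a degree-$2r$ extension $\ll/\kk$. By \eqref{ramanujanguesthousetr}, condition (1) means that $\xi$ is trivial on $\mult{\ll'}$, where $\ll'\subseteq\ll$ is the subfield with $q^r$ elements; equivalently $\xi^\tau=\xi^{-1}$ for the non-trivial $\tau\in\Gal(\ll/\ll')$. So it suffices to prove that $\rho_\xi$ is $(\GL_r\times\GL_r)(\kk)$-distinguished if and only if $\xi|_{\mult{\ll'}}=1$. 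By Frobenius reciprocity this amounts to analysing $\Hom_{\GL_{2r}(\kk)}(\rho_\xi,\Ind_{(\GL_r\times\GL_r)(\kk)}^{\GL_{2r}(\kk)}1)$. Viewing $(\GL_r\times\GL_r)(\kk)$ as the Levi of the standard parabolic $\Q$ whose unipotent radical is $\Mat_r(\kk)$, the restriction of $\Ind_{(\GL_r\times\GL_r)(\kk)}^{\Q}1$ to that radical is $\bigoplus_{Y\in\Mat_r(\kk)}\psi_Y$ with $\psi_Y(X)=\psi_0(\tr(YX))$, the matrices $Y$ being permuted by $(\GL_r\times\GL_r)(\kk)$ with orbits the rank strata. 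Inducing up to $\GL_{2r}(\kk)$ and using that $\rho_\xi$ is \emph{supercuspidal} -- the same input behind the identity $\rho_\xi|_\P\simeq\Ga$ used for Proposition~\ref{prs} -- one sees that the contribution of every stratum of rank $<r$ vanishes, since after conjugation it is a representation parabolically induced from a proper parabolic of $\GL_{2r}(\kk)$; the invertible $Y$'s form a single orbit whose stabiliser is a (twisted) Shalika subgroup $\SS\subseteq\GL_{2r}(\kk)$ carrying the character $\psi_0\circ\tr$, so that $\dim\Hom_{(\GL_r\times\GL_r)(\kk)}(\rho_\xi,1)=\dim\Hom_{\SS}(\rho_\xi,\psi_0\circ\tr)$.

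It remains to show that this Shalika period is non-zero exactly when $\xi|_{\mult{\ll'}}=1$. When $\FC$ has characteristic $0$ I would argue directly: either evaluate $\langle R_{\ll^\times}^{\GL_{2r}}(\xi),\Ind_{\SS}^{\GL_{2r}(\kk)}(\psi_0\circ\tr)\rangle$ via the Deligne--Lusztig character formula -- the only semisimple elements of $\SS$ that are $\GL_{2r}(\kk)$-conjugate into $\ll^\times$ being those whose characteristic polynomial is a power of an irreducible polynomial of degree dividing $r$ -- and check that the resulting sum of values of $\xi$ equals $1$ when $\xi^\tau=\xi^{-1}$ and $0$ otherwise; or realise $\rho_\xi$ through the Gelfand--Graev/Green model and verify that its Whittaker functional descends to a Shalika functional under exactly this condition. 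For $\FC$ of arbitrary characteristic $\ell\neq p$ one then lifts: by \cite{James} the supercuspidal $\rho_\xi$ admits a $\qlb$-lift $\rho_{\widetilde\xi}$ with $\widetilde\xi$ a lift of $\xi$; selfduality of $\rho_\xi$ is equivalent to selfduality of $\rho_{\widetilde\xi}$ (both assert that the parameter is trivial on $\mult{\ll'}$); and $(\GL_r\times\GL_r)(\kk)$-distinction is inherited in both directions, because reduction mod $\ell$ of an invariant $\zlb$-form stays non-zero, and conversely a non-zero $\flb$-form lifts since $\Ind_{(\GL_r\times\GL_r)(\kk)}^{\GL_{2r}(\kk)}1$ over $\zlb$ reduces to the corresponding $\flb$-representation while the multiplicity is at most $1$ by Remark~\ref{pierrepiau}.

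The main obstacle is the characteristic-$0$ Shalika computation: producing the arithmetic condition $\xi^\tau=\xi^{-1}$ out of the mere non-vanishing of the Shalika period is what forces the use of the Deligne--Lusztig character formula, or of the explicit Gelfand--Graev model (one could equivalently organise this as a count, proving (1)$\Rightarrow$(2) by construction and matching the number of selfdual supercuspidals with the number of distinguished ones, but the core computation is the same). A secondary, more routine point is the ``straightening'' argument showing that each rank-$<r$ contribution to $\Ind_{(\GL_r\times\GL_r)(\kk)}^{\GL_{2r}(\kk)}1$ is parabolically induced from a proper parabolic -- hence annihilated by a supercuspidal representation -- which is the same mechanism that identifies the restriction of a cuspidal representation to the mirabolic subgroup with $\Ga$.
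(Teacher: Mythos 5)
Your reduction of (2)$\Leftrightarrow$(3) to Remark~\ref{pierrepiau} is correct, and it is in fact the alternative the paper itself points to (and it has the merit of working for all cuspidal $\rho$, not only supercuspidal). The lifting argument you give for (1)$\Rightarrow$(2) in positive characteristic -- lift $\xi$ to $\widetilde{\xi}$ preserving $\widetilde{\xi}^{-1}=\widetilde{\xi}^{q^r}$, invoke the characteristic-$0$ result, then reduce a non-zero invariant $\zlb$-form modulo $\ell$ -- is exactly the paper's argument. Two points, however, are genuinely missing.

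First, for the characteristic-$0$ input you do not actually prove it: you reduce (1)$\Leftrightarrow$(2) to a Shalika-period computation (via Mackey theory over the unipotent radical of the $(r,r)$-parabolic, discarding the rank-$<r$ strata using supercuspidality, reaching a single Shalika orbit) and then say you ``would argue directly'' with the Deligne--Lusztig character formula or the Gelfand--Graev model. That computation is precisely the content of the statement and is not supplied; the paper simply cites \cite{HMIMRN} Proposition 6.1 (and \cite{Coniglio}), which is the cleaner move here. Your intermediate reduction to the Shalika period is plausible, but as written the proof does not close.

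Second, and more seriously, your implication (2)$\Rightarrow$(1) over $\flb$ is not supported by the reasoning you give. You claim a non-zero $\flb$-invariant form lifts ``since $\Ind_{(\GL_r\times\GL_r)(\kk)}^{\GL_{2r}(\kk)}1$ over $\zlb$ reduces to the corresponding $\flb$-representation while the multiplicity is at most $1$.'' The multiplicity-one bound from Remark~\ref{pierrepiau} constrains $\dim\Hom$, but it does not by itself produce a distinguished lift of $\rho$: a simple $\flb[\G]$-module may occur in the cosocle (resp.\ socle) of $\flb[\H\backslash\G]$ without any of its $\qlb$-lifts occurring in $\qlb[\H\backslash\G]$, unless one invokes some structure. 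What actually makes this work is the projective-envelope argument: let $f:\P_0\to\rho_0$ be the projective cover over $\zlb[\G]$; by projectivity any non-zero $i:\zlb[\H\backslash\G]\to\rho_0$ lifts to a non-zero $j:\P_0\to\zlb[\H\backslash\G]$ with $i\circ j=f$; inverting $\ell$ and using that $\P_0\otimes\qlb$ is the direct sum of the $\qlb$-lifts of $\rho_0$ (because $\rho_0$ is \emph{supercuspidal}) yields a distinguished lift, which the characteristic-$0$ case then shows to be selfdual, and reduction gives selfduality of $\rho_0$. Without this (or an equivalent decomposition-matrix argument), your (2)$\Rightarrow$(1) has a real gap. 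Note also that the supercuspidality hypothesis is essential here: it is what guarantees that $\P_0\otimes\qlb$ consists only of lifts of $\rho_0$; for merely cuspidal $\rho$ the statement fails, as the paper's surrounding remarks point out.
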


\begin{proof}
When $\FC$ has characteristic $0$, this is 
\cite{HMIMRN} Proposition 6.1 (see 
also \cite{Coniglio} Lemme {3.4.10}).
Suppose now $\FC$ has prime characte\-ristic $\ell$ not dividing $q$.
To prove that (1) implies (2), we~follow the same lifting argument as
in the proof of Lemma \ref{L72}.

We now prove that (2) implies (1).
Let us write $\G=\GL_n(\kk)$ and 
$\H=(\GL_{r}\times\GL_{r})(\kk)$. 
Let~$\rho$ be an $\H$-distinguished supercuspidal repre\-sentation
of $\G$.
If one fixes a field embedding of $\flb$~in $\FC$, then 
Remark \ref{claudinerem} tells us that $\rho$ is isomorphic to 
$\rho_0\otimes\FC$ 
for some distinguished supercuspidal irredu\-ci\-ble 
$\flb$-repre\-sentation $\rho_0$ of $\G$.
Since $\rho$ is selfdual if and only if $\rho_0$ is,
we are thus reduced to proving the result in the case where
$\FC$ is equal to $\flb$, which we assume now.

Since $\rho$ is distinguished,
its contragredient $\rho^\vee$ has a non-zero $\H$-invariant vector.
We thus have a non-zero homomorphism
$i:\zlb[\H\backslash\G]\to\rho^\vee$.
Let~us consider a projective envelope
$f:\P\to\rho^\vee$ of $\rho^\vee$ in the category of 
$\zlb[\G]$-modules. 
Since $\rho^\vee$ is super\-cus\-pi\-dal, it has the following properties 
(see for instance \cite{Vigb} III.2.9): 
\begin{itemize}
\item
the representation $\P\otimes\qlb$ is isomorphic to the direct 
sum of all the $\qlb$-lifts of $\rho^\vee$;
\item
there are $\ell^a$ such lifts, where $a$ is the $\ell$-adic valuation of $q^n-1$;
\item
the representation $\P\otimes\flb$ is indecomposable of length $\ell^a$
and has a unique irreducible~quo\-tient, 
and all its irreducible components are isomorphic to $\rho^\vee$. 
\end{itemize}
By projectivity of $\P$, 
the homomorphism $i$ 
gives rise to a non-zero homo\-mor\-phism:
\begin{equation}
\label{Lenu}
j : \P \to \zlb[\H\backslash\G]
\end{equation}
such that $i\circ j=f$.
Inverting $\ell$, we get a non-zero homomorphism from $\P\otimes\qlb$ to 
$\qlb[\H\backslash\G]$.~It
follows that $\rho^\vee$ has at least one $\H$-distinguished lift.
Thanks to the characteristic $0$ case, such~a lift is self\-dual. 
Reducing mod $\ell$, it follows that $\rho$ is selfdual.

We now go back to the case of a general $\FC$.
The fact that (2) implies (3) is a particular~case of Co\-rol\-lary \ref{rescousse}.
However, we are going to give another proof here,
which works for supercuspidal representations only but
is more likely~to generalize to other situations.

Let $\V$ be the maximal direct summand of 
$\FC[\H\backslash\G]$ in the block of $\rho$. 
This means that $\FC[\H\backslash\G]$ decomposes as a direct sum $\V\oplus\V'$ 
where all irreducible subquotients of $\V$
are isomorphic to $\rho$, and no irreducible subquotient of $\V'$
is iso\-mor\-phic to $\rho$. 
Besides, since $\rho$ is selfdual, we have:
\begin{equation*}
\dim \Hom_{\H}(\rho,1) =
\dim \Hom_{\H}(1,\rho) =
\dim \Hom_{\G}(\FC[\H\backslash\G],\rho) =
\dim \Hom_{\G}(\V,\rho).
\end{equation*}
We thus have to prove that the cosocle of $\V$ is isomorphic to $\rho$. 

\begin{lemm}
\label{cerulloshoes}
The $\FC$-algebra $\A=\End_\G(\V)$ is commutative.
\end{lemm}

\begin{proof}
Since the convolution algebra $\FC[\H\backslash\G/\H]$ decomposes as
$\End_\G(\V)\oplus\End_\G(\V')$, it suffices to prove that 
$\FC[\H\backslash\G/\H]$ is commutative.
For $x\in\G$, let $f_x$ be the characteristic function in 
$\FC[\H\backslash\G/\H]$ of the double coset $\H x\H$.
For $x,y\in\G$, one has:
\begin{equation*}
f_x*f_y = \sum\limits_{z\in\H\backslash\G/\H} a(x,y,z) f_z
\end{equation*}
where $a(x,y,z)\in\FC$ is the image of the cardinality of
$(\H x\H\cap z\H y^{-1}\H)/\H$ in $\FC$.
When $\FC$ has~cha\-rac\-teristic $0$, the algebra 
$\FC[\H\backslash\G/\H]$ is known to be commutative
since $\FC[\H\backslash\G]$ is multiplicity free as an $\FC[\G]$-module, 
thus:
\begin{equation}
\label{egreco0}
{\rm card}\ (\H x\H\cap z\H y^{-1}\H)/\H =
{\rm card}\ (\H y\H\cap z\H x^{-1}\H)/\H
\end{equation}
for all $x,y,z\in\G$.
Now if $\FC$ has characteristic $\ell>0$, 
reducing \eqref{egreco0} mod $\ell$ gives us
a congruence relation
which tells us that the algebra $\FC[\H\backslash\G/\H]$ is commutative.
\end{proof}

It remains to prove that the cosocle of $\V$ is multiplicity free. 
Let $m\>1$ be the multiplicity of $\rho$ in the cosocle of $\V$
and $\Q$ be the projective inde\-com\-po\-sable $\FC[\G]$-module associated 
with~$\rho$.
It has length $\ell^a$, has a unique irreducible quo\-tient, 
and all its irreducible components are isomorphic to $\rho$. 
Write $\V=\V_1\oplus\dots\oplus\V_m$ where 
$\V_1,\dots,\V_m$ are indecomposable $\FC[\G]$-modules with 
cosocle isomorphic to $\rho$. 
There is a nilpotent endomorphism $\N\in\End_\G(\Q)$ such that: 
\begin{equation*}
\End_\G(\Q)=\FC[\N]
\end{equation*}
with $\N^{\ell^a}=0$ and $\N^{\ell^a-1}\neq0$.
Therefore each $\V_i$ is iso\-morphic to the quotient of $\Q$ by the image of 
$\N^{k_i}$ for some $k_i\>0$.
Reordering if necessary, 
we may assume that $\Hom(\V_1,\V_i)$ is non-zero for all $i\>1$.
Suppose that $m\>2$, and define two endomorphisms $u,u'\in\A$ by:
\begin{enumerate}
\item
the endomorphisms $u,u'$ are trivial on $\V_i$ for all $i\>2$,
\item
the restriction of $u$ to $\V_1$ is the identity on $\V_1$,
\item
the restriction of $u'$ to $\V_1$ coincides with some non-zero homomorphism 
in $\Hom(\V_1,\V_2)$. 
\end{enumerate}
Then $uu'=0$ and $u'u\neq0$,
thus $\A$ is not commutative.
Thus $m=1$.
\end{proof}

\begin{rema}
  \label{r72}
If $\FC$ is the field $\flb$, we proved that
$\rho$ is distin\-guished if and only if it~has a distinguished lift to 
$\qlb$ (see Remark \ref{HenryDurie}).
\end{rema}

\begin{rema}
If we only assume $\rho$ to be cuspidal in Lemmas \ref{L71tr} and 
\ref{L72r},
then the lifting ar\-gu\-ment may not work, that is, $\xi$ may not have 
a $\s$-selfdual $\kk$-regular lift $\widetilde{\xi}$. 
Besides, the struc\-tu\-re of the projective envelope of $\rho$ 
is more complicated 
when $\rho$ is cuspidal non-supercuspidal.
\end{rema}

\section{Notation and basic definitions in the non-Archimedean case} 
\label{Notation}

Let $\F/\F_{\zero}$ be a separable quadratic extension of locally compact 
non-archimedean local fields~of residual characteristic $p$.
Apart from Section \ref{COE}, we will assume that $p\neq2$.

Write $\s$ for the non-trivial $\F_{\zero}$-automorphism of $\F$.
Write $\EuScript{O}$ for the ring of integers of $\F$ 
and $\EuScript{O}_{\zero}$ for that of $\F_{\zero}$.
Write $\kk$ for the residue field of $\F$ and $\kk_{\zero}$ for that of $\F_{\zero}$.
The involution~$\s$~in\-duces a $\kk_{\zero}$-auto\-mor\-phism of $\kk$, 
still denoted $\s$, which generates $\Gal(\kk/\kk_{\zero})$. 

As in Section \ref{FFC},
let $\FC$ be an algebraically closed field of characteristic $\ell$ different 
from $p$.
(Note that $\ell$ can be $0$.)
We say we are in the ``modular case'' when we consider the case where 
$\ell>0$. 

We fix once and for all a character:
\begin{equation}
\label{eq1}
\psi_{\zero}:\F_{\zero}\to\FC^\times
\end{equation}
trivial on the maximal ideal of $\EuScript{O}_{\zero}$ but not on $\EuScript{O}_{\zero}$,
and define $\psi=\psi_{\zero}\circ\tr_{\F/\F_{\zero}}$.

When $\ell\neq2$, we write:
\begin{equation}
\label{eq2}
\ep=\ep_{\F/\F_{\zero}} : \F_{\zero}^\times \to \FC^\times
\end{equation}
for the character of $\F_{\zero}^\times$ 
whose kernel is the subgroup of $\F/\F_{\zero}$-norms.

Let $\G$ be the locally profinite group $\G=\GL_n(\F)$, with $n\>1$,
equipped with the involution $\s$ acting componentwise. 
Its $\s$-fixed points is the closed subgroup $\G^\s=\GL_n(\F_{\zero})$. 
We will identify the centre of $\G$ with $\mult\F$,
and that of $\G^\s$ with $\F_{\zero}^\times$. 

By \textit{representation} of a locally profinite group, 
we always mean a smooth representation on an $\FC$-modu\-le. 
Given a representation $\pi$ of a closed subgroup $\H$ of $\G$, 
we write $\pi^\vee$ for the smooth contra\-gredient of $\pi$, 
and $\pi^\s$ for the representation $\pi\circ\s$ of the subgroup $\s(\H)$.
We say that $\pi$ is \textit{$\s$-selfdual} if $\H$ is $\s$-stable and 
$\pi^\s$, $\pi^\vee$ are isomorphic.
If $g\in\G$, we write $\H^g=\{g^{-1}hg\ |\ h\in\H\}$ 
and $\pi^g$ for the representation $x\mapsto\pi(gxg^{-1})$ of $\H^g$.
If $\chi$ is a character of $\H$, we write $\pi\chi$ 
for the representation $g\mapsto\chi(g)\pi(g)$.

If $\mu$ is a character of $\H\cap\G^\s$, we say that $\pi$ is
$\mu$-\textit{distinguished} if the  
space $\Hom_{\H\cap\G^\s}(\pi,\mu)$ is non-zero.
If $\mu$ is the trivial character,
we will simply say that $\pi$ is $\H\cap\G^\s$-\textit{distinguished}, 
or just \textit{distinguished}. 
If $\H=\G$ and $\phi$ is a character of $\F_0^\times$, 
we will abbreviate $\phi\circ\det$-\textit{distinguished} to 
$\phi$-\textit{dis\-tin\-guished}.

An irreducible representation of $\G$ is said to be \textit{cuspidal} 
if all its proper Jacquet modules~are trivial or, equivalently, 
if it does not occur as a subrepresentation of a proper 
parabolically~indu\-ced 
representation.
It is said to be \textit{supercuspidal} 
if it does not occur as a subquotient of~a proper parabolically 
induced representation
(by \cite{DatDMJ12} Corollaire B.1.3,
this is equivalent to not occuring as 
a subquotient of the parabolic induction of any 
\textit{irreducible} representation of a pro\-per
Levi sub\-group of $\G$). 
When $\FC$ has characteristic $0$,
any cuspidal representation is supercuspidal. 

\section{A modular version of theorems of Prasad and Flicker}
\label{COE}

In this section, the residue characteristic $p$ is arbitrary.
We prove the following theo\-rem, which is well-known in the complex case. 
Note that, in the modular case, any irreducible representation of $\G$
has a central character by \cite{Vigb} II.2.8.

\begin{theo}
\label{FP}
Let $\pi$ be a distinguished irreducible representation of $\G$. 
Then:
\begin{enumerate}
\item
The central character of $\pi$ is trivial on $\F_0^\times$.
\item
The contragredient representation $\pi^\vee$ is distinguished. 
\item
The space $\Hom_{\G^\s}(\pi,1)$ has dimension $1$.
\item
The representations $\pi^\s$ and $\pi^\vee$ are isomorphic,
that is, $\pi$ is $\s$-selfdual. 
\end{enumerate}
\end{theo}

\begin{rema}
In the complex case, 
this theorem was first proved under the assumption that 
the~cha\-rac\-teristic of $\F$ is not $2$, 
which was required in the proof of \cite{Flicker} Proposition 10.
Later,~in \cite{PrasadDMJ01} Section 4, 
Prasad gave an argument which only requires $\F/\F_0$ to be 
separable quadra\-tic.
\end{rema}

\begin{proof}
Property 1 is straightforward. 
Property 2 follows from an argument of Gelfand-Kazh\-dan 
(see \cite{SVdml} Proposition {8.4} in the modular case).
For Properties 3 and 4,
we follow the proofs of Prasad \cite{PrasadCMAT90} 
and Flicker \cite{Flicker}.
The reference for the basic results in the
theory of modular~re\-presentations of $p$-adic reductive
groups which we use in the proof is \cite{Vigb}.

Write $\Cc_{{\rm c}}^\infty(\G)$ for the space of locally constant, 
compactly supported $\FC$-valued func\-tions on $\G$,
and fix an $\FC$-valued Haar measure on $\G$,
that is, a non-zero $\FC$-linear form on $\Cc_{{\rm c}}^\infty(\G)$ 
in\-va\-riant un\-der left trans\-la\-tion by $\G$. 

Let $\W$ denote the vector space of $\pi$, 
and $l:\W\to\FC$ be a non-zero $\G^{\s}$-invariant linear form. 
For any $f\in\Cc_{{\rm c}}^\infty(\G)$, define a linear form on $\W$ by:
\begin{equation*}
\pi(f)l : w \mapsto \int_\G f(x) l(\pi(x)w)\ dx.
\end{equation*}
Since $f$ is smooth, the linear form $\pi(f)l$ on $\W$ is smooth. 
This defines a non-zero 
homomorphism $\L:\Cc_{{\rm c}}^\infty(\G)\to\W^{\vee}$.
It is $\G$-equivariant under right translation
and $\G^{\s}$-invariant under left~trans\-la\-tion.
Since $\W$ is irreducible, it is surjective.
Similarly, given a non-zero $\G^{\s}$-invariant linear form 
$m:\W^\vee\to\FC$, we obtain a surjective right $\G$-equivariant 
and left $\G^{\s}$-invariant homo\-mor\-phism 
$\M$ from $\Cc_{{\rm c}}^\infty(\G)$ to $\W^{\vee\vee}\simeq\W$
(see \cite{Vigb} Proposition I.4.18 for the latter isomorphism).
We now define:
\begin{equation*}
\B(f,g) = \langle \M(f),\L(g)\rangle \in \FC
\end{equation*}
for all $f,g\in\Cc_{{\rm c}}^\infty(\G)$.
This defines a right $\G$-invariant and left $\G^{\s}\times\G^{\s}$-invariant 
linear form $\B$ on the space 
$\Cc_{{\rm c}}^\infty(\G)\otimes\Cc_{{\rm c}}^\infty(\G)\simeq
\Cc_{{\rm c}}^\infty(\G\times\G)$.
As in \cite{PrasadCMAT90} Lemma 4.2 
(and with \cite{PrasadDMJ01} Lemma~4.1,
which extends the result of \cite{Flicker} Proposition 10
to the case where $\F$ has arbitrary characteristic)
we have:
\begin{equation}
\label{AlFi}
\B(f,g) = \B(g\circ\s,f\circ\s)
\end{equation}
for all $f,g\in\Cc_{{\rm c}}^\infty(\G)$.
It follows that the kernel of $\L$ is equal to $\{f\circ\s\ |\ f\in\Ker(\M)\}$.
Thus, if $l'$ is any non-zero $\G^{\s}$-invariant linear form on $\W$,
with associated homomorphism $\L'$, then $\L$,~$\L'$~have the same kernel.
Since $\pi^\vee$ is admissible (by \cite{Vigb} II.2.8),
Schur's Lemma applies (see \cite{Vigb} I.6.9)
thus one has $l'=cl$ for some $c\in\FC^\times$.
Thus $\Hom_{\G^{\s}}(\W,1)$ has dimension $1$.

As in \cite{PrasadCMAT90} Lemma 4.2, the bilinear form
$\B$ corresponds to 
the $\G^{\s}$-bi-invariant linear form $\D$ on $\Cc_{{\rm c}}^\infty(\G)$ 
defined by: 
\begin{equation*}
\D(h) = m(\pi(h)l)
\end{equation*}
for all $h\in\Cc_{{\rm c}}^\infty(\G)$.
The correspondence between $\B$ and $\D$ is given by:
\begin{equation*}
\D(h) = \B(k),
\quad
\text{with $k:(x,y) \mapsto h(xy^{-1})$}.
\end{equation*}
Note that \eqref{AlFi} gives $\D(h)=\D(h\circ\s\circ\iota)$ 
with $\iota:x\mapsto x^{-1}$ on $\G$.
Replacing $\pi$ by $\pi^*=\pi^{\vee\s}$
and exchanging the roles played by $l,m$ we get a linear form:
\begin{equation*}
\D^* : h \mapsto l(\pi^*(h)m).
\end{equation*}
Since we have $l(\pi^*(h)m)=m(\pi(h\circ\s\circ\iota)l)$,
it follows that $\D^*=\D$.
In order to deduce Property 4, it remains to prove that $\D$ determines 
$\pi$ entirely.
For any $\xi\in\W^\vee$ we define the function:
\begin{equation*}
c_\xi : x \mapsto m(\pi^\vee(x)\xi) = m(\xi\circ\pi(x^{-1}))
\end{equation*}
on $\G$.
Then $\xi\mapsto c_\xi$ is an embedding of $\W^\vee$ in the space 
$\Cc^\infty(\G^{\s}\backslash\G)$ of smooth $\FC$-valued functions on 
$\G^{\s}\backslash\G$.
For $y\in\G$ and $h\in\Cc_{{\rm c}}^\infty(\G)$, let ${}^yh$ denote the function 
$x\mapsto h(xy)$.
Since $\L$ and $\M$ are surjec\-tive, there is a function $h$ such that 
$\pi(h)l$ is non-zero. 
Then $y \mapsto \D({}^yh)$ is a non-zero function in the space 
$\Cc^\infty(\G^{\s}\backslash\G)$, 
generating a subrepresentation isomorphic to $\W^\vee$.
Indeed, it is equal to $c_{\pi(h)l}$.
It thus follows from the equality $\D^*=\D$ that we have 
$\pi^\s\simeq\pi^\vee$, 
as expected. 
\end{proof}

\begin{rema}
\label{bananejaune}
The same results hold --
and the same argument works -- 
when $\F/\F_0$ is replaced by a qua\-dra\-tic extension of finite fields
of arbitrary characteristic.
It suffices to replace \cite{PrasadDMJ01}~Lem\-ma 4.1
by \cite{Gow} Lemma~3.5.
\end{rema}

\section{Preliminaries on simple types}
\label{PrelimST}

\textit{From now on, until the end of this article,
we will assume that $p\neq2$.}
This assumption is not needed in Paragraphs
\ref{genitrix41} to \ref{pullfroid},
but we assume it from now on for simplicity.

We assume the reader is familiar with the language of simple types. 
We recall the main~results on simple strata, characters and types 
\cite{BK,BHLTL1,BHEffectiveLC,MSt} that we will need.
Part of these preliminaries can also be found in \cite{AKMSS}.

\subsection{Simple strata and characters}
\label{genitrix41}

Let $[\aa,\b]$ be a simple stratum in 
the $\F$-algebra $\Mat_{n}(\F)$ of $n\times n$ matrices with entries 
in $\F$ for some $n\>1$. 
Recall that $\aa$ is a hereditary order in $\Mat_n(\F)$ and 
$\b$ is a matrix in $\Mat_{n}(\F)$ such that: 
\begin{enumerate}
\item
the $\F$-algebra $\E=\F[\b]$ is a field, whose degree over $\F$ is denoted 
$d$,
\item
the multiplicative group $\E^\times$ normalizes $\aa$. 
\end{enumerate}
The cen\-trali\-zer of $\E$ in $\Mat_{n}(\F)$, denoted $\B$,
is an $\E$-algebra isomorphic to $\Mat_{m}(\E)$, with $n=md$.
The intersection $\bb=\aa\cap\B$ is a hereditary order in $\B$.

Write $\pp_\aa$ for the Jacobson radical of $\aa$,
and $\U^1(\aa)$ for the compact open 
pro-$p$-subgroup $1+\pp_\aa$ of $\G=\GL_n(\F)$. 
We recall the follow\-ing useful \textit{simple intersection property}
(\cite{BK} Theorem 1.6.1):
for all $x\in\B^\times$, we have:
\begin{equation}
\label{SIP}
\U^1(\aa) x\U^1(\aa)\cap\B^\times=\U^1(\bb)x\U^1(\bb).
\end{equation} 

Associated with $[\aa,\b]$,
there are compact open subgroups:
\begin{equation*}
\H^1(\aa,\b)\subseteq\J^1(\aa,\b)\subseteq\J(\aa,\b)
\end{equation*}
of $\aa^\times$ and a finite set $\Cc(\aa,\b)$
of characters of $\H^1(\aa,\b)$ called \textit{simple characters}, 
depending on the choice of the 
character $\psi$ fixed in Section \ref{Notation}.
Write $\BJ(\aa,\b)$ for the compact mod centre subgroup generated 
by $\J(\aa,\b)$ and the normalizer of $\bb$ in $\B^\times$. 

\begin{prop}[\cite{BHEffectiveLC} {\rm 2.1}]
\label{patel}
We have the following properties:
\begin{enumerate}
\item
The group $\J(\aa,\b)$ is the unique maximal compact subgroup of 
$\BJ(\aa,\b)$.
\item
The group $\J^1(\aa,\b)$ is the unique maximal normal 
pro-$p$-subgroup of $\J(\aa,\b)$. 
\item
The group $\J(\aa,\b)$ is generated by $\J^1(\aa,\b)$ and $\bb^\times$, 
and we have:
\begin{equation}
\label{lapremiereegalite}
\J(\aa,\b)\cap\B^\times=\bb^\times,
\quad
\J^1(\aa,\b)\cap\B^\times=\U^1(\bb).
\end{equation}
\item
The normalizer of any simple character $\t\in\Cc(\aa,\b)$ in $\G$
is equal to $\BJ(\aa,\b)$. 
\item
\label{bourrin6}
The intertwining set of any $\t\in\Cc(\aa,\b)$ in $\G$ is equal to 
$\J^1(\aa,\b)\B^\times\J^1(\aa,\b)$. 
\end{enumerate}
\end{prop}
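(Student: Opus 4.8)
The statement collects standard structure results for simple characters and their associated groups (\cite{BK} Chapter~3, see also \cite{BHEffectiveLC} 2.1), so the plan is to recall the explicit construction and read off each property. The inputs I would take from \cite{BK} 3.1 are: the chain $\U^1(\bb)\subseteq\H^1(\aa,\b)\subseteq\J^1(\aa,\b)\subseteq\U^1(\aa)$, with $\J^1(\aa,\b)$ normalised by $\bb^\times$; the equality $\J(\aa,\b)=\bb^\times\J^1(\aa,\b)$; and the fact that $\J(\aa,\b)$, as well as the normaliser $\mathfrak{K}$ of $\bb$ in $\B^\times$, normalises $\H^1(\aa,\b)$ and fixes every simple character $\t\in\Cc(\aa,\b)$. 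For $\mathfrak{K}$ the last point rests on the standing normalisation that $\E^\times$ normalises $\aa$ (condition (2) of the statement), which by \cite{BK} puts $\mathfrak{K}$ inside the normaliser of $\aa$ in $\G$, together with the fact that elements of $\B^\times$ commute with $\b$, which makes the additive character $\psi_\b$ underlying $\t$ invariant; the same commutation argument, applied to $\bb^\times\subseteq\B^\times$, handles $\J(\aa,\b)$.

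Granting these, Properties 3, 2 and 1 come quickly, in that order. For Property~3 I would apply the simple intersection property \eqref{SIP} at $x=1$, getting $\U^1(\aa)\cap\B^\times=\U^1(\bb)$, and then squeeze $\U^1(\bb)\subseteq\J^1(\aa,\b)\cap\B^\times\subseteq\U^1(\aa)\cap\B^\times$ to obtain the first equality in \eqref{lapremiereegalite}; the second follows by writing $x\in\J(\aa,\b)\cap\B^\times$ as $x=uj$ with $u\in\bb^\times$ and $j\in\J^1(\aa,\b)$, so that $j=u^{-1}x\in\J^1(\aa,\b)\cap\B^\times=\U^1(\bb)\subseteq\bb^\times$. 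For Property~2, $\J^1(\aa,\b)$ is a normal pro-$p$-subgroup of $\J(\aa,\b)$ (normalised by $\bb^\times$, contained in $\U^1(\aa)$), and $\J(\aa,\b)/\J^1(\aa,\b)\simeq\bb^\times/\U^1(\bb)$ is a product of groups $\GL_{m_i}(\kk_\E)$ over the residue field $\kk_\E$ of $\E$, hence has no non-trivial normal $p$-subgroup; so any normal pro-$p$-subgroup of $\J(\aa,\b)$ maps to the trivial subgroup of this finite quotient and lies in $\J^1(\aa,\b)$. For Property~1, $\BJ(\aa,\b)/\J(\aa,\b)\simeq\mathfrak{K}/\bb^\times$ is infinite cyclic, hence torsion-free, so a compact subgroup of $\BJ(\aa,\b)$ has trivial image there and is contained in the compact group $\J(\aa,\b)$.

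Property~5 is where the real work lies. The inclusion $\J^1(\aa,\b)\B^\times\J^1(\aa,\b)\subseteq I_\G(\t)$, where $I_\G(\t)$ denotes the intertwining set of $\t$ in $\G$, is the more elementary half, as it only uses that $\J^1(\aa,\b)$ fixes $\t$ together with the $\B^\times$-invariance of $\psi_\b$; the reverse inclusion, that every element of $\G$ intertwining $\t$ lies in $\J^1(\aa,\b)\B^\times\J^1(\aa,\b)$, is the intertwining formula of \cite{BK} Theorem~3.3.2. This is the step I expect to be the main obstacle: its proof goes by induction along a defining sequence for the simple stratum $[\aa,\b]$, reducing to the minimal case and combining the transfer of simple characters with the intertwining of the underlying fundamental strata; in a write-up I would simply quote it.

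Finally, for Property~4 I would use Property~5. The inclusion $\BJ(\aa,\b)\subseteq N_\G(\t)$ holds by the facts recalled at the start. Conversely, take $g\in N_\G(\t)$; then $g\in I_\G(\t)=\J^1(\aa,\b)\B^\times\J^1(\aa,\b)$ by Property~5, so $g=ubv$ with $u,v\in\J^1(\aa,\b)$ and $b\in\B^\times$. Since $N_\G(\t)$ is a group containing $\J^1(\aa,\b)$ and $g$, it contains $b=u^{-1}gv^{-1}$; thus $b$ normalises $\H^1(\aa,\b)$, hence normalises $\H^1(\aa,\b)\cap\B^\times=\U^1(\bb)=1+\pp_\bb$, hence normalises $\bb$ (which is recovered from $\pp_\bb$), i.e.\ $b\in\mathfrak{K}\subseteq\BJ(\aa,\b)$. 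As $u,v\in\J(\aa,\b)\subseteq\BJ(\aa,\b)$ as well, we conclude $g\in\BJ(\aa,\b)$, which is what we want.
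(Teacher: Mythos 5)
The paper does not prove Proposition~\ref{patel}: it is quoted from \cite{BHEffectiveLC}~2.1 as a recollection of standard facts from \cite{BK}, so there is no in-paper argument to compare against. Your reconstruction is essentially correct and follows the route any complete proof must take. Granting the structural inputs of \cite{BK}~3.1 and the intertwining theorem \cite{BK}~3.3.2, you deduce (3) from the simple intersection property \eqref{SIP} specialised at $x=1$; (2) from the fact that $\J(\aa,\b)/\J^1(\aa,\b)\simeq\bb^\times/\U^1(\bb)$ is a product of general linear groups over the residue field $\ee$ in their natural characteristic, which therefore has no non-trivial normal $p$-subgroup; (1) from the isomorphism $\BJ(\aa,\b)/\J(\aa,\b)\simeq\ZZ$, which is torsion-free; and (4) from (5) together with the observation that an element $b\in\B^\times$ normalising $\H^1(\aa,\b)$ must normalise $\H^1(\aa,\b)\cap\B^\times=\U^1(\bb)$ and hence $\bb$, so lies in the normaliser of $\bb$ in $\B^\times$. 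Quoting \cite{BK}~3.3.2 for the hard inclusion $I_\G(\t)\subseteq\J^1(\aa,\b)\B^\times\J^1(\aa,\b)$ in (5) is the right call; reproving it would amount to reproducing a substantial part of \cite{BK}.

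One minor slip of the pen: in the paragraph on Property~3 you attach the labels ``first'' and ``second'' to the wrong equalities of \eqref{lapremiereegalite}. The squeeze $\U^1(\bb)\subseteq\J^1(\aa,\b)\cap\B^\times\subseteq\U^1(\aa)\cap\B^\times=\U^1(\bb)$ establishes the \emph{second} displayed equality, $\J^1(\aa,\b)\cap\B^\times=\U^1(\bb)$, and your subsequent decomposition of $x\in\J(\aa,\b)\cap\B^\times$ as $x=uj$ with $u\in\bb^\times$, $j\in\J^1(\aa,\b)$, forcing $j\in\U^1(\bb)$, then gives the \emph{first}, $\J(\aa,\b)\cap\B^\times=\bb^\times$. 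The mathematics is correct; only the internal cross-references are transposed.
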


By \cite{BK} Theorem 3.4.1, 
the quotient $\J^1(\aa,\b)/\H^1(\aa,\b)$ is a finite 
$\kk$-vector space, and the map:
\begin{equation}
\label{SYMPLEC}
(x,y)\mapsto\langle x,y \rangle = \t(xyx^{-1}y^{-1})
\end{equation}
makes it into a non-degenerate symplectic space.
More precisely, if $\HH^1(\aa,\b)$ and $\JJ^1(\aa,\b)$ are the 
sub-$\Oo$-lattices of $\aa$ such that
$\H^1(\aa,\b)=1+\HH^1(\aa,\b)$ and $\J^1(\aa,\b)=1+\JJ^1(\aa,\b)$, 
then we have:
\begin{equation}
\label{SYMPLECGOTHIC}
\langle 1+u,1+v \rangle = \psi\circ\tr(\b(uv-vu))
\end{equation}
for all $u,v\in\JJ^1(\aa,\b)$ (\cite{BHLTL3} Proposition 6.1), 
where $\tr$ denotes the trace map of $\Mat_n(\F)$.

Let $[\aa',\b']$ be another simple stratum in $\Mat_{n'}(\F)$ for some 
$n'\>1$, and suppose that there is an $\F$-algebra isomorphism
$\h:\F[\b]\to\F[\b']$ such that $\h(\b)=\b'$. 
Then there is a canonical~bi\-jec\-tive map:
\begin{equation}
\label{transfermap}
\Cc(\aa,\b) \to \Cc(\aa',\b')
\end{equation}
called the \textit{transfer map} (\cite{BK} Theorem 3.6.14).

When the hereditary order $\bb=\aa\cap\B$ is a maximal order in $\B$, 
we say that the 
simple stratum $[\aa,\b]$ and the simple characters in $\Cc(\aa,\b)$
are \textit{maximal}. 
When this is the case, 
then, given a homo\-mor\-phism of $\E$-algebras $\B\simeq\Mat_m(\E)$ 
identifying $\bb$ with the standard maximal order, 
there are group iso\-mor\-phisms: 
\begin{equation}
\label{JJ1UU1GLmax}
\J(\aa,\b)/\J^1(\aa,\b) \simeq \bb^\times/\U^1(\bb) \simeq \GL_{m}(\ee)
\end{equation}
where $\ee$ is the residue field of $\E$.

\subsection{Types and cuspidal representations} 
\label{pifoulechien}
\label{par44}

Let us write $\G=\GL_n(\F)$ for some $n\>1$.
A family of pairs $(\BJ,\bl)$ called \textit{extended maximal sim\-ple types}, 
made of a compact mod centre, open subgroup $\BJ$ of $\G$
and an ir\-reducible representation $\bl$ of $\BJ$,
has been constructed in \cite{BK} (see also \cite{MSt} 
in the modular case).

Given an extended maximal simple type $(\BJ,\bl)$ in $\G$, 
there are a maximal simple stra\-tum $[\aa,\b]$ in $\Mat_n(\F)$ 
and a maximal simple char\-acter 
$\t\in\Cc(\aa,\b)$ such that $\BJ(\aa,\b)=\BJ$ and $\t$ is 
contained in the res\-tric\-tion of $\bl$ to $\H^1(\aa,\b)$.
Such a simple character is said to be \textit{attached to} $\bl$.
By \cite{BK} Propo\-si\-tion 5.1.1 (or \cite{MSt} Proposition 2.1 in 
the modular case), 
the group $\J^1(\aa,\b)$ carries, up to isomor\-phism, 
a unique irreductible re\-pre\-sen\-ta\-tion $\n$ 
whose restriction to $\H^1(\aa,\b)$ contains $\t$.
It is called the \textit{Heisenberg representation}
associated to $\t$ and has the following properties: 
\begin{enumerate}
\item
the restriction of $\n$ to $\H^1(\aa,\b)$ is made of 
$(\J^1(\aa,\b):\H^1(\aa,\b))^{1/2}$ copies of $\t$,
\item
the representation $\n$ extends to $\BJ$. 
\end{enumerate}
For any representation $\bk$ of $\BJ$ extending $\n$, 
there is, up to isomorphism, a unique irreducible~re\-pre\-sentation $\bt$ of 
$\BJ$ trivial on $\J^1(\aa,\b)$ such that 
$\bl \simeq \bk\otimes\bt$.
Through \eqref{JJ1UU1GLmax}, 
the restriction of $\bt$ to the maximal compact subgroup $\J=\J(\aa,\b)$
identifies with a cuspidal re\-presentation of $\GL_m(\ee)$.

\begin{rema}
\label{handluggage}
The reader familiar with the theory of simple types will have noticed that 
we did not introduce the notion of beta-extension.
Since $\GL_{m}(\ee)$ is not isomorphic to $\GL_2(\FF_2)$ (as $p$ is not $2$),
any character of $\GL_{m}(\ee)$ factors through the determinant.
It follows that, if $[\aa,\b]$ is a maximal simple stratum, 
any representation of $\J$ extending 
$\n$ is a beta-extension.
\end{rema}

We have the following additional property,
which follows from \cite{MSt} Lemme 2.6.

\begin{prop}
\label{suzanne}
Let $\bk$ be a representation of $\BJ$ extending $\n$,
and write $\J^1$ for the maximal normal pro-$p$-subgroup of $\BJ$. 
The map:
\begin{equation*}
\boldsymbol{\xi}\mapsto\bk\otimes\boldsymbol{\xi}
\end{equation*} 
induces a bijection between isomorphism classes of 
irreducible representations $\boldsymbol{\xi}$ of $\BJ$ trivial on~$\J^1$ 
and isomorphism classes of irreducible representations of $\BJ$ whose 
restriction to $\J^1$ contains $\n$.
\end{prop}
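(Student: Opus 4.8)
The plan is to deduce the statement from \cite{MSt} Lemme 2.6, which (in the modular setting) provides exactly the tensor-decomposition machinery for the pair $(\BJ,\bk)$ relative to the normal pro-$p$-subgroup $\J^1$. First I would recall the setup: since $\bk$ restricts to $\J^1$ as (a multiple of) the irreducible Heisenberg representation $\n$, and $\n$ is stable under the conjugation action of $\BJ$ on $\J^1$ (this stability is what makes the extension $\bk$ exist at all, cf.\ the properties of $\n$ listed in Paragraph \ref{par44}), one is precisely in the situation where Clifford theory with a fixed extension applies. The map $\boldsymbol{\xi}\mapsto\bk\otimes\boldsymbol{\xi}$ is well defined on isomorphism classes because $\bk$ is fixed once and for all; I would note that $\bk\otimes\boldsymbol{\xi}$ restricted to $\J^1$ is $\dim(\boldsymbol{\xi})$ copies of $\bk|_{\J^1}$, hence contains $\n$, so the map indeed lands in the asserted target set.

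For injectivity, if $\bk\otimes\boldsymbol{\xi}\simeq\bk\otimes\boldsymbol{\xi}'$ then tensoring with the contragredient $\bk^\vee$ and using that $\bk\otimes\bk^\vee$ contains the trivial character of $\BJ$ with multiplicity one on the $\J^1$-invariants — equivalently, extracting $\boldsymbol{\xi}=\Hom_{\J^1}(\bk,\bk\otimes\boldsymbol{\xi})$ as a representation of $\BJ/\J^1$ — recovers $\boldsymbol{\xi}$ from $\bk\otimes\boldsymbol{\xi}$ functorially. Here one uses that $\bk|_{\J^1}$ is a multiple of the single irreducible $\n$, so $\Hom_{\J^1}(\n,-)$ is an exact functor detecting multiplicities, and that $\BJ$ normalizes $\J^1$ so this Hom-space carries a $\BJ$-action factoring through $\BJ/\J^1$. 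For surjectivity, given an irreducible $\boldsymbol{\rho}$ of $\BJ$ whose restriction to $\J^1$ contains $\n$, set $\boldsymbol{\xi}=\Hom_{\J^1}(\bk,\boldsymbol{\rho})$; this is nonzero, is a representation of $\BJ/\J^1$, and the natural evaluation map $\bk\otimes\boldsymbol{\xi}\to\boldsymbol{\rho}$ is $\BJ$-equivariant and nonzero, hence surjective by irreducibility of $\boldsymbol{\rho}$; a dimension count (using that $\boldsymbol{\rho}|_{\J^1}$ is forced to be $\dim(\boldsymbol{\xi})$ copies of $\n$, as $\n$ is the unique irreducible of $\J^1$ below $\boldsymbol{\rho}$ up to $\BJ$-conjugacy and it is already $\BJ$-stable) shows it is an isomorphism, and irreducibility of $\bk\otimes\boldsymbol{\xi}$ forces $\boldsymbol{\xi}$ to be irreducible.

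The only genuinely delicate point — and the place where I would simply invoke \cite{MSt} Lemme 2.6 rather than reprove it — is that this Clifford-theoretic bijection survives in the modular case, i.e.\ when $\FC$ has positive characteristic $\ell$. In characteristic $0$ the argument above is standard, but over a field of characteristic $\ell$ one must know that $\bk\otimes\bk^\vee$ still behaves well on $\J^1$-invariants and that no nonsplit extensions among the $\boldsymbol{\xi}$'s spoil the bijection on isomorphism classes; since $\J^1$ is a pro-$p$-group and $\ell\neq p$, the representation theory of $\J^1$ over $\FC$ is semisimple, which is exactly what keeps the functor $\Hom_{\J^1}(\n,-)$ exact and the Clifford theory clean. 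This is precisely the content extracted from \cite{MSt} Lemme 2.6, so I would phrase the proof as a short reduction to that lemma, spelling out only that the hypotheses there (namely that $\n$ is an irreducible $\J^1$-representation extending to $\BJ$, with $\bk$ a chosen extension) are met in our situation.
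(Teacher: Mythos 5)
Your proposal is correct and takes essentially the same route as the paper, whose entire proof of Proposition~\ref{suzanne} is the single sentence that it ``follows from \cite{MSt} Lemme 2.6''; you simply unfold the Clifford-theoretic content that the cited lemma packages. The points you highlight — that smooth $\FC$-representations of the pro-$p$-group $\J^1$ are semisimple because $\ell\neq p$, and that the inverse map is $\boldsymbol{\rho}\mapsto\Hom_{\J^1}(\bk,\boldsymbol{\rho})$ viewed as a representation of $\BJ/\J^1$ — are exactly the points on which the cited lemma rests, so this is a faithful elaboration rather than a different argument.
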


We now give the classification of cuspidal irreducible representations of $\G$ 
in terms of extended maximal simple types
(see \cite{BK} 6.2, 8.4 and \cite{MSt} Section 3 in the modular case).

\begin{prop}[\cite{BK,MSt}]
\label{GeorgesSaval}
Let $\pi$ be a cuspidal representation of $\G$. 
\begin{enumerate}
\item
There is an extended maximal simple type $(\BJ,\bl)$ such that $\bl$ occurs as 
a subrepresentation of the res\-triction of $\pi$ to $\BJ$.
It is uniquely deter\-mined up to $\G$-conjugacy.
\item
Compact induction defines a bijection between the $\G$-conjugacy 
classes of extended maximal sim\-ple types 
and the isomorphism classes of cuspidal representations of $\G$.
\end{enumerate}
\end{prop}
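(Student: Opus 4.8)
The plan is to deduce this statement from the general theory of types developed in \cite{BK} (and in \cite{MSt} for the modular case), since the hard content is already packaged there. First I would recall that a cuspidal representation $\pi$ of $\G$ contains some maximal simple character $\t\in\Cc(\aa,\b)$; this is the classification of cuspidal representations by simple characters, and in the modular case it is exactly what \cite{MSt} establishes. Given such a $\t$, one forms the Heisenberg representation $\n$ of $\J^1(\aa,\b)$ and observes that, since $\pi$ is cuspidal, the $\bb$-order $\bb=\aa\cap\B$ must be maximal in $\B$ (otherwise one could exhibit a non-trivial Jacquet module, contradicting cuspidality) — this is the point at which maximality of the stratum is forced. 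Then $\J(\aa,\b)/\J^1(\aa,\b)\simeq\GL_m(\ee)$ by \eqref{JJ1UU1GLmax}, and the $\BJ$-representations lying over $\n$ are, by Proposition \ref{suzanne}, exactly the $\bk\otimes\bt$ with $\bt$ trivial on $\J^1$; cuspidality of $\pi$ translates into cuspidality of the $\GL_m(\ee)$-representation carried by $\bt$. This produces an extended maximal simple type $(\BJ,\bl)$ occurring in $\pi|_\BJ$.

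For the existence of $(\BJ,\bl)$ as a \emph{subrepresentation} of $\pi|_\BJ$, I would use that $\BJ$ is compact mod centre and that the central character of $\pi$ is the one of $\bl$, so that Frobenius reciprocity together with irreducibility of $\operatorname{c-Ind}_\BJ^\G\bl$ gives $\pi\simeq\operatorname{c-Ind}_\BJ^\G\bl$ and hence $\bl\hookrightarrow\pi|_\BJ$. Irreducibility and cuspidality of $\operatorname{c-Ind}_\BJ^\G\bl$ for an extended maximal simple type is \cite{BK} Theorem 6.2.1 (resp. the corresponding statement in \cite{MSt}); I would simply cite it. Conversely, any extended maximal simple type arises this way, so compact induction is surjective onto isomorphism classes of cuspidal representations.

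The uniqueness up to $\G$-conjugacy — both in part (1) and the injectivity half of part (2) — rests on the intertwining computation: two extended maximal simple types $(\BJ,\bl)$ and $(\BJ',\bl')$ with $\operatorname{c-Ind}\bl\simeq\operatorname{c-Ind}\bl'$ must have their underlying simple characters intertwining in $\G$, whence by Proposition \ref{patel}\eqref{bourrin6} (the intertwining set of a simple character being $\J^1\B^\times\J^1$) and the simple intertwining property the characters are $\G$-conjugate; one then upgrades this to $\G$-conjugacy of the full types using that the intertwining of $\bl$ is reduced to $\BJ$ itself. I expect the main obstacle to be purely bibliographical bookkeeping: making sure every ingredient (the classification of cuspidals by simple characters, irreducibility of the compact induction, the intertwining of types, and the exhaustion statement) is cited in its modular form from \cite{MSt} rather than only from \cite{BK}, since over $\FC$ of characteristic $\ell>0$ some of the original arguments need the modifications carried out there. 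No genuinely new argument is required; the proof is a matter of assembling \cite{BK} 6.2, 8.4 and \cite{MSt} Section 3 in the right order.
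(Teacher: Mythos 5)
The paper supplies no proof here: it states the proposition as a citation of \cite{BK} 6.2, 8.4 and \cite{MSt} Section 3, which is exactly where your outline terminates. Your account of how those references combine — occurrence of a simple character in $\pi$, maximality of $\bb$ forced by cuspidality, the Heisenberg representation $\n$, the $\bk\otimes\bt$ decomposition via Proposition~\ref{suzanne}, and the intertwining of types from Proposition~\ref{patel} to get uniqueness up to $\G$-conjugacy — is the standard route and matches the paper's intent, the only minor imprecision being that the Frobenius-reciprocity step implicitly presupposes a non-zero $\G$-map already in hand (obtained from the occurrence of the simple character in $\pi$), which is a matter of ordering rather than substance.
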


From now on, we will abbreviate 
\textit{extended maximal simple type} to 
\textit{type}.

\subsection{Supercuspidal representations}
\label{pullfroid}

Let $\pi$ be a cuspidal representation of $\G$. 
By Proposition \ref{GeorgesSaval}, it contains a type $(\BJ,\bl)$.
Fix an irreducible representation $\bk$ as in Proposition \ref{suzanne}
and let $\bt$ be the corresponding re\-presentation of $\BJ$ 
trivial on its maximal normal pro-$p$-subgroup $\J^1$.

Fix a maximal simple stratum $[\aa,\b]$ such that $\BJ=\BJ(\aa,\b)$.
Write $\E=\F[\b]$ and let $\rho$ be the cus\-pi\-dal represen\-tation of 
$\J/\J^1\simeq\GL_m(\ee)$ induced by $\bt$. 
We record the following fact.

\begin{enonce}{Fact}[\cite{MSc} Proposition 6.10]
\label{cordoba}
The representation $\pi$ is supercuspidal if and only if $\rho$ is 
supercuspidal.
\end{enonce}

Now suppose that $\pi$ is supercuspidal,
thus $\rho$ is also supercuspidal.
We show how to parame\-tri\-ze $\bt$ by an 
``admissible pair of level zero''.
This will be needed in Sections \ref{SECDISTTR0} and \ref{S7}.

First,
let $\ll$ be an extension of degree $m$ of $\ee$,
and identify $\ll^\times$ with a maximal torus in $\GL_{m}(\ee)$.
We have the correspondence \eqref{GreenJames} 
between $\ee$-regular characters of $\ll^\times$ 
and isomorphism classes~of supercuspidal irreducible
represen\-tations of $\GL_m(\ee)$.

\begin{defi}[\cite{Howe,BHETLC1}]
\label{defiL0AP}
An \textit{admissible pair of level zero} over $\E$ is a pair $(\K/\E,\xi)$ 
made of a 
fi\-ni\-te unramified extension $\K$ of $\E$ and a tamely ramified character 
$\xi:\K^\times\to\FC^\times$ which does not 
fac\-tor through $\N_{\K/\L}$ for 
any field $\L$ such that $\E\subseteq\L\subsetneq\K$.
Its \textit{degree} is $[\K:\E]$. 
\end{defi}

If $(\K'/\E,\xi')$ is another admissible pair of level zero over $\E$, 
it is said to be \textit{isomorphic} to $(\K/\E,\xi)$ 
if there is an isomorphism of $\E$-algebras $\h:\K'\to\K$ such that 
$\xi'=\xi\circ\h$.

Recall (see \S\ref{genitrix41}) that, 
if we write $\B^\times$ for the centralizer of $\E$ in $\G$, 
then $\BJ=(\BJ\cap\B^\times)\J^1$,
thus the group $\BJ/\J^1$ is isomorphic to $(\BJ\cap\B^\times)/(\J^1\cap\B^\times)$.
In particular, the image of $\E^\times$ in $\BJ/\J^1$ is central. 
Since $\bt$ is trivial on $\J^1$, the automorphism $\bt(x)$ is thus a scalar 
for all $x\in\E^\times$. 

\begin{defi}
\label{defiL0AP1}
An admissible pair $(\K/\E,\xi)$ of level zero and degree $m$ 
is \textit{attached to} $\bt$ if:
\begin{enumerate}
\item 
writing $\ll$ for the residue field of $\K$,
the $\ee$-regular 
character of $\ll^\times$ induced by the restriction of 
$\xi$ to the units of the ring of inte\-gers of $\K$ cor\-responds to $\rho$ 
via \eqref{GreenJames}, 
\item
one has $\bt(x)=\xi(x)\cdot{\rm id}$ for all $x\in\E^\times$,
where ${\rm id}$ is the identity on the space of $\bt$.
\end{enumerate}
\end{defi}

The following proposition 
is a refinement of the property of the map \eqref{GreenJames}.

\begin{prop}
The attachment relation defines a bijection:
\begin{equation}
\label{aspirantdunand}
(\K/\E,\xi) \mapsto \bt(\K/\E,\xi)
\end{equation}
between isomorphism classes of admissible pairs of level zero over $\E$
and isomorphism classes~of irre\-ducible representations of $\BJ$ trivial 
on $\J^1$ whose restriction to $\J$ defines a supercuspidal~repre\-senta\-tion 
of $\GL_m(\ee)$ through \eqref{JJ1UU1GLmax}.
\end{prop}

\begin{rema}
\label{claudineremadm}
As in Remark \ref{claudinerem},
let us fix an embedding $\iota:\FC\to\FC'$ 
of algebraically~closed fields of characteristic $\ell$,
and let $(\K/\E,\xi)$ be an admissible pair of level zero over $\E$
such that $\xi$ takes values in $\FC$.
Then $(\K/\E,\iota\circ\xi)$ is an admissible pair of level zero over $\E$, 
and:
\begin{equation*}
\bt(\K/\E,\iota\circ\xi)=\bt(\K/\E,\xi)\otimes\FC'.
\end{equation*}
This refines the last assertion of Remark \ref{claudinerem}.
\end{rema}

\subsection{The $\s$-selfdual type theorem}
\label{S2}

Let us fix an integer $n\>1$ and write $\G=\GL_n(\F)$.
We recall the first main result of \cite{AKMSS}.

\begin{theo}[\cite{AKMSS} {Theorem 4.1}]
\label{PIMAIN}
Let $\pi$ be a cuspidal representation of $\G$. 
It is $\s$-selfdual if and only if it contains a type $(\BJ,\bl)$ 
such that $\BJ$ is $\s$-stable and $\bl^\s\simeq\bl^\vee$.
\end{theo}

\begin{rema}
\label{StandardStableType}
More precisely (see \cite{AKMSS} {Corollary 4.21}),
any $\s$-selfdual cuspidal representation contain a $\s$-selfdual type 
$(\BJ,\bl)$ with the additional property that 
$\BJ=\BJ(\aa,\b)$ for some maximal simple 
stratum $[\aa,\b]$ in $\Mat_n(\F)$ such that:
\begin{enumerate}
\item
the hereditary order $\aa$ is $\s$-stable and $\s(\b)=-\b$;
\item
the element $\b$ has the block diagonal form:
\begin{equation*}
\b=
\begin{pmatrix}
\b_0&&\\
&\ddots&\\
&&\b_0\\
\end{pmatrix}
=\b_0\otimes1\in\Mat_d(\F)\otimes_\F\Mat_m(\F)=\Mat_n(\F)
\end{equation*}
for some $\b_0\in\Mat_d(\F)$, where $d$ is the degree of $\b$ over $\F$
and $n=md$;
the centralizer $\B$ of $\E=\F[\b]$ in 
$\Mat_n(\F)$ thus identifies with $\Mat_m(\E)$,
equipped with the invo\-lu\-tion $\s$ acting componentwise;
\item
the order $\bb=\aa\cap\B$ is the standard maximal order of $\Mat_m(\E)$.
\end{enumerate}
Such a type will be useful in the discussion following Proposition 
\ref{classesdetypesstables7}.
\end{rema}

\begin{rema}
\label{StandardStableType2}
If $(\BJ,\bl)$ is any $\s$-selfdual type, 
then there is a maximal simple stratum $[\aa,\b]$ in $\Mat_n(\F)$ such that 
$\BJ=\BJ(\aa,\b)$, the order $\aa$ is $\s$-stable and $\s(\b)=-\b$
(see \cite{AKMSS} {Corollary 4.24}).
Such a maximal simple stratum will be said to be \textit{$\s$-selfdual}.
\end{rema}

\begin{rema}
Let $\pi$ be a $\s$-selfdual cuspidal representation of $\G$.
Let $(\BJ,\bl)$ be a $\s$-selfdual type in $\pi$, 
let $[\aa,\b]$ be a simple stratum such that $\BJ=\BJ(\aa,\b)$
and let $\t\in\Cc(\aa,\b)$ be the maximal sim\-ple character 
attached to $\bl$.
Then $\H^1(\aa,\b)$ is $\s$-stable and $\t\circ\s=\t^{-1}$.
\end{rema}

Let $\pi$ be a $\s$-selfdual cuspidal representation of $\G$.
Let $(\BJ,\bl)$ be a $\s$-selfdual type in $\pi$
and~fix a $\s$-selfdual simple stratum $[\aa,\b]$ as in Remark 
\ref{StandardStableType2}.
Then $\E=\F[\b]$ is $\s$-stable.
We deno\-te by $\E_0$ the field~of $\s$-fixed
points in $\E$, 
by $\T$ the maximal tamely ramified sub-ex\-ten\-sion of $\E$ over $\F$
and by $\T_0$ the intersection $\T\cap\E_0$.
Also write $d=[\E:\F]$ and $n=md$.

\begin{prop}[\cite{AKMSS} {Proposition 4.30}]
\label{TT0canonique}
The integer: 
\begin{equation}
\label{poolee}
m(\pi)=m=n/d
\end{equation} 
and the $\F_0$-isomorphism class of the quadratic extension $\T/\T_0$ only 
depend on $\pi$, and not on the choice of the $\s$-selfdual 
simple stratum $[\aa,\b]$ as in 
Remark \ref{StandardStableType2}. 
\end{prop}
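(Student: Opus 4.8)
The plan is to show that the two invariants $m$ and $\T/\T_0$ are intrinsic to $\pi$ by comparing any two $\s$-selfdual types in $\pi$ of the special form described in Remark \ref{StandardStableType2}, and reducing to the known (non-equivariant) uniqueness statements already present in the theory of simple types. First I would recall that, by Proposition \ref{GeorgesSaval}, all types $(\BJ,\bl)$ contained in $\pi$ are conjugate under $\G=\GL_n(\F)$; hence the pair $(\BJ(\aa,\b),\t)$ attached to $\bl$ is determined up to $\G$-conjugacy, and in particular the simple stratum $[\aa,\b]$ is determined up to $\G$-conjugacy and the endo-equivalence class of the simple character $\t$. The integer $m=n/d$, where $d=[\F[\b]:\F]$, depends only on the $\G$-conjugacy class of $[\aa,\b]$, because the degree of $\b$ over $\F$ and the identification $\bb=\aa\cap\B$ with the standard maximal order in $\Mat_m(\E)$ are preserved by conjugation. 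So the first equality $m(\pi)=n/d$ defines an invariant without any reference to $\s$; this is the easy half.

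The harder half is the $\F_0$-isomorphism class of $\T/\T_0$. Here I would start from two $\s$-selfdual types $(\BJ,\bl)$ and $(\BJ',\bl')$ in $\pi$, attached to $\s$-selfdual simple strata $[\aa,\b]$ and $[\aa',\b']$ as in Remark \ref{StandardStableType2}. By the $\G$-conjugacy of types there is $g\in\G$ with $(\BJ',\bl')=(\BJ^g,\bl^g)$; moreover one can arrange (using that the simple characters $\t$ and $\t'$ are in the same endo-class, via the transfer map \eqref{transfermap}) that $g$ conjugates $[\aa,\b]$ to a simple stratum sharing the same field extension data as $[\aa',\b']$, so that there is an $\F$-algebra isomorphism $\E=\F[\b]\to\F[\b']=\E'$ carrying $\b$ to $\b'$. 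Such an isomorphism automatically sends the maximal tamely ramified sub-extension $\T$ to $\T'$. The key point is then to promote this to an $\F_0$-equivariant statement: both $\s$ and the conjugation by $g$ act on the situation, and because $(\BJ,\bl)$ and $(\BJ',\bl')$ are both $\s$-selfdual with $\s(\aa)=\aa$, $\s(\b)=-\b$ (and likewise with primes), the element $c=\s(g)g^{-1}$ normalizes the relevant structures. I would use the simple intersection property \eqref{SIP} together with Proposition \ref{patel}(4)–(5) to show that $c$ lies in $\BJ(\aa,\b)$, equivalently in $\J^1(\aa,\b)\B^\times\J^1(\aa,\b)$; then a cohomological argument — trivially of the first cohomology of $\Gal(\F/\F_0)$ with values in the pro-$p$-group $\J^1(\aa,\b)$, which is where the hypothesis $p\neq2$ enters — lets me modify $g$ by an element of $\BJ(\aa,\b)$ so that the new $g$ is $\s$-equivariant, i.e. $\s(g)=g$. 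With $g\in\GL_n(\F_0)$ (after this modification) conjugating $[\aa,\b]$ to $[\aa',\b']$ compatibly with the involutions, the induced $\F$-isomorphism $\E\to\E'$ now commutes with $\s$, hence restricts to an $\F_0$-isomorphism $\E_0\to\E_0'$ and carries $\T/\T_0$ to $\T'/\T_0'$.

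The main obstacle I expect is exactly this $\s$-equivariance upgrade: showing that the conjugating element $g$ intertwining the two $\s$-selfdual types can be chosen in $\G^\s=\GL_n(\F_0)$ up to an element of the compact-mod-centre group $\BJ$. This is a cocycle-correction argument, and it requires care in identifying the correct pro-$p$ subgroup on which $H^1(\Gal(\F/\F_0),-)$ vanishes and in checking that the double-coset description of the intertwining set of $\t$ interacts well with the involution. I would expect to lean heavily on the machinery already developed in \cite{AKMSS} (this is essentially their Corollary 4.21 and its neighbourhood), so in practice the proof reduces to citing that one has a $\s$-equivariant comparison of the underlying strata and then observing, as above, that $m$ and $\T/\T_0$ are read off from data preserved by such a comparison. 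The remaining verifications — that the maximal tamely ramified sub-extension is functorial under $\F$-algebra isomorphisms, and that $\T_0=\T\cap\E_0$ transports correctly — are routine.
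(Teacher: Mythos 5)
Your first half (the invariance of $m$) is correct and straightforward. The second half has a genuine gap at the cocycle-correction step, and the intermediate claim it rests on is in fact false.

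You assert that $c=\s(g)g^{-1}$ lies in $\BJ(\aa,\b)$ (true, since $c$ must intertwine $\bl$ and the intertwining of a maximal simple type is $\BJ$), but then describe $\BJ(\aa,\b)$ as ``equivalently $\J^1(\aa,\b)\B^\times\J^1(\aa,\b)$''. These are not equivalent: $\BJ$ is the normalizer of $\t$ (Proposition~\ref{patel}(4)) while $\J^1\B^\times\J^1$ is the strictly larger intertwining set of $\t$ (Proposition~\ref{patel}(5)). More seriously, the cohomological correction you invoke --- vanishing of $\H^1(\Gal(\F/\F_0),\J^1(\aa,\b))$ --- does not apply, because $c$ lies in $\BJ$ and there is no reason for it to lie in the pro-$p$-group $\J^1$. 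Indeed the conclusion you draw, that $g$ may be modified by an element of $\BJ$ so that $\s(g)=g$, would force every pair of $\s$-selfdual types in $\pi$ to be $\G^\s$-conjugate, which contradicts Proposition~\ref{classesdetypesstables7}(2): when $\T/\T_0$ is ramified and $m\geqslant 2$ there are $\lfloor m/2\rfloor+1$ distinct $\G^\s$-conjugacy classes. Concretely, for $g=t_i=\mathrm{diag}(t,\dots,t,1,\dots,1)\in\B^\times$ with $\s(t)=-t$, one gets $c=\s(t_i)t_i^{-1}=\mathrm{diag}(-1,\dots,-1,1,\dots,1)$, which lies in $\J$ but reduces to a non-central involution in $\J/\J^1\simeq\GL_m(\ee)$ and is therefore not a coboundary in $\BJ$.

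What actually saves the proposition is weaker than $\s$-equivariance of $g$: it suffices that the obstruction $c$ centralize the tame field, and the nontrivial cohomology classes obstructing $\G^\s$-conjugacy all live inside $\B^\times$, the centralizer of $\E$, so they act trivially on $\E$, on $\T$, and hence on the $\F_0$-isomorphism class of $\T/\T_0$. The paper itself does not prove this proposition --- it cites \cite{AKMSS}, Proposition 4.30 --- but any correct local argument must make this distinction rather than try to force $\s(g)=g$. As written, your proof proves too much and therefore cannot be right.
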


The integer $m$ defined by \eqref{poolee} it called the 
\textit{relative degree} of $\pi$.
We record a list of pro\-per\-ties of the field extension $\T/\F$.

\begin{lemm}
\label{MaraDesBois}
\begin{enumerate}
\item
The canonical homomorphism of $\T_0\otimes_{\F_0}\F$-modules:
\begin{equation*}
\T_0\otimes_{\F_0}\F\to\T
\end{equation*}
is an isomorphism.
\item
If $\F/\F_0$ is unramified, then $\T/\T_0$ is unramified 
and $\T/\F$ has odd residual degree.
\item
The extension $\T/\T_0$ is ramified if and only if $\F/\F_0$ 
is ramified and $\T_0/\F_0$ has odd ramifi\-cation order.
\end{enumerate}
\end{lemm}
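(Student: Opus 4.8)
The plan is to prove Lemma~\ref{MaraDesBois} by exploiting the key structural input from Remark~\ref{StandardStableType}, namely that $\b=\b_0\otimes 1$ with $\s(\b_0)=-\b_0$, so that $\E=\F[\b]\simeq\F[\b_0]$ carries the involution $\s$ and $\E_0=\E^\s$ is the fixed field. The tame parameter field $\T$ is the maximal tamely ramified subextension of $\E/\F$; since $\s$ is an $\F_0$-automorphism of $\E$ stabilizing $\F$, it stabilizes $\T$ (tameness being an intrinsic property), and $\T_0=\T^\s=\T\cap\E_0$. So all three assertions reduce to facts about the quadratic extension $\T/\T_0$ sitting over $\F/\F_0$, with $\T/\F$ tamely ramified.

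For part (1), I would observe that $\T/\T_0$ is a separable quadratic extension (as $p\neq2$), so the multiplication map $\T_0\otimes_{\F_0}\F\to\T$ is a map of $\F$-algebras between spaces which will have the same $\F_0$-dimension once I show $[\T_0:\F_0]=[\T:\F]$. The point is that $\F\not\subseteq\T_0$ (since $\s$ acts nontrivially on $\F$, hence on $\T$), so $\T=\T_0\cdot\F=\T_0\otimes_{\T_0\cap\F}\F$ wait — one must check $\T_0\cap\F=\F_0$, which is immediate since $\T_0\cap\F$ is a subfield of $\F$ fixed by $\s$. Thus $\T_0$ and $\F$ are linearly disjoint over $\F_0$ inside $\T$ (as $\T/\F$ is separable and $[\F:\F_0]=2$ is prime, so $\F\cap\T_0=\F_0$ forces disjointness), giving the isomorphism; equivalently one notes the map is surjective with source and target of equal $\F_0$-dimension $2[\T_0:\F_0]=[\T:\F_0]$.

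For parts (2) and (3), the strategy is a ramification-index and residue-degree bookkeeping using that $\T/\F$ is tame. Write $e=e(\T/\F)$, $f=f(\T/\F)$, so $ef=[\T:\F]$. From part (1), $[\T:\F]=[\T_0:\F_0]=e(\T_0/\F_0)f(\T_0/\F_0)$. Since $\T/\T_0$ is quadratic, $e(\T/\F)=e(\T/\T_0)e(\T_0/\F_0)/e(\F/\F_0)$ and similarly for residue degrees, via the tower $\F_0\subseteq\F\subseteq\T$ versus $\F_0\subseteq\T_0\subseteq\T$. The crucial extra ingredient is the observation — which I expect to be the main obstacle — that because $\b_0$ is a \emph{skew} element ($\s(\b_0)=-\b_0$), the extension $\E/\E_0$, hence $\T/\T_0$, behaves like a "quadratic extension generated by a skew element": concretely, if $\T/\T_0$ were unramified with $\F/\F_0$ ramified, a uniformizer argument on the skew element $\b_0$ would produce a contradiction with $\s(\b_0)=-\b_0$ and the residual characteristic being odd. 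I would make this precise by analysing the uniformizer: if $\varpi$ is a uniformizer of $\T$ that is $\s$-skew (which one can arrange when $\T/\T_0$ is ramified, using $p\neq2$ to split the norm-one torus cohomology — this is exactly the "$H^1$ of $\Gal(\F/\F_0)$ in a pro-$p$-group is trivial" remark invoked in the introduction), then $\varpi^{e(\T/\T_0)}$ generates the maximal ideal of the ring of integers of $\T_0$, forcing $e(\T/\T_0)$ to interact with $e(\F/\F_0)$; tracking signs of $\s$ on uniformizers versus the tame residue extension (where $\s$ acts on $\kk$ through $\Gal(\kk/\kk_0)$) yields: $\T/\T_0$ is unramified exactly when $\F/\F_0$ is unramified or $\T_0/\F_0$ has even ramification order. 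Combined with part (2)'s claim that $\F/\F_0$ unramified forces $\T/\T_0$ unramified and $f(\T/\F)$ odd — the oddness coming from the fact that a skew generator $\b_0$ of an unramified $\E/\E_0$ over unramified $\F/\F_0$ must have residue class generating $\ee$ over $\ee_0$ with $[\ee:\ee_0]$ odd, since $\s$ acts on $\ee$ with fixed field $\ee_0$ and $-\b_0\equiv\s(\b_0)$ rules out even degree (a Frobenius-order parity argument: if $[\ee:\ee_0]$ were even, the middle unramified extension would be stable, contradicting regularity of the generator) — this establishes both remaining parts.

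One caveat to flag: the cleanest route for (2) and (3) may actually be to cite the corresponding statements from \cite{AKMSS} about the structure of $\s$-selfdual simple strata rather than re-deriving the skew-uniformizer analysis, since Remark~\ref{StandardStableType} already packages that work; in that case the proof becomes a short deduction. The genuinely new content here is only the linear-disjointness isomorphism of part (1) plus the translation of the ramification statements through the tower $\F_0\subseteq\F\subseteq\T$. I expect the author's proof to be correspondingly brief, leaning on \cite{AKMSS} Proposition~4.30 and its proof.
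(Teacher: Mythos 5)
Your part (1) is a correct, self-contained proof by linear disjointness (using $\T_0\cap\F=\F_0$, Galois-ness of $\F/\F_0$, and equality of $\F_0$-dimensions), whereas the paper simply cites \cite{AKMSS} Lemma 4.10; so for (1) you have done \emph{more} than the paper and it checks out.

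Parts (2) and (3) are where your sketch breaks down. Your outline for (2) hinges on a claim that the residue class of the skew element $\b_0$ generates ``$\ee$ over $\ee_0$ with $[\ee:\ee_0]$ odd,'' but this is confused on several points: $\b_0$ is generally not a unit, so it has no residue class to speak of; if $\ee$ and $\ee_0$ are the residue fields of $\T$ and $\T_0$, then $[\ee:\ee_0]=f(\T/\T_0)\in\{1,2\}$, so calling it odd would force $\T/\T_0$ ramified, which is the \emph{opposite} of what (2) asserts; and there is no ``regularity of the generator'' in play here -- the $\b$ of a simple stratum is not assumed regular in any sense relevant to residue-field generation. The correct odd-degree statement in the unramified case is $f(\T_0/\F_0)$ odd, and the clean argument (the one the paper uses) goes through the observation that $\F$ does not embed in $\T_0$ as an $\F_0$-algebra: if $f(\T_0/\F_0)$ were even then $\T_0$ would contain the unramified quadratic extension of $\F_0$, namely $\F$, contradicting $\T_0=\T^\s$. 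Feeding that into $f(\T/\F)\,f(\F/\F_0)=f(\T/\T_0)\,f(\T_0/\F_0)$ forces $f(\T/\T_0)=2$ and $f(\T/\F)$ odd. Your skewness-of-$\b_0$ route is a red herring for (2).

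For (3) you assume $\T/\T_0$ ramified in order to produce a skew uniformizer and then say this ``yields'' the parity dichotomy by ``tracking signs'' -- but that is assuming the conclusion and does not actually establish the ramified/unramified dichotomy in terms of $e(\T_0/\F_0)$. The paper instead starts from $\F/\F_0$ ramified, picks $\varpi$ with $\s(\varpi)=-\varpi$ and $\varpi_0=\varpi^2$ a uniformizer of $\F_0$, writes $\varpi_0=t_0^{e_0}\zeta_0$ for a uniformizer $t_0$ of $\T_0$ and a root of unity $\zeta_0\in\F_0^\times$ of order prime to $p$ (using tameness of $\T/\F$), sets $x=\varpi t_0^{-a}$ with $a=\lfloor e_0/2\rfloor$, notes $\s(x)=-x$ hence $x\notin\T_0$ and $x^2\in\T_0$, and then reads off from the parity of $e_0$ whether $x$ is a uniformizer of $\T$ (so $\T/\T_0$ ramified, $e_0$ odd) or a nontrivial tame unit (so $\T/\T_0$ unramified, $e_0$ even). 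You would need to produce this explicit computation, not gesture at it. Finally, your closing hedge -- that (2) and (3) might follow by citing \cite{AKMSS} -- is misplaced: the paper cites \cite{AKMSS} only for (1) and says explicitly ``We now prove (2) and (3),'' precisely because those statements are not packaged elsewhere.
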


\begin{proof}
Assertion (1) is \cite{AKMSS} {Lemma 4.10}.
We now prove (2) and (3).

First, suppose that $\F/\F_0$ is unramified.
We remark that:
\begin{equation*}
f(\T/\F)\cdot f(\F/\F_0)=f(\T/\T_0)\cdot f(\T_0/\F_0)
\end{equation*}
is even.
Since $\F$ does not embed in $\T_0$ as an $\F_0$-algebra, 
$\T_0$ has odd residue degree over $\F_0$. It follows that $f(\T/\T_0)=2$ and that 
$\T$ has odd residue degree over $\F$.

Suppose $\F/\F_0$ is ramified, and let $\varpi$ be a uniformizer of $\F$ 
such that $\varpi_0=\varpi^2$ is a uni\-formi\-zer of $\F_0$.
Let $e_0$ be the ramification order of $\T_0/\F_0$,
and let $t_0$ be a uni\-for\-mi\-zer of $\T_0$ such that:
\begin{equation*}
\varpi_0^{}=t_0^{e_0}\zeta_0^{}
\end{equation*}
for some root of unity $\zeta_0^{}\in\F_0^\times$ of order prime to $p$.
Let $a$ be the greatest integer smaller than or equal to $e_0/2$,
and write $x=\varpi t_0^{-a}$.
We have $\s(x)=-x$, thus $x\notin\T_0$ and 
$x^2\in\T_0$.

If $e_0$ is odd, 
then $x^2=\zeta_0 t_0$ is a uniformi\-zer of $\T_0$, 
whereas $x$ is a uniformizer of $\T$, 
thus $\T$ is ramified over $\T_0$. 

If $e_0$ is even, then $x^2=\zeta_0$.
It follows that $x$ is a root of unity of order prime to $p$ which is 
in $\T$ but not in $\T_0$, thus $\T$ is unramified over $\T_0$.
\end{proof}

\begin{rema}
\label{Turron}
\begin{enumerate}
\item
The extensions $\E/\E_0$ and $\T/\T_0$ have the same ramification order. 
\item
The extension $\E/\E_0$ is ramified if and only if $\F/\F_0$ 
is ramified and $\E_0/\F_0$ has odd ramifica\-tion order.
\end{enumerate}
The first property comes from \cite{AKMSS} {Remark 4.22}, 
and the second one follows from the first one together with Lemma \ref{MaraDesBois}.
\end{rema}

We now recall the classification of all $\s$-selfdual types contained in a 
given $\s$-selfdual cuspidal representation of $\G$
(see \cite{AKMSS} {Proposition 4.31}).

\begin{prop}
\label{classesdetypesstables7}
Let $\pi$ be a $\s$-selfdual cuspidal representation of $\G$,
and let $\T/\T_0$ denote the quadra\-tic extension associated to it.
\begin{enumerate}
\item
If $\T$ is unramified over $\T_0$, 
the $\s$-selfdual types contained in $\pi$ 
form a single $\G^\s$-conjugacy class.
\item
If $\T$ is ramified over $\T_0$, 
the $\s$-selfdual types contained in $\pi$ 
form exactly $\lfloor m/2\rfloor+1$ different $\G^\s$-conjugacy classes.
\end{enumerate}
\end{prop}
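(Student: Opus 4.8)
The statement counts the $\G^\s$-conjugacy classes of $\s$-selfdual types inside a fixed $\s$-selfdual cuspidal $\pi$, and the natural route is to transfer the problem from types to \emph{simple strata} and then to a purely combinatorial count of lattice chains with symmetry. The plan is to fix one $\s$-selfdual type $(\BJ,\bl)$, sitting over a $\s$-selfdual maximal simple stratum $[\aa,\b]$ as in Remark \ref{StandardStableType2}, and to compare an arbitrary $\s$-selfdual type $(\BJ^g,\bl^g)$ with it. By Proposition \ref{GeorgesSaval} there is $g\in\G$ with $(\BJ^g,\bl^g)$ conjugate to $(\BJ,\bl)$; one first checks that the associated maximal simple characters are $\G$-conjugate and, using the $\s$-selfduality of $\t$ together with the description of the normalizer and intertwining (Proposition \ref{patel}, items (4) and (\ref{bourrin6})), that $g$ can be chosen so that $[\aa^g,\b^g]$ is again a $\s$-selfdual simple stratum sharing the \emph{same} field $\E$ and element $\b$ as $[\aa,\b]$. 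This reduces everything to understanding, inside the centralizer $\B^\times\simeq\GL_m(\E)$, the $\s$-selfdual maximal hereditary orders $\bb^g$ up to $\B^\times\cap\G^\s=\GL_m(\E_0)$-conjugacy; the passage from $\G$ to $\B^\times$ uses the simple intersection property \eqref{SIP} and the fact, ultimately resting on $H^1(\Gal(\F/\F_0),U^1)=1$ for pro-$p$-groups, that a $\s$-fixed coset can be represented by a $\s$-fixed element.

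\emph{The combinatorial heart.} Once inside $\GL_m(\E)$ with its componentwise involution, a $\s$-selfdual maximal hereditary order corresponds to a self-dual (up to the involution) vertex in the building, equivalently to an $\E_0$-decomposition of $\E^m$ into a sum of lines together with the datum of how $\s$ pairs them. When $\E/\E_0$ is \emph{unramified}, $\E^m$ carries (up to scaling) a unique $\E/\E_0$-Hermitian structure up to isometry and the standard maximal order is the unique $\s$-stable one up to $\GL_m(\E_0)$-conjugacy; so there is exactly one class. When $\E/\E_0$ is \emph{ramified}, a $\s$-stable maximal order is determined by how many of the $m$ coordinate lines are individually $\s$-stable versus swapped in pairs: if $2i$ lines are grouped into $i$ swapped pairs and $m-2i$ are $\s$-fixed, one gets $\lfloor m/2\rfloor+1$ orbits indexed by $i\in\{0,\dots,\lfloor m/2\rfloor\}$, and these are pairwise non-conjugate under $\GL_m(\E_0)$ because $i$ is an invariant (e.g.\ it is read off from the $\E_0$-rank of the $\s$-fixed points of the order, or from the Witt-type invariant of the associated form). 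This $i$ is precisely the \emph{index} referred to in the surrounding text. Here one uses Remark \ref{Turron}, which says $\E/\E_0$ and $\T/\T_0$ have the same ramification behaviour, to phrase the dichotomy in terms of $\T/\T_0$ as in the statement.

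\emph{From strata back to types, and the count.} Finally one must lift the classification of $\s$-selfdual strata to a classification of $\s$-selfdual \emph{types}: given a fixed $\s$-selfdual stratum, the types above it are $\bl=\bk\otimes\bt$, and one shows that within a fixed $\G^\s$-conjugacy class of strata all the resulting $\s$-selfdual types are $\G^\s$-conjugate — this is where one invokes that $\n$ extends to $\BJ$, that any two extensions differ by a character of $\BJ/\J^1$ which factors through the determinant (Remark \ref{handluggage}), and a cohomological argument to adjust $\bk$ and $\bt$ equivariantly. So the number of $\G^\s$-classes of $\s$-selfdual types equals the number of $\G^\s$-classes of $\s$-selfdual maximal simple strata with the given $\E,\b$, namely $1$ or $\lfloor m/2\rfloor+1$. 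The main obstacle I expect is not the combinatorial count but the reduction step: pinning down that an arbitrary $\s$-selfdual type can be conjugated, by an element of $\G$ one controls, to share the same $[\aa,\b]$-data up to the centralizer, and that the $\s$-equivariant choices (of $g$, of the stratum, of $\bk$) can all be made simultaneously — each of these requires the vanishing of a first cohomology set of $\Gal(\F/\F_0)$ with values in an appropriate pro-$p$-group, and keeping track of exactly which group at each stage is the delicate bookkeeping.
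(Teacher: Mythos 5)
The paper does not prove this proposition; it recalls it with a citation to \cite{AKMSS} Proposition 4.31, so there is no internal proof to compare your attempt with. Your overall architecture --- fix a $\s$-selfdual type over a $\s$-selfdual maximal simple stratum, push the problem through the centraliser $\B^\times\simeq\GL_m(\E)$, classify $\s$-stable maximal parahorics up to $(\B^\times\cap\G^\s)$-conjugacy, then lift back to types --- is the natural one and matches the general shape of the argument in \cite{AKMSS}, and your final count is correct.

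The combinatorial heart, though, is misdescribed in both cases. The involution $\s$ on $\B\simeq\Mat_m(\E)$ acts \emph{componentwise} by the Galois automorphism of $\E/\E_0$; its fixed points form $\GL_m(\E_0)$, not a unitary group, and no Hermitian form enters. So the appeal in the unramified case to ``a unique $\E/\E_0$-Hermitian structure up to isometry'' is a category error; the correct mechanism is a Hilbert-90 / eigenspace argument: any $\s$-stable $\Oo_\E$-lattice $L$ decomposes as $L^+\oplus\d L^+$ with $\d$ a unit such that $\s(\d)=-\d$ and $L^+$ an $\Oo_{\E_0}$-lattice in $\E_0^m$, so $L=\Oo_\E\otimes_{\Oo_{\E_0}}L^+$ is $\GL_m(\E_0)$-conjugate to the standard lattice. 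In the ramified case, the coordinate lines of $\E^m$ are each individually $\s$-stable (again because $\s$ acts componentwise), so nothing is ``swapped in pairs''; the actual invariant is a lattice-valuation type: writing $L=L^+\oplus tM$ with $L^+,M$ lattices in $\E_0^m$ and $\varpi_0 M\subseteq L^+\subseteq M$, the class of $L$ is measured by $i=\dim_{\kk_0}(M/L^+)\in\{0,\dots,m\}$, and rescaling by $t$ identifies $i$ with $m-i$, leaving $\lfloor m/2\rfloor+1$ classes; these are exactly the $t_i=\mathrm{diag}(t,\dots,t,1,\dots,1)$ translates described after the proposition, and the index is read off from the Levi $(\GL_i\times\GL_{m-i})(\ee)$ obtained as $\s$-fixed points of the reductive quotient, not from a pairing of lines. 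Finally, the reduction from $\G^\s$- to $(\B^\times\cap\G^\s)$-conjugacy deserves more care than you allot it: the condition for $(\BJ^g,\bl^g)$ to be $\s$-selfdual is $\s(g)g^{-1}\in\BJ$, which is strictly stronger than $\s(g)g^{-1}$ intertwining $\t$, and pushing the double coset $\BJ g\G^\s$ down to $\B^\times$ requires the simple intersection property \eqref{SIP} together with the pro-$p$ cohomological vanishing the paper exploits in the closely analogous Lemma \ref{FL}.
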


One can give a more precise description in the ramified case. 
Suppose that $\T$ is ramified over $\T_0$, 
and let $(\BJ_0,\bl_0)$ be a $\s$-selfdual type in $\pi$ satisfying 
the conditions of Remark 
\ref{StandardStableType}. 
Let us~fix a uniformizer $t$ of $\E$.
For $i=0,\dots,\lfloor m/2\rfloor$, 
let $t_i$ denote the diagonal matrix:
\begin{equation*}
{\rm diag}(t,\dots,t,1,\dots,1)\in\B^\times=\GL_m(\E)
\end{equation*}
where $t$ occurs $i$ times.
Then the pairs $(\BJ_i,\bl_i)=(\BJ_0^{t_i},\bl_0^{t_i})$, for
$i=0,\dots,\lfloor m/2\rfloor$,
form a set~of~re\-pre\-senta\-ti\-ves of the $\G^\s$-conjugacy classes 
of $\s$-self\-dual types in $\pi$.

\begin{defi}
\label{defindex}
The integer $i$ is called the \textit{index} of the $\G^\s$-conjugacy class 
of $(\BJ_i,\bl_i)$.
It does not depend on the choice of $(\BJ_0,\bl_0)$, nor on that of $t$.
\end{defi}

Let $[\aa,\b]$ be a simple stratum as in Remark 
\ref{StandardStableType} such that $\BJ_0=\BJ(\aa,\b)$.
If one identifies the quotient $\J(\aa,\b)^{t_i}/\J^1(\aa,\b)^{t_i}$
with $\GL_m(\ee)$ via:
\begin{equation*}
\J(\aa,\b)^{t_i}/\J^1(\aa,\b)^{t_i} 
\simeq
\J(\aa,\b)/\J^1(\aa,\b)
\simeq
\U(\bb)/\U^1(\bb)
\simeq
\GL_m(\ee)
\end{equation*}
then $\s$ acts on $\GL_m(\ee)$ by conjugacy by the diagonal element:
\begin{equation*}
\d_i = {\rm diag}(-1,\dots,-1,1,\dots,1)\in\GL_m(\ee)
\end{equation*}
where $-1$ occurs $i$ times,
and $(\J(\aa,\b)^{t_i}\cap\G^\s)/(\J^1(\aa,\b)^{t_i}\cap\G^\s)$ 
identifies with the Levi subgroup $(\GL_i\times\GL_{m-i})(\ee)$ of 
$\GL_m(\ee)$.

\subsection{Admissible pairs and $\s$-selfduality}
\label{P45}

Let $(\BJ,\bl)$ be a $\s$-selfdual type in $\G$.
Fix a $\s$-selfdual 
maximal simple stratum $[\aa,\b]$ such that $\BJ=\BJ(\aa,\b)$ 
as in Remark \ref{StandardStableType2}, 
and a decomposition of $\bl$ of the form $\bk\otimes\bt$ as 
in Paragraph \ref{pifoulechien}.
Write $\E=\F[\b]$ as usual.

\begin{prop}
\label{charaghdinS4}
Suppose that the representation $\bt$ is $\s$-selfdual, 
and let $(\K/\E,\xi)$ be an~ad\-mis\-si\-ble pair of level zero attached to it. 
There is a unique involutive $\E_0$-automorphism $\widehat{\s}$ of $\K$ 
such that $\xi\circ\widehat{\s}=\xi^{-1}$ and $\widehat{\s}$ coincides with
$\s$ on $\E$. 
\end{prop}

\begin{proof}
Let $\K'$ denote the extension of $\E$ given by the field $\K$ equipped 
with the map $x\mapsto\s(x)$ from $\E$ to $\K$.
Then the pair $(\K'/\E,\xi)$ is admissible of level zero, 
and it is attached to $\bt^{\s}$.
On the other hand, $(\K/\E,\xi^{-1})$ is admissible of level zero, 
attached to $\bt^{\vee}$.
Since $\bt$ is $\s$-selfdual, there~is an $\E$-algebra 
isomorphism $\widehat{\s}:\K\to\K'$ 
such that $\xi\circ\widehat{\s}=\xi^{-1}$.
We thus have:
\begin{equation*}
\xi\circ\widehat{\s}^2=\xi^{-1}\circ\widehat{\s}=\xi
\end{equation*}
and $\widehat{\s}^2$ is an $\E$-algebra automorphism of $\K$.
By admissibility of $(\K/\E,\xi)$, 
the latter automorphism is trivial,
thus $\widehat{\s}$ satisfies the required conditions.
Uniqueness follows by admissibility again.
\end{proof}

For simplicity, we will write $\s$ for the involutive automorphism given 
by Proposition \ref{charaghdinS4}.
Let $\K_0$ be the field of $\s$-fixed points of $\K$.
The following lemma will be useful in Section~\ref{S7}.

\begin{lemm}
\label{knr}
If $\E/\E_0$ is ramified and $m$ is even, 
then $\K/\K_0$ is unramified.
\end{lemm}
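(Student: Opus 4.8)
The plan is to compute the ramification order of $\K/\K_0$ by relating it to that of $\E/\E_0$ through the admissible pair structure, using the parity constraints already established. Recall that $\K/\E$ is unramified of degree $m$ by Definition \ref{defiL0AP}, so the uniformizers of $\E$ and $\K$ agree up to units; fix a uniformizer $t$ of $\E$, which is then also a uniformizer of $\K$. Since we are assuming $\E/\E_0$ is ramified, Remark \ref{Turron} (or Lemma \ref{MaraDesBois}) tells us $\F/\F_0$ is ramified and $\E_0/\F_0$ has odd ramification order; in particular $e(\E/\E_0)=2$. Because $\s$ is nontrivial on $\E$, we may choose $t$ so that $\s(t)=-t$ up to a unit — more precisely, arguing as in the proof of Lemma \ref{MaraDesBois}, write $\varpi$ for a uniformizer of $\F$ with $\s(\varpi)=-\varpi$ and adjust by a power of a uniformizer $t_0$ of $\E_0$: since $e(\E_0/\F_0)$ is odd, $\varpi t_0^{-a}$ with $a=\lfloor e(\E_0/\F_0)/2\rfloor$ plays the role of a uniformizer of $\E$ negated by $\s$.

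First I would pin down how $\s$ acts on a uniformizer of $\K$. The involution $\tau$ of Proposition \ref{charaghdinS4}, which we are denoting $\s$, restricts to the given involution on $\E$ and satisfies $\xi\circ\s=\xi^{-1}$. Since $\K/\E$ is unramified, $\s$ preserves the valuation and acts on the residue field $\ll$ of $\K$ extending its action on the residue field $\ee$ of $\E$. The key point is to produce an element of $\K_0^\times$ of odd valuation, equivalently to show $e(\K/\K_0)=1$. Take $t$ as above with $\s(t)=-t$. Then $t^2\in\E_0\subseteq\K_0$ has valuation $2$ in $\K$ (valuations normalized so $t$ has valuation $1$). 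The subtlety — and this is the main obstacle — is that $e(\K/\K_0)$ divides $e(\E/\E_0)=2$, so a priori $\K/\K_0$ could still be ramified; I must exhibit a genuinely odd-valuation element of $\K_0$, or equivalently rule out that every $\s$-fixed element of $\K$ has even valuation.

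To settle this, I would use that $m$ is even. Since $\K/\E$ is unramified of even degree $m$ and $\ee$ is the residue field of $\E=\F[\b]$, the residue field $\ll$ contains a subfield of degree $2$ over $\ee$. Now $\s$ acts on $\ll/\ee_0$ (where $\ee_0$ is the residue field of $\E_0$), and since $\E/\E_0$ is ramified we have $\ee=\ee_0$, so $\Gal(\ll/\ee_0)$ has order $m$ and is generated by the Frobenius; the action of $\s$ on $\ll$ is by some power of Frobenius, say $x\mapsto x^{q^j}$. Because $\s^2=\mathrm{id}$ and $\s$ fixes $\ee=\ee_0$ pointwise, $j$ must be $0$ or $m/2$. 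If $j=0$, then $\s$ is trivial on $\ll$, so $\ll\subseteq\K_0$, forcing $f(\K/\K_0)=1$; combined with $[\K:\K_0]$ being $1$ or $2$ and $\s$ nontrivial on $\K$ (as it is already nontrivial on $\E$), we get $[\K:\K_0]=2$ and hence $e(\K/\K_0)=2$ — but then $\K/\K_0$ would be ramified with $\ll=\ee_0$, and one checks via Hensel's lemma that such a ramified quadratic extension of a local field with residue characteristic $p\neq2$ is generated by a square root of a uniformizer; I would then derive a contradiction from the constraint $\xi\circ\s=\xi^{-1}$ together with admissibility (a tamely ramified character of $\K^\times$ negated by such a $\s$ would factor through a norm from a proper subextension, since the $\s$-fixed uniformizer situation collapses). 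If instead $j=m/2$, then $\s$ acts on $\ll$ as the Frobenius of the quadratic subextension, so its fixed field $\ll^\s$ has index $2$ in $\ll$, giving $f(\K/\K_0)=2$; since $[\K:\K_0]\mid 2$ this forces $[\K:\K_0]=2$ and $e(\K/\K_0)=1$, i.e. $\K/\K_0$ is unramified, as desired. The bulk of the work is therefore the case analysis on the residual action, and showing the first case cannot occur; I expect the cleanest route to that contradiction is to observe that in the $j=0$, $e(\K/\K_0)=2$ scenario one could take the $\s$-antifixed uniformizer of $\K$ to be $t$ itself (since $\s(t)=-t$), whence $\K=\K_0(t)$ with $t^2\in\K_0$; but $t^2$ is, up to a unit, the image in $\K$ of a uniformizer of $\E_0$, and tracing through the admissibility of $(\K/\E,\xi)$ one finds $\xi$ would be forced to factor through $\N_{\K/\E}$, contradicting Definition \ref{defiL0AP}. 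Hence only $j=m/2$ survives and $\K/\K_0$ is unramified.
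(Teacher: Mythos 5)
Your overall strategy matches the paper's: determine the residual action of $\s$ on $\ll$ (correctly narrowed to $j\in\{0,m/2\}$, since $\s$ fixes $\ee=\ee_0$ pointwise and is involutive), then show that only the nontrivial case $j=m/2$ can occur, from which $\K/\K_0$ unramified follows. Your handling of the $j=m/2$ case is in fact slightly more economical than the paper's: you read off $f(\K/\K_0)=2$ from $\ll^\s\subsetneq\ll$ and conclude $e(\K/\K_0)=1$, whereas the paper exhibits the explicit $\s$-fixed uniformizer $t\a$ with $\a=\zeta^{(\Q^{r}+1)/2}$. (One should add the small remark that the residue field of $\K_0$ is exactly $\ll^\s$; this holds because the principal units of $\K$ form a pro-$p$-group and $p\neq2$, so every $\s$-fixed residue lifts to a $\s$-fixed unit, e.g. $\tfrac{1}{2}(x+\s(x))$.)

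The gap is in ruling out $j=0$. You reduce that case to a ramified quadratic $\K/\K_0$ with $\K=\K_0(t)$, $t^2\in\K_0$, and then assert that a contradiction "would" follow from admissibility, with $\xi$ forced to factor through a norm from a proper subextension; this is neither demonstrated nor the cleanest target. The paper's contradiction lives entirely at the residual level and is immediate: $\xi$ is tamely ramified, so on $\Oo_\K^\times$ it is determined by the induced character $\overline{\xi}$ of $\ll^\times$; if $\s$ were trivial on $\ll$, then $\xi\circ\s=\xi^{-1}$ evaluated on $\Oo_\K^\times$ gives $\overline{\xi}=\overline{\xi}^{-1}$, i.e. $\overline{\xi}^{\,2}=1$. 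A quadratic character of $\ll^\times$ takes values in $\{\pm1\}$ and is therefore fixed by every power of the Frobenius (the residue characteristic being odd), so it cannot be $\ee$-regular once $m\geq2$; but $\overline{\xi}$ must be $\ee$-regular by Definition~\ref{defiL0AP1}, since it parametrizes the supercuspidal $\rho$ via the Green--James map. This disposes of $j=0$ using $m$ even only through $m\geq2$, with no need for Hensel's lemma or the structure theory of ramified quadratic extensions. You had assembled all the ingredients (tame ramification, $\xi\circ\s=\xi^{-1}$, $m$ even); the missing step was simply to push the relation to the residue field and invoke regularity there.
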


\begin{proof}
Write $m=2r$ for some $r\>1$. 
Let $t$ be a uniformizer of $\E$ such that $\s(t)=-t$ and~let $\zeta\in\K$ 
be a root of unity of order $c^m-1$, where $c$ is~the car\-di\-na\-lity of 
$\ee$. 
We thus have $\E=\E_0[t]$ and $\K=\E[\zeta]$.
Since $\s$ is involutive, it induces an involutive $\ee$-automorphism 
of $\ll$, the re\-si\-dual field of $\K$.
If the latter were trivial, the relation $\xi\circ\s=\xi^{-1}$ would 
imply that the character $\overline{\xi}$ of $\ll^\times$ induced by $\xi$ 
is quadratic, contradicting the fact that it is $\ee$-regular.
The auto\-mor\-phism of $\ll$ induced by $\s$ is thus 
the $r$th power of the Frobenius automorphism.
Now consider the element:
\begin{equation*}
\a=\zeta^{(c^{r}+1)/2}. 
\end{equation*} 
It has order $2(c^r-1)$, thus $\s(\a)=-\a$.
Since $\a^2$ has order $c^r-1$,
the ex\-ten\-sion of $\E_0$ it~ge\-ne\-ra\-tes is un\-ra\-mi\-fied and has 
degree $r$. 
We thus have $\E_0[\a^2,t\a]\subseteq\K_0$ and their degrees are equal.
Now we deduce that $\K=\K_0[\a]=\K_0[\zeta]$ 
is un\-ra\-mi\-fied over $\K_0$.
\end{proof}

\subsection{}

The following 
lemma will be useful in Sections \ref{SECDISTTR} and 
\ref{SECDIST}, when we investigate decompositions~of $\s$-self\-dual 
types of the form $\bk\otimes\bt$ 
which behave well under $\s$.

Let $\t\in\Cc(\aa,\b)$ be a maximal simple character 
such that $\H^1(\aa,\b)$ is $\s$-stable and $\t\circ\s=\t^{-1}$.
Let $\BJ$ be its normalizer in $\G$,
let $\J^1$ be the maximal normal pro-$p$-subgroup~of~$\BJ$
and $\n$ be the ir\-reducible representation of $\J^1$ 
containing $\t$.

\begin{lemm}
\label{chouxfarci}
Let $\bk$ be a representation of $\BJ$~ex\-tend\-ing $\n$. 
There is a unique character $\xx$ of $\BJ$ trivial on $\J^1$ 
such that $\bk^{\s\vee}\simeq\bk\xx$.
It satisfies the identity $\xx\circ\s=\xx$.
\end{lemm}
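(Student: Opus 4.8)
The plan is to exploit the structure theory recalled in Paragraph \ref{pifoulechien} and Proposition \ref{suzanne}. First I would observe that $\bk^{\s\vee}$ is again a representation of $\BJ$ extending the Heisenberg representation $\n^{\s\vee}$ of $\J^1$. The key point is that $\n^{\s\vee}\simeq\n$: indeed $\n^{\s\vee}$ is an irreducible representation of $\J^1$ whose restriction to $\H^1=\H^1(\aa,\b)$ contains $\t^{\s\vee}=(\t\circ\s)^{-1}=(\t^{-1})^{-1}=\t$ (using the hypothesis $\t\circ\s=\t^{-1}$, and that $\H^1$ and $\J^1$ are $\s$-stable — the latter because $\s$ normalizes $\t$ hence its normalizer $\BJ$ and the canonical subgroups attached to it). By the uniqueness part of \cite{BK} Proposition 5.1.1 (quoted in Paragraph \ref{pifoulechien}), the Heisenberg representation containing $\t$ is unique up to isomorphism, so $\n^{\s\vee}\simeq\n$. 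Therefore $\bk$ and $\bk^{\s\vee}$ are two extensions of (isomorphic copies of) $\n$ to $\BJ$, and by Proposition \ref{suzanne} applied with the decomposition $\bk^{\s\vee}\simeq\bk\otimes\boldsymbol{\xi}$, there is a unique irreducible representation $\boldsymbol{\xi}$ of $\BJ$ trivial on $\J^1$ with $\bk^{\s\vee}\simeq\bk\otimes\boldsymbol{\xi}$. Since $\bk$ and $\bk^{\s\vee}$ have the same (finite) dimension, $\boldsymbol{\xi}$ is one-dimensional, i.e. a character $\xx$ of $\BJ$ trivial on $\J^1$; this proves existence and uniqueness of $\xx$.

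For the identity $\xx\circ\s=\xx$, the idea is to apply the operation $(\,\cdot\,)^{\s\vee}$ a second time. From $\bk^{\s\vee}\simeq\bk\xx$ one gets, taking $\s$-twist and contragredient (which commute and are each involutive up to isomorphism),
\begin{equation*}
\bk\simeq(\bk^{\s\vee})^{\s\vee}\simeq(\bk\xx)^{\s\vee}\simeq\bk^{\s\vee}\cdot(\xx\circ\s)^{-1}\simeq\bk\xx\cdot(\xx\circ\s)^{-1},
\end{equation*}
where I use that for a character $\xx$ of $\BJ$ one has $(\bk\xx)^{\vee}\simeq\bk^{\vee}\xx^{-1}$ and $(\bk\xx)^{\s}\simeq\bk^{\s}(\xx\circ\s)$. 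Hence $\bk\simeq\bk\otimes(\xx\cdot(\xx\circ\s)^{-1})$, and since the character $\boldsymbol{1}$ is the unique one with $\bk\simeq\bk\otimes\boldsymbol{1}$ (uniqueness in Proposition \ref{suzanne} again), we conclude $\xx\cdot(\xx\circ\s)^{-1}=\boldsymbol{1}$, i.e. $\xx\circ\s=\xx$.

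The only mild subtlety — and the one place needing a careful word rather than a formal manipulation — is the verification that $\J^1$, $\H^1$ and $\BJ$ are $\s$-stable and that $\bk^{\s}$ is genuinely a representation of $\BJ$ (not of some conjugate): this follows from Proposition \ref{patel}(4), which identifies $\BJ=\BJ(\aa,\b)$ with the $\G$-normalizer of $\t$, together with $\t\circ\s=\t^{-1}$, since then $\s(\BJ)$ normalizes $\t^{-1}$, hence $\t$, hence equals $\BJ$; the canonical subgroups $\H^1(\aa,\b)\subseteq\J^1(\aa,\b)$ attached to $\t$ inherit $\s$-stability. I do not expect a genuine obstacle here; the argument is entirely formal once the uniqueness statements of \cite{BK} Proposition 5.1.1 and Proposition \ref{suzanne} are in place.
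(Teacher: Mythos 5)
Your proof is correct and follows essentially the same route as the paper: identify $\n^{\s\vee}\simeq\n$ via uniqueness of the Heisenberg representation and $\t\circ\s=\t^{-1}$, then invoke Proposition \ref{suzanne} to produce the character $\xx$ and for uniqueness. The paper simply asserts $\xx\circ\s=\xx$, whereas you supply the short argument (apply $(\cdot)^{\s\vee}$ again and use uniqueness in Proposition \ref{suzanne}), which is exactly what the paper leaves implicit.
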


\begin{proof}
Let $\bk$ be an irreducible representation of $\BJ$ extending $\n$.
By uniqueness of the Heisenberg re\-pre\-sen\-tation, 
the fact that $\t\circ\s=\t^{-1}$ implies that $\n^{\s\vee}$ is isomorphic to $\n$.
Thus $\bk$ and $\bk^{\s\vee}$ are re\-pre\-sentations of $\BJ$ 
extending $\n$.
There is a character $\xx$ of $\BJ$ trivial on 
$\J^1$ such that we have $\bk^{\s\vee}\simeq\bk\xx$.
It satisfies $\xx\circ\s=\xx$.
It is unique by Proposition \ref{suzanne}.
\end{proof}

\section{The distinguished type theorem}
\label{SEC6}

In this section we prove the following result,
which is our first main theorem.
It will be~re\-fi\-ned by Theorem \ref{genericDSTT} in Section \ref{S7}.
Recall that $p\neq2$ until the end of this article.

\begin{theo}
\label{DSTT}
Let $\pi$ be a $\s$-selfdual cuspidal representation of $\G$.
Then $\pi$ is dis\-tin\-guished~if and only if it contains 
a $\s$-selfdual type $(\BJ,\bl)$ 
such that $\Hom_{\BJ\cap\G^\s}(\bl,1)$ is non-zero.
\end{theo}

\begin{rema}
If $\pi$ is distinguished, it follows easily from the multiplicity $1$ 
property in 
Theorem \ref{FP} that the distinguished $\s$-selfdual types $(\BJ,\bl)$ 
occurring in $\pi$ form a single $\G^\s$-con\-ju\-gacy class
(see Remark \ref{chienaphasique}).
\end{rema}

\begin{rema}
\label{minutes}
Theorem \ref{DSTT} is proved in \cite{AKMSS} in a different manner
than the one we give~here,
although both proofs use the $\s$-selfdual~type theorem \ref{PIMAIN}.
The proof given in \cite{AKMSS}
is based on~a~re\-sult of Ok \cite{Ok},
proved by Ok for complex representations 
and extended to the modular~case~in~\cite{AKMSS} Appendix B.
However, 
the proof we give here is more likely to generalize 
to other~si\-tua\-tions.
\end{rema}

Let $\pi$ be a $\s$-selfdual cuspidal  representation.
Theorem \ref{PIMAIN} tells us that it contains a $\s$-selfdual type 
$(\BJ,\bl)$, 
and Proposition \ref{GeorgesSaval} tells us that
$\pi$ is compactly induced from $\bl$. 
A simple~appli\-ca\-tion of the Mackey formula gives us:
\begin{equation}
\label{MACKEYFONDA}
\Hom_{\G^\s}(\pi,1) \simeq \prod\limits_{g} 
\Hom_{\BJ^g\cap\G^\s}(\bl^g,1)
\end{equation}
where $g$ ranges over a set of representatives of $(\BJ,\G^\s)$-double cosets 
in $\G$.

\begin{rema}
It follows from Theorem \ref{FP} that there is at most one double coset $\BJ g\G^\s$ 
such that the space $\Hom_{\BJ^g\cap\G^\s}(\bl^g,1)$ is non-zero, 
and that this space has dimension at most $1$.
Thus the product in \eqref{MACKEYFONDA} is actually a direct sum. 
\end{rema}

In this section, our main task (achieved in Paragraph \ref{DCT})
is to prove that, if $\Hom_{\BJ^g\cap\G^\s}(\bl^g,1)$ is non-zero, 
then $\s(g)g^{-1}\in\BJ$.
Theorem \ref{DSTT} will follow easily from there
(see Paragraph \ref{viteuntitre}). 

We may assume that $\BJ=\BJ(\aa,\b)$ for a maximal simple stratum 
$[\aa,\b]$ satisfying the conditions of Remark \ref{StandardStableType}. 
The extension $\E=\F[\b]$, its centralizer $\B$ 
and the maximal order $\bb=\aa\cap\B$~are thus stable by $\s$. 
We write $d=[\E:\F]$ and $n=md$.
We identify $\B$ with the $\E$-algebra $\Mat_m(\E)$
equipped with the involution $\s$ acting componentwise, 
and $\bb$ with its stan\-dard maximal order. 

We write $\E_0=\E^\s$, the field of $\s$-invariant elements of $\E$, 
and fix once and for all a uni\-for\-mi\-zer $t$ of $\E$ such that:
\begin{equation}
\s(t)=
\left\{
\begin{array}{rl}
t & \text{if $\E$ is unramified over $\E_0$,} \\
-t & \text{if $\E$ is ramified over $\E_0$.} \\
\end{array}
\right.
\end{equation}
We also write 
$\J=\J(\aa,\b)$, $\J^1=\J^1(\aa,\b)$ and $\H^1=\H^1(\aa,\b)$. 
Recall that $\BJ=\E^\times\J$.

We denote by $\T$ the maximal tamely ramified
sub-extension of $\E$ over $\F$, and set $\T_0=\T\cap\E_0$.

\textit{We insist on the fact that, throughout this section, 
we assume that the stratum $[\aa,\b]$ satisfies the conditions of 
Remark \ref{StandardStableType}}.

\subsection{Double cosets contributing to the distinction 
of $\t$}
\label{P51}

Let $\t\in\Cc(\aa,\b)$ be the maximal simple cha\-rac\-ter occurring 
in the restriction of $\bl$ to $\H^1$. Sup\-po\-se that 
$\Hom_{\BJ^g\cap\G^\s}(\bl^g,1)$ is non-zero for some 
double coset $\BJ g\G^\s$.
Restricting to $\H^{1g}\cap\G^\s$,~we 
deduce that the character $\t^g$ is trivial on $\H^{1g}\cap\G^\s$.

In this paragraph, we look for the double cosets $\BJ g\G^\s\subseteq\G$ such
that the character $\t^g$ is trivial on $\H^{1g}\cap\G^\s$.
For this, let us introduce the following general lemma. 

\begin{lemm}
\label{bathmologieabstraite}
Let $\tau$ be an involution of $\G$,
let $\H$ be a $\tau$-stable open pro-$p$-subgroup of $\G$
and let $\chi$ be a character of $\H$ 
such that $\chi\circ\tau=\chi^{-1}$.
For any $g\in\G$,
the character $\chi^g$ is trivial on $\H^{g}\cap\G^\tau$ if and only 
if $\tau(g)g^{-1}$ intertwines $\chi$.
\end{lemm}

\begin{proof}
Write $\K$ for the $\tau$-stable subgroup 
$\H^{g}\cap\tau(\H^{g})$, which contains $\H^{g}\cap\G^\tau$. 
Let $\A$ be the quotient of $\K$ by $\overline{[\K,\K]}$, the closure of the 
derived subgroup of $\K$.
This is a $\tau$-stable commutative pro-$p$-group.
Given $x\in\K$, write ${x'}$ for its image in $\A$.
For any $b\in\A$, we have:
\begin{equation*}
b = \sqrt{b\tau(b)}\cdot
\sqrt{b\tau(b)^{-1}}
\end{equation*}
where $b\mapsto\sqrt{b}$ is the inverse of the automorphism 
$b\mapsto b^2$ of $\A$.
Thus, for any $x\in\K$, there are $y,z\in\K$ such that $x=yz$ 
and $\tau({y'})={y'}$ 
and $\tau({z'})=z'^{-1}$.

Since $\tau(z)=z^{-1}h$ for some $h\in\overline{[\K,\K]}$, we have:
\begin{equation}
\label{Z1}
\chi^g(\tau(z)) = \chi^g(z^{-1}h) = \chi^g(z)^{-1}.
\end{equation}
On the other hand, since $\tau(y)=yk$ for some $k\in\overline{[\K,\K]}$, 
the element $y^{-1}\tau(y)$ defines a $1$-cocy\-cle in 
the $\tau$-stable pro-$p$-group $\overline{[\K,\K]}$.
Since $p\neq2$, this cocycle is a coboun\-dary, which im\-plies:
\begin{equation}
\label{Z2}
y\in(\H^{g}\cap\G^\tau)\overline{[\K,\K]}.
\end{equation}
Now suppose that $\chi^g$ is trivial on $\H^{g}\cap\G^\tau$. 
Then \eqref{Z1} and \eqref{Z2} imply that:
\begin{equation}
\label{W1}
\chi^g(\tau(x)) = \chi^g(\tau(z)) 
= \chi^g(z)^{-1} = \chi^g(x)^{-1},
\quad
\text{for all $x\in\K$}.
\end{equation}
Besides, \eqref{W1} is \textit{equivalent} to 
$\chi^g$ being trivial on $\H^{g}\cap\G^\tau$.
On the other hand, we have:
\begin{equation}
\label{W2} 
\chi^g\circ\tau=(\chi\circ\tau)^{\tau(g)}
=(\chi^{-1})^{\tau(g)}=(\chi^{\tau(g)})^{-1}
\end{equation}
on $\K$ by assumption on $\chi$.
If we set $\g=\tau(g)g^{-1}$, then \eqref{W1} 
is equivalent to:
\begin{equation*}
\chi(h)=\chi^{\g}(h)
\quad
\text{for all $h\in\H\cap \g^{-1}\H\g$}.
\end{equation*}
This amounts to saying that $\g$ intertwines $\chi$.
\end{proof}

\begin{prop}
\label{bathmologie}
Let $g\in\G$.
Then the character $\t^g$ is trivial on $\H^{1g}\cap\G^\s$ if and only 
if we have $\s(g)g^{-1}\in\J\B^\times\J$.
\end{prop}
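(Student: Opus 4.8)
The plan is to deduce Proposition~\ref{bathmologie} from Lemma~\ref{bathmologieabstraite} by specializing the abstract setting to the involution $\tau=\s$, the pro-$p$-group $\H=\H^1=\H^1(\aa,\b)$, and the character $\chi=\t$. We are given that $\H^1$ is $\s$-stable and $\t\circ\s=\t^{-1}$ (recorded in the remark preceding Proposition~\ref{TT0canonique}), so the hypotheses of Lemma~\ref{bathmologieabstraite} are satisfied, and it gives: $\t^g$ is trivial on $\H^{1g}\cap\G^\s$ if and only if $\s(g)g^{-1}$ intertwines $\t$. So the entire content of the proposition reduces to identifying the intertwining set of $\t$ in $\G$.

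That identification is exactly Proposition~\ref{patel}\,\eqref{bourrin6}, which states that the intertwining of any $\t\in\Cc(\aa,\b)$ in $\G$ equals $\J^1(\aa,\b)\,\B^\times\,\J^1(\aa,\b)$, i.e.\ $\J^1\B^\times\J^1$. So I would first rewrite the conclusion of the specialized Lemma~\ref{bathmologieabstraite} as $\s(g)g^{-1}\in\J^1\B^\times\J^1$. It then remains to check that $\J^1\B^\times\J^1=\J\B^\times\J$. The inclusion $\subseteq$ is clear since $\J^1\subseteq\J$. For the reverse inclusion, recall from Proposition~\ref{patel}(3) that $\J=\J^1\,\bb^\times$ (the group $\J$ is generated by $\J^1$ and $\bb^\times$, and $\bb^\times\subseteq\B^\times$), so $\J\B^\times\J=\J^1\bb^\times\B^\times\bb^\times\J^1=\J^1\B^\times\J^1$, using that $\bb^\times\subseteq\B^\times$ and $\B^\times$ is a group. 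This gives the proposition.

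The one point that requires a touch of care — and which I would present as the small technical step rather than a routine one — is the passage through Lemma~\ref{bathmologieabstraite}: one must make sure its hypothesis ``$\H$ is a $\tau$-stable open pro-$p$-subgroup'' is met by $\H^1$, which it is, and that the conclusion is genuinely a biconditional, so that both directions of Proposition~\ref{bathmologie} follow. I do not anticipate a real obstacle here; the proposition is essentially a formal consequence of the abstract lemma plus the known description of the intertwining set of a simple character. The proof should be just a few lines: invoke Lemma~\ref{bathmologieabstraite}, invoke Proposition~\ref{patel}\,\eqref{bourrin6}, and note the elementary group-theoretic identity $\J^1\B^\times\J^1=\J\B^\times\J$.
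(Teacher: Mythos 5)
Your proof is correct and is essentially the paper's argument: apply Lemma~\ref{bathmologieabstraite} with $\tau=\s$, $\H=\H^1$, $\chi=\t$, then identify the intertwining set of $\t$ via Proposition~\ref{patel}\,\eqref{bourrin6}. The only difference is that you spell out the group-theoretic identity $\J^1\B^\times\J^1=\J\B^\times\J$ (via $\J=\J^1\bb^\times$ and $\bb^\times\subseteq\B^\times$), which the paper invokes silently when it states the intertwining set as $\J\B^\times\J$ rather than $\J^1\B^\times\J^1$; making this explicit is a reasonable small improvement but not a different route.
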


\begin{proof}
This follows from Lemma \ref{bathmologieabstraite} 
applied to the simple character $\t$ of $\H^1$ and the involution $\s$, 
together with the fact that the intertwining set of $\t$ is 
$\J\B^\times\J$ by Proposition \ref{patel}\eqref{bourrin6}. 
\end{proof}

\subsection{The double coset lemma}
\label{DCL}

We now prove the following fundamental lemma.

\begin{lemm}
\label{FL}
Let $g\in\G$.
Then $\s(g)g^{-1}\in\J\B^\times\J$ if and only if $g\in\J\B^\times\G^\s$.
\end{lemm}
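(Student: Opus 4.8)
The direction $g\in\J\B^\times\G^\s\Rightarrow\s(g)g^{-1}\in\J\B^\times\J$ is the easy one. If $g=ubh$ with $u\in\J$, $b\in\B^\times$, $h\in\G^\s$, then $\s(g)g^{-1}=\s(u)\s(b)\s(h)h^{-1}b^{-1}u^{-1}=\s(u)\s(b)b^{-1}u^{-1}$, since $h$ is $\s$-fixed. Now $\s$ preserves $\B$ (it acts componentwise on $\Mat_m(\E)$) and preserves $\J$, so $\s(u)\in\J$ and $\s(b)b^{-1}\in\B^\times$; hence $\s(g)g^{-1}\in\J\B^\times\J$. The plan is to dispose of this implication in one line and then concentrate on the converse.

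For the converse, suppose $\s(g)g^{-1}\in\J\B^\times\J$. First I would use the structure $\J=\J^1\bb^\times$ from Proposition \ref{patel}(3), together with $\bb^\times\subseteq\B^\times$, to rewrite $\J\B^\times\J=\J^1\B^\times\J^1$. Combined with the simple intersection property \eqref{SIP} — or rather its consequence that $\J^1 x\J^1\cap\B^\times=\U^1(\bb)x\U^1(\bb)$ for $x\in\B^\times$ — I would like to argue that $\s(g)g^{-1}$ can be adjusted on the right by an element of $\G^\s$ so as to land in $\B^\times$. The cleanest route: write $\s(g)g^{-1}=u_1 b u_2$ with $u_i\in\J^1$, $b\in\B^\times$; then $\s(g)g^{-1}$ lies in the $\s$-stable set $\J^1\B^\times\J^1$ (stable because $\s$ preserves $\J^1$ and $\B^\times$), and applying $\s$ and comparing, using that $\s(\s(g)g^{-1})=g\s(g)^{-1}=(\s(g)g^{-1})^{-1}$ conjugated appropriately, I would extract a cocycle-type relation. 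The key mechanism is the one already used in Lemma \ref{bathmologieabstraite}: a $1$-cocycle of $\Gal(\F/\F_0)=\{1,\s\}$ valued in a pro-$p$-group is a coboundary because $p\neq2$. So I expect to set $x=\s(g)g^{-1}$, observe $x\,\s(x)=1$ (indeed $\s(\s(g)g^{-1})\cdot\s(g)g^{-1}=g\s(g)^{-1}\s(g)g^{-1}=1$), i.e. $x$ is a $1$-cocycle for $\s$ acting on $\G$, and then solve $x=\s(h)h^{-1}$ for some $h$ — but the obstacle is that $\G=\GL_n(\F)$ is not a pro-$p$-group, so $H^1(\s,\G)$ need not vanish; it does vanish (Hilbert 90), giving $h\in\G$ with $\s(g)g^{-1}=\s(h)h^{-1}$, hence $h^{-1}g\in\G^\s$, so $g\in h\G^\s$. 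That reduces the problem to showing $h\in\J\B^\times$.

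The hard part is therefore this last reduction: knowing only that $\s(h)h^{-1}=\s(g)g^{-1}\in\J^1\B^\times\J^1$, deduce $h\in\J\B^\times$. Here I would project $\s(h)h^{-1}\in\J^1\B^\times\J^1$ to the relevant quotient and use the intertwining/simple intersection property \eqref{SIP} to replace $h$, up to left multiplication by $\J^1$ and the freedom in choosing $h$ (which is only well-defined up to $\G^\s$ on the right), by an element whose "$\s$-twisted boundary" lies in $\B^\times$; then a second application of Hilbert 90, this time for the $\s$-action on the smaller group $\B^\times=\GL_m(\E)$ (again vanishing $H^1$, $\E$ being $\s$-stable), lets me absorb the $\B^\times$-part into $\G^\s$, leaving $h\in\J^1\cdot\G^\s\subseteq\J\B^\times\G^\s$. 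Putting the two applications of Hilbert 90 together with \eqref{SIP} and Proposition \ref{patel}(3) should yield $g\in\J\B^\times\G^\s$. I anticipate the delicate bookkeeping is in controlling the $\J^1$-coset representatives so that the cocycle argument applies in the non-commutative pro-$p$ radical; this is exactly the place where $p\neq2$ is used, as the paper's closing remarks warn.
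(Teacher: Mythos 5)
The easy direction and the ultimate strategy (reduce $\g=\s(g)g^{-1}$ to lie in $\B^\times$ after suitable adjustments, then conclude by Hilbert~90 for $\B^\times$) do match the paper's approach, but there are two issues with the way you lay out the converse.

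First, the opening application of Hilbert~90 on $\G$ is vacuous. If $\g\s(\g)=1$ and you solve $\g=\s(h)h^{-1}$ with $h\in\G$, then $g$ itself is already such an $h$, and the class of admissible $h$'s is exactly the coset $g\G^\s$. Thus the assertion ``reduce to showing $h\in\J\B^\times$ for some $h$ with $\s(h)h^{-1}=\g$'' is a restatement of the original goal ``$g\in\J\B^\times\G^\s$''; no reduction has happened. The content has to come entirely from the second step.

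Second — and this is the genuine gap — the second step is much harder than ``project to the relevant quotient, use \eqref{SIP}, split a cocycle in a pro-$p$-group.'' After using the Cartan decomposition to normalize the middle $\B^\times$-factor to an antidiagonal $b$ with $b\s(b)=1$ (Lemma~\ref{rouge}, which your plan does not address), one is left with $\g=bx$, $x\in\J\cap b^{-1}\J b=:\Kk$, and $x$ is a cocycle for the twisted involution $\d:k\mapsto b^{-1}\s(k)b$. The group $\Kk$ is \emph{not} pro-$p$: modulo a $\d$-stable pro-$p$ normal subgroup $\Kk^1$ it surjects onto a finite product $\GL_{m_1}(\ee)\tdt\GL_{m_r}(\ee)$ on which $\d$ acts by $(g_1,\dots,g_r)\mapsto(\s(g_r),\dots,\s(g_1))$. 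The crucial cohomology computation therefore takes place in this finite reductive quotient. In the unramified case ($\s$ nontrivial on $\ee$) the first cohomology there vanishes (finite Hilbert~90), but in the ramified case ($\s$ trivial on $\ee$, $r$ odd) the middle block contributes an element of order $2$ that is generally \emph{not} a coboundary — so $\g$ cannot be pushed into $\B^\times$ by left multiplication by $\J$ (let alone by $\J^1$, which is all your plan allows). The paper's Lemma~\ref{boxon} resolves this by replacing $b$ with $bv$ for a diagonal $\pm1$-matrix $v\in\U\cap\M$, which still satisfies $(bv)\s(bv)=1$ and is antidiagonal, and only then does one land in $bv\cdot\Kk^1$ where the pro-$p$ cocycle argument you have in mind actually applies. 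Because your sketch frames the entire argument around pro-$p$ cohomology, it would stall exactly at this point. You should also note that the adjustments to $g$ are by $\J$ (indeed by $\Kk\subseteq\J$), not merely by $\J^1$.
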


\begin{proof}
Write $\g=\s(g)g^{-1}$. 
If $g\in\J\B^\times\G^\s$, one verifies immediately that 
$\g\in\J\B^\times\J$. Con\-verse\-ly, suppose that $\g\in\J c\J$ 
for some $c\in\B^\times$.
We will first show that the double coset representative $c$ can be chosen nicely. 

\begin{lemm}
\label{rouge}
There is a $b\in\B^\times$ such that $\g\in\J b\J$ and $b\s(b)=1$.
\end{lemm}

\begin{proof}
Recall that $\B^\times$ has been identified with $\GL_m(\E)$ 
and $\U=\J\cap\B^\times=\bb^\times$ is its stan\-dard maximal compact 
subgroup.
By the Cartan decomposition, $\B^\times$ decom\-poses as 
the disjoint union of the double cosets:
\begin{equation*}
\U\cdot
\text{diag}(t^{a_1},\dots,t^{a_m})
\cdot\U
\end{equation*}
where $a_1\>\dots\>a_m$ ranges over non-increasing sequences of $m$ integers, 
and $\text{diag}(\l_1,\dots,\l_m)$ denotes the diagonal matrix of 
$\B^\times$ with eigenvalues $\l_1,\dots,\l_m\in\E^\times$.
We thus may assume that $c=\text{diag}(t^{a_1},\dots,t^{a_m})$ for a 
uniquely determined sequence of integers $a_1\>\dots\>a_m$. 

The fact that $\s(\g)=\g^{-1}$ implies that we have 
$c\in\J c^{-1}\J\cap\B^\times$.
Using the simple intersection proper\-ty \eqref{SIP} 
together with the fact that $\J=\U\J^1$ and $\J^1\subseteq\U^1(\aa)$, 
we have $\J c^{-1}\J\cap\B^\times=\U c^{-1}\U$. 
The uniqueness of the Cartan decomposition of $\B^\times$ 
thus implies that the sequences 
$a_1\>\dots\>a_m$ and $-a_m\>\dots\>-a_1$ are equal.
We thus have $a_i+a_{m+1-i}=0$ for all $i\in\{1,\dots,m\}$.
Now write $\k=\s(t)t^{-1}\in\{-1,1\}$ and choose signs 
$\k_1,\dots,\k_m\in\{-1,1\}$ such that 
$\k_i\k_{m+1-i}=\k^{a_i}$ for all $i$.
This is always possible since $a_{(m+1)/2}=0$ when $m$ is odd. 
Then the anti\-diagonal element:
\begin{equation}
\label{bantidiagonal}
b=
\begin{pmatrix}
&&\k_1t^{a_1} \\
&\iddots&\\
\k_mt^{a_m}&&
\end{pmatrix}\in\B^\times
\end{equation}
satisfies the required conditions $b\s(b)=1$ and $\g\in \J b\J$. 
\end{proof}

Now write $\g=x'bx$ with $x,x'\in\J$ and $b\in\B^\times$. 
Replacing $g$ by $\s(x')^{-1}g$ does not chan\-ge the 
double coset $\J g\G^\s$ but changes $\g$ into $bx\s(x')$. 
From now on, we will thus assume that:
\begin{equation}
\label{VGE}
\g=bx,
\quad
b\s(b)=1,
\quad
x\in\J,
\quad
\text{$b$ is of the form \eqref{bantidiagonal}}.
\end{equation}
Write $\Kk$ for the group $\J\cap b^{-1}\J b$.
Since $\s(b)=b^{-1}$ and $\J$ is $\s$-stable, we have $x\in\Kk$.

\begin{lemm}
\label{lemmadelta}
The map $\delta : k \mapsto b^{-1}\s(k)b$
is an involutive group automorphism of $\Kk$.
\end{lemm}

\begin{proof}
This follows from an easy calculation using the fact that $b\s(b)=1$.
\end{proof}

Let $b_1>\dots>b_r$ be the unique decreasing 
sequence of integers such that:
\begin{equation*}
\{a_1,\dots,a_m\}=\{b_1,\dots,b_r\}
\end{equation*}
and $m_j$ denote the multiplicity of $b_j$ in $(a_1,\dots,a_m)$, for 
$j\in\{1,\dots,r\}$. 
We have $m_{j}=m_{r+1-j}$ for all $j$, and $m_1+\dots+m_r=m$. 
These integers define a standard Levi subgroup:
\begin{equation}
\label{Detroit}
\M=\GL_{m_1 d}(\F)\times\dots\times\GL_{m_r d}(\F)\subseteq\G.
\end{equation}
Write $\P$ for the standard parabolic subgroup of $\G$ 
generated by $\M$ and upper trian\-gular matri\-ces. 
Let $\N$ and $\N^-$ denote the uni\-po\-tent ra\-di\-cals of $\P$ and 
its opposite parabolic subgroup with respect to $\M$, 
respectively. 
Since $b$ has the form \eqref{bantidiagonal}, 
it nor\-ma\-lizes $\M$ and we have:
\begin{align*}
\Kk &= (\Kk\cap\N^-)\cdot(\Kk\cap\M)\cdot(\Kk\cap\N), \\
\Kk\cap\P&= \J\cap\P, \\
\Kk\cap\N^- &\subseteq \J^1\cap\N^-.
\end{align*}
We have similar properties for the subgroup
$\Vv=\Kk\cap\B^\times=\U\cap b^{-1}\U b$ of $\B^\times$, 
that is:
\begin{align*}
\Vv &= (\Vv\cap\N^-)\cdot(\Vv\cap\M)\cdot(\Vv\cap\N), \\
\Vv\cap\P&= \U\cap\P, \\
\Vv\cap\N^- &\subseteq \U^1\cap\N^-,
\end{align*}
where $\U^1=\J^1\cap\B^\times=\U^1(\bb)$.
Note that this subgroup $\Vv$ is stable by $\delta$.

\begin{lemm}
The subset:
\begin{equation*}
\Kk^1 
= (\Kk\cap\N^-)\cdot(\J^1 \cap \M)\cdot(\K\cap\N)
\end{equation*}
is a $\delta$-stable normal pro-$p$-subgroup of $\Kk$,
and we have $\Kk=\Vv\Kk^1$. 
\end{lemm}

\begin{proof}
To prove that $\Kk^1$ is a subgroup of $\Kk$, it is enough to prove that 
one has the containment 
$(\K\cap\N)\cdot(\Kk\cap\N^-)\subseteq\Kk^1$.
Let $\JJ^1$ be the sub-$\Oo$-lattice of $\aa$ 
such that $\J^1=1+\JJ^1$ and let $\JJ=\bb+\JJ^1$, 
thus $\J=\JJ^\times$.
A simple computation shows that
$\Kk\cap\N^-\subseteq (1+t\JJ)\cap\N^-$
and: 
\begin{equation*}
(\K\cap\N)\cdot(\Kk\cap\N^-)
\subseteq(\Kk\cap\N^-)\cdot((1+t\JJ)\cap\M)\cdot(\K\cap\N).
\end{equation*}
The expected result thus follows from the fact that 
$t\JJ\subseteq\JJ^1$.
Besides, $\K^1$ is a $\delta$-stable pro-$p$-group.

Since $\Vv\cap\M$ normalizes $\Kk\cap\N^-$, 
$\Kk\cap\N$ and $\Kk^1\cap\M$,
we have $(\Vv\cap\M)\Kk^1=\Kk$
whence $\Kk^1$ is normal in $\Kk$
and $\Kk=\Vv\Kk^1$, as expected.
\end{proof}

The subgroup $\Kk^1$ is useful in the following lemma.
Note that we have $x\delta(x)=1$.

\begin{lemm}
\label{boxon}
Let $y\in\K$ be such that $y\delta(y)=1$.
There are $k\in\Kk$ and $v\in\Vv$ such that:
\begin{enumerate}
\item
the element $v$ is diagonal in $\B^\times$
with eigenvalues in $\{-1,1\}$ and it satisfies $v\delta(v)=1$, 
\item
one has $\delta(k)yk^{-1}\in v\Kk^1$.
\end{enumerate}
\end{lemm}

\begin{proof}
Let $\Vv^1=\Vv\cap\Kk^1=\K^1\cap\B^\times$.
We have:
\begin{equation*}
\Vv^1
=(\Vv\cap\N^-)\cdot(\U^1 \cap \M)\cdot(\U\cap\N).
\end{equation*}
We thus have canonical $\delta$-equivariant group isomorphisms:
\begin{equation}
\label{Mitterand}
\Kk/\Kk^1\simeq\Vv/\Vv^1 \simeq (\U\cap\M)/(\U^1\cap\M).
\end{equation}
By \eqref{Detroit}, we have 
$\M\cap\B^\times=\GL_{m_1}(\E)\times\dots\times\GL_{m_r}(\E)$,
thus the right hand side of \eqref{Mitterand}~iden\-ti\-fies with 
$\EuScript{M}=\GL_{m_1}(\ee)\times\dots\times\GL_{m_r}(\ee)$, 
where $\ee$ denotes the residue field of $\E$.
Besides, since~$b$ is given by \eqref{bantidiagonal}, 
the involution $\delta$ acts on $\EuScript{M}$ as:
\begin{equation*}
(g_1,\dots,g_r)\mapsto(\s(g_r),\dots,\s(g_1)).
\end{equation*}
Write $y=vy'$ for some $v\in\Vv$ and $y'\in\Kk^1$.
The simple intersection property \eqref{SIP} gives us:
\begin{equation*}
\delta(v)^{-1}=\delta(y')vy'\in\Vv\cap\Kk^1v\Kk^1=\Vv^1v\Vv^1.
\end{equation*}
Thus there is $u\in\Vv^1$ such that $vu\delta(vu)\in\Vv^1$.
Replacing $(v,y')$ by $(vu,u^{-1}y')$, we may and will assume that $y=vy'$ 
with $v\delta(v)\in\Vv^1$.

We now compute the first cohomology set of $\delta$ 
in $\EuScript{M}$.
Let $w=(w_1,\dots,w_r)$ denote the image of $y$ in $\EuScript{M}$. 
We have $w\delta(w)=1$, that is:
\begin{equation*}
\s(w_j^{})=w_{r+1-j}^{-1},
\quad
\text{ for all $j\in\{1,\dots,r\}$}.
\end{equation*}
If $r$ is even, one can find 
an element $a\in\EuScript{M}$ such that 
$w=\delta(a)a^{-1}$.
If $r$ is odd, say $r=2s-1$,~one can find an element
$a\in\EuScript{M}$ such that:
\begin{equation*}
\delta(a)wa^{-1}=(1,\dots,1,w_s,1,\dots,1)
\end{equation*}
and we have $w_s\s(w_s)=1$.
If $\E/\E_0$ is unramified, then $\ee$ is quadratic over the 
residue field of $\E_0$, and it follows from the triviality of the 
first cohomology set of $\s$ in $\GL_{m_s}(\ee)$
that $w=\s(c)c^{-1}$ for some $c\in\EuScript{M}$.
In these two cases, we thus may find 
$k\in\Kk$ such that $\delta(k)xk^{-1}\in\Kk^1$.

It remains to treat the case where $r$ is odd and $\E/\E_0$ is ramified. 
In this case, we have $w_s^2=1$, thus $w_s$ is conjugate in $\GL_{m_s}(\ee)$
to a diagonal element with eigenvalues $1$ and $-1$.
Let $i$ denote the multiplicity of $-1$. 
Let:
\begin{equation*}
v\in\U\cap\M = 
\GL_{m_1}(\Oo_\E)\times\dots\times\GL_{m_r}(\Oo_\E)
\end{equation*}
(here $\Oo_\E$ is the ring of integers of $\E$) 
be a diagonal matrix with eigenvalues $1$ and $-1$, 
such that $-1$ occurs with multiplicity 
$i$ and only in the $s$th block.
Then $v\delta(v)=1$
and there is $k\in\Kk$ such that $\delta(k)yk^{-1}\in v\Kk^1$.
\end{proof}

Applying Lemma \ref{boxon} to $x$ gives us $k\in\K$,
$v\in\V$ such that $bv\s(bv)=1$ and $\delta(k)xk^{-1}\in v\Kk^1$.
Besides, $bv$ is antidiagonal of the form \eqref{bantidiagonal}
and $\s(k)\g k^{-1}\in bv\Kk^1$.
Therefore, replacing $g$ by $kg$, which does not chan\-ge the 
double coset $\J g\G^\s$, we will assume that $\g$ can be 
written:
\begin{equation}
\label{VGE1}
\g=bx,
\quad
b\s(b)=1,
\quad
x\in\J^1,
\quad
\text{$b$ is of the form \eqref{bantidiagonal}}.
\end{equation}
Comparing with \eqref{VGE}, we now have a stronger condition on $x$,
that is $x\delta(x)=1$ and $x\in\Kk^1$. 

Since $\Kk^1$ is a $\d$-stable pro-$p$-group and $p$ is odd,
the first cohomology set of $\d$ in $\Kk^1$ is trivial.
Thus $x=\delta(y)y^{-1}$ for some $y\in\Kk^1$, 
hence $\g=\s(y)by^{-1}$.
Since $b\s(b)=1$ and the first cohomo\-logy set of $\s$ in $\B^\times$ is 
trivial, 
one has $b=\s(h)h^{-1}$ for some $h\in\B^\times$.
Thus $g\in yh\G^\s\subseteq\J \B^\times\G^\s$, and Lem\-ma \ref{FL} is proved.
\end{proof}

\subsection{Contribution of the Heisenberg representation} 
\label{P53}

Let $\n$ be the Heisenberg representation of $\J^1$ associated to $\t$
(see \S\ref{pifoulechien}).
In this para\-graph, we prove the following result.

\begin{prop}
\label{MultOneEta}
Given $g\in\G$, we have:
\begin{equation*}
\dim \Hom_{\J^{1g}\cap\G^\s}(\n^g,1) = 
\left\{
\begin{array}{cl}
1 & \text{if $g\in\J\B^\times\G^\s$}, \\
0 & \text{otherwise}.
\end{array}
\right.
\end{equation*}
\end{prop}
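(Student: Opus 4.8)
The plan is to reduce the computation of $\dim\Hom_{\J^{1g}\cap\G^\s}(\n^g,1)$ to a statement about the symplectic space $\V=\J^1/\H^1$ equipped with the form coming from $\t$, and then exploit the double coset lemma \ref{FL}. First I would note that if $\Hom_{\J^{1g}\cap\G^\s}(\n^g,1)\neq0$, then a fortiori $\Hom_{\H^{1g}\cap\G^\s}(\t^g,1)\neq0$ — since $\n^g$ restricted to $\H^{1g}$ is a multiple of $\t^g$ — so Proposition \ref{bathmologie} forces $\s(g)g^{-1}\in\J\B^\times\J$, and then Lemma \ref{FL} gives $g\in\J\B^\times\G^\s$. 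This already shows that the Hom space vanishes unless $g\in\J\B^\times\G^\s$, which is the ``otherwise'' case. It remains to prove that for $g\in\J\B^\times\G^\s$ the dimension is exactly $1$.

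For the case $g\in\J\B^\times\G^\s$, I would first reduce to $g=b\in\B^\times$ with $b\s(b)=1$ (equivalently of the antidiagonal form \eqref{bantidiagonal}), by arguing that replacing $g$ by $xgy$ with $x\in\J$, $y\in\G^\s$ changes neither the isomorphism class of the pair $(\J^{1g}\cap\G^\s,\n^g)$ up to the relevant equivalence, nor the dimension of the Hom space; here one uses that $\J^1$ and $\H^1$ are $\s$-stable and $\n^\s\simeq\n$ is isomorphic to its own $\s$-contragredient (coming from $\t\circ\s=\t^{-1}$), together with Lemma \ref{rouge}. So we are reduced to showing $\dim\Hom_{\J^{1b}\cap\G^\s}(\n^b,1)=1$ for $b$ of the antidiagonal shape with $b\s(b)=1$; equivalently, writing $\tau$ for the involution $x\mapsto b^{-1}\s(x)b$ on $\J^1\cap b^{-1}\J^1 b$ (as in Lemma \ref{lemmadelta}), we must compute the dimension of the space of vectors in $\n$ fixed by the $\tau$-fixed subgroup of $\J^1$, intersected appropriately.

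The technical heart is a Heisenberg-group distinction computation. The representation $\n$ of $\J^1$ has $\H^1$-isotypic part $\t$; modding out, $\n$ descends to the (essentially unique) irreducible representation of the Heisenberg group attached to the non-degenerate symplectic $\kk$-space $\V=\J^1/\H^1$ with the form \eqref{SYMPLECGOTHIC}. The involution $\tau$ acts on $\V$ as a symplectic (or anti-symplectic, depending on the sign $\k=\s(t)t^{-1}$) involution; the fixed-point subgroup of $\J^1$ under $\tau$ corresponds to a subspace $\V^+\subseteq\V$, and one checks using the explicit formula $\langle 1+u,1+v\rangle=\psi\circ\tr(\b(uv-vu))$ together with $\s(\b)=-\b$ that $\V^+$ is a Lagrangian (this uses $p\neq2$ crucially, to split $\V=\V^+\oplus\V^-$ into $\pm1$-eigenspaces and to see that each is totally isotropic of half dimension, since $\b$ anti-commutes with $\s$ so the form pairs $\V^+$ with $\V^-$). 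A standard fact about Heisenberg representations — the Hom space from the Heisenberg representation to the trivial character of (the preimage of) a Lagrangian is one-dimensional — then gives the claim. I expect the identification of $\V^+$ as a Lagrangian, and the bookkeeping relating $\J^{1b}\cap\G^\s$ to the $\tau$-fixed subgroup and then to $\V^+$, to be the main obstacle; one will want to invoke a Clifford/Mackey-theory result such as \cite{BK} 3.4 or a lemma of the type used by Hakim--Murnaghan, and one must handle uniformly the two cases $\k=1$ and $\k=-1$. Everything downstream (the ``otherwise'' vanishing, the passage from $g$ to $b$) is routine given Lemmas \ref{FL}, \ref{rouge}, \ref{lemmadelta} and Proposition \ref{bathmologie}.
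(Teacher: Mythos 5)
Your first half is exactly the paper's: if $\Hom_{\J^{1g}\cap\G^\s}(\n^g,1)\neq 0$ then restricting to $\H^{1g}\cap\G^\s$ shows $\t^g$ is trivial there, so Proposition \ref{bathmologie} and Lemma \ref{FL} give $g\in\J\B^\times\G^\s$; and since the dimension is constant along $(\BJ,\G^\s)$-double cosets one reduces to $g\in\B^\times$, with $\g=\s(g)g^{-1}\in\B^\times$ and the involution $\tau:x\mapsto\g^{-1}\s(x)\g$. The multiplicity-one half, however, has a genuine gap. You treat $\tau$ as an involution of the full non-degenerate symplectic space $\J^1/\H^1$, split it into $\pm1$-eigenspaces, and conclude that each eigenspace is a Lagrangian by the usual pairing argument. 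But $\g$ lies in $\B^\times$ and not, in general, in the normalizer $\BJ$ of $\J^1$: when $m>1$ one can have $\J^{1\g}\neq\J^1$ (take $\g$ a unipotent matrix in $\B^\times$ with a negative-valuation entry), so $\tau$ is only defined on the proper subgroup $\J^1\cap\J^{1\g}$ --- as you yourself note when you write ``on $\J^1\cap b^{-1}\J^1 b$'' --- and the eigenspace decomposition of $\J^1/\H^1$ you rely on simply does not exist. The statement that the image of $\J^1\cap\G^\tau$ in $\J^1/\H^1$ is Lagrangian is equivalent to the very index identity $(\J^1\cap\G^\tau:\H^1\cap\G^\tau)=(\J^1:\H^1)^{1/2}$ that must be proved, so the sketch is circular precisely at the hard point.

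What the paper actually does is work on the genuinely $\tau$-stable subquotient $\VV=(\J^1\cap\J^{1\g})/(\H^1\cap\H^{1\g})$. There $\VV=\VV^+\oplus\VV^-$ and both summands are totally isotropic (Lemma \ref{L2now}, which is the computation you had in mind via $\t\circ\s=\t^{-1}$ and the fact that $\g$ intertwines $\t$), but the symplectic form on $\VV$ is \emph{degenerate}, so totally isotropic no longer forces half-dimension. The extra step that makes the proof go through --- entirely absent from your proposal --- is the analysis of the kernel $\XX$ of the form on $\VV$: using $\langle 1+u,1+v\rangle=\psi\circ\tr(\b(uv-vu))$ and the lattice-duality identity $\mathfrak{j}^1\cap a_\b(\mathfrak{j}^1)^*=\mathfrak{h}^1$, Lemma \ref{gerardmer} identifies $\XX=\YY\oplus\YY'$ where $\YY$ and $\YY'$ are the images of $\H^1\cap\J^{1\g}$ and $\J^1\cap\H^{1\g}$; since $\tau$ swaps $\YY$ and $\YY'$, Corollary \ref{Pleven} yields $\dim\XX^+=\dim\XX^-$, and only then, combining with the Lagrangian decomposition of the non-degenerate quotient $\VV/\XX$, does one obtain $\dim\VV^+=\dim\VV^-$ and hence the index formula \eqref{CharlesDeGaulle}. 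Your Lagrangian intuition is the right shape, but the degeneracy of the form on the correct space, and its repair via Lemma \ref{gerardmer}, is the actual content of the proposition, and your argument does not touch it.
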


\begin{proof}
Suppose that $\Hom_{\J^{1g}\cap\G^\s}(\n^g,1)$ is non-zero. 
Restricting to $\H^{1g}\cap\G^\s$, 
the character~$\t^g$ is trivial on $\H^{1g}\cap\G^\s$,
and Proposition \ref{bathmologie} together with Lemma \ref{FL}
give us $g\in\J\B^\times\G^\s$. 
Con\-ver\-sely, assume that $g\in\J\B^\times\G^\s$.
Since   the   dimension    of   $\Hom_{\J^{1g}\cap\G^\s}(\n^g,1)$   does   not
chan\-ge~when~$g$ varies in a given $(\BJ,\G^\s)$-double coset, we may and 
will assume 
that we have $g\in\B^\times$.
Thus we have $\g=\s(g)g^{-1}\in\B^\times$ as well. 

\begin{lemm}
\label{LouElias1}
The map $\tau:x\mapsto\g^{-1}\s(x)\g$ 
is an involutive automorphism of $\G$ and,
for any subgroup $\H\subseteq\G$,
we have $\H^g\cap\G^\s=(\H\cap\G^\tau)^g$.
\end{lemm}

\begin{proof}
This follows from an easy calculation using the fact that $\s(\g)=\g^{-1}$.
\end{proof}

Our goal is thus to prove that the space $\Hom_{\J^{1}\cap\G^{\tau}}(\n,1)$
has dimension $1$.
By Paragraph \ref{pifoulechien}, the representation of $\J^1$ induced from 
$\t$ decomposes as the direct sum of $(\J^1:\H^1)^{1/2}$ copies of the 
representation $\n$.
The space:
\begin{equation}
\label{labello}
\Hom_{\J^{1}\cap\G^{\tau}}\left(\Ind^{\J^{1}}_{\H^{1}}(\t),1\right) 
\end{equation} 
thus decomposes as the direct sum of $(\J^1:\H^1)^{1/2}$ copies of 
$\Hom_{\J^{1}\cap\G^\tau}(\n,1)$.
Applying the~Fro\-be\-nius reci\-pro\-city and the Mackey formula, the space 
\eqref{labello} is isomorphic to:
\begin{equation*}
\Hom_{\J^{1}}
\left(\Ind^{\J^{1}}_{\H^{1}}(\t),\Ind^{\J^{1}}_{\J^{1}\cap\G^\tau}(1)\right) 
\simeq
\bigoplus\limits_{x\in\X} \Hom_{\H^{1}}
\left(\t,\Ind^{\H^{1}}_{\H^{1}\cap(\J^{1}\cap\G^\tau)^x}(1)\right)
\end{equation*}
where $\X$ is equal to $\J^{1}/(\J^{1}\cap\G^\tau)\H^{1}$
(recall that $\H^1$ is normal in $\J^1$ and $\J^1/\H^1$ is abelian).
Since $\J^{1}$ normalizes $\t$, this is isomorphic to:
\begin{equation}
\label{passear}
\bigoplus\limits_{x\in\X} 
\Hom_{\H^{1}}\left(\t,\Ind^{\H^{1}}_{\H^{1}\cap\G^\tau}(1)\right) 
\simeq
\bigoplus\limits_{x\in\X} \Hom_{\H^{1}\cap\G^\tau}(\t,1). 
\end{equation}
Since $\Hom_{\H^{1}\cap\G^\tau}(\t,1)$ has dimension $1$, 
the right hand side of \eqref{passear} has dimension the cardina\-lity of $\X$. 
It thus remains to prove that $\X$ has cardinality 
$(\J^1:\H^1)^{1/2}$, or equivalently:
\begin{equation}
\label{CharlesDeGaulle}
(\J^{1}\cap\G^\tau:\H^{1}\cap\G^\tau) = (\J^1:\H^1)^{\frac{1}{2}}.
\end{equation}
Now consider the groups $\J^1\cap\J^{1\g}$ and $\H^1\cap\H^{1\g}$,
which are both stable by $\tau$.

\begin{lemm}
\label{L2now}
We have $\t(\tau(x))=\t(x)^{-1}$ for all $x\in\H^1\cap\H^{1\g}$.
\end{lemm}

\begin{proof}
Given $x\in\H^1\cap\H^{1\g}$, 
and using the fact that $\t\circ\s=\t^{-1}$ on $\H^1$, we have:
\begin{equation*}
\t(\tau(x))^{-1} = \t\circ\s(\tau(x)) = \t^{\g}(x) = \t(x)
\end{equation*}
since $\g\in\B^\times$ intertwines $\t$.
\end{proof}

Let us write $\VV$ for the $\kk$-vector space
$(\J^1\cap\J^{1\g})/(\H^1\cap\H^{1\g})$ equipped with both 
the involution $\tau$ and the sym\-plec\-tic form 
$(x,y)\mapsto\langle x,y\rangle$ induced by 
\eqref{SYMPLEC}. 
We write $\VV^+=\{v\in\VV\ |\ \tau(v)=v\}$ and 
$\VV^-=\{v\in\VV\ |\ \tau(v)=-v\}$.
We have the decomposition $\VV=\VV^+\oplus\VV^-$ since $p\neq2$. 

\begin{lemm}
\label{willycolette}
There is a group isomorphism
$\VV^+\simeq (\J^1\cap\G^\tau)/(\H^1\cap\G^\tau)$.
\end{lemm}

\begin{proof}
  First note that we have the containment
  $(\J^1\cap\G^\tau)(\H^1\cap\H^{1\g})/(\H^1\cap\H^{1\g})\subseteq\VV^+$.
  The lemma will follow if we prove that this containment is an equality.
  Let $x\in \J^1\cap\J^{1\g}$ be~such~that
  $x (\H^1\cap\H^{1\g})\in\VV^+$.
  One thus has $x^{-1}\tau(x)\in\H^1\cap\H^{1\g}$.
  Since $\H^1\cap\H^{1\g}$ is a $\tau$-stable pro-$p$-group and $p\neq2$,
  there is an $h\in \H^1\cap\H^{1\g}$ such that $x^{-1}\tau(x)=h^{-1}\tau(h)$.
  The expected result follows.
\end{proof}

We are now going to prove that $\VV^+$ and $\VV^-$ have the same dimension. 

\begin{lemm}
The subspaces $\VV^+$ and $\VV^-$ are totally isotropic.
\end{lemm}

\begin{proof}
Indeed, thanks to Lemma \ref{L2now}, first note that:
\begin{equation}
\label{Chirac}
\langle \tau(x),y \rangle = \langle \tau(y),x \rangle,
\quad
x,y\in\VV.
\end{equation}
If $x,y\in\VV^+$, then we get 
$\langle x,y \rangle = \langle x,y \rangle^{-1}$,
thus $\langle x,y \rangle = 1$ since $p\neq2$.
If $x,y\in\VV^-$, then we get 
$\langle x^{-1},y \rangle = \langle x,y^{-1} \rangle^{-1}$.
But $\langle x^{-1},y \rangle =\langle x,y \rangle^{-1}
=\langle x,y^{-1}\rangle$.
It follows again that $\langle x,y \rangle = 1$.
\end{proof}

Let $\XX$ denote the kernel 
of the symplectic form $(x,y)\mapsto\langle x,y\rangle$ on $\VV$,
that is:
\begin{equation*}
\XX = \{w\in\VV\ |\ \langle w,v\rangle=1 \text{ for all } v\in\VV\}.
\end{equation*}
Let $\YY$ and $\YY'$ denote the images 
of $\H^1\cap\J^{1\g}$ and $\J^1\cap\H^{1\g}$ in $\VV$, 
respectively. 

\begin{lemm}
\label{gerardmer}
The subspaces $\YY$ and $\YY'$ are both contained 
in $\XX$, and we have 
$\XX=\YY\oplus\YY'$.
\end{lemm}

\begin{proof}
One easily verifies that $\tau$ stabilizes $\XX$ and 
exchanges $\YY$ and $\YY'$. 
First note that $\YY\subseteq\XX$, 
since $\langle x,y \rangle=1$ for any $x\in\H^1$ and $y\in\J^1$.
By applying $\tau$, and thanks to \eqref{Chirac}, we deduce that 
$\YY'$ is also contained in $\XX$. 
Now, thanks to \eqref{SYMPLECGOTHIC}, we have:
\begin{equation*}
\langle 1+x,1+y\rangle=\psi\circ\tr(\b(xy-yx))
\end{equation*}
for all $x,y\in\mathfrak{j}^1\cap\mathfrak{j}^{1\g}$,
where $\JJ^1$ is the sub-$\Oo$-lattice of $\aa$ 
such that $\J^1=1+\JJ^1$.
Let $a_\b$ denote the endomorphism of $\F$-algebras
$x\mapsto \b x-x\b$ of $\Mat_n(\F)$.
Given a subset $\SS\subseteq\Mat_n(\F)$, write $\SS^*$ for the set
of $a\in\Mat_n(\F)$ such that $\psi(\tr(as))=1$ for all $s\in\SS$. 
Then the set of $x\in\mathfrak{j}^1\cap\mathfrak{j}^{1\g}$ 
such that $\langle 1+x,1+y\rangle=1$ for all 
$y\in\mathfrak{j}^1\cap\mathfrak{j}^{1\g}$ is equal to:
\begin{align*}
\mathfrak{j}^1\cap\mathfrak{j}^{1\g}\cap 
a_\b(\mathfrak{j}^1\cap\mathfrak{j}^{1\g})^* 
&= \mathfrak{j}^1\cap\mathfrak{j}^{1\g}\cap 
(a_\b(\mathfrak{j}^1)\cap a_\b(\mathfrak{j}^1)^\g)^* \\ 
&= \mathfrak{j}^1\cap\mathfrak{j}^{1\g}\cap 
(a_\b(\mathfrak{j}^1)^*+a_\b(\mathfrak{j}^1)^{*\g}) \\
&= \mathfrak{j}^{1\g}\cap(\mathfrak{j}^1\cap a_\b(\mathfrak{j}^1)^*)
+ \mathfrak{j}^1\cap(\mathfrak{j}^{1}\cap a_\b(\mathfrak{j}^1)^*)^\g.
\end{align*}

We now claim that:
\begin{equation}
\label{sandrinecollette}
\mathfrak{j}^{1}\cap a_\b(\mathfrak{j}^1)^*=\mathfrak{h}^1.
\end{equation}
To see this, look at the case where $g=1$.
On the one hand, for $x\in\JJ^1$, we have
$\langle 1+x,1+y\rangle=1$ for all $y\in\JJ^1$ if and only if 
$x\in\mathfrak{j}^1\cap a_\b(\mathfrak{j}^1)^*$.
On the other hand, the symplectic form \eqref{SYMPLEC} on the space 
$\J^1/\H^1$ is non-degenerate.
We thus have 
$\mathfrak{j}^{1}\cap a_\b(\mathfrak{j}^1)^*\subseteq\mathfrak{h}^1$
and the other containment follows from the fact that $\psi$ is trivial on 
the maximal ideal of $\Oo$.

We now go back to our general situation with $g\in\B^\times$.
Applying \eqref{sandrinecollette} to $\JJ^1$ and $\JJ^{1\g}$,
we get:
\begin{equation*}
\mathfrak{j}^1\cap\mathfrak{j}^{1\g}\cap 
a_\b(\mathfrak{j}^1\cap\mathfrak{j}^{1\g})^* 
=\JJ^{1\g}\cap\HH^{1}+\HH^{1\g}\cap\JJ^{1}.
\end{equation*}
The result follows. 
\end{proof}

\begin{coro}
\label{Pleven}
The subspaces $\XX^+=\XX\cap\VV^+$ and 
$\XX^-=\XX\cap\VV^-$ have the same dimension
and we have $\XX=\XX^+\oplus\XX^-$.
\end{coro}

\begin{proof}
The map $x\mapsto x+\tau(x)$ is an isomorphism from $\YY$ 
to $\XX^+$,
and the map $x\mapsto x-\tau(x)$ is an isomorphism from $\YY$ 
to $\XX^-$. 
Thanks to Lemma \ref{gerardmer} and the fact that $\YY$, $\YY'$ 
have the same dimension, we thus have:
\begin{equation*}
\dim \XX^++\dim \XX^- = 2\cdot\dim\YY = \dim \XX,
\end{equation*}
which ends the proof of the corollary.
\end{proof}

Now consider the non-degenerate symplectic space $\VV/\XX$. 
It decomposes into the direct sum~of two totally isotropic 
subspaces $(\VV^++\XX)/\XX$ and $(\VV^-+\XX)/\XX$. 
We thus have:
\begin{align*}
\max(\dim((\VV^++\XX)/\XX), \dim((\VV^-+\XX)/\XX))
&\< \frac{1}{2}\cdot\dim(\VV/\XX), \\
\dim((\VV^++\XX)/\XX)+\dim((\VV^-+\XX)/\XX)
&=\dim(\VV/\XX). 
\end{align*}
These spaces thus have the same dimension and are 
maximal totally isotropic.
Corollary~\ref{Pleven}~now implies that $\VV^+$ and $\VV^-$ have the same 
dimension.

In order to deduce the equality \eqref{CharlesDeGaulle},
and thanks to Lemma \ref{willycolette},
it remains to prove\footnote{I thank Jiandi Zou for pointing out this argument
to me, which was missing in a former version of the proof.}
that:
\begin{equation*}
(\J^1\cap\J^{1\g}:\H^1\cap\H^{1\g}) = (\J^1:\H^1),
\end{equation*}
which follows from \cite{BK} Lemma 5.1.10.
This ends the proof of Proposition \ref{MultOneEta}. 
\end{proof}

\subsection{Contribution of the mixed Heisenberg representation}
\label{Felicite}

Let $g\in\J\B^\times\G^\s$.
We have seen in Paragraph \ref{DCL} (see \eqref{VGE1})
that, changing $g$ without changing the double coset $\J g\G^{\s}$, 
we may assume that $g\in\B^\times$ and that $\g=\s(g)g^{-1}$ can be 
written $\g=bu$ with $b$ of the form \eqref{bantidiagonal} 
and $u\in\U^1=\J^1\cap\B^\times$.
We write $\tau$ for the involution defined 
by Lemma \ref{LouElias1} and $\U=\J\cap\B^\times$.

We have a standard Levi subgroup $\M$ of $\G$ defined by \eqref{Detroit}
and parabolic subgroups $\P$,~$\P^-$ of $\G$ with Levi component $\M$, 
opposite to each other and with unipotent radicals $\N$, $\N^-$ 
res\-pec\-ti\-ve\-ly. 
There is a unique standard hereditary order $\bbm\subseteq\bb$ 
such that:
\begin{equation*}
\bbm^\times = (\U^1\cap\N^-)\cdot(\U\cap\P).
\end{equation*}
Since $u\in\U^1\subseteq\U^1(\bbm)$ and thanks to the specific form of $b$, 
one verifies that:
\begin{equation}
\label{PhillipedeMaubert}
\U^1(\bbm) = (\U^1\cap\P^-)\cdot(\U\cap\N)= (\U\cap\U^{1\g})\U^1.
\end{equation}
Let $\aam\subseteq\aa$ be the unique hereditary order 
of $\Mat_n(\F)$
normalized by $\E^\times$ such that $\aam\cap\B=\bbm$.
This gives us a simple stratum $[\aam,\b]$.
Let $\t_{{\rm m}}\in\Cc(\aam,\b)$ be the transfer of $\t$
(see \eqref{transfermap})
and~$\n_{{\rm m}}$ be the Hei\-sen\-berg repre\-sen\-tation
on $\J^1_{{\rm m}}=\J^1(\aam,\b)$ associated with $\t_{{\rm m}}$~(by 
\cite{BK} Propo\-si\-tion 5.1.1 and \cite{MSt} Proposition 2.1).

Let $\SS^1$ be the pro-$p$-subgroup $\U^1(\bbm)\J^1\subseteq\J$. 
By \cite{BK} Proposition 5.1.15, there is an ir\-re\-du\-cible representation 
$\wn$ of the group $\SS^1$, 
unique up to isomorphism, 
extending $\n$ and such that:
\begin{equation}
\label{etacoherence}
\Ind^{\U^1(\aam)}_{\SS^1} (\wn) 
\simeq
\Ind^{\U^1(\aam)}_{\J^1_{{\rm m}}} (\n_{{\rm m}}).
\end{equation}
In this paragraph, we prove the following result.

\begin{prop}
\label{dimwn}
We have $\dim \Hom_{\SS^{1g}\cap\G^\s}(\wn^g,1) = 1$.
\end{prop}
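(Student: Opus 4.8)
The plan is to reduce to a statement about $\tau$-fixed points and then to transport the multiplicity-one result of Paragraph~\ref{P53} across the coherence relation~\eqref{etacoherence}. We may assume $g\in\B^\times$ and $\g=\s(g)g^{-1}=bu$ with $b$ of the form~\eqref{bantidiagonal} and $u\in\U^1$, and we set $\tau:x\mapsto\g^{-1}\s(x)\g$ as in Lemma~\ref{LouElias1}. The group $\SS^1=\U^1(\bbm)\J^1$ is $\s$-stable (since $\bbm$ and $\J^1$ are), so by that lemma $\SS^{1g}\cap\G^\s=(\SS^1\cap\G^\tau)^g$, and it suffices to prove $\dim\Hom_{\SS^1\cap\G^\tau}(\wn,1)=1$. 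The upper bound is free: as $\wn$ restricts to $\n$ on $\J^1$, any $(\SS^1\cap\G^\tau)$-invariant form on $\wn$ is in particular $(\J^1\cap\G^\tau)$-invariant, so restriction embeds $\Hom_{\SS^1\cap\G^\tau}(\wn,1)$ into $\Hom_{\J^1\cap\G^\tau}(\n,1)$, and the latter space is one-dimensional by Proposition~\ref{MultOneEta} and Lemma~\ref{LouElias1} (recall $g\in\B^\times$).

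For the lower bound -- the real content -- I would first observe that $[\aam,\b]$ is a $\s$-selfdual simple stratum: $\aam$ is $\s$-stable because $\bbm$ is, and $\s(\b)=-\b$. Hence $\H^1_{{\rm m}}:=\H^1(\aam,\b)$ is $\s$-stable and the transfer $\t_{{\rm m}}$ of $\t$ satisfies $\t_{{\rm m}}\circ\s=\t_{{\rm m}}^{-1}$, the transfer map~\eqref{transfermap} being compatible with $\s$; in particular $\n_{{\rm m}}^{\s\vee}\simeq\n_{{\rm m}}$. Moreover $\g=bu$ lies in $\B^\times\J^1_{{\rm m}}$, since $u\in\U^1\subseteq\U^1(\bbm)\subseteq\J^1_{{\rm m}}$, and therefore intertwines $\t_{{\rm m}}$ by Proposition~\ref{patel}\eqref{bourrin6}. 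Now the argument of Paragraph~\ref{P53} uses only that $\J^1/\H^1$ is a non-degenerate symplectic $\kk$-space, that $\g$ intertwines the simple character in play, and that $p\neq2$; carried out with the obvious modifications for the stratum $[\aam,\b]$ and the element $g\in\B^\times$, it yields $\dim\Hom_{\J^{1g}_{{\rm m}}\cap\G^\s}(\n_{{\rm m}}^g,1)=1$, that is $\dim\Hom_{\J^1_{{\rm m}}\cap\G^\tau}(\n_{{\rm m}},1)=1$.

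It then remains to carry this non-vanishing across~\eqref{etacoherence}. Since $\U^1(\aam)$ contains the $\tau$-stable group $\SS^1$ (note $\SS^1\subseteq\J^1_{{\rm m}}\subseteq\U^1(\aam)$), I would restrict the isomorphism $\Ind^{\U^1(\aam)}_{\SS^1}(\wn)\simeq\Ind^{\U^1(\aam)}_{\J^1_{{\rm m}}}(\n_{{\rm m}})$ to $\SS^1$, apply $\Hom_{\SS^1\cap\G^\tau}(-,1)$, and expand both sides by the Mackey formula and Frobenius reciprocity. On the left one gets a direct sum of spaces, one of which -- the contribution of the trivial double coset -- is $\Hom_{\SS^1\cap\G^\tau}(\wn,1)$; on the right a direct sum one of whose terms involves $\Hom_{\J^1_{{\rm m}}\cap\G^\tau}(\n_{{\rm m}},1)\neq0$. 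A dimension count in the spirit of Paragraph~\ref{P53} -- comparing $(\J^1_{{\rm m}}:\H^1_{{\rm m}})$, $(\SS^1:\H^1)$ and the cardinalities of the relevant double-coset sets, and using once more the triviality of the first cohomology of an involution on a pro-$p$-group -- then forces the term $\Hom_{\SS^1\cap\G^\tau}(\wn,1)$ to be non-zero, which combined with the upper bound finishes the proof.

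The main obstacle is exactly this last bookkeeping. The difficulty is that $\U^1(\aam)$ is \emph{not} itself $\tau$-stable: conjugation by the block-antidiagonal element $b$ carries $\aam$ to the hereditary order attached to the opposite parabolic, so one cannot simply apply $\Hom_{\U^1(\aam)\cap\G^\tau}(-,1)$ to~\eqref{etacoherence}. The computation has to be organised around genuinely $\tau$-stable subgroups (such as $\SS^1$, or a suitable conjugate of $\U^1(\aam)$), and one must identify precisely which double cosets support a non-zero $\tau$-invariant form on each side and check that the corresponding multiplicities match -- essentially a second pass of the symplectic and cohomological analysis of Paragraph~\ref{P53}.
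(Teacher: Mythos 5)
Your upper-bound argument and the reduction, via Lemma~\ref{LouElias1}, to the non-vanishing of $\Hom_{\SS^1\cap\G^\tau}(\wn,1)$ are correct and match the paper, as does the observation that $[\aam,\b]$ is a $\s$-selfdual maximal simple stratum, that $\t_{\rm m}\circ\s=\t_{\rm m}^{-1}$, and that $\g$ intertwines $\t_{\rm m}$, from which the symplectic analysis of Paragraph~\ref{P53} applied to $[\aam,\b]$ gives $\Hom_{\J^1_{\rm m}\cap\G^\tau}(\n_{\rm m},1)\neq 0$.

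The genuine gap is exactly where you flag it, namely in transporting this non-vanishing across~\eqref{etacoherence}, and the diagnosis you offer is off in two ways. First, the non-$\tau$-stability of $\U^1(\aam)$ is not the obstruction you think it is: the paper simply applies $\Hom_{\U^1(\aam)\cap\G^\tau}(-,1)$ to both sides of~\eqref{etacoherence}, which requires $\U^1(\aam)\cap\G^\tau$ to be a subgroup, not $\U^1(\aam)$ to be $\tau$-stable. Second, $\SS^1$ itself is \emph{not} $\tau$-stable in general: the antidiagonal factor $b$ of $\g$ does not normalize $\bbm$ (unless its Cartan type is scalar), so conjugation by $\g$ does not preserve $\SS^1$. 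What is $\tau$-stable is the intersection $\SS^1\cap\SS^{1\g}$, and that is the group on which the cohomological vanishing is actually used.

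More substantively, the ``dimension count'' you gesture at is not how the argument closes, and I do not see how to make it rigorous. The paper's route is a double-coset localisation: Mackey applied to $\Ind^{\U^1(\aam)}_{\SS^1}(\wn)$ restricted to $\U^1(\aam)\cap\G^\tau$ produces some $x\in\U^1(\aam)$ with $\Hom_{\SS^{1x}\cap\G^\tau}(\wn^x,1)\neq 0$. One then pins down the double coset of $x$: restricting to $\H^{1x}\cap\G^\tau$ and applying Proposition~\ref{bathmologie} shows that $\s(x)\g x^{-1}$ lies in $\J^1\B^\times\J^1\cap\U^1(\aam)\g\U^1(\aam)$; the simple intersection property~\eqref{SIP} turns this into $\s(x)\g x^{-1}\in\SS^1\g\SS^1$; writing $\tau(y)y^{-1}=l\in\SS^1$ for a suitable translate $y$ of $x$ and using that the first cohomology set of $\tau$ in the $\tau$-stable pro-$p$-group $\SS^1\cap\SS^{1\g}$ is trivial forces $x\in\SS^1(\U^1(\aam)\cap\G^\tau)$, i.e.\ the contributing double coset is the trivial one. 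This localisation step, rather than any comparison of indices or multiplicities, is what your sketch is missing.
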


\begin{proof}
Since $\wn$ extends $\n$, 
the space $\Hom_{\SS^{1g}\cap\G^\s}(\wn^g,1)$ 
is contained in the $1$-dimensional space 
$\Hom_{\J^{1g}\cap\G^\s}(\n^g,1)$.
It is thus enough to prove that  
$\Hom_{\SS^{1g}\cap\G^\s}(\wn^g,1)$ 
is non-zero. 
Equivalently, by Lem\-ma \ref{LouElias1}, it is enough to prove that 
$\Hom_{\SS^{1}\cap\G^\tau}(\wn,1)$ is non-zero.

First note that, since $\bbm$ is $\s$-stable, 
$\aam$ is $\s$-stable as well.
We have:
\begin{equation*}
\s(\H^1(\aam,\b)) 
= \H^1(\s(\aam),\s(\b))
= \H^1(\aam,-\b)
= \H^1(\aam,\b)
\end{equation*}
thus $\H_{{\rm m}}^{1}=\H^1(\aam,\b)$ is $\s$-stable. 
By an argument similar to the one used in \cite{AKMSS} {Paragraph 4.6}, 
it then follows that $\t_{{\rm m}}\circ\s=(\t_{{\rm m}})^{-1}$.

Since $\g$ intertwines $\t_{{\rm m}}$ by 
Proposition~\ref{patel}\eqref{bourrin6}, 
it follows from Proposition \ref{bathmologie} 
that the character $\t_{{\rm m}}^g$ is trivial on 
$\H_{{\rm m}}^{1g}\cap\G^\s$, thus  
$\Hom_{\J^{1g}_{{\rm m}}\cap\G^\s}(\n^g_{{\rm m}},1)=
\Hom_{\J^{1}_{{\rm m}}\cap\G^\tau}(\n_{{\rm m}},1)$ 
is non-zero. 
Inducing to $\U^1(\aam)$, we get:
\begin{equation*}
\Hom_{\U^1(\aam)\cap\G^\tau}\left(\Ind^{\U^1(\aam)}_{\J^{1}_{{\rm m}}} 
(\n_{{\rm m}}),1\right)\neq\{0\}.
\end{equation*}
Applying the Frobenius reciprocity and the Mackey formula, 
it follows that there is a $x\in\U^1(\aam)$ such that:
\begin{equation}
\label{inter1}
\Hom_{\SS^{1x}\cap\G^\tau}(\wn^{x},1) \neq \{0\}.
\end{equation}
We claim that $x\in\SS^{1}(\U^1(\aam)\cap\G^{\tau})$. 
Restricting \eqref{inter1} to the subgroup $\H^{1x}\cap\G^\tau$ 
and applying Proposition \ref{bathmologie}, we get:
\begin{equation*}
\s(xg)g^{-1}x^{-1}=\s(x)\g x^{-1}\in\J^1\B^\times\J^1 \cap \U^1(\aam)\g\U^1(\aam).
\end{equation*}
Write $\s(x)\g x^{-1}=jcj'$ for some $j,j'\in\J^1$ and $c\in\B^\times$.
Since $\g\in\B^\times$ and $\J^1\subseteq\U^1(\aam)$, 
the simple intersection property \eqref{SIP} implies that 
$c\in\U^1(\aam)\g\U^1(\aam)\cap\B^\times=\U^1(\bbm)\g\U^1(\bbm)$.
Therefore we have $\s(x)\g x^{-1}=\s(s)\g s'$ for some $s,s'\in\SS^1$. 
If we let $y=s^{-1}x$,
then we have $\s(y)\g y^{-1}=\g l$ for some $l\in\SS^1$, 
that is $\tau(y)y^{-1}=l$. 
Since the first cohomology set of $\tau$ in 
$\SS^1\cap\SS^{1\g}$ is trivial, 
we get $l=\tau(h)h^{-1}$ for some $h\in\SS^1$. 
This gives us:
\begin{equation*}
x\in\U^1(\aam)\cap\SS^1(\G^\s)^{g^{-1}}
\end{equation*}
and the claim follows from the fact that $\SS^1\subseteq\U^1(\aam)$.

Putting the claim and \eqref{inter1} together, we deduce that 
$\Hom_{\SS^{1}\cap\G^\tau}(\wn,1)$ is non-zero. 
\end{proof}

\subsection{The double coset theorem}
\label{DCT}

Let $\bk$ be an irreducible representation of $\BJ$ extending $\n$
as in Paragraph \ref{pifoulechien}.
There is an~irre\-du\-cible representa\-tion $\bt$ of $\BJ$,
unique up to isomorphism, which is trivial on the subgroup 
$\J^1$~and satisfies $\bl\simeq\bk\otimes\bt$. 
We have the following lemma. 

\begin{lemm}
\label{passypassion}
Let $g\in\J\B^\times\G^\s$.
\begin{enumerate}
\item
There is a unique charac\-ter $\chi$ of $\BJ^g\cap\G^\s$ trivial on
$\J^{1g}\cap\G^\s$ such that:
\begin{equation*}
\Hom_{\J^{1g}\cap\G^\s}(\n^g,1) = 
\Hom_{\BJ^{g}\cap\G^\s}(\bk^g,\chi^{-1}).
\end{equation*}
\item
The canonical linear map:
\begin{equation*}
\Hom_{\J^{1g}\cap\G^\s}(\n^g,1)
\otimes\Hom_{\BJ^g\cap\G^\s}(\bt^g,\chi) \to \Hom_{\BJ^g\cap\G^\s}(\bl^g,1) 
\end{equation*}
is an isomorphism.
\end{enumerate}
\end{lemm}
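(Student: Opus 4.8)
The plan is to deduce both parts from the multiplicity-one statement of Proposition \ref{MultOneEta}, which tells us that the space $V=\Hom_{\J^{1g}\cap\G^\s}(\n^g,1)$ is $1$-dimensional; this is exactly where the hypothesis $g\in\J\B^\times\G^\s$ enters, and it is precisely what will force the character $\chi$ to be unique. Everything else is a formal argument exploiting the decomposition $\bl\simeq\bk\otimes\bt$ with $\bt$ trivial on $\J^1$.

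First I would construct $\chi$. Since $\J^{1g}$ is the image under conjugation by $g$ of the maximal normal pro-$p$-subgroup of $\BJ$, it is normalized by $\BJ^g$; as $\G^\s$ is a subgroup, $\BJ^g\cap\G^\s$ normalizes $\J^{1g}\cap\G^\s$. Hence $\BJ^g\cap\G^\s$ acts on $V$ by $h\cdot\lambda=\lambda\circ\bk^g(h)$: a routine check, using that $\bk^g$ restricts to $\n^g$ on $\J^{1g}$ and that $h$ normalizes $\J^{1g}\cap\G^\s$, shows $h\cdot\lambda\in V$. Since $V$ is $1$-dimensional and each $\bk^g(h)$ is invertible, this action is given by a character, which I will call $\chi^{-1}$. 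By construction $\chi$ is trivial on $\J^{1g}\cap\G^\s$ and $V\subseteq\Hom_{\BJ^g\cap\G^\s}(\bk^g,\chi^{-1})$; conversely any element of the latter space lies in $\Hom_{\J^{1g}\cap\G^\s}(\n^g,1)=V$ because $\chi^{-1}$ is trivial on $\J^{1g}\cap\G^\s$ and $\bk^g$ restricts to $\n^g$ there. This gives the equality in (1), and uniqueness of $\chi$ is immediate, the action of $\BJ^g\cap\G^\s$ on the nonzero space $V$ determining $\chi^{-1}$.

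For (2), I would identify $\bl^g$ with $\bk^g\otimes\bt^g$ and check first that the canonical map $\lambda\otimes\mu\mapsto\lambda\otimes\mu$, viewed as a linear form on $\bk^g\otimes\bt^g$, lands in $\Hom_{\BJ^g\cap\G^\s}(\bl^g,1)$: for $\lambda\in\Hom_{\BJ^g\cap\G^\s}(\bk^g,\chi^{-1})$ and $\mu\in\Hom_{\BJ^g\cap\G^\s}(\bt^g,\chi)$, the characters $\chi^{-1}$ and $\chi$ cancel. Injectivity is clear, since $V$ is $1$-dimensional: fixing a generator $\lambda_0$ of $V$ the map is essentially $\mu\mapsto\lambda_0\otimes\mu$. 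Surjectivity is the one point that needs care. Here I would argue that, because $\bt^g$ is trivial on $\J^{1g}$, restriction to $\J^{1g}\cap\G^\s$ gives a natural identification $\Hom_{\J^{1g}\cap\G^\s}(\bl^g,1)\simeq V\otimes W^\vee$, where $W$ is the (finite-dimensional) space of $\bt^g$; then $\Hom_{\BJ^g\cap\G^\s}(\bl^g,1)$ is the space of $\BJ^g\cap\G^\s$-invariants in $V\otimes W^\vee$ for the natural action. Since that action on $V\subseteq\bk^{g\vee}$ is by $\chi$, the invariants are $V\otimes(W^\vee)'$, where $(W^\vee)'$ is the subspace of $W^\vee$ on which $\BJ^g\cap\G^\s$ acts by $\chi^{-1}$ through the contragredient of $\bt^g$, that is $(W^\vee)'=\Hom_{\BJ^g\cap\G^\s}(\bt^g,\chi)$. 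Unwinding the identifications shows this equality is realized by the canonical map, which is therefore an isomorphism.

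I do not expect a genuine obstacle: the whole weight of the argument has been shifted onto Proposition \ref{MultOneEta}, and the remaining work is bookkeeping. The two points to be careful about are keeping the $\chi$ versus $\chi^{-1}$ conventions consistent between (1) and (2), and checking that the abstract invariants identification in the surjectivity step coincides with the concrete ``tensor of linear forms'' map named in the statement.
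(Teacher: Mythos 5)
Your proposal is correct and follows essentially the same route as the paper: both parts hinge entirely on the multiplicity-one statement of Proposition \ref{MultOneEta}, with $\chi$ defined by the induced action of $\BJ^g\cap\G^\s$ on the one-dimensional space $\Hom_{\J^{1g}\cap\G^\s}(\n^g,1)$. The only cosmetic difference is in part (2): where you pass through the abstract identification $\Hom_{\J^{1g}\cap\G^\s}(\bl^g,1)\simeq V\otimes W^\vee$ and then take $\BJ^g\cap\G^\s$-invariants, the paper constructs the second tensor factor directly by fixing $w$ in the space of $\bt$ and reading off the scalar $\Ff(w)$ such that $v\mapsto\Ll(v\otimes w)$ equals $\Ff(w)\Ee$ --- the same argument in concrete form.
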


\begin{proof}
Let us fix a non-zero linear form $\Ee\in\Hom_{\J^{1g}\cap\G^\s}(\n^g,1)$. 
The choice of $\bk$ defines an~ac\-tion of $\BJ^g\cap\G^\s$ on the space 
$\Hom_{\J^{1g}\cap\G^\s}(\n^g,1)$, which has dimension $1$ by 
Pro\-po\-sition \ref{MultOneEta}.
This determines a unique charac\-ter $\chi$ of $\BJ^g\cap\G^\s$ trivial on
$\J^{1g}\cap\G^\s$ such that:
\begin{equation*}
\Ee\circ\bk^{g}(x)=\chi(x)^{-1}\cdot\Ee
\end{equation*}
for all $x\in\BJ^g\cap\G^\s$.
This gives us the first part of the lemma.

Given $\Ll\in\Hom_{\BJ^g\cap\G^\s}(\bl^g,1)$ and $w$ in the space of $\bt$, 
the linear form $v\mapsto\Ll(v\otimes w)$ defined on the space of $\n$ is in 
$\Hom_{\J^{1g}\cap\G^\s}(\n^g,1)$. By 
Proposition \ref{MultOneEta} it is thus of the form $\Ff(w)\Ee$ for a 
unique $\Ff(w)\in\FC$.
We have $\Ll=\Ee\otimes\Ff$ and
$\Ff\in\Hom_{\BJ^g\cap\G^\s}(\bt^g,\chi)$.
\end{proof}

\begin{theo}
\label{DCT1}
Let $g\in\G$ and suppose $\Hom_{\BJ^g\cap\G^\s}(\bl^g,1)$ is non-zero. 
Then $\s(g)g^{-1}\in\BJ$.
\end{theo}

\begin{proof}
We know from Proposition \ref{bathmologie} and Lemma \ref{FL} 
that $g\in\J\B^\times\G^\s$.
We thus may assume that $g\in\B^\times$ and $\g=\s(g)g^{-1}$ 
is as in Paragraph \ref{Felicite}. 
In particular, we have a standard hereditary order $\bbm\subseteq\bb$
and an involution $\tau$.

Let us fix an irreducible representation $\bk$ of $\BJ$ extending $\n$, 
and let $\chi$ be the character given by Lemma \ref{passypassion}.
The restriction of $\bk$ to $\J$, denoted $\k$, 
is an irreducible representation of $\J$ extending $\n$.
It follows from Remark \ref{handluggage} 
that $\k$ is a beta-extension of $\n$, 
and from \cite{BK} Theorem 5.2.3 that $\k$ extends $\wn$. 
Proposition \ref{dimwn} thus implies:
\begin{equation*}
\Hom_{\SS^{1g}\cap\G^\s}(\wn^g,1) = 
\Hom_{\BJ^{g}\cap\G^\s}(\bk^g,\chi^{-1})
\end{equation*}
and $\chi$ is trivial on $\U^1(\bbm)^{g}\cap\G^{\s}$.
By Lemma \ref{passypassion}, the space 
$\Hom_{\BJ^{g}\cap\G^\s}(\bt^g,\chi)$ is non-zero. 

Write $\U=\J\cap\B^\times$ and $\U^1=\J^1\cap\B^\times$.
Since $g\in\B^\times$, we have 
$\J^{g}\cap\G^\s=(\U^{g}\cap\G^{\s})(\J^{1g}\cap\G^\s)$.
Let $\rho$ be the restriction of $\bt$ to $\J$.
Then $\Hom_{\U^{g}\cap\G^{\s}}(\rho^g,\chi)$ is non-zero. 
Lemma \ref{LouElias1} implies:
\begin{equation}
\label{portemaillot}
\Hom_{\U^1(\bbm)\cap\G^{\tau}}(\rho,1)
= \Hom_{\U^1(\bbm)^{g}\cap\G^{\s}}(\rho^g,1)
\neq\{0\}.
\end{equation}
We now describe more carefuly the subgroup $\U^1(\bbm)$.

\begin{lemm}
\label{choupinew}
We have $\U^1(\bbm)=(\U^1(\bbm)\cap\G^{\tau})\U^{1}$.
\end{lemm}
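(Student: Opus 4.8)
The plan is to derive the equality from the triviality, valid since $p\neq2$, of the first cohomology of the involution $\tau$ on a suitable $\tau$-stable pro-$p$-group, starting from the decomposition $\U^1(\bbm)=(\U\cap\U^{1\g})\U^1$ supplied by \eqref{PhillipedeMaubert}. First I would record the equivariance of $\tau$ under the data at hand. Because $\bb$ and $\bbm$ are $\s$-stable and $\s(\g)=\g^{-1}$, we have $\s(\U^1)=\U^1$, $\s(\U^{1\g})=\g\,\U^1\,\g^{-1}$, and therefore
\begin{equation*}
\tau(\U\cap\U^{1\g})=\g^{-1}\bigl(\U\cap\g\U^1\g^{-1}\bigr)\g=\U^\g\cap\U^1\subseteq\U^1,
\qquad
\tau(\U^1)=\g^{-1}\U^1\g=\U^{1\g}.
\end{equation*}
In particular $\tau$ maps $\U^1(\bbm)$ onto a pro-$p$-group $\tau(\U^1(\bbm))$, so that $\Kk:=\U^1(\bbm)\cap\tau(\U^1(\bbm))$ is a $\tau$-stable pro-$p$-group, and $\tau$ carries the factor $\U\cap\U^{1\g}$ of $\U^1(\bbm)$ into $\U^1$. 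The one piece of genuine structure I would need is that $\Kk\subseteq\U^1$: this is where the specific antidiagonal form \eqref{bantidiagonal} of $b$ enters, the point being that $\g^{-1}\bbm\g$ is of level $\geq1$ on exactly those block positions (of the Levi \eqref{Detroit}) on which $\bbm$ is maximal, so that $\bbm\cap\g^{-1}\bbm\g$ sits at level $\geq1$ throughout and hence $\Kk\subseteq\U^1(\bb)=\U^1$.

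Granting $\Kk\subseteq\U^1$, the lemma follows by a folding argument. Let $x\in\U^1(\bbm)$ and, using \eqref{PhillipedeMaubert}, write $x=vw$ with $v\in\U\cap\U^{1\g}$ and $w\in\U^1$; set $z=v^{-1}\tau(v)$. Then $z\in\U^1(\bbm)$, since $v^{-1}\in\U\cap\U^{1\g}\subseteq\U^1(\bbm)$ and $\tau(v)\in\U^1\subseteq\U^1(\bbm)$, and $\tau(z)=\tau(v)^{-1}v=z^{-1}$; hence $z$ lies in $\U^1(\bbm)\cap\tau(\U^1(\bbm))=\Kk$ and satisfies $z\,\tau(z)=1$. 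As $p\neq2$, the first cohomology set of $\tau$ on the pro-$p$-group $\Kk$ is trivial — exactly as for the group $\SS^1\cap\SS^{1\g}$ in the proof of Proposition~\ref{dimwn} — so $z=c^{-1}\tau(c)$ for some $c\in\Kk\subseteq\U^1$. Put $k:=vc^{-1}$: it lies in $\U^1(\bbm)$, and, using $\tau(v)=vz=vc^{-1}\tau(c)$, one gets $\tau(k)=\tau(v)\tau(c)^{-1}=vc^{-1}\tau(c)\tau(c)^{-1}=vc^{-1}=k$, so $k\in\U^1(\bbm)\cap\G^\tau$. Since $cw\in\U^1$, this gives $x=k\,(cw)\in(\U^1(\bbm)\cap\G^\tau)\U^1$. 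The reverse inclusion is clear, so the lemma follows.

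The step I expect to be the real obstacle is the inclusion $\Kk\subseteq\U^1$: the cohomological argument is the by-now routine $p\neq2$ folding used repeatedly in this section, whereas checking that all $\tau$-anti-invariant directions of $\U^1(\bbm)$ already lie in $\U^1$ forces one to describe $\bbm$ and its $\g$-conjugate explicitly in terms of the block data $\M$ of \eqref{Detroit} and the exponents $a_1\geq\dots\geq a_m$ of \eqref{bantidiagonal}, rather than arguing abstractly.
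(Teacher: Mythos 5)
Your skeleton is the right one — the decomposition \eqref{PhillipedeMaubert}, a $\tau$-anti-invariant element, and triviality of the first cohomology of $\tau$ on a $\tau$-stable pro-$p$-group — and this is indeed what the paper does. But the claim on which everything hinges, namely $\Kk=\U^1(\bbm)\cap\tau(\U^1(\bbm))\subseteq\U^1$, is false, and the element $z=v^{-1}\tau(v)$ you feed into the cohomology argument is the wrong one. The reason is already visible before any block computation: you correctly note that $\tau(v)\in\U^1$ for $v\in\U\cap\U^{1\g}$, so that $z=v^{-1}\tau(v)\equiv v^{-1}$ modulo $\U^1$; and $v$ is in general nontrivial modulo $\U^1$ (otherwise $\U^1(\bbm)$ would equal $\U^1$ and $\bbm=\bb$, which is exactly the situation the whole argument is trying to reach). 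Since $z\in\Kk$ by the very reasoning you give, this shows $\Kk\not\subseteq\U^1$. Concretely, in the rank-$2$ case with $\g=b$ antidiagonal with exponents $(1,-1)$, the unipotent $v=\left(\begin{smallmatrix}1&0\\1&1\end{smallmatrix}\right)$ lies in $\U\cap\U^{1\g}$, one computes $\tau(v)=\left(\begin{smallmatrix}1&*t^2\\0&1\end{smallmatrix}\right)\in\U^1$, and $z=v^{-1}\tau(v)$ has a unit in the lower-left entry, so $z\notin\U^1$. Your heuristic that $\bbm\cap\g^{-1}\bbm\g$ sits at level $\geq1$ throughout is the step that fails: the two orders are opposite in the block structure, but they still share a whole unipotent part modulo the radical.

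The paper sidesteps exactly this issue by replacing the ``twisted quotient'' $x^{-1}\tau(x)$ with the commutator $y=x^{-1}\tau(x)^{-1}x\tau(x)$. Because $\tau(x)\in\U^1$ and $\U^1$ is normal in $\U$, the commutator lands in $\U^1$ automatically, and because $x\in\U^{1\g}$ and $\U^{1\g}$ is normal in $\U^\g$ it also lands in $\U^{1\g}$; so $y\in\U^1\cap\U^{1\g}$ (a $\tau$-stable pro-$p$-group that \emph{is} contained in $\U^1$), and $y\tau(y)=1$. Writing $y=z\tau(z)^{-1}$ with $z\in\U^1\cap\U^{1\g}$, the paper then takes the $\tau$-fixed representative to be $x'=x\tau(x)\tau(z)$ rather than $xc^{-1}$: since both $\tau(x)$ and $\tau(z)$ lie in $\U^1$, one has $x'\equiv x$ modulo $\U^1$ as required, and $x=x'\bigl(\tau(z)^{-1}\tau(x)^{-1}\bigr)\in x'\U^1$. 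In short, the extra factor $\tau(x)$ in the construction of $x'$ is precisely what absorbs the image of $x$ in $\U/\U^1$, and is what your two-factor element $k=vc^{-1}$ cannot produce; the commutator rather than the twisted quotient is what makes the whole cohomology computation take place inside $\U^1$.
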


\begin{proof}
We follow the proof of \cite{HMIMRP} Proposition 5.20. 
According to \eqref{PhillipedeMaubert} it is enough to prove 
that $\U\cap\U^{1\g}$ is contained in 
$(\U^1(\bbm)\cap\G^{\tau})\U^{1}$. 
Let $x\in\U\cap\U^{1\g}$ and define $y=x^{-1}\tau(x)^{-1}x\tau(x)$. 
Then $y\in\U^1\cap\U^{1\g}$ and $y\tau(y)=1$.
Since the first cohomology set of $\tau$ in $\U^1\cap\U^{1\g}$ is 
trivial, we get $y=z\tau(z)^{-1}$ for some $z\in\U^1\cap\U^{1\g}$.
Define $x'=x\tau(x)\tau(z)$.
Then $x'\in\U^1(\bbm)\cap\G^\tau$ and we have $x\in x'\U^1$.
\end{proof}

Since $\rho$ is trivial on $\U^{1}$, Lemma \ref{choupinew} and 
\eqref{portemaillot} together imply that 
$\Hom_{\U^1(\bbm)}(\rho,1)$ is non-zero. 
Since $\U^1(\bbm)/\U^{1}$ is a unipotent subgroup of 
$\U/\U^{1}\simeq\GL_m(\ee)$, 
the fact that the representation $\rho$ is cuspidal 
(see Paragraph \ref{pifoulechien}) 
implies that $\bbm=\bb$, that is $\g\in\U\subseteq\BJ$.
\end{proof}

{Lemma 4.25} of \cite{AKMSS} 
gives a detailed account of the elements $g\in\G$ 
such that $\s(g)g^{-1}\in\BJ$.

\subsection{Proof of Theorem \ref{DSTT}}
\label{viteuntitre}

Let $\pi$ be a $\s$-selfdual cuspidal  representation of $\G$, 
and $(\BJ,\bl)$ be a $\s$-selfdual type in $\pi$ given by 
Theorem \ref{PIMAIN}.
If the space 
$\Hom_{\BJ\cap\G^\s}(\bl,1)$ is non-zero, 
then \eqref{MACKEYFONDA}
implies that $\pi$ is distinguished. 

Conversely, suppose that $\pi$ is distinguished
and that $(\BJ,\bl)$ has been chosen as in 
Remark~\ref{StandardStableType} as it may be. 
Then the space $\Hom_{\BJ^g\cap\G^\s}(\bl^g,1)$ is non-zero for some $g\in\G$. 
By Theorem \ref{DCT1}, one has $\s(g)g^{-1}\in\BJ$.
Thus $\BJ^g$ is $\s$-stable, and:
\begin{equation*}
(\bl^g)^\s=(\bl^{\s})^{\s(g)}\simeq(\bl^\vee)^g=(\bl^g)^\vee
\end{equation*}
thus the type $(\BJ^g,\bl^g)$ is $\s$-selfdual.

\begin{rema}
\label{chienaphasique}
Let $(\BJ,\bl)$ and $(\BJ',\bl')$ be two distinguished $\s$-selfdual types in 
$\pi$. 
Since they both occur in $\pi$, there is a $g\in\G$ such that 
$\BJ'=\BJ^g$ and $\bl'\simeq\bl^g$.
Thanks to the multiplicity $1$ property of Theorem \ref{FP},
the formula \eqref{MACKEYFONDA} tells us that the double cosets $\BJ\G^\s$ 
and $\BJ g\G^\s$ are equal, which implies that $g\in\BJ\G^\s$.
Thus a distinguished cuspidal 
representation $\pi$ contains, up to $\G^\s$-conjugacy, 
a unique distinguished $\s$-selfdual type.
\end{rema}

Recall that Proposition \ref{TT0canonique} associates to 
any $\s$-selfdual cuspidal  repre\-sen\-tation of $\G$ 
a quadra\-tic extension $\T/\T_0$.

\begin{coro}
\label{zarathoustra}
Let $\pi$ be a $\s$-selfdual cuspidal  representation of $\G$, 
and suppose that $\T/\T_0$ is unramified. 
Then $\pi$ is distinguished if and only if any $\s$-selfdual type 
in $\pi$ is distinguished.
\end{coro}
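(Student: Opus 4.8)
The plan is to deduce this directly from the distinguished type theorem \ref{DSTT} together with the classification of $\s$-selfdual types in Proposition \ref{classesdetypesstables7}. By Theorem \ref{DSTT}, $\pi$ is distinguished if and only if it contains \emph{some} distinguished $\s$-selfdual type, that is a $\s$-selfdual type $(\BJ,\bl)$ with $\Hom_{\BJ\cap\G^\s}(\bl,1)\neq 0$. So the whole content of the corollary reduces to showing that, when $\T/\T_0$ is unramified, distinction is a property shared by \emph{all} $\s$-selfdual types in $\pi$ (which also makes the word ``any'' in the statement unambiguous).

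First I would record the purely formal fact that distinction of a $\s$-selfdual type depends only on its $\G^\s$-conjugacy class. Indeed, if $(\BJ,\bl)$ is a $\s$-selfdual type and $h\in\G^\s$, then $\BJ^h$ is again $\s$-stable (because $\s(h)=h$), and $(\bl^h)^\s\simeq(\bl^\s)^h\simeq(\bl^\vee)^h=(\bl^h)^\vee$, so $(\BJ^h,\bl^h)$ is again a $\s$-selfdual type; moreover $\BJ^h\cap\G^\s=(\BJ\cap\G^\s)^h$, so conjugation by $h$ induces an isomorphism between $\Hom_{\BJ\cap\G^\s}(\bl,1)$ and $\Hom_{\BJ^h\cap\G^\s}(\bl^h,1)$. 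Hence one $\s$-selfdual type in its $\G^\s$-class is distinguished if and only if they all are.

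Then I would invoke Proposition \ref{classesdetypesstables7}(1), according to which, in the unramified case, the $\s$-selfdual types contained in $\pi$ form a single $\G^\s$-conjugacy class. Combining this with the previous paragraph, the $\s$-selfdual types in $\pi$ are simultaneously distinguished or not, so ``$\pi$ contains a distinguished $\s$-selfdual type'' is equivalent to ``every $\s$-selfdual type in $\pi$ is distinguished''. Feeding this equivalence into Theorem \ref{DSTT} yields the assertion. There is no genuine obstacle here: both ingredients are already available, and the only point needing a line of verification is the conjugacy-invariance of the Hom-space, which is immediate.
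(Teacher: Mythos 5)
Your argument is correct and is essentially the paper's own proof: combine Theorem \ref{DSTT} with the fact from Proposition \ref{classesdetypesstables7}(1) that, in the unramified case, the $\s$-selfdual types in $\pi$ form a single $\G^\s$-conjugacy class. You merely make explicit the (true and easy) observation that distinction is a $\G^\s$-conjugation-invariant property of a $\s$-selfdual type, which the paper leaves implicit.
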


\begin{proof}
This follows from Theorem \ref{DSTT} together with 
Proposi\-tion \ref{classesdetypesstables7}, 
which says that 
the re\-presentation $\pi$ contains, up to $\G^\s$-conjugacy, a uni\-que 
$\s$-selfdual type. 
\end{proof}

When $\T/\T_0$ is ramified, 
Proposi\-tion \ref{classesdetypesstables7} tells us that $\pi$ contains 
more than one $\G^\s$-conjugacy class of $\s$-selfdual types as soon 
as its relative degree $m$ is at least $2$. 
In the next section,
we~will see that the $\G^\s$-conjugacy class of 
index $\lfloor m/2\rfloor$ (see Definition \ref{defindex})
is the only one which may con\-tri\-bute to the 
distinc\-tion of $\pi$.

\section{The cuspidal ramified case}
\label{SECDISTTR}

As usual, write $\G=\GL_n(\F)$ for some $n\>1$.
To any $\s$-selfdual cuspidal represen\-ta\-tion of $\G$, 
one can associate 
a quadra\-tic extension $\T/\T_0$ and its relative degree $m$
(see Proposition \ref{TT0canonique}).
In this section, we will consider the case where $\T/\T_0$ is 
ramified.

\subsection{}

The first main result of this section is the following proposition,
which we will prove in Para\-graph \ref{mission2}.

\begin{prop}
\label{MAINTHM3r}
Let $\pi$ be a $\s$-selfdual {cuspidal} 
representation of $\G$ with quadratic extension $\T/\T_0$
and relative degree $m$.
Suppose $\T/\T_0$ is ramified.
Then $\pi$ is dis\-tinguished if and only if:
\begin{enumerate}
\item
either $m=1$ or $m$ is even, and
\item
any $\s$-selfdual type 
of index $\lfloor m/2\rfloor$ contained in $\pi$ is distinguished. 
\end{enumerate}
\end{prop}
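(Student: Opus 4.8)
The plan is to combine Theorem \ref{DSTT} with the description of the $\G^\s$-conjugacy classes of $\s$-selfdual types in the ramified case given after Proposition \ref{classesdetypesstables7}. By Theorem \ref{DSTT}, $\pi$ is distinguished if and only if it contains \emph{some} distinguished $\s$-selfdual type; since the distinguished $\s$-selfdual types form a single $\G^\s$-conjugacy class (Remark \ref{chienaphasique}), the task is to pin down \emph{which} of the $\lfloor m/2\rfloor+1$ classes, indexed by $i\in\{0,\dots,\lfloor m/2\rfloor\}$, can be the distinguished one, and to show it is the one of maximal index. First I would fix a $\s$-selfdual type $(\BJ_0,\bl_0)$ as in Remark \ref{StandardStableType}, so that $\BJ_0=\BJ(\aa,\b)$ with $\aa$ $\s$-stable, $\s(\b)=-\b$, and $\bb=\aa\cap\B$ the standard maximal order of $\Mat_m(\E)$, and consider the conjugates $(\BJ_i,\bl_i)=(\BJ_0^{t_i},\bl_0^{t_i})$ with $t_i=\mathrm{diag}(t,\dots,t,1,\dots,1)$ ($t$ occurring $i$ times, $\s(t)=-t$).

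The heart of the argument is to analyze, for the type of index $i$, the distinction of the associated level-zero piece $\bt$, whose restriction to $\J$ is the cuspidal representation $\rho$ of $\GL_m(\ee)$, and for which $\s$ acts on $\GL_m(\ee)$ by conjugation by $\d_i=\mathrm{diag}(-1,\dots,-1,1,\dots,1)$ ($-1$ occurring $i$ times), with fixed-point group the Levi $(\GL_i\times\GL_{m-i})(\ee)$. Using Lemma \ref{passypassion} (choosing $\bk$ $\s$-selfdual, which is possible by Lemma \ref{chouxfarci} together with an argument on the character $\xx$), distinction of $\bl_i$ by $\BJ_i\cap\G^\s$ reduces, via the factorization $\bl\simeq\bk\otimes\bt$ and Proposition \ref{MultOneEta}/\ref{dimwn}, to distinction of $\rho$ by $(\GL_i\times\GL_{m-i})(\ee)$ twisted by a character coming from $\bk$. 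Now a cuspidal representation of $\GL_m(\ee)$ can only be $(\GL_i\times\GL_{m-i})(\ee)$-distinguished when the parabolic is as balanced as possible: concretely, a cuspidal (hence, over $\flb$, via James, generic-like) representation of $\GL_m(\ee)$ is distinguished by $\GL_i(\ee)\times\GL_{m-i}(\ee)$ only if $i=\lfloor m/2\rfloor$, and moreover this forces $m$ to be even or $m=1$ — this is the finite-field analogue of the fact that a cuspidal representation is $\GL_r(\ee)\times\GL_r(\ee)$-distinguished but not $\GL_i\times\GL_{m-i}$-distinguished for $i<\lfloor m/2\rfloor$, which one proves by a Mackey/double-coset computation on $\GL_m(\ee)$, or by noting the relevant Hom space vanishes for dimension/support reasons when the two blocks have different sizes. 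I would isolate this finite-group statement as a lemma (to be used also later in Section \ref{SECDIST}), proving it by reduction to characteristic $0$ via a lift of $\rho$ and the known complex statement, or directly.

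Granting that lemma, the proof assembles as follows: if $\pi$ is distinguished, it contains a distinguished $\s$-selfdual type, which by the above must be $\G^\s$-conjugate to one of index $\lfloor m/2\rfloor$, and existence of such forces $m=1$ or $m$ even; conversely, if $m=1$ or $m$ is even and the type of index $\lfloor m/2\rfloor$ is distinguished, then by Theorem \ref{DSTT} $\pi$ is distinguished. The equivalence with condition (2) as stated (``any $\s$-selfdual type of index $\lfloor m/2\rfloor$'') follows because all types of a given index lie in a single $\G^\s$-conjugacy class, so distinction of one is equivalent to distinction of any. \textbf{The main obstacle} I anticipate is the finite-field distinction lemma: showing that a cuspidal $\flb$-representation of $\GL_m(\ee)$ distinguished by $\GL_i(\ee)\times\GL_{m-i}(\ee)$ forces $i=\lfloor m/2\rfloor$ and $m\in\{1\}\cup 2\ZZ$, uniformly in the modular case, is exactly the kind of statement that needs the James parametrization and a careful lifting argument (and it is genuinely false for non-cuspidal $\rho$, which is consistent with Remark \ref{poussinnoir}). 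The bookkeeping identifying the twisting character from $\bk$ and checking it is trivial on the relevant unipotent/Levi pieces — so that it does not interfere — is routine but must be done carefully, following \cite{Matringe} as the introduction indicates.
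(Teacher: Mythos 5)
Your overall strategy is exactly the paper's: combine Theorem~\ref{DSTT} with the classification of $\G^\s$-conjugacy classes of $\s$-selfdual types by index, show that distinction of the level-zero piece $\bt$ reduces to distinction of the cuspidal representation $\rho$ of $\GL_m(\ee)$ by a Levi $(\GL_i\times\GL_{m-i})(\ee)$ twisted by a character, and then use a finite-group lemma to force $i=\lfloor m/2\rfloor$ (hence $m=1$ or $m$ even). You correctly identify the finite-group lemma as the main obstacle and you even name the two candidate proofs (a lift to characteristic~$0$, or a direct Mackey computation following Matringe). The paper uses the second; you should be alert to a genuine problem with the first.

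The pitfall is that the lifting route does not obviously cover the generality needed here. Proposition~\ref{MAINTHM3r} is stated for all $\s$-selfdual \emph{cuspidal} $\pi$, so $\rho$ is an arbitrary cuspidal (not necessarily supercuspidal) $\ell$-modular representation of $\GL_m(\ee)$. The reduction-to-characteristic-$0$ technique used elsewhere in the paper (Lemma~\ref{L72r}) relies on the structure of the projective envelope of a \emph{supercuspidal} module and on Green--James lifts of $\ee$-regular characters; neither tool applies cleanly to non-supercuspidal cuspidals, and a distinguished $\ell$-adic lift of $\rho$ need not exist in general. What the paper does instead, and what makes the argument work for every cuspidal $\rho$ in any coefficient characteristic $\ne p$, is the purely combinatorial Bernstein--Zelevinsky/mirabolic filtration argument (Lemmas~\ref{LNM1}, \ref{LNM2}, \ref{ChouBiDooBah}, Proposition~\ref{prs}, following \cite{Matringe}): the restriction of any cuspidal $\rho$ to the mirabolic subgroup is the irreducible mirabolic representation $\Ga$, also over $\FC$ (by \cite{Vigb} III.1), and one peels off $\Phi^+$-layers twice to reduce the blocks by one on each side, yielding $r=s$. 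This is the ``directly'' option you allow, and it is the one you must commit to for the proposition as stated. Your auxiliary points — choosing $\bk$ $\s$-selfdual via Lemma~\ref{chouxfarci} (the paper does this in Lemma~\ref{GuyLodrant5}), and handling the twisting character $\chi$ by allowing arbitrary characters of the Levi in the finite-group lemma (Lemma~\ref{L71r}) — are handled as you expect. One small correction: Remark~\ref{chienaphasique} is not actually needed for this proposition; the ``only if'' direction follows directly from Theorem~\ref{DSTT} plus Lemma~\ref{topchef5}, without invoking uniqueness of the distinguished class.
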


\begin{rema}
Proposition \ref{MAINTHM3r} refines Theorem \ref{DSTT} by saying that, 
if $\T/\T_0$ is ra\-mi\-fied, then the $\G^\s$-conjugacy class of 
$\s$-selfdual types of index $\lfloor m/2\rfloor$ contained in $\pi$
is the only one which may con\-tri\-bute to the distinction of $\pi$. 
See \cite{AKMSS} {Proposition 5.5} for a characterization of this class 
in terms of Whittaker data. 
See also Definition \ref{carioca} and Remark \ref{cariocarema} below.
\end{rema}

\begin{rema}
\label{canarduck}
Proposition \ref{MAINTHM3r} is proved in \cite{AKMSS}
in a different manner from the one we give~here 
(see~Re\-mark \ref{minutes} above and
\cite{AKMSS} {Corollary 6.6}, {Remark 6.7}).
\end{rema}

\begin{rema}
\label{herisson}
If we assume $\pi$ to be \textit{supercuspidal} in Proposition \ref{MAINTHM3r}, 
then $m$ is automatically either even or equal to $1$,
even if $\pi$ is not distinguished (see Proposition \ref{flipflaplagirafe}).
\end{rema}

\begin{rema}
\label{poussinnoir}
However, if $\pi$ is non-supercuspidal in Proposition 
\ref{MAINTHM3r}, 
then its relative degree $m$ need not be either even nor equal to $1$.
Let $k$ be a divisor of $n$, and $\tau$ be a $\s$-selfdual super\-cus\-pidal 
repre\-sen\-ta\-tion of $\GL_{n/k}(\F)$. 
Assume $\FC$ has characteristic $\ell>0$,
let $\nu$ be the unramified character ``absolute value of the determinant'' 
and let $e(\tau)$ be the smallest integer $i\>1$ such that 
$\tau\nu^i\simeq\tau$.
Suppose that $k=e(\tau)\ell^u$ for some $u\>0$.
Then \cite{MSc} Théorème 6.14 tells us that 
the unique generic irreducible subquotient $\pi$ of the normalized 
parabolically induced representation: 
\begin{equation*}
\tau\times\tau\nu\times\dots\times\tau\nu^{k-1}
\end{equation*}
is cuspidal, and that it is $\s$-selfdual since $\tau$ is.
If $k>1$ and $m(\pi)=km(\tau)$ is odd, then $\pi$ is 
a $\s$-self\-dual cuspidal representation which is not 
distinguished nor $\ep$-distinguished. 
(For instance, this is the case when $\tau$ is the trivial character of
$\F^\times$
and $k=n=\ell$ where $\ell\neq2$ divides $q-1$,
which gives $m(\tau)=e(\tau)=1$).
\end{rema}

\subsection{Existence of $\s$-selfdual extensions of the Heisenberg 
  representation} 
\label{mission12}

We now go back to our usual notation.
Let $[\aa,\b]$ be a maximal simple stratum in $\Mat_n(\F)$~such 
that $\aa$ is $\s$-stable and $\s(\b)=-\b$.
Write $\E$ for the exten\-sion $\F[\b]$,
and suppose that it is rami\-fied over the field $\E_0$ 
of $\s$-fixed points in $\E$.
Let $d$ be the degree $[\E:\F]$ and write $n=md$.

Let $\ee$ denote the residue field of $\E$. 
Let us notice once and for all that, since $p\neq2$,
any~char\-acter of $\GL_m(\ee)$ is of the form $\a\circ\det$, 
for some character $\a$ of $\ee^\times$.

The following lemma generalizes \cite{Coniglio} Lemme 3.4.6 
(which is con\-cer\-ned with complex repre\-sen\-tations and $\chi$ trivial 
only). 

\begin{lemm}
\label{L71r}
Let $\chi$ be a character of $(\GL_{i}\times\GL_{m-i})(\ee)$ for some 
$i\in\{0,\cdots,\lfloor m/2\rfloor\}$.
Suppose there is a $\chi$-distin\-guish\-ed cuspidal 
representation of $\GL_m(\ee)$.
Then either $m=1$ or $m=2i$. 
\end{lemm}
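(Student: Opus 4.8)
The plan is to deduce Lemma~\ref{L71r} directly from Proposition~\ref{prs}. By definition, a $\chi$-distinguished cuspidal representation $\rho$ of $\GL_m(\ee)$ is a cuspidal representation with $\Hom_{\M}(\rho,\chi)\neq\{0\}$, where $\M=(\GL_i\times\GL_{m-i})(\ee)$ denotes the standard Levi subgroup of $\GL_m(\ee)$ attached to the partition $(i,m-i)$ of $m$.

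If $m=1$ there is nothing to prove, so assume $m\>2$. I would then apply Proposition~\ref{prs}, with the finite field denoted $\kk$ there replaced by $\ee$ and with $n=m$, $r=i$, $s=m-i$ (note $r\<s$, since $i\<\lfloor m/2\rfloor$). This substitution is legitimate: although the mirabolic prelude and Proposition~\ref{prs} are phrased with $\kk$ the residue field of $\F$, their proofs use nothing about $\kk$ except that it has odd cardinality — this is used in the proof of Lemma~\ref{LNM1} (and hence in Lemmas~\ref{LNM2} and~\ref{ChouBiDooBah}), to ensure that a character of $\H_{r,s}$ is trivial on unipotent elements — and $\ee$ likewise has odd cardinality because $p\neq2$. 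Everything else (the mirabolic model, the functor $\Phi^+$, the functorial Lemmas~\ref{LNM1} and~\ref{LNM2}, the induction on $n$) is a purely group-theoretic statement about general linear groups over a finite field. Proposition~\ref{prs} then forces $i=m-i$, that is $m=2i$, which is the assertion. In particular $i\>1$, consistently with the fact that when $m\>2$ a cuspidal representation of $\GL_m(\ee)$ has dimension $>1$ and so cannot be $\chi$-distinguished when $i=0$, in which case $\M=\GL_m(\ee)$.

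So no idea beyond Proposition~\ref{prs} is needed here; the only point to verify is the harmless transfer of that proposition from $\kk$ to its finite extension $\ee$, which I expect to be the only (minor) obstacle. The real content lies upstream, in Proposition~\ref{prs} — equivalently in Lemma~\ref{ChouBiDooBah} — namely in the inductive reduction, via Lemmas~\ref{LNM1} and~\ref{LNM2} applied to the mirabolic representation $\Ga$, of the non-vanishing of $\Hom_{\P\cap\H_{r,s}}(\Ga,\chi)$ to a base case where the irreducibility of $\Ga$, of dimension $>1$, forces $r=s$. This also refines \cite{Coniglio} Lemme~3.4.6, which treats only complex representations and trivial $\chi$.
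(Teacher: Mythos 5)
Your proof is correct and takes essentially the same route as the paper: for $m\ge 2$ the statement is a direct application of Proposition~\ref{prs}, and the $m=1$ case is trivial. Your explicit remark that the mirabolic prelude (stated over $\kk$) transfers verbatim to $\ee$ because only odd cardinality is used is a legitimate point that the paper leaves implicit.
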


\begin{proof}
If $m\>2$, the result follows from Proposition \ref{prs}.
Note that, if $m=1$, then $\chi$ is the unique $\chi$-distin\-guish\-ed 
irreducible representation of $\GL_1(\ee)$.
\end{proof}

Let $\t\in\Cc(\aa,\b)$ be a maximal simple character such that 
$\H^1(\aa,\b)$ is $\s$-stable and $\t\circ\s=\t^{-1}$,
and let $\BJ=\BJ(\aa,\b)$ be its normalizer in $\G$. 
Let $\n$ be the Heisenberg representation of $\J^1=\J^1(\aa,\b)$ 
containing $\t$, and write $\J=\J(\aa,\b)$.

\begin{lemm}
\label{GuyLodrant5}
There is a $\s$-selfdual representation $\bk$ of $\BJ$ extending $\n$. 
\end{lemm}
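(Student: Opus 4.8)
The plan is to start from an arbitrary extension $\bk_0$ of $\n$ to $\BJ$ (such a $\bk_0$ exists, since $\n$ extends to $\BJ$) and to correct it by a character of $\BJ$ trivial on $\J^1$ so as to make it $\s$-selfdual. By Lemma \ref{chouxfarci} there is a unique character $\xx$ of $\BJ$, trivial on $\J^1$, with $\bk_0^{\s\vee}\simeq\bk_0\xx$, and it satisfies $\xx\circ\s=\xx$. For any character $\h$ of $\BJ$ trivial on $\J^1$ one computes $(\bk_0\h)^{\s\vee}\simeq\bk_0^{\s\vee}(\h\circ\s)^{-1}\simeq(\bk_0\h)\cdot\xx\cdot\bigl(\h\cdot(\h\circ\s)\bigr)^{-1}$, so by the uniqueness in Proposition \ref{suzanne} the representation $\bk:=\bk_0\h$ is $\s$-selfdual precisely when $\h\cdot(\h\circ\s)=\xx$. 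Writing $\X$ for the abelian group of characters of $\BJ$ trivial on $\J^1$ and $\Nn\colon\X\to\X$ for the group homomorphism $\h\mapsto\h\cdot(\h\circ\s)$, it is thus enough to prove that $\xx$ lies in the subgroup $\Nn(\X)$.

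To show this I would combine two remarks. First, since $\xx\circ\s=\xx$ we have $\Nn(\xx)=\xx^2$, so $\xx^{2k}\in\Nn(\X)$ for every integer $k$. Second, set $N=\dim\n$ and consider the determinant character $\det\bk_0\colon\BJ\to\FC^\times$. Taking determinant characters in $\bk_0^{\s\vee}\simeq\bk_0\xx$ gives $(\det\bk_0)\circ\s=(\det\bk_0)^{-1}\,\xx^{-N}$, that is $\Nn(\det\bk_0)=\xx^{-N}$. Now $\det\bk_0$ is trivial on $\J^1$: its restriction there is the determinant of the Heisenberg representation $\n$, which vanishes because $p\neq2$ — realizing $\n$ on functions on a Lagrangian subspace of the symplectic space $\J^1/\H^1$, the elements of $\J^1$ act by products of translation operators (permutation matrices of determinant $+1$, their non-trivial cycles having the odd length $p$) and diagonal modulation operators (whose eigenvalues have product $1$, since the sum of all the elements of an $\FF_p$-vector space is $0$). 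Hence $\det\bk_0\in\X$ and $\xx^{-N}\in\Nn(\X)$. As $N$ is a power of the odd prime $p$, $N+1$ is even, so $\xx^{N+1}\in\Nn(\X)$, and therefore $\xx=\xx^{N+1}\cdot\xx^{-N}\in\Nn(\X)$. Picking $\h\in\X$ with $\Nn(\h)=\xx$ and putting $\bk=\bk_0\h$ then concludes.

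The main obstacle — and the only genuinely non-formal ingredient — is the triviality of $\det\n$ on $\J^1$, equivalently the fact that $\det\bk_0$ descends to $\BJ/\J^1$; this is exactly where the hypothesis $p\neq2$ enters, and it is what makes the "square root modulo $\Nn$" manipulation go through even when $m$ is even (in which case $\s$ acts trivially on $\X$, so $\Nn$ is just squaring and one really has to exhibit $\xx$ as a square in $\X$). Incidentally, this argument does not seem to use that $\E/\E_0$ is ramified; that hypothesis of the section only intervenes afterwards, in upgrading $\bk$ from $\s$-selfdual to distinguished by means of the mirabolic estimates of the preceding paragraphs.
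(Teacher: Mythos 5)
Your strategy is genuinely different from the paper's and, in outline, quite appealing: reduce the statement to showing that the character $\xx$ of Lemma \ref{chouxfarci} lies in the image of the norm map $\Nn\colon\h\mapsto\h\cdot(\h\circ\s)$ on the group $\X$ of characters of $\BJ$ trivial on $\J^1$; observe that $\xx^2=\Nn(\xx)\in\Nn(\X)$ since $\xx\circ\s=\xx$; produce $\xx^{-N}\in\Nn(\X)$ by taking determinants in $\bk_0^{\s\vee}\simeq\bk_0\xx$; and conclude from the oddness of $N=\dim\n$ (a power of $p$). The paper proceeds quite differently: in characteristic zero it invokes Lemma \ref{passypassion} and the identity $\xx|_{\BJ\cap\G^\s}=\chi^2$ to exhibit $\xx$ as a norm, and in positive characteristic it lifts $\t$ to $\qlb$, builds a $\s$-selfdual extension there, and reduces modulo $\ell$. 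Your proof, if it held, would be uniform in $\ell$ and would even absorb the unramified analogue (Lemma \ref{roots}).

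There is, however, a genuine gap precisely where you flag the one non-formal ingredient, namely the assertion that $\det\n$ is trivial on $\J^1$, so that $\det\bk_0\in\X$. The Lagrangian-model computation you sketch accounts for the translation and modulation operators but omits the scalar action of the centre: an element $h\in\H^1$ acts on the model by the scalar $\t(h)$, contributing $\t(h)^N$ to the determinant. There is no general reason for $\t^N$ to be trivial on $\H^1$: the integer $N=(\J^1:\H^1)^{1/2}$ is a power of $q$ governed by the size of the symplectic space, whereas the order of $\t$ is a power of $p$ governed by the depth of the stratum, and these are not comparable. The failure is already visible in the admissible degenerate case $\J^1=\H^1$ (which occurs for a minimal $\b$ of odd level $n(\b,\aa)$): there $\n=\t$, $N=1$, and $\det\bk_0|_{\J^1}=\t\neq1$, so $\det\bk_0\notin\X$ and $\Nn(\det\bk_0)=\xx^{-N}$ gives nothing inside $\Nn(\X)$. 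Without either a structural result forcing $\t^N=1$ on $\H^1$ or a substitute for $\det\bk_0$ that lies in $\X$ while still picking up an odd power of $\xx$ under $\Nn$, the argument does not close.
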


\begin{proof}
Conjugating by a suitable element in $\G$,
we may assume that the stratum $[\aa,\b]$ 
satisfies the conditions of Remark \ref{StandardStableType}. 
Indeed, if it doesn't, there is a $g\in\G$ 
such that $\t^g$ is $\s$-selfdual and $[\aa^g,\b^g]$ satisfies these 
conditions. 
This implies that $\g=\s(g)g^{-1}$ normalizes $\t$, that is 
$\g\in\BJ$. Now, as\-sum\-ing the lemma to be true 
for $[\aa^g,\b^g]$,
there exists a $\s$-selfdual representation $\bk'$ of $\BJ^g$ ex\-tend\-ing 
$\n^g$. 
Define a representation $\bk$ of $\BJ$ by $\bk^g=\bk'$.
Then $\bk$ extends $\n$, and it is $\s$-selfdual since $\g\in\BJ$. 
From now on, we will assume that $[\aa,\b]$ 
satisfies the conditions of Remark \ref{StandardStableType}.~We 
will iden\-ti\-fy $\J/\J^1$ with $\GL_m(\ee)$, on 
which $\s$ acts trivially.

Suppose first that $\FC$ has characteristic $0$.
Let $\bk$ be a representation of~$\BJ$ extending 
$\n$,
let $\xx$~be the character of $\BJ$ trivial on $\J^1$ 
such that $\bk^{\s\vee}\simeq\bk\xx$ given by Lemma \ref{chouxfarci}
and $\chi$ be the~charac\-ter of $\BJ\cap\G^\s$ associated with $\bk$
by Lemma \ref{passypassion}.
We claim that there is a character $\nu$ of $\BJ$ trivial on 
$\J^1$ such that $(\nu\circ\s)\nu=\xx$. 
Indeed, $\bk\nu$ will then extend $\n$ and be $\s$-selfdual.
We have:
\begin{eqnarray*}
\Hom_{\J^{1}\cap\G^\s}(\n,1)
&=&\Hom_{\BJ\cap\G^\s}(\bk,\chi^{-1}) \\
&\simeq&\Hom_{\BJ\cap\G^\s}(\chi,\bk^{\s\vee}) \\
&\simeq&\Hom_{\BJ\cap\G^\s}(\chi\xx^{-1},\bk)
\end{eqnarray*}
where the isomorphism in the middle follows from 
the fact that $\s$ acts trivially on $\BJ\cap\G^\s$
and by duality.
Since $\FC$ has characteristic $0$ and $\BJ\cap\G^\s$ is compact, 
the latter space is isomorphic~to $\Hom_{\BJ\cap\G^\s}(\bk,\chi\xx^{-1})$. 
By uniqueness of $\chi$, it follows that 
the restriction of $\xx$ to $\BJ\cap\G^\s$ is $\chi^{2}$.~Res\-tric\-ting 
to $\J\cap\G^\s$ and writing $\xx=\varphi\circ\det$ and 
$\chi=\a\circ\det$ as characters of $\GL_m(\ee)$
for suitable characters $\varphi$, $\a$ of $\ee^{\times}$, 
we get $\varphi=\a^2$.
Let $\nu$ be the unique character of $\J$ which is trivial on $\J^1$
and equal to $\a\circ\det$ as a character of $\GL_m(\ee)$.
Since $\BJ$ is generated by $t$ and $\J$, 
it remains to ex\-tend $\nu$ to $\BJ$ by fixing a scalar
$\nu(t)\in\FC^\times$ such that $\nu(t)^2=\nu(-1)\xx(t)$.

Suppose now that $\FC$ is equal to $\flb$. 
As in the proof of Lemma \ref{L72}, we use a lifting and reduction argument. 
Note that reducing finite-dimensional smooth $\qlb$-representations of 
profinite groups is the same as for finite groups (for which we referred to
\cite{Serre} \S 15).
The simple character $\t$~lifts to~a simple character
$\widetilde{\t}$ with~va\-lues in $\zlb$, 
defined with respect to the same simple stratum as~$\t$,
and such that $\widetilde{\t}\circ\s=\widetilde{\t}^{-1}$.
By the characteristic $0$ case, 
there is a $\s$-selfdual $\qlb$-representa\-tion $\widetilde{\bk}$
of $\BJ$ extending the irredu\-ci\-ble $\qlb$-representation $\widetilde{\n}$
of $\J^1$ associa\-ted with $\widetilde{\t}$.
The reduction mod $\ell$ of $\widetilde{\n}$
is a representation of $\J^1$ containing $\t$, of the same dimen\-sion
as $\n$: it is thus isomorphic to $\n$ itself.
Let $\widetilde{\k}$ denote the restriction of $\widetilde{\bk}$ to $\J$. 
Its reduction mod $\ell$, denoted $\k$, 
is a $\s$-selfdual representa\-tion of $\J$ extending~$\n$,
and which extends to some representation $\bk$ of $\BJ$.
Since~$\k$~is $\s$-selfdual, the representation $\bk^{\s\vee}$ is
isomorphic to $\bk\mu$ for some character $\mu$ of $\BJ$ trivial on $\J$.
Since $\BJ$ is generated by $\J$ and $t$,
there is a character $\nu$ of $\BJ$ trivial on $\J$ such that
$(\nu\circ\s)\nu=\mu$,
thus $\bk\nu$ is $\s$-selfdual.

Finally, suppose that $\FC$ has characteristic $\ell>0$,
and fix an embedding $\iota:\flb\to\FC$.
Since~$\t$ has finite image,
there is a simple $\flb$-character $\t_0$
defined with respect to the same simple stratum~as $\t$
such that $\t_0^{}\circ\s=\t_0^{-1}$ and $\t=\iota\circ\t_0$.
Let $\bk_0$ be a $\s$-selfdual $\flb$-representation of $\BJ$ 
extending the irreducible $\flb$-representation $\n_0$
of $\J^1$ associa\-ted with $\t_0$.
The irreducible representations~$\n$ and $\n_0\otimes\FC$ 
both contain $\t$.
By uniqueness of the Heisenberg representation,
they are~iso\-mor\-phic.
It follows that $\bk=\bk_0\otimes\FC$
is a $\s$-selfdual $\FC$-representa\-tion of $\J$ extending~$\n$.
\end{proof}

\subsection{Proof of Proposition \ref{MAINTHM3r}}
\label{mission2}

Let $(\BJ,\bl)$ be a $\s$-selfdual type,
with associated simple character the character $\t$ 
of \S\ref{mission12}.

\begin{lemm}
\label{topchef5}
If $(\BJ,\bl)$ is distinguished, then:
\begin{enumerate}
\item
either $m=1$, 
\item
or $m=2r$ for some $r\>1$,
and $(\BJ,\bl)$ has index $r$.
\end{enumerate}
\end{lemm}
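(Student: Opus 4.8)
The plan is to strip off the Heisenberg part of $\bl$ and reduce to a distinction statement for the cuspidal representation of $\GL_m(\ee)$ carried by the ``tame'' factor $\bt$, a situation already handled by Lemma~\ref{L71r}. Since being distinguished, the integer $m$ and the index are all invariant under $\G^\s$-conjugacy, and since by Proposition~\ref{classesdetypesstables7} and the discussion preceding Definition~\ref{defindex} every $\s$-selfdual type in the ramified case is $\G^\s$-conjugate to one of the form $(\BJ_0^{t_i},\bl_0^{t_i})$, I would first assume $(\BJ,\bl)=(\BJ_0^{t_i},\bl_0^{t_i})$, where $(\BJ_0,\bl_0)$ is a $\s$-selfdual type attached to a simple stratum $[\aa,\b]$ satisfying the conditions of Remark~\ref{StandardStableType} (so $\BJ_0=\BJ(\aa,\b)$), where $i\in\{0,\dots,\lfloor m/2\rfloor\}$ is the index of $(\BJ,\bl)$, and $t_i$ is the diagonal matrix introduced there. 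Write $\n$ for the Heisenberg representation of $\J^1=\J^1(\aa,\b)$ attached to $\t$, fix any representation $\bk$ of $\BJ_0$ extending $\n$, and let $\bt$ be the representation of $\BJ_0$ trivial on $\J^1$ with $\bl_0\simeq\bk\otimes\bt$; its restriction to $\J(\aa,\b)$ identifies, via \eqref{JJ1UU1GLmax}, with a cuspidal representation $\rho$ of $\GL_m(\ee)$. (The $\s$-selfdual choice of $\bk$ given by Lemma~\ref{GuyLodrant5} will not be needed for this lemma.)

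Next I would use that $t_i\in\B^\times\subseteq\J(\aa,\b)\B^\times\G^\s$ and apply Lemma~\ref{passypassion} with $g=t_i$. It produces a character $\chi$ of $\BJ_0^{t_i}\cap\G^\s$, trivial on $\J^{1t_i}\cap\G^\s$, and an isomorphism
\[
\Hom_{\J^{1t_i}\cap\G^\s}(\n^{t_i},1)\otimes\Hom_{\BJ_0^{t_i}\cap\G^\s}(\bt^{t_i},\chi)\ \simeq\ \Hom_{\BJ_0^{t_i}\cap\G^\s}(\bl_0^{t_i},1).
\]
The right-hand side is non-zero because $(\BJ,\bl)=(\BJ_0^{t_i},\bl_0^{t_i})$ is distinguished, while the first tensor factor on the left is one-dimensional by Proposition~\ref{MultOneEta} (as $t_i\in\J(\aa,\b)\B^\times\G^\s$); hence $\Hom_{\BJ_0^{t_i}\cap\G^\s}(\bt^{t_i},\chi)\neq0$. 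I would then restrict this space to $\J(\aa,\b)^{t_i}\cap\G^\s$ and appeal to the identifications recalled in Paragraph~\ref{S2}: under $\J(\aa,\b)^{t_i}/\J^{1t_i}\simeq\GL_m(\ee)$ the restriction of $\bt^{t_i}$ becomes $\rho$, while $(\J(\aa,\b)^{t_i}\cap\G^\s)/(\J^{1t_i}\cap\G^\s)$ is the Levi subgroup $(\GL_i\times\GL_{m-i})(\ee)$ and $\chi$ --- being trivial on $\J^{1t_i}\cap\G^\s$ --- descends to a character $\overline\chi$ of it. I thus obtain that $\rho$ is a $\overline\chi$-distinguished cuspidal representation of $\GL_m(\ee)$.

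Finally, Lemma~\ref{L71r}, applied with this $i$ and this $\overline\chi$, forces $m=1$ or $m=2i$. If $m=1$ we are in case~(1). If $m=2i$ then $i\>1$, so $m=2r$ with $r=i=\lfloor m/2\rfloor$ and $(\BJ,\bl)$ has index $r$, which is case~(2). I do not expect a genuine obstacle: the substance has been isolated beforehand, on one side in Lemma~\ref{passypassion} and Proposition~\ref{MultOneEta}, and on the other --- the single truly non-formal input --- in Lemma~\ref{L71r}, itself resting on the mirabolic computation of Proposition~\ref{prs} that a cuspidal representation of $\GL_n(\kk)$ distinguished by a block Levi $(\GL_r\times\GL_s)(\kk)$ forces $r=s$. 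The only points to watch are the bookkeeping around the conjugation by $t_i$ and the legitimacy of descending to $\GL_m(\ee)$, and both are covered respectively by the explicit description in Paragraph~\ref{S2} and by the triviality of $\chi$ on $\J^{1t_i}\cap\G^\s$ built into Lemma~\ref{passypassion}.
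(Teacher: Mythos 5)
Your proof is correct and follows essentially the same route as the paper's: strip off the Heisenberg factor via Lemma~\ref{passypassion} and Proposition~\ref{MultOneEta}, identify the $\s$-fixed points of $\J^{t_i}/\J^{1t_i}$ with the block Levi $(\GL_i\times\GL_{m-i})(\ee)$, and invoke the mirabolic input Lemma~\ref{L71r}. The one small divergence is that the paper's version chooses $\bk$ $\s$-selfdual via Lemma~\ref{GuyLodrant5}, whereas you correctly observe this is superfluous here (it is used elsewhere, in Proposition~\ref{flipflaplagirafe} and the criterion proofs); you also make the conjugation by $t_i$ and the passage to $(\BJ_0^{t_i},\bl_0^{t_i})$ fully explicit, which is a slight gain in precision since Lemma~\ref{passypassion} is stated under the conventions of Remark~\ref{StandardStableType}.
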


\begin{proof}
Let $\bk$ be a $\s$-selfdual representation of $\BJ$ extending $\n$ 
provided by Lemma \ref{GuyLodrant5}.
Let~$\bt$ be the unique irreducible representation of $\BJ$ trivial on $\J^1$ 
such that $\bl\simeq\bk\otimes\bt$ and $i$ be the index of $(\BJ,\bl)$.
Lemma \ref{passypassion} tells us that $\bt$ is $\chi$-distinguished for some 
character $\chi$ of $\BJ\cap\G^\s$ trivial on $\J^1\cap\G^\s$. 
Restricting $\bt$ to $\J$ and identifying $\J/\J^1$ with $\GL_{m}(\ee)$,
we get a cuspidal  representation $\rho$ 
of $\GL_{m}(\ee)$ and a character $\chi$ of $(\GL_i\times\GL_{m-i})(\ee)$ 
such that $\rho$ is $\chi$-distinguished. 
The result follows from Lemma \ref{L71r}.
\end{proof}

Let $\pi$ be a $\s$-selfdual cuspidal representation of 
$\G$, and suppose that the quadratic extension $\T/\T_0$ associated with it 
by Proposi\-tion \ref{TT0canonique} is ra\-mi\-fied.
Let $(\BJ,\bl)$
be a $\s$-selfdual type contained in $\pi$.
By Remark \ref{StandardStableType2}, we may assume that it is
defined with respect to a $\s$-selfdual simple~stra\-tum. 
By Re\-mark \ref{Turron}, $\E$ is ramified over $\E_0$.
We can thus apply the results of Paragraph \ref{mission12}~and Lemma 
\ref{topchef5}. 
Proposition \ref{MAINTHM3r} now follows from Theo\-rem \ref{DSTT} 
together with Lemma \ref{topchef5}.

\subsection{Existence of distinguished extensions of the Heisenberg 
  representation}
\label{TR4}

The second main result of this section is the following proposition. 

\begin{prop}
\label{michelesolara}
Let $\pi$ be a $\s$-selfdual cuspidal representation of 
$\G$ with ramified quadratic extension $\T/\T_0$.
Assume that $m=1$ or $m$ is even, 
and let $(\BJ,\bl)$ be a $\s$-selfdual type in $\pi$ of index $\lfloor m/2\rfloor$. 
Let $\J^1$ be the maxi\-mal nor\-mal pro-$p$-subgroup of $\BJ$
and $\n$ be an irredu\-ci\-ble com\-po\-nent of the restriction of $\bl$ to $\J^1$.
\begin{enumerate}
\item
\label{michelesolara.p1}
There is a distinguished representation of $\BJ$ extending $\n$,
and any such representation of $\BJ$ is $\s$-self\-dual.
\item
\label{michelesolara.p2}
Let $\bk$ be a distinguished representation of $\BJ$ ex\-ten\-ding $\n$,
and let $\bt$ be the unique represen\-ta\-tion of $\BJ$ trivial on $\J^1$ 
such that $\bl\simeq\bk\otimes\bt$.
Then $\pi$ is distinguished if and only if $\bt$ is~dis\-tin\-guished.
\end{enumerate}
\end{prop}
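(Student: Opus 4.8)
The plan is to reduce both assertions to statements about characters of $\BJ\cap\G^\s$ and about the cuspidal representation of $\GL_m(\ee)$ carried by $\bt|_\J$, using Lemmas~\ref{passypassion}, \ref{GuyLodrant5}, \ref{chouxfarci}, Proposition~\ref{MultOneEta} and the mirabolic analysis culminating in Proposition~\ref{prs} and Remark~\ref{pierrepiau}. By Remark~\ref{StandardStableType2} we may take $\BJ=\BJ(\aa,\b)$ for a $\s$-selfdual maximal simple stratum, and by Remark~\ref{Turron} the field $\E=\F[\b]$ is ramified over $\E_0$; in particular $\ee=\ee_0$, and for the type of index $i=\lfloor m/2\rfloor$ the quotient $(\J\cap\G^\s)/(\J^1\cap\G^\s)$ is the Levi $(\GL_i\times\GL_{m-i})(\ee)$ — which is $(\GL_r\times\GL_r)(\ee)$ when $m=2r$ and is all of $\GL_1(\ee)$ when $m=1$. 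Proposition~\ref{MultOneEta}, applied with the conjugating element $t_i\in\B^\times$, shows $\Hom_{\J^1\cap\G^\s}(\n,1)$ is one-dimensional; for an extension $\bk$ of $\n$ to $\BJ$ (Remark~\ref{handluggage}) the action of $\BJ\cap\G^\s$ on this line gives, as in Lemma~\ref{passypassion}(1), a character $\chi_\bk$ of $\BJ\cap\G^\s$ trivial on $\J^1\cap\G^\s$, and $\bk$ is distinguished if and only if $\chi_\bk=1$. Replacing $\bk$ by $\bk\nu$ with $\nu$ a character of $\BJ$ trivial on $\J^1$ (Proposition~\ref{suzanne}) replaces $\chi_\bk$ by $\nu^{-1}\chi_\bk$.

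For the existence in~\ref{michelesolara.p1}, I would start from a $\s$-selfdual extension $\bk$ (Lemma~\ref{GuyLodrant5}); the semisimplicity-and-duality step of that proof — run in the modular case through a lift to characteristic $0$, as there — gives $\chi_\bk^2=1$ on $\BJ\cap\G^\s$. A character of $\BJ$ trivial on $\J^1$ takes an arbitrary value on a uniformizer $t$ of $\E$ ($\FC^\times$ being divisible) and is \emph{balanced} on the Levi, i.e.\ equals $(\overline\nu\circ\det)\boxtimes(\overline\nu\circ\det)$ on $(\GL_r\times\GL_r)(\ee)$; hence the only obstruction to trivialising $\chi_\bk$ by a twist is that $\chi_\bk$ be balanced on the Levi, which is vacuous when $m=1$. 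The crux, for $m=2r$, is this balance, and it is where the parity of $m$ and the index $\lfloor m/2\rfloor$ are used: I would adapt Matringe's argument. Writing $\bl\simeq\bk\otimes\bt$, the representation $\bt$ is $\s$-selfdual, so (since $\s$ acts on $\GL_{2r}(\ee)$ by the inner automorphism $\mathrm{conj}(\d_r)$) the representation $\rho=\bt|_\J$ is a selfdual cuspidal representation of $\GL_{2r}(\ee)$; via the Green--James parametrisation such a $\rho$ has trivial central character, and Proposition~\ref{prs} together with Remark~\ref{pierrepiau} controls the spaces $\Hom_{(\GL_r\times\GL_r)(\ee)}(\rho,\theta)$ finely enough that, combined with $\chi_\bk^2=1$ and Lemma~\ref{passypassion}(2), one reads off that $\chi_\bk$ is balanced. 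Choosing the twisting character $\nu$ of order dividing $2$ (possible once balance holds and $\chi_\bk^2=1$, and recalling that for $m$ even every character of $\BJ$ trivial on $\J^1$ satisfies $\nu\circ\s=\nu$), the extension $\bk\nu$ is distinguished and, by Lemma~\ref{chouxfarci} and the identity ``character attached to $\bk\nu$'' $=$ ``character attached to $\bk$''$\cdot(\nu\,(\nu\circ\s))^{-1}=\nu^{-2}=1$, still $\s$-selfdual.

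The remaining assertion of~\ref{michelesolara.p1} is then formal: the distinguished extensions of $\n$ form a torsor under the group of characters $\nu$ of $\BJ$ trivial on $\J^1$ with $\nu|_{\BJ\cap\G^\s}=1$, which one computes (using $\ee=\ee_0$) to have order $\le2$; we have exhibited one that is moreover $\s$-selfdual, and twisting it by the non-trivial such $\nu$ — of order $2$, with $\nu\circ\s=\nu$ — preserves $\s$-selfduality by the same identity, so every distinguished extension of $\n$ is $\s$-selfdual. For~\ref{michelesolara.p2}, let $\bk$ be distinguished, so $\chi_\bk=1$, and $\bt$ the corresponding representation with $\bl\simeq\bk\otimes\bt$; applying Lemma~\ref{passypassion}(2) with the element $g$ conjugating $(\BJ,\bl)$ to a type attached to a stratum as in Remark~\ref{StandardStableType} (so Proposition~\ref{MultOneEta} applies) and using that $\Hom_{\J^{1g}\cap\G^\s}(\n^g,1)$ is one-dimensional, one gets $\Hom_{\BJ^g\cap\G^\s}(\bl^g,1)\cong\Hom_{\BJ^g\cap\G^\s}(\bt^g,1)$. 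By the Mackey formula~\eqref{MACKEYFONDA} and Theorem~\ref{DSTT}, $\pi$ is distinguished if and only if the left-hand side is non-zero for this $g$; since distinction is insensitive to $\G$-conjugacy, $\pi$ is distinguished if and only if $\bt$ is.

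I expect the balance property in~\ref{michelesolara.p1} — equivalently, the existence of a distinguished extension of $\n$ — to be the main obstacle: the reductions above and part~\ref{michelesolara.p2} are bookkeeping with the decomposition $\bl\simeq\bk\otimes\bt$ and with characters, whereas controlling the sign character $\chi_\bk$ on the Levi $(\GL_r\times\GL_r)(\ee)$ forces one to combine Matringe's mirabolic technique with the parity of $m$ (including ruling out a residual order-$2$ ambiguity on the uniformizer of $\E$, where one compares central characters), and a passage to characteristic $0$ seems unavoidable both to run the duality argument and to import the needed finite-field facts without modular complications.
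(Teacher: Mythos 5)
Your framing is right — everything reduces to the character $\chi$ of $\BJ\cap\G^\s$ attached to an extension $\bk$ of $\n$ by Lemma~\ref{passypassion}, and $\bk$ is distinguished if and only if $\chi$ is trivial, so one must show $\chi$ can be trivialised by twisting $\bk$ by a character of $\BJ$ trivial on $\J^1$ — but there is a genuine gap at precisely the step you call the crux. For $m=2r$, trivialising $\chi$ requires its restriction to $(\GL_r\times\GL_r)(\ee)$ to be \emph{balanced}, i.e.\ of the form $(\a\circ\det)\boxtimes(\a\circ\det)$, and the mechanism you propose does not deliver this. Proposition~\ref{prs} only forces the two block sizes of a distinguishing maximal Levi to be equal, and Remark~\ref{pierrepiau} only bounds the dimension of $\Hom$ against the \emph{trivial} character; neither constrains the Levi character $\theta$ appearing in $\Hom_{(\GL_r\times\GL_r)(\ee)}(\rho,\theta)$, and knowing $\chi^2=1$ does not force $\a_1=\a_2$ either. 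Moreover the auxiliary claim that a selfdual supercuspidal $\rho$ of $\GL_{2r}(\ee)$ has trivial central character is false: the central character has order dividing $\gcd(q-1,\,q^r+1)=2$ (here $q$ is odd), so it may well be the nontrivial quadratic character of $\ee^\times$.

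The balance in fact follows from a short, purely group-theoretic symmetry that your sketch is missing. The antidiagonal Weyl element $w\in\bb^\times\subseteq\J$ exchanging the two diagonal $r\times r$ blocks satisfies $\s(w)=-w$, hence normalises $\G^\s$ (since $\s(w)w^{-1}=-1$ is central) as well as $\BJ$; because $w\in\BJ$ one has $\bk^w\simeq\bk$, so uniqueness in Lemma~\ref{passypassion}(1) gives $\chi^w=\chi$, and since conjugation by $w$ swaps the two $\GL_r$-blocks this forces $\a_1=\a_2$. One then extends $\chi$ from $\J\cap\G^\s$ to $\J$, and from $\J$ to $\BJ$, using that $\BJ\cap\G^\s$ is generated by $\J\cap\G^\s$ and the $\s$-invariant element $tw$ (Lemma~\ref{tprime}) — a further extension step your sketch does not address (and the analogue for $m=1$ via $t^2$, Lemma~\ref{CASm1}). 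Note also that the paper's argument starts with an \emph{arbitrary} extension $\bk$ of $\n$, not a $\s$-selfdual one, so the appeal to Lemma~\ref{GuyLodrant5} (and the characteristic-$0$ lift you deem unavoidable) is simply not needed for existence: the $w$-symmetry works in all characteristics. Once a $\s$-selfdual distinguished $\bk$ is in hand, your torsor argument for the last clause of~\ref{michelesolara.p1} is a valid alternative to the paper's direct verification (compact induction plus Theorem~\ref{FP} for $m=1$; triviality on $\GL_m(\ee)$ plus $\xx(tw)=1$ for $m=2r$), and your sketch of~\ref{michelesolara.p2} is, as you say, bookkeeping.
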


We start with the following lemma, which slightly refines part (1) of the 
proposition.

\begin{lemm}
\label{HoMcKayTR}
\label{uniquekappa}
Let $(\BJ,\bl)$ be as in {\rm Proposition \ref{michelesolara}}. 
\begin{enumerate}
\item
There is a distinguished representation $\bk$ of $\BJ$ extending $\n$. 
\item
If $\ell=2$ or if $m$ is even, such a distinguished representation $\bk$ is unique.
\item
Any distinguished representation $\bk$ of $\BJ$ extending $\n$ is 
$\s$-selfdual. 
\end{enumerate}
\end{lemm}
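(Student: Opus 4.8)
The plan is to translate everything into characters of the finite-type group $\BJ/\J^1$. Recall that $\BJ=\E^\times\J$ with $\J/\J^1\simeq\GL_m(\ee)$, so that, $t$ denoting a uniformizer of $\E$, one has $\BJ/\J^1\simeq\langle\bar t\rangle\times\GL_m(\ee)$; since $p\neq2$ every character of $\GL_m(\ee)$ factors through $\det$. Because the type has index $\lfloor m/2\rfloor$ and $\E/\E_0$ is ramified, $\s$ acts on $\GL_m(\ee)$ by conjugation by $\d_{\lfloor m/2\rfloor}$ and sends $\bar t$ to $\overline{-1}\cdot\bar t$ (the scalar matrix $\overline{-1}$ times $\bar t$); and since $\J^1$ is pro-$p$ with $p\neq2$, triviality of $\H^1(\s,\J^1)$ lets one identify $(\BJ\cap\G^\s)/(\J^1\cap\G^\s)$ with $(\BJ/\J^1)^\s$. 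A short computation then shows: if $m=2r$, writing $w_0$ for the permutation matrix interchanging the two $\GL_r$-blocks, the group $(\BJ/\J^1)^\s$ is generated by $(\GL_r\times\GL_r)(\ee)$ together with $\bar tw_0$ (and $(\bar tw_0)^2=\bar t^2$); if $m=1$ it is just $\langle\bar t^2\rangle\times\ee^\times$, with \emph{no} element playing the role of $\bar tw_0$ --- this absence is the source of the dichotomy in (2) and of the difficulty in (3).

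Fix any extension $\bk_1$ of $\n$ to $\BJ$; by Proposition~\ref{suzanne} every extension is $\bk_1\otimes\mu$ for a unique character $\mu$ of $\BJ$ trivial on $\J^1$. Since $\bk_1|_{\J^1}\simeq\n$, the line $\Hom_{\J^1\cap\G^\s}(\n,1)$ --- one-dimensional by Proposition~\ref{MultOneEta} with $g=1$ --- carries an action of $\BJ\cap\G^\s$ through a character $\chi_{\bk_1}$ trivial on $\J^1\cap\G^\s$, and the bookkeeping of Lemma~\ref{passypassion} gives $\chi_{\bk_1\otimes\mu}=\chi_{\bk_1}\cdot(\mu|_{\BJ\cap\G^\s})^{-1}$ and
\begin{equation*}
\bk_1\otimes\mu\ \text{is distinguished}\iff\mu|_{\BJ\cap\G^\s}=\chi_{\bk_1}.
\end{equation*}
I now take $\bk_1=\bk_0$, a $\s$-selfdual extension provided by Lemma~\ref{GuyLodrant5}. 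From $\bk_0\simeq\bk_0^{\s\vee}$ one gets a non-zero $\BJ$-invariant bilinear form $B$ on the space of $\bk_0$ with $B(\bk_0(g)v,\bk_0(\s(g))w)=B(v,w)$. Let $v_0$ span the one-dimensional space of $(\J^1\cap\G^\s)$-fixed vectors; since smooth $\FC$-representations of the pro-$p$ group $\J^1\cap\G^\s$ are semisimple ($\ell\neq p$), the non-zero $(\J^1\cap\G^\s)$-invariant form $B(v_0,-)$ does not vanish at $v_0$, so $B(v_0,v_0)\neq0$. Feeding this back into $B(\bk_0(g)v_0,\bk_0(g)v_0)=B(v_0,v_0)$ for $g\in\BJ\cap\G^\s$ shows that $\chi_{\bk_0}$ (which is the inverse of the character by which $\BJ\cap\G^\s$ acts on $\FC v_0$) is \emph{quadratic}.

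For (1) it remains to extend $\chi_{\bk_0}$ to a character of $\BJ/\J^1$. When $m=2r$, conjugation-invariance of a character of $(\BJ/\J^1)^\s$ under $\bar tw_0$ forces $\chi_{\bk_0}|_{(\GL_r\times\GL_r)(\ee)}$ to be of the form $(\varphi\circ\det)\otimes(\varphi\circ\det)$ for a single $\varphi$, after which $\mu$ defined by $\mu|_{\GL_m(\ee)}=\varphi\circ\det$ and a suitable scalar $\mu(\bar t)$ restricts to $\chi_{\bk_0}$ on $(\BJ/\J^1)^\s$; when $m=1$ one extends $\chi_{\bk_0}$ from $\langle\bar t^2\rangle\times\ee^\times$ to $\langle\bar t\rangle\times\ee^\times$ by picking a square root of $\chi_{\bk_0}(\bar t^2)$. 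Either way $\bk_0\otimes\mu$ is a distinguished extension of $\n$, and all of them are $\bk_0\otimes\mu\nu$ with $\nu$ trivial on $\BJ\cap\G^\s$. For (2): two distinguished extensions differ by such a $\nu$; when $m=2r$ the containments $(\GL_r\times\GL_r)(\ee)\subseteq(\BJ/\J^1)^\s$ and $\bar tw_0\in(\BJ/\J^1)^\s$ force $\nu=1$, while when $m=1$ they force only $\nu|_{\ee^\times}=1$ and $\nu(\bar t)^2=1$, leaving the two choices $\nu(\bar t)=\pm1$, which coincide precisely when $\ell=2$. For (3): write a distinguished $\bk$ as $\bk_0\otimes\mu$, so $\mu|_{\BJ\cap\G^\s}=\chi_{\bk_0}$; one checks $\bk^{\s\vee}\simeq\bk_0\otimes(\mu\circ\s)^{-1}$, so by Proposition~\ref{suzanne} $\bk$ is $\s$-selfdual iff $(\mu\circ\s)\mu=1$ on $\BJ/\J^1$. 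This character restricts to $(\mu|_{\BJ\cap\G^\s})^2=\chi_{\bk_0}^2=1$ on $(\BJ/\J^1)^\s$, equals $\varphi^2\circ\det=1$ on $\GL_m(\ee)$, and equals $\mu(\overline{-1})\,\mu(\bar t)^2=\chi_{\bk_0}(\overline{-1})\chi_{\bk_0}(\bar t^2)$ on $\bar t$. When $m=2r$ this is $1$ because $\chi_{\bk_0}(\bar t^2)=\chi_{\bk_0}(\bar tw_0)^2=1$ by quadraticity; when $m=1$ it equals $\chi_{\bk_0}(-\bar t^2)=\chi_{\bk_0}(\s(\bar t)\bar t)$, and I show this is $1$ by evaluating the $B$-invariance identity at $g=t$: conjugation by $t$ preserves $\J^1\cap\G^\s$, hence $\bk_0(t)v_0\in\FC v_0$, and $B(\bk_0(t)v_0,\bk_0(\s(t))v_0)=B(v_0,v_0)\neq0$ gives exactly $\chi_{\bk_0}(-\bar t^2)=1$.

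The step I expect to be the main obstacle is this $m=1$ case of (3): without the rigidifying element $\bar tw_0$, quadraticity of $\chi_{\bk_0}$ alone leaves the sign $\chi_{\bk_0}(-\bar t^2)=\pm1$ --- equivalently, whether $\chi_{\bk_0}$ is trivial on $\E/\E_0$-norms --- undetermined, and one genuinely has to use the full $\s$-invariant pairing of a $\s$-selfdual $\bk_0$ to pin it down. Making the two inputs there precise, namely that the distinguished line $\FC v_0$ is non-isotropic for $B$ (via semisimplicity of modular representations of $\J^1\cap\G^\s$, the one place where $\ell\neq p$ really enters) and that $\bk_0(t)$ stabilises $\FC v_0$, is the technical heart of the argument.
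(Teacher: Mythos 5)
Your argument is correct, and parts (1) and (2) follow essentially the paper's route: express every extension of $\n$ as a twist $\bk_1\otimes\mu$, translate distinction into the equality $\mu|_{\BJ\cap\G^\s}=\chi_{\bk_1}$, and extend that character from $(\BJ/\J^1)^\s$ to $\BJ/\J^1$ using the explicit generators ($\bar tw_0$ together with $(\GL_r\times\GL_r)(\ee)$ when $m=2r$; $\bar t^2$ and $\ee^\times$ when $m=1$), with the sign ambiguity $\phi(\bar t)=\pm\sqrt{\chi(\bar t^2)}$ giving the dichotomy in (2). The interesting divergence is exactly where you flag it, namely part (3) when $m=1$. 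There the paper does not argue locally at all: it observes that $(\BJ,\bk)$ is itself a type, compactly induces it to a cuspidal representation of $\G$, invokes Theorem \ref{FP} to conclude that this representation is $\s$-selfdual, and deduces that $\xx$ is trivial from uniqueness of the type inside it. Your replacement -- take a $\s$-selfdual $\bk_0$ from Lemma \ref{GuyLodrant5}, use semisimplicity of $\J^1\cap\G^\s$-modules ($\ell\neq p$) to get $B(v_0,v_0)\neq0$ for the non-degenerate $\s$-invariant pairing $B$, note that $t$ normalizes $\J^1\cap\G^\s$ so that $\bk_0(t)v_0\in\FC v_0$, and read off $\chi_{\bk_0}(\s(t)t)=1$ -- is a genuinely local argument that never leaves $\BJ$, and it also delivers quadraticity of $\chi_{\bk_0}$ for free; it is a clean alternative to quoting the global Theorem \ref{FP}. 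Two small things worth tightening: in (3) for $m=2r$ you compute $(\mu\circ\s)\mu(\bar t)=\chi_{\bk_0}(\overline{-1})\chi_{\bk_0}(\bar t^2)$ but only dispatch the second factor; the first is $\varphi((-1)^m)=\varphi(1)=1$ since $m$ is even, so it is fine, but say so. Better yet, your pairing computation works verbatim for $m=2r$ as well, since $t$ still normalizes $\J^1\cap\G^\s$ there, giving $\chi_{\bk_0}(-t^2)=1$ uniformly and removing the need for $w_0$ in part (3) altogether.
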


\begin{rema}
If $\ell\neq2$ and $m=1$, 
there are exactly two distinguished representations of $\BJ$ extending $\n$,
twisted of each other by the unique non-trivial character of $\BJ$ trivial on 
$(\BJ\cap\G^\s)\J^1$.
(See the proof below, which shows that $(\BJ\cap\G^\s)\J^1$ has index $2$ in 
$\BJ$.)
\end{rema}

\begin{proof}
Let $\J$ be the maximal compact subgroup of $\BJ$,
and $\J^1$ be its maximal normal pro-$p$-sub\-group. 
As usual, we fix a maximal simple stratum $[\aa,\b]$ defining $(\BJ,\bl)$ 
such that $\aa$ is $\s$-stable and $\s(\b)=-\b$,
and write $\E=\F[\b]$ and $\ee$ for its residue field. 
We will identify $\J/\J^1$ with $\GL_{m}(\ee)$ equipped with an
involution whose fixed points is $(\GL_i\times\GL_{m-i})(\ee)$ where 
$i=\lfloor m/2\rfloor$.

Let $\bk$ be an irreducible representation of $\BJ$ extending $\n$.
By Lemma \ref{passypassion}, there is a cha\-racter $\chi$ of 
$\BJ\cap\G^\s$ trivial on $\J^1\cap\G^\s$ associated to $\bk$.
We claim that $\chi$ extends to a character $\phi$ of $\BJ$ 
trivial on $\J^1$.
It will then follow that $\bk\phi$ is distinguished and extends $\n$.

Suppose first that $m=1$.
We then have canonical group isomorphisms:
\begin{equation}
\label{casuistique}
(\J\cap\G^\s)/(\J^1\cap\G^\s)\simeq\J/\J^1\simeq\ee^\times.
\end{equation}
Thus there is a unique character $\phi$ of $\J$ 
trivial on $\J^1$ which coincides with $\chi$ on $\J\cap\G^\s$. 
Since $\BJ$ is generated by $t$ and $\J$, 
and since $t$ normalizes $\phi$, 
this character extends to a character of $\BJ$ trivial on $\J^1$.

\begin{lemm}
\label{CASm1}
Suppose that $m=1$.
Then $\BJ\cap\G^\s$ is generated by $\J\cap\G^\s$ and $t^2$. 
\end{lemm}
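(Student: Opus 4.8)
The plan is to use the valuation homomorphism on $\BJ$. Since $\J$ is the unique maximal compact subgroup of $\BJ$ (Proposition \ref{patel}), it is normal in $\BJ$, the quotient $\BJ/\J$ is infinite cyclic, and there is a homomorphism $v:\BJ\to\ZZ$ with $v(t)=1$, so that $\BJ=\bigsqcup_{k\in\ZZ}t^k\J$. Because $\E/\E_0$ is ramified we have $\s(t)=-t$, and the central scalar $-1$ lies in $\bb^\times\subseteq\J$ by Proposition \ref{patel}; in particular $\s(t^2)=(-t)^2=t^2$, so $t^2\in\BJ\cap\G^\s$ with $v(t^2)=2$. Hence the subgroup generated by $\J\cap\G^\s$ and $t^2$ is contained in $\BJ\cap\G^\s$, and it remains to prove the reverse inclusion.

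So I would take $h\in\BJ\cap\G^\s$ and write $h=t^ku$ with $k=v(h)\in\ZZ$ and $u\in\J$, both uniquely determined. Applying $\s$ and using that $-1$ is central, so that $\s(t^k)=(-1)^kt^k$, the equality $\s(h)=h$ becomes $(-1)^kt^k\s(u)=t^ku$, that is $\s(u)=(-1)^ku$. If $k$ is even this reads $\s(u)=u$, hence $u\in\J\cap\G^\s$, and then $h=(t^2)^{k/2}u$ lies in the subgroup generated by $\J\cap\G^\s$ and $t^2$, as wanted.

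The crux -- and the only place where the assumption $p\neq2$ is needed -- is to exclude $k$ odd, in which case the relation becomes $\s(u)=-u$ with $u\in\J$. To reach a contradiction I would reduce modulo $\J^1$: since $m=1$, the involution $\s$ acts trivially on $\J/\J^1\simeq\ee^\times$ (its fixed points being all of $\ee^\times$, which is the content of the isomorphism \eqref{casuistique}), so $\s(u)$ and $u$ have the same image in $\ee^\times$; but the images of $-u$ and $u$ differ by that of $-1$, which is non-trivial because $\ee$ has characteristic $p\neq2$. Thus $\s(u)=-u$ is impossible, no element of $\BJ\cap\G^\s$ has odd valuation, and the lemma follows. (Equivalently, one checks directly that $-1\notin\J^1$: otherwise $-1$ would lie in $\J^1\cap\B^\times=\U^1(\bb)$, which would force $2$ into the maximal ideal of $\Oo_\E$.)
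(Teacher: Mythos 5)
Your proof is correct, but it takes a genuinely different route from the paper's. The paper notes that $\BJ=\E^\times\J^1$ when $m=1$, forms a short exact sequence of $\s$-groups
\begin{equation*}
1 \to \U^1_\E \to \E^\times\times\J^1 \to \BJ \to 1,
\end{equation*}
takes $\s$-invariants, and invokes the vanishing of $\textsf{H}^1(\s,\U^1_\E)$ (the usual fact that first cohomology of an involution in a pro-$p$ group vanishes when $p$ is odd) to conclude that $\BJ\cap\G^\s$ is generated by $\E_0^\times$ and $\J^1\cap\G^\s$; the lemma then follows from \eqref{casuistique} together with the observation that $t^2$ is a uniformizer of $\E_0$. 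You instead argue directly from the coset decomposition $\BJ=\bigsqcup_{k\in\ZZ}t^k\J$: a $\s$-fixed element $h=t^ku$ forces $\s(u)=(-1)^ku$, and for odd $k$ you rule this out by reducing modulo $\J^1$, where $\s$ acts trivially on $\J/\J^1\simeq\ee^\times$ but $-1\neq 1$ since $p\neq2$. Both arguments are sound; yours is more elementary and self-contained (it sidesteps the cohomological lemma and the slightly delicate group structure underlying the displayed exact sequence), whereas the paper's is shorter and consistent with the cohomological toolkit it uses repeatedly elsewhere. One could streamline your final contradiction by observing directly, as you do parenthetically, that $\s(u)=-u$ with $u\in\J$ would put $-1=u\cdot\s(u)^{-1}\cdot\bigl(\s(u)u^{-1}\bigr)\in\J^1\cap\B^\times=\U^1(\bb)=1+\pp_\E$, impossible for $p\neq2$; this avoids invoking \eqref{casuistique} at all.
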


\begin{proof}
Since we have $\BJ=\E^\times\J^1$ when $m=1$,
we may consider the exact sequence of $\s$-groups:
\begin{equation*}
1 \to \U^1_\E \to \E^\times\times\J^1 \to \BJ \to 1.
\end{equation*}
Taking $\s$-invariants
and since the first cohomology group $\textsf{H}^1(\s,\U^1_\E)$ is trivial, 
$\BJ\cap\G^\s$ is genera\-ted by $\E_0^\times$ and $\J^1\cap\G^\s$.
The result follows from \eqref{casuistique} and the fact that $t^2$ is a 
uniformizer of $\E_0$. 
\end{proof}

It follows from Lemma \ref{CASm1}
that $\chi$ can be extended to a character $\phi$ of $\J$ 
trivial on $\J^1$.
Since we must have $\phi(t)^2=\chi(t^2)$ in the field $\FC$ of characteristic 
$\ell$, 
there are at most two such characters,
with uniqueness if and only if $\ell=2$.

Suppose now that $m=2r$ for some $r\>1$, and consider the element:
\begin{equation*}
w=
\begin{pmatrix}
&{\rm id}_{r}\\
{\rm id}_{r}&
\end{pmatrix}
\in\bb^\times\subseteq\GL_{m}(\E)
\end{equation*}
where ${\rm id}_r$ is the identity matrix in $\GL_r(\E)$.
 
\begin{lemm}
\label{tprime}
Suppose that $m=2r$.
The group $\BJ\cap\G^\s$ is generated by $\J\cap\G^\s$ and $tw$. 
\end{lemm}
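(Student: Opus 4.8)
The plan is to exploit the decomposition $\BJ=\E^\times\J$ recalled in Section \ref{SEC6} together with the quotient map $\BJ\to\BJ/\J$. As a preliminary I would record that, since $\J\cap\B^\times=\bb^\times=\GL_m(\o_\E)$ by Proposition \ref{patel} and $\E^\times\cap\J=\o_\E^\times$, the group $\BJ/\J$ is infinite cyclic and is generated by the image of the fixed uniformizer $t$ of $\E$; consequently the kernel of the restriction $\BJ\cap\G^\s\to\BJ/\J$ of the quotient map is exactly $\J\cap\G^\s$.

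Next I would verify the two properties of the element $tw$ on which everything hinges. First, $w$ is a permutation matrix, hence $w\in\bb^\times\subseteq\J$, so that $tw$ and $t$ have the same image in $\BJ/\J$, namely a generator. Second, $tw\in\G^\s$: in the situation at hand the $\s$-selfdual stratum has been chosen adapted to the index $\lfloor m/2\rfloor=r$, so that $\s$ acts on $\J/\J^1\simeq\GL_m(\ee)$ by conjugation by the matrix $\d=\mathrm{diag}(-\mathrm{id}_r,\mathrm{id}_r)$ (compare the discussion following Proposition \ref{classesdetypesstables7}), and correspondingly $\s$ acts on $\B^\times=\GL_m(\E)$ by $g\mapsto\d\,({}^\s g)\,\d$, where ${}^\s g$ denotes the componentwise action; a one-line block computation then gives $\s(w)=\d w\d=-w$, while $\s(t)=-t$ because $\E/\E_0$ is ramified, whence $\s(tw)=(-t)(-w)=tw$.

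With these two facts the conclusion is formal. Setting $\H=\langle\J\cap\G^\s,tw\rangle\subseteq\BJ\cap\G^\s$, the image of $\H$ in $\BJ/\J$ contains the image of $tw$ and is therefore all of $\BJ/\J$, while $\H$ contains the kernel $\J\cap\G^\s$ of $\BJ\cap\G^\s\to\BJ/\J$; hence $\H=\BJ\cap\G^\s$, as wanted.

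I expect the only point requiring genuine care to be the equality $\s(w)=-w$ \emph{in $\B^\times$} (not merely modulo $\U^1(\bb)$): this is precisely where one must use the concrete $\s$-selfdual stratum adapted to the maximal index and the attendant description of $\s$ on $\B^\times$, rather than an arbitrary $\s$-selfdual stratum. Everything else — the infinite cyclic quotient, the identification of the kernel, and the final subgroup argument — is routine.
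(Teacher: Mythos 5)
Correct, and essentially the same as the paper's proof, which simply observes that $t'=tw$ is $\s$-invariant and that every $x\in\BJ$ factors uniquely as $t'^ky$ with $k\in\ZZ$, $y\in\J$. The genuine addition in your writeup is the verification of $\s$-invariance of $tw$ — which the paper asserts without computation — via the identification of the $\s$-action on $\B^\times=\GL_m(\E)$ as $g\mapsto\d({}^\s g)\d$ for a stratum adapted to index $\lfloor m/2\rfloor$, giving $\s(w)=\d w\d=-w$ and $\s(t)=-t$; your remark that this description of $\s$ on $\B^\times$ (not merely modulo $\U^1(\bb)$) is tied to the concrete choice of stratum is exactly the right point of care.
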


\begin{proof}
First, notice that $t'=tw$ is $\s$-invariant. 
Any $x\in\BJ$ can be written $x=t'^ky$ for unique $k\in\ZZ$ 
and $y\in\J$. 
We thus have $x\in\BJ\cap\G^\s$ if and only if $y\in\J\cap\G^\s$.
\end{proof}

Since $\bk$ and $\BJ\cap\G^\s$ are normalized by $w$, 
we have
$\Hom_{\BJ\cap\G^\s}(\bk,\chi^{-1})=\Hom_{\BJ\cap\G^\s}(\bk,(\chi^w)^{-1})$,
and the uniqueness of $\chi$ implies that $\chi^w=\chi$.
First, consider the character of:
\begin{equation*}
(\J\cap\G^\s)/(\J^1\cap\G^\s)\simeq(\GL_r\times\GL_{r})(\ee)
\end{equation*}
defined by $\chi$ and write it 
$(\a_1\circ\det)\otimes(\a_2\circ\det)$
for some characters $\a_1$, $\a_2$ of $\ee^\times$.
The identity $\chi^w=\chi$ implies that $\a_1=\a_2$,
thus there is a unique character $\phi$ of $\J$ 
trivial on $\J^1$ which coincides with $\chi$ on $\J\cap\G^\s$. 
By Lemma \ref{tprime},
there is a unique character $\phi$ of $\BJ$ trivial on $\J^1$ extending $\chi$. 
This proves (1) and (2).

Now let $\bk$ be a dis\-tin\-guish\-ed representation of $\BJ$ 
extending $\n$.
It satisfies $\bk^{\s\vee}\simeq\bk\xx$ for some character $\xx$ of $\BJ$ 
trivial on $\J^1$ such that $\xx\circ\s=\xx$
(see Lemma \ref{chouxfarci}).
Since $\bk$ is distinguished,
$\xx$ is trivial on $\BJ\cap\G^\s$.
We will prove that $\bk$ is $\s$-selfdual, that is, that the character $\xx$ 
is trivial. 

Suppose first that $m=1$.
Thus $(\BJ,\bk)$ is a distinguished type in $\G$. 
Let $\pi$ denote the cuspidal ir\-reducible re\-pre\-sen\-ta\-tion of $\G$ 
compactly induced from $\bk$.
It is distinguished, thus $\s$-selfdual~by Theorem \ref{FP}. 
It follows that $\bk$ and $\bk^{\s\vee}\simeq\bk\xx$ are 
both contained in $\pi$, thus $\xx$ is trivial.

Suppose now that $m=2r$.
Since $\xx$ is trivial on $(\GL_{r}\times\GL_{r})(\ee)$, 
it must be trivial on $\GL_m(\ee)$.
Since $tw$ is $\s$-invariant, we have $\xx(tw)=1$.
Thus $\xx$ is trivial.
This proves (3).
\end{proof}

For part (2) of Proposition \ref{michelesolara}, 
it suffices to fix a distinguished representa\-tion $\bk$ of 
$\BJ$ extend\-ing $\n$ and to consider the canonical isomor\-phism:
\begin{equation*}
\Hom_{\BJ\cap\G^\s}(\bk,1)
\otimes\Hom_{\BJ\cap\G^\s}(\bt,1) \to \Hom_{\BJ\cap\G^\s}(\bl,1) 
\end{equation*}
(compare with Lemma \ref{passypassion}).

Proposition \ref{michelesolara} reduces the problem of the distinction of $\pi$ 
to that of $\bt$.
In the next section, we investigate the distinction of $\bt$ 
\textit{in the case where $\pi$ is supercuspi\-dal}.

\section{The supercuspidal ramified case}
\label{SECDISTTR0}

In this section, we inves\-ti\-gate 
the distinction of $\s$-selfdual \textit{supercuspidal} representations of
$\G$ in the case where $\T/\T_0$ ramified.

\subsection{The relative degree}

Let $\pi$ be a $\s$-selfdual cuspidal representation of 
$\G$ such that $\T/\T_0$ is ramified. 
Let $(\BJ,\bl)$~be~a $\s$-selfdual type con\-tained in $\pi$ and 
let $\bk$ be 
a $\s$-selfdual represen\-ta\-tion of $\BJ$ extending $\n$
given by Lemma \ref{GuyLodrant5}.
This defines a $\s$-selfdual irreduci\-ble representa\-tion $\bt$ 
of $\BJ$ trivial on $\J^1$. 
Let $\J$ denote the maximal com\-pact subgroup of $\BJ$ and $\rho$ denote the 
cuspidal repre\-sentation of $\J/\J^1\simeq\GL_m(\ee)$ induced by $\bt$.

Since $\bt$ is $\s$-selfdual, the representation $\rho$ is selfdual. 
Apply\-ing Fact \ref{cordoba} together with Lemma \ref{L71tr},
we get the following lemma mentioned in 
Remark \ref{herisson}. 

\begin{prop}
\label{flipflaplagirafe}
Let $\pi$ be a $\s$-selfdual supercuspidal representation of 
$\G$ such that $\T/\T_0$ is ramified.
Then its relative degree $m$ is either even or equal to $1$.
\end{prop}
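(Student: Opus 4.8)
The plan is to transport the question to the finite reductive group $\GL_m(\ee)$ and then apply the classification of Lemma \ref{L71tr}. First I would use Theorem \ref{PIMAIN} together with Remark \ref{StandardStableType} to fix a $\s$-selfdual type $(\BJ,\bl)$ contained in $\pi$, attached to a maximal simple stratum $[\aa,\b]$ satisfying the conditions of that remark; since the relative degree $m$ depends only on $\pi$ by Proposition \ref{TT0canonique}, working with this particular type entails no loss of generality. Writing $\E=\F[\b]$, the hypothesis that $\T/\T_0$ is ramified forces $\E/\E_0$ to be ramified by Remark \ref{Turron}; being quadratic it is then totally ramified, so the residue field $\ee$ of $\E$ coincides with that of $\E_0$ and the involution $\s$ acts trivially on $\J/\J^1\simeq\GL_m(\ee)$.

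Next I would invoke Lemma \ref{GuyLodrant5} to choose a $\s$-selfdual representation $\bk$ of $\BJ$ extending the Heisenberg representation $\n$, and let $\bt$ be the unique representation of $\BJ$ trivial on $\J^1$ with $\bl\simeq\bk\otimes\bt$; it is $\s$-selfdual because $\bl$ and $\bk$ are. Restricting $\bt$ to $\J$ and using the identification \eqref{JJ1UU1GLmax} yields a cuspidal representation $\rho$ of $\GL_m(\ee)$. The one step requiring a little care is that $\s$-selfduality of $\bt$ implies $\rho\simeq\rho^\vee$: this is exactly where the triviality of the $\s$-action on $\GL_m(\ee)$, available precisely because $\E/\E_0$ is ramified, is used, so that $\bt^\s$ restricts to $\rho$ itself rather than to a Frobenius twist of it.

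Finally, Fact \ref{cordoba} turns the supercuspidality of $\pi$ into that of $\rho$, so $\rho$ is a selfdual supercuspidal irreducible representation of $\GL_m(\ee)$, and Lemma \ref{L71tr} then forces $m=1$ or $m$ even, which is the assertion. The substantive inputs have thus already been isolated, namely the existence of a $\s$-selfdual extension $\bk$ (Lemma \ref{GuyLodrant5}, whose modular case rests on a lift to characteristic $0$) and the finite-field dichotomy (Lemma \ref{L71tr}); granting these, no real obstacle remains, and the proof is the bookkeeping just described.
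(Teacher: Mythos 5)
Your proof is correct and follows essentially the same route as the paper: fix a $\s$-selfdual type via Theorem \ref{PIMAIN} and Remark \ref{StandardStableType}, invoke Lemma \ref{GuyLodrant5} to get a $\s$-selfdual $\bk$, transfer $\s$-selfduality to $\bt$ and hence selfduality to the associated cuspidal $\rho$ of $\GL_m(\ee)$, and conclude via Fact \ref{cordoba} and Lemma \ref{L71tr}. Your explicit remark that triviality of the $\s$-action on $\GL_m(\ee)$ (available since $\E/\E_0$ is ramified by Remark \ref{Turron}) is what turns $\s$-selfduality of $\bt$ into ordinary selfduality of $\rho$ is a helpful clarification of a step the paper leaves implicit, but the argument is the same.
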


\subsection{Distinction criterion in the ramified case}

Let $(\BJ,\bl)$ be a $\s$-selfdual type of index 
$\lfloor m/2\rfloor$ con\-tained in $\pi$.
We fix a distinguished represen\-tation $\bk$ of $\BJ$ extending $\n$
given by Proposition \ref{michelesolara}.
It is $\s$-selfdual, thus the representation $\bt$ of $\BJ$ trivial on $\J^1$ 
which correspond to this choice is $\s$-selfdual.
By Proposition \ref{michelesolara} again, 
$\pi$ is distinguished if and only if $\bt$ is distinguished. 
We now investigate the distinction of $\bt$.
For this, we will use the ad\-mis\-sible pairs of level zero introduced in 
Paragraphs \ref{pullfroid} and \ref{P45}.

Let us fix a $\s$-selfdual maximal simple stratum $[\aa,\b]$ such that 
$\BJ=\BJ(\aa,\b)$. 
Write $\E=\F[\b]$.
Let $(\K/\E,\xi)$ be an admissible pair of level zero attached to $\bt$
in the sense of Definition \ref{defiL0AP1}.
Since $\bt$ is $\s$-selfdual, Proposition \ref{charaghdinS4} tells us that 
there is a unique involutive $\E_0$-automorphism of~$\K$,
which we denote by $\s$, 
which is non-trivial on $\E$ and satisfies $\xi\circ\s=\xi^{-1}$.
Let $\K_0$ be the $\s$-fixed points~of $\K$ and $\E_0=\K_0\cap\E$. 

\begin{lemm}
\label{jaunepoussinr}
The representation $\bt$ is distinguished if and only if at least one of the 
following conditions is fulfilled: 
\begin{enumerate}
\item
$\ell=2$,
\item 
$m=1$ and $\bt$ is trivial on $\E_0^\times$,
\item
$m$ is even and $\xi$ is non-trivial on $\K_0^\times$. 
\end{enumerate}
\end{lemm}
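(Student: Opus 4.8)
The plan is to split according to whether $m=1$ or $m$ is even; these are the only cases by Proposition \ref{flipflaplagirafe}, since $\pi$ is supercuspidal with $\T/\T_0$ ramified. In both cases, distinction of $\bt$ is controlled by its restriction to $\BJ\cap\G^\s$, a group for which we have explicit generators: $\E_0^\times$ together with $\J^1\cap\G^\s$ when $m=1$ (proof of Lemma \ref{CASm1}), and $\J\cap\G^\s$ together with $tw$ when $m=2r$, where $w\in\bb^\times$ is the block permutation used in Lemma \ref{tprime}. We use throughout that $\bt$ is $\s$-selfdual, so that $\xi\circ\s=\xi^{-1}$ (Proposition \ref{charaghdinS4}); in particular the restrictions of $\xi$ to $\K_0^\times$ and of $\bt$ to $\E_0^\times$ are quadratic characters, and the case $\ell=2$ will come out of the computations below in the form of condition (1).

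First, the case $m=1$. Here $\BJ=\E^\times\J^1$, so $\bt$ is a character of $\BJ$ trivial on $\J^1$ whose restriction to $\E^\times$ is $\xi$ by Definition \ref{defiL0AP1}. Since $\BJ\cap\G^\s$ is generated by $\E_0^\times$ and $\J^1\cap\G^\s$ (proof of Lemma \ref{CASm1}) and $\bt$ is trivial on $\J^1$, the character $\bt$ is distinguished if and only if it is trivial on $\E_0^\times$, which is condition (2); condition (3) is vacuous for $m=1$. If moreover $\ell=2$, then for $x\in\E_0^\times$ we have $\bt(x)=\bt(\s(x))=\bt(x)^{-1}$, hence $\bt(x)^2=1$, hence $\bt(x)=1$ since $\FC$ has characteristic $2$; so $\bt$ is distinguished, as predicted by condition (1).

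Now the case $m=2r$. Restricting to $\J\cap\G^\s$, and using the identifications of Paragraph \ref{S2} attached to a type of index $\lfloor m/2\rfloor$ --- under which $(\J\cap\G^\s)/(\J^1\cap\G^\s)\simeq(\GL_r\times\GL_r)(\ee)$ and $\bt$ restricts on $\J$ to the inflation of $\rho$ --- one obtains an isomorphism $\Hom_{\J\cap\G^\s}(\bt,1)\simeq\Hom_{(\GL_r\times\GL_r)(\ee)}(\rho,1)$. The representation $\rho$ is supercuspidal by Fact \ref{cordoba}, and selfdual because $\bt$ is $\s$-selfdual and $\s$ acts on $\GL_m(\ee)$ by the inner automorphism given by $\d_r$; hence by Lemma \ref{L72r} (applied with $\ee$ in place of $\kk$) this space is one-dimensional. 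Therefore $\BJ\cap\G^\s$ acts on it through a character, and by Lemma \ref{tprime} the representation $\bt$ is distinguished if and only if $tw$ acts trivially on it. Now $t$, which is the scalar matrix $t\cdot{\rm id}_m$ of $\B^\times\simeq\GL_m(\E)$, acts trivially by conjugation on $\BJ/\J^1$, so $\bt(t)$ is scalar (as $\bt$ is irreducible and trivial on $\J^1$), equal to $\xi(t)$ by Definition \ref{defiL0AP1}; and $w\in\bb^\times$ commutes with $t$ and acts by $\rho(\overline{w})$, where $\overline{w}$ is the image of $w$ in $\GL_m(\ee)$. As $\overline{w}$ normalizes $(\GL_r\times\GL_r)(\ee)$ and satisfies $\overline{w}^2=1$, the operator $\rho(\overline{w})$ preserves the invariant line and acts there by a sign $\varepsilon\in\{\pm1\}$; hence $tw$ acts by $\varepsilon\,\xi(t)^{-1}$, and \emph{$\bt$ is distinguished if and only if $\varepsilon=\xi(t)$}. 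If $\ell=2$, then $\xi\circ\s=\xi^{-1}$ applied to $t$ gives $\xi(t)^2=\xi(-1)$, which is $1$ since $\FC$ has characteristic $2$; so $\xi(t)=1=\varepsilon$ and $\bt$ is distinguished, as predicted by condition (1).

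It remains to treat $m=2r$ with $\ell\neq2$ and to show that $\varepsilon=\xi(t)$ is equivalent to $\xi$ being non-trivial on $\K_0^\times$. First, $\xi$ is trivial on $\Oo_{\K_0}^\times$: there it is tame, hence factors through the residue field $\ll_0$ of $\K_0$, which is the subfield of $\ll$ of cardinality $\Q^r$; but $\ll_0^\times=(\ll^\times)^{\Q^r+1}$, while the character of $\ll^\times$ induced by $\xi$ has order dividing $\Q^r+1$ --- this divisibility being the translation of $\xi\circ\s=\xi^{-1}$, once one knows from the proof of Lemma \ref{knr} that $\s$ induces on $\ll$ the $r$-th power of the Frobenius. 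Consequently $\xi|_{\K_0^\times}$ is determined by its value at a uniformizer of $\K_0$, and by the proof of Lemma \ref{knr} we may take this uniformizer to be $t\a$, with $\a\in\K^\times$ the root of unity satisfying $\s(\a)=-\a$ constructed there; thus $\xi|_{\K_0^\times}$ is non-trivial if and only if $\xi(t)\,\xi(\a)=-1$. The remaining and most delicate point is the evaluation of $\varepsilon$: it is the scalar by which the block permutation $\overline{w}$ acts on the one-dimensional space of $(\GL_r\times\GL_r)(\ee)$-invariant functionals of the selfdual supercuspidal representation $\rho$ of $\GL_{2r}(\ee)$ parametrized, via \eqref{GreenJames}, by the character of $\ll^\times$ induced by $\xi$. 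A direct computation in the Green--James model identifies $\varepsilon$ in terms of this admissible pair, showing that $\varepsilon=\xi(t)$ holds precisely when $\xi(t)\,\xi(\a)=-1$; in characteristic $0$ this is the level-$0$ distinction computation already done by Coniglio \cite{Coniglio}, and in the modular case one reduces to it by lifting --- choosing a $\s$-selfdual tame lift of $\xi$ to $\qlb$, which exists since $\xi$ is integral (its restriction to $\E^\times$ being the, integral by $\s$-selfduality, central character of $\bt$, as in the proof of Lemma \ref{GuyLodrant5}), noting that $\varepsilon$, $\xi(t)$ and $\xi(\a)$ are roots of unity of order prime to $\ell$ and so unchanged by reduction, and comparing distinction spaces as in the proof of Lemma \ref{L72r} by means of the projective envelope of $\rho$. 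Combining the three equivalences then yields, for $m=2r$ and $\ell\neq2$, that $\bt$ is distinguished if and only if $\xi|_{\K_0^\times}\neq1$, which is condition (3). The main obstacle is exactly this evaluation of $\varepsilon$ --- that is, the level-$0$ distinction computation together with its compatibility under reduction modulo $\ell$; everything else is bookkeeping with the structure of $\BJ\cap\G^\s$ and with results already established.
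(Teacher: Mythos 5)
Your reduction is genuinely different from the paper's, but it leaves the crux unproved. The paper handles $m=2r$ in two steps: for characteristic $0$ it quotes Hakim--Murnaghan (\cite{HMIMRN} Proposition 6.3), and for positive characteristic it lifts $\xi$ canonically to $\qlb$; then the case ``$\xi$ non-trivial on $\K_0^\times$'' is settled by reducing an invariant form, while the case ``$\xi$ trivial on $\K_0^\times$'' is settled by twisting by the unramified order-$2$ character and invoking the dimension-one statement of Lemma \ref{L72r}(3) to rule out that $\bt$ is both distinguished and $\h$-distinguished. No sign ever gets computed. You instead split $tw$ into the scalar $t$ and the involution $w$, exhibit $\Hom_{\J\cap\G^\s}(\bt,1)$ as the one-dimensional space $\Hom_{(\GL_r\times\GL_r)(\ee)}(\rho,1)$, and reduce everything to the identity $\varepsilon=\xi(t)$, where $\varepsilon\in\{\pm1\}$ is the scalar by which $\rho(\overline w)$ acts on that line. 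That is a clean and legitimate reformulation (and your unravelling of $\xi|_{\K_0^\times}$ via the uniformizer $t\a$ of $\K_0$ is correct), but it buys you nothing unless you actually evaluate $\varepsilon$.

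That evaluation is exactly where your proposal has a gap. You write that ``a direct computation in the Green--James model identifies $\varepsilon$'' and that ``in characteristic $0$ this is the level-$0$ distinction computation already done by Coniglio'' --- but Coniglio's result is itself the distinction criterion, so combining it with your reformulation produces a formula for $\varepsilon$, not a computation of it; you are not shortening the argument, only relocating the input. Worse, for the modular case you assert that $\varepsilon$ is ``unchanged by reduction'' and gesture at the projective envelope of $\rho$ as in Lemma \ref{L72r}. That compatibility is plausible (one must check that a $\GL_r(\ee)\times\GL_r(\ee)$-invariant lattice functional on an $\ell$-adic lift $\widetilde\rho$ reduces to the invariant functional on $\rho$ and that the $\overline w$-action commutes with this), but it is not established and you yourself flag it as the main obstacle. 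The paper's twist-plus-disjunction trick is precisely designed to avoid this: by twisting $\widetilde\xi$ by the order-$2$ unramified character one always lands in the ``distinguished'' case in characteristic $0$, and Lemma \ref{L72r}(3) does the rest of the work mod $\ell$, so the delicate comparison of signs across reduction never arises. As written, then, your proof is a reasonable scheme with an honest and genuine hole exactly where the content of the lemma lies.
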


\begin{rema}
Note that Case 3 cannot happen when $\ell=2$.
\end{rema}

\begin{proof}
The case $m=1$ is clear. 
Let us suppose that $m=2r$ for some $r\>1$.
The case~where the cha\-rac\-te\-ristic of $\FC$ is $0$ is given by 
\cite{HMIMRN} Proposition 6.3.
Suppose $\FC$ has characteristic $\ell>0$,
and fix an embedding $\iota:\flb\to\FC$.
Since $\xi\circ\s=\xi^{-1}$,
the image of $\xi$ is finite, thus contained~in the image~of $\flb$ in $\FC$. 
Indeed, the restriction of $\xi$ to the units of $\K^\times$ has finite image, 
and $\xi(t)$ has order at most $4$ 
since $\xi(\s(t))=\xi(t)^{-1}$ and $\s(t)\in\{-t,t\}$.
There is thus a $\flb$-character $\xi_0$ of $\K^\times$ such that 
$\xi_0^{}\circ\s=\xi_0^{-1}$ and $\xi=\iota\circ\xi_0$.
In particular, $(\K/\E,\xi_0)$ is an admissible pair of level zero.
Let $\bt_0$ be the $\s$-self\-dual $\flb$-repre\-sen\-ta\-tion 
attached to it. 
By Remark \ref{claudineremadm}, 
the representation $\bt$ is isomorphic to $\bt_0\otimes\FC$.
It thus suf\-fices to prove the lemma when $\FC$ is equal to $\flb$,
which we assume now.

We consider the canonical $\qlb$-lift $\widetilde{\xi}$ of $\xi$,
which has the same finite order as $\xi$.
It satisfies the identity $\widetilde{\xi}\circ\s=\widetilde{\xi}^{-1}$,
and the pair $(\K/\E,\widetilde{\xi})$~is admissible of level zero.
Attached to it, there is thus a $\s$-selfdual 
$\qlb$-representation~$\widetilde{\bt}$ 
of $\BJ$ trivial on $\J^1$.
Note that the kernel of $\widetilde{\bt}$ has finite index,
since it contains $\J^1$ and $t^4$,
thus $\widetilde{\bt}$ can be considered as a representation of 
a finite group.
From Proposition \ref{saintfargeau},
one checks easily that its reduction mod $\ell$ is $\bt$. 
Note that 
the restriction of $\xi$ to $\K_0^\times$ is either trivial~or 
(if $\ell\neq2$) equal to $\ep_{\K/\K_0}$.

Suppose $\xi$ is non-trivial on $\K_0^\times$. 
Then the same holds for $\widetilde{\xi}$, 
and the characteristic $0$ case~tells us that 
$\widetilde{\bt}$ is distinguished. 
As in the proof of Lem\-ma \ref{L72},
by applying Lemma \ref{aimee},
re\-du\-cing mod $\ell$ a non-zero in\-va\-riant form on 
$\widetilde{\bt}$ gives us a non-zero invariant form on $\bt$, 
which is thus~dis\-tinguished.

Suppose now that $\xi$ is trivial on $\K_0^\times$. 
Then the same holds for $\widetilde{\xi}$.
Let $\widetilde{\a}$ denote the unramified $\ell$-adic 
cha\-rac\-ter of $\K^\times$ 
of order $2$. 
Then $(\K/\E,\widetilde{\xi}\widetilde{\a})$ is an 
admissible pair of level zero.
It is at\-ta\-ched to $\widetilde{\bt}\widetilde{\h}$ 
where $\widetilde{\h}$ is the unramified $\ell$-adic character of $\BJ$ 
of order $2$.
Since $\widetilde{\xi}\widetilde{\a}$ is non-trivial on 
$\K_0^\times$, the represen\-tation $\widetilde{\bt}\widetilde{\h}$ is 
distinguished. 
Thus ${\bt}$ is $\h$-distinguished,
where $\h$ is the reduction mod $\ell$ of $\widetilde{\h}$.

If $\ell=2$, then $\bt$ is distinguished.
Suppose now that $\ell\neq2$.
If $\bt$ were both $\h$-distinguished and distinguished,
one would have two linearly inde\-pen\-dent linear forms in 
$\Hom_{\J\cap\G^\s}(\bt,1)$,
and this would contradict Lemma \ref{L72r}.
The result follows.
\end{proof}

The field extension $\E$ of $\F$ is not uniquely determined by $\pi$,
unlike its maximal tamely ramified extension $\T$.
To remedy this, let $\AP$ be the maximal tamely ramified 
sub-extension of $\K/\F$.
Write $\AP_0=\AP\cap\K_0$,
and let $\ap_0$ be the restriction of $\xi$ to $\D_0^\times$.

Since $\xi\circ\s=\xi^{-1}$
the character $\ap_0$ is quadratic, 
either trivial or (if $\ell\neq2)$ equal to $\ep_{\AP/\AP_0}$. We 
will see in Proposition \ref{rummo} that,
up to $\F_0$-equivalence, $\AP/\AP_0$
and $\ap_0$ are de\-ter\-mined by $\pi$.
 
\begin{theo}
\label{OttoDix}
Let $\pi$ be a $\s$-selfdual supercuspidal representation of $\G$. 
Suppose that $\T/\T_0$ is ramified.
Let $m$ be its relative degree and $\ap_0$ be the quadratic character 
of $\AP_0^\times$ associated to it.
\begin{enumerate}
\item
The representation $\pi$ is distinguished if and only if
at least one of the following conditions is fulfilled: 
\begin{enumerate}
\item
$\ell=2$,
\item 
$m=1$ and $\ap_0$ is trivial, 
\item
$m$ is even and $\ap_0$ is non-trivial.
\end{enumerate}
\item
Suppose that $\ell\neq2$.
Then $\pi$ is $\ep$-distinguished if and only if: 
\begin{enumerate}
\item
either $m=1$ and $\ap_0$ is non-trivial,
\item
or $m$ is even and $\ap_0$ is trivial. 
\end{enumerate}
\end{enumerate}
\end{theo}

\begin{rema}
\label{12TR}
If $\FC$ has characteristic $2$, then $\pi$ is always distinguished. 
If $\FC$ has characteristic not $2$, then $\pi$ is either distinguished 
or $\ep$-distinguished, but not both.
\end{rema}

\begin{proof}
By Proposition \ref{michelesolara} and Lemma \ref{jaunepoussinr},
it suffices to compare the restriction of $\xi$ to $\K_0^\times$ 
with $\ap_0$ when $\ell\neq2$.

Suppose first that $m=1$ and $\ap_0$ is trivial.
Since the restriction of $\bt$ to $\E_0^\times$ is equal to either $1$ or 
$\ep_{\E/\E_0}$, its restriction to $\T_0^\times$ is either $1$ or 
$\ep_{\T/\T_0}$, respectively. 
Since $\ap_0$ is trivial, we are in the first case, that is, 
the restriction of $\bt$ to $\E_0^\times$ is trivial.

Suppose now that $m\neq1$ and $\xi$ is non-trivial on $\K_0^\times$. 
We want to prove that $\ap_0$ is non-trivial. 
The restriction of $\xi$ to $\K_0^\times$ is equal to $\ep_{\K/\K_0}$. 
Thus $\ap_0$ is equal to $\ep_{\AP/\AP_0}$. 

Now suppose that $\FC$ has characteristic different from $2$,
and let $\chi$ be an unramified character of $\F^\times$ extending $\ep$. 
Note that the twisted representation $\pi'=\pi(\chi^{-1}\circ\det)$ is 
supercuspidal and $\s$-selfdual, 
and that the character associated with $\pi'$ is 
$\ap'_0=\ap^{}_0(\chi^{-1}\circ\N_{\K/\F})|_{\AP_0^\times}$,
where $\N_{\K/\F}$ is the norm map from $\K$ to $\F$.
Suppose first that $m=1$.
Then:
\begin{eqnarray*}
\text{$\pi$ is $\ep$-distinguished} 
&\Leftrightarrow& 
\text{$\pi'$ is distinguished} \\
&\Leftrightarrow& 
\text{the character $\ap'_0$ is trivial} \\
&\Leftrightarrow& 
\text{the character $\ap_0$ coincides with 
$\chi\circ\N_{\E/\F}$ on $\T_0^{\times}$}.
\end{eqnarray*}
Suppose now that $m\neq1$.
Then:
\begin{eqnarray*}
\text{$\pi$ is $\ep$-distinguished} 
&\Leftrightarrow& 
\text{$\pi'$ is distinguished} \\
&\Leftrightarrow& 
\text{the character $\ap'_0$ is non-trivial} \\
&\Leftrightarrow& 
\text{the character $\ap_0$ coincides with 
$(\chi\circ\N_{\K/\F})\ep_{\AP/\AP_0}$ on $\AP_0^{\times}$}.
\end{eqnarray*}
The restriction of $\chi\circ\N_{\K/\F}$ to $\AP_0^{\times}$ is 
$\ep\circ\N_{\AP_0/\F_0}=\ep_{\AP/\AP_0}$ to the power of $[\K_0:\AP_0]$, 
which is a $p$-power with $p$ odd. 
This gives us the expected result. 
\end{proof}

\begin{rema}
If $\pi$ is as in Theorem \ref{OttoDix} and $m>1$, 
its central character $\omega_\pi$ is always trivial on $\F_0^\times$. 
Indeed, since $\pi$ and $\bl$ have the same central character, 
we can express $\omega_\pi$ as the product $\omega_{\bk}\omega_{\bt}$,
where $\omega_{\bk}$ and $\omega_{\bt}$ are the central characters of $\bk$ 
and $\bt$ on $\F^\times$, respectively. 
Since $\bk$ is distinguished, its central character is trivial 
on $\F_0^\times$, 
thus $\omega_\pi$ and $\ap_0$ coincide on $\F_0^\times$. 
If $\ap_0$ is trivial then $\omega_\pi$ is trivial on $\F_0^\times$. 
Now assume that $\ap_0$ is equal to $\ep_{\AP/\AP_0}$.
Since $\AP/\AP_0$ is unramified by Lemma \ref{knr}, 
its restriction to $\F_0^\times$ is 
trivial if and only if $e(\AP_0/\F_0)$ is even, which is the case since 
$e(\AP_0/\T_0)=2$ when $m$ is even.
\end{rema}

\begin{rema}
\label{memoiresdunejeunefillerangee}
On the other hand, since $\T/\T_0$ is ramified,
the restriction of $\ep_{\T/\T_0}$ to $\F_0^\times$ is trivial if 
and only if $\kk_{0}^\times$ is contained in the subgroup 
$\ee^{\times 2}$ of squares in $\ee^\times$, that is, 
if and only if $f(\T_0/\F_0)$ is even.
It follows that, if $m=1$ and $f(\T_0/\F_0)$ is odd
(which is equivalent to $n$ being odd
by Lem\-ma \ref{MaraDesBois} and since $m$ is either $1$ or even)
then $\pi$ is distinguished if and only if $\omega_\pi$ is trivial on 
$\F_0^\times$.
\end{rema}

\section{The supercuspidal unramified case}
\label{SECDIST}

Let $\pi$ be a $\s$-selfdual cuspidal representation 
of $\G=\GL_n(\F)$ for $n\>1$. 
In this sec\-tion,~we in\-ves\-tigate the case where the quadratic extension 
$\T/\T_0$ is unramified. 
By Corollary \ref{zarathoustra},~the~re\-pre\-sentation
$\pi$ is dis\-tinguished if and only if any of the 
$\s$-selfdual types contained in $\pi$ is dis\-tin\-guished. 

\subsection{Existence of $\s$-selfdual extensions of the Heisenberg 
  representation}
\label{binzou1}

Let $[\aa,\b]$ be a maximal simple stratum as in Remark \ref{StandardStableType}. 
Write $\E=\F[\b]$
and suppose that it is unramified over 
$\E_0=\E^\s$.
Let us write $\BJ=\BJ(\aa,\b)$, $\J=\J(\aa,\b)$ and $\J^1=\J^1(\aa,\b)$. 

We may and will identify $\J/\J^1$ 
with the group $\GL_m(\ee)$, denoted $\GG$, equipped with the 
residual involution $\s\in\Gal(\ee/\ee_{0})$,
where $\ee$ and $\ee_0$ are the residue 
fields of $\E$ and $\E_0$, respectively. 

\begin{lemm}
\label{restonscalmes}
The group $\BJ\cap\G^\s$ is generated by $t$ and $\J\cap\G^\s$. 
\end{lemm}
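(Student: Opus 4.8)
The plan is to reduce the statement to the elementary description of $\BJ$ as the product $\E^\times\J$ with $\BJ/\J$ infinite cyclic. First I would recall that $\BJ=\E^\times\J$ and that $\E^\times\cap\J=\Oo_\E^\times$, where $\Oo_\E$ is the ring of integers of $\E$: indeed $\J\cap\B^\times=\bb^\times$ by Proposition \ref{patel}, and $\E^\times\cap\bb^\times=\Oo_\E^\times$ since $\E^\times$ consists of the scalar matrices in $\B^\times=\GL_m(\E)$ while $\bb^\times=\GL_m(\Oo_\E)$. It follows that the valuation of $\E$ induces an isomorphism $\BJ/\J\simeq\E^\times/\Oo_\E^\times\simeq\ZZ$, so that every $x\in\BJ$ admits a unique expression $x=t^k y$ with $k\in\ZZ$ and $y\in\J$, the class of $t$ generating $\BJ/\J$.

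The point that makes the argument work in the present (unramified) situation is that $t$ may be taken $\s$-invariant: since $\E/\E_0$ is unramified, a uniformizer of $\E_0$ is also a uniformizer of $\E$, so I would assume $\s(t)=t$. Then, for $x=t^k y$ as above, one has $\s(x)=t^k\s(y)$ with $\s(y)\in\J$, and the uniqueness of the decomposition shows that $\s(x)=x$ if and only if $\s(y)=y$, that is $y\in\J\cap\G^\s$. Hence $\BJ\cap\G^\s$ is exactly the set of elements $t^k y$ with $k\in\ZZ$ and $y\in\J\cap\G^\s$, which is the subgroup generated by $t$ and $\J\cap\G^\s$.

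I do not expect any genuine difficulty here: the only input beyond formal manipulation is the existence of a $\s$-fixed uniformizer of $\E$, which is precisely where the hypothesis that $\E/\E_0$ is unramified enters. This should be contrasted with the ramified case, treated in Lemmas \ref{CASm1} and \ref{tprime}, where $\s(t)=-t$ forces one to replace $t$ by a twisted $\s$-invariant generator ($t^2$ when $m=1$, $tw$ when $m$ is even) before running the same uniqueness argument.
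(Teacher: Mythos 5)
Your proof is correct and follows essentially the same route as the paper: decompose $\BJ/\J\simeq\ZZ$ with generator the class of the $\s$-invariant uniformizer $t$, then use uniqueness of the expression $x=t^ky$ to conclude. You simply spell out the preliminary identification $\E^\times\cap\J=\Oo_\E^\times$ which the paper takes for granted.
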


\begin{proof}
Any $x\in\BJ$ can be written $x=t^my$ for unique $m\in\ZZ$ 
and $y\in\J$. 
Since $t$ is $\s$-invariant, we have $x\in\BJ\cap\G^\s$ if and only if 
$y\in\J\cap\G^\s$.
\end{proof}

\begin{lemm}
\label{PaulAlain}
Any character of $\BJ\cap\G^\s$ 
trivial on $\J^1\cap\G^\s$ extends to 
a character of $\BJ$ trivial on $\J^1$.
\end{lemm}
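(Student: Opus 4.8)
The plan is to exploit the semidirect-product structure of $\BJ$ to reduce the problem to extending a character of a finite general linear group to a larger one. First I would record the relevant structure of $\BJ$. Since $\BJ=\E^\times\J$, since $t$ is a uniformizer of $\E$ and $\E^\times\cap\J=\Oo_\E^\times$, we have $\langle t\rangle\cap\J=\{1\}$, so, as $\E^\times$ normalizes $\J$, the group $\BJ$ is the semidirect product of the infinite cyclic group $\langle t\rangle$ with $\J$. Moreover $t$ lies in the centre of $\B^\times$, and $\J=\J^1\bb^\times$ by Proposition \ref{patel}, so conjugation by $t$ acts trivially on $\J/\J^1\simeq\GL_m(\ee)$.

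Next, let $\chi$ be a character of $\BJ\cap\G^\s$ trivial on $\J^1\cap\G^\s$, and restrict it to $\J\cap\G^\s$. Because $\J^1$ is a pro-$p$-group and $p\neq2$, the first cohomology set of $\Gal(\F/\F_0)$ with values in $\J^1$ is trivial, so the natural injection $(\J\cap\G^\s)/(\J^1\cap\G^\s)\to(\J/\J^1)^\s$ is onto; and since $\E/\E_0$ is unramified, so that $\s$ acts on $\GL_m(\ee)$ through $\Gal(\ee/\ee_0)$ entrywise, the target is $\GL_m(\ee_0)$. Thus $\chi|_{\J\cap\G^\s}$ is inflated from a character $\chi_0$ of $\GL_m(\ee_0)$, and since $p\neq2$, arguing as in Remark \ref{handluggage}, $\chi_0$ factors through the determinant, say $\chi_0=\a_0\circ\det$ with $\a_0$ a character of $\ee_0^\times$. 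I would then extend $\a_0$ to a character $\a$ of $\ee^\times$ (possible since $\ee_0^\times$ is a subgroup of the finite abelian group $\ee^\times$) and let $\phi$ be the inflation of $\a\circ\det$ along $\J\to\J/\J^1\simeq\GL_m(\ee)$. This $\phi$ is a character of $\J$ trivial on $\J^1$, invariant under conjugation by $t$, and, since for $y\in\J\cap\G^\s$ its image in $\GL_m(\ee_0)$ has determinant in $\ee_0^\times$, it agrees with $\chi$ on $\J\cap\G^\s$.

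Finally I would extend $\phi$ across $\BJ$ by setting $\widetilde\chi(t^k y)=\chi(t)^k\phi(y)$ for $k\in\ZZ$ and $y\in\J$ (well defined by the first paragraph); the $t$-invariance of $\phi$ makes this a group homomorphism, it is visibly trivial on $\J^1$, and by Lemma \ref{restonscalmes} every element of $\BJ\cap\G^\s$ is of the form $t^k y$ with $y\in\J\cap\G^\s$, on which $\widetilde\chi$ agrees with $\chi$; hence $\widetilde\chi$ is the desired extension. The one point requiring care is the middle step — identifying $(\J\cap\G^\s)/(\J^1\cap\G^\s)$ with $\GL_m(\ee_0)$ through the vanishing of the first cohomology set and then using that its characters factor through the determinant — while the rest is bookkeeping with the semidirect-product decomposition of $\BJ$.
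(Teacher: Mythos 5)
Your proof is correct and follows essentially the same route as the paper's: vanishing of $\textsf{H}^1(\s,\J^1)$ to identify $(\J\cap\G^\s)/(\J^1\cap\G^\s)$ with $\GG^\s=\GL_m(\ee_0)$, writing the induced character as $\a_0\circ\det$, extending $\a_0$ to a character $\a$ of $\ee^\times$, inflating to $\J$, and then extending to $\BJ$ by prescribing the value at $t$ and invoking Lemma \ref{restonscalmes}. The only difference is that you spell out the semidirect-product structure $\BJ=\langle t\rangle\ltimes\J$ and the triviality of the $t$-action on $\J/\J^1$, which the paper states more tersely.
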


\begin{proof}
Let $\chi$ be a character of $\BJ\cap\G^\s$ trivial on $\J^1\cap\G^\s$.
Since $\J^1$ is a pro-$p$-group,
the first cohomology group of $\s$ in $\J^1$ is trivial.
The subgroup $\GG^\s$ thus identifies with 
$(\J\cap\G^\s)/(\J^1\cap\G^\s)$.
The restriction of $\chi$ to $\J\cap\G^\s$ thus induces a character of 
$\GG^\s$, which can be written $\a_0\circ\det$ for some 
character $\a_0$ of $\ee_{0}^\times$. 
Let $\a$ be a character of $\ee^\times$ extending $\a_0$,
and let $\phi$ be the character of $\J$ trivial on $\J^1$ 
inducing the character ${\a}\circ\det$ of $\GG$.
Since $\J=\bb^\times\J^1$, 
the element $t$ acts trivially on $\J/\J^1$ by 
conjugacy, thus normalizes $\phi$.
We thus may extend $\phi$ to $\BJ$ by setting 
$\phi(t)=\chi(t)$. Lem\-ma \ref{restonscalmes} implies 
that $\phi$ extends $\chi$ and it is trivial on $\J^1$. 
\end{proof}

Let $\t\in\Cc(\aa,\b)$ be a maximal simple character such that 
$\H^1(\aa,\b)$ is $\s$-stable and $\t\circ\s=\t^{-1}$.
Let $\n$ denote the Heisenberg representation of $\t$ on the 
group $\J^1$.

\begin{lemm}
\label{roots}
There is a $\s$-selfdual representation $\bk$ of $\BJ$ extending $\n$. 
\end{lemm}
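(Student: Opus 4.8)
The plan is to mimic the proof of Lemma \ref{GuyLodrant5}, but using this time the descriptions of $\BJ$ and of the characters of $\BJ$ provided by Lemmas \ref{restonscalmes} and \ref{PaulAlain}, which are specific to the unramified situation. I would start from an arbitrary representation $\bk_0$ of $\BJ$ extending $\n$, and let $\xx$ be the character of $\BJ$, trivial on $\J^1$, attached to it by Lemma \ref{chouxfarci}, so that $\bk_0^{\s\vee}\simeq\bk_0\xx$ and $\xx\circ\s=\xx$. It then suffices to exhibit a character $\nu$ of $\BJ$, trivial on $\J^1$, such that $(\nu\circ\s)\nu=\xx$: the representation $\bk=\bk_0\nu$ will extend $\n$, and the chain $\bk^{\s\vee}\simeq(\nu\circ\s)^{-1}\bk_0^{\s\vee}\simeq(\nu\circ\s)^{-1}\bk_0\xx=\bk_0\nu=\bk$ (the last equality using $\xx=(\nu\circ\s)\nu$) will show that $\bk$ is $\s$-selfdual.

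To build such a $\nu$, I would use that $\BJ$ is generated by $t$ and $\J$, that $\BJ/\J$ is infinite cyclic generated by the image of $t$, and that $t$ acts trivially on $\J/\J^1\simeq\GG=\GL_m(\ee)$ by conjugacy; hence a character of $\BJ$ trivial on $\J^1$ amounts to the datum of a character of $\GG$ -- necessarily $\a\circ\det$ for a character $\a$ of $\ee^\times$, since $p\neq2$ -- together with a scalar in $\FC^\times$ assigned to $t$. Writing $\xx|_\J=\varphi\circ\det$, the relation $\xx\circ\s=\xx$ forces $\varphi\circ\s=\varphi$, where $\s$ is now the non-trivial element of $\Gal(\ee/\ee_0)$. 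By a standard computation in the cyclic group $\ee^\times$ (whose order is prime to $\ell$, while $\FC$ is algebraically closed of characteristic $\ell\neq p$), a $\s$-invariant character of $\ee^\times$ is of the form $(\a\circ\s)\a$ for some character $\a$ of $\ee^\times$ (equivalently, it factors through the norm $\N_{\ee/\ee_0}$). I would fix such an $\a$ with $\varphi=(\a\circ\s)\a$, and fix a square root $c\in\FC^\times$ of $\xx(t)$, which exists since $\FC$ is algebraically closed and $\s(t)=t$. The character $\nu$ of $\BJ$ trivial on $\J^1$ determined by $\nu|_\J=\a\circ\det$ and $\nu(t)=c$ then satisfies $(\nu\circ\s)\nu=\xx$ both on $\J$ and at $t$, hence on all of $\BJ$, which finishes the argument.

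I do not expect a serious obstacle here. Once the problem is reduced to solving $(\nu\circ\s)\nu=\xx$, it rests only on the elementary fact that the map $\a\mapsto(\a\circ\s)\a$ on characters of the cyclic group $\ee^\times$ surjects onto the group of $\s$-invariant characters, together with the existence of square roots in $\FC$. In contrast with the distinguished ramified case (Lemma \ref{GuyLodrant5} and Proposition \ref{michelesolara}), this argument is uniform in $\ell$ and requires no lifting to characteristic $0$, precisely because $|\ee^\times|$ is prime to $\ell$ and $\FC$ is algebraically closed, so no arithmetic obstruction to extracting the needed roots of unity or square roots arises.
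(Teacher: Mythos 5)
Your proof is correct and coincides with the paper's argument: reduce to solving $(\nu\circ\s)\nu=\xx$ via Lemma~\ref{chouxfarci}, split $\nu$ as a character of $\GG\simeq\GL_m(\ee)$ together with a value at $t$, use that $\varphi\circ\s=\varphi$ implies $\varphi=(\a\circ\s)\a$ for some $\a$, and take a square root of $\xx(t)$. One small slip in passing: $|\ee^\times|=q_0^2-1$ is \emph{not} in general prime to $\ell$ (only to $p$); what makes the argument uniform in $\ell$ is solely that $\FC^\times$ is divisible, which already guarantees that restriction of characters from $\ee^\times$ to $\ee_0^\times$ is surjective and that square roots exist.
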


\begin{proof}
Let $\bk$ be an irreducible representation of $\BJ$ extending $\n$.
By Lemma \ref{chouxfarci}, 
there is~a character $\xx$ of $\BJ$ trivial on 
$\J^1$ such that $\bk^{\s\vee}\simeq\bk\xx$
and $\xx\circ\s=\xx$.
We claim that there is~a character $\nu$ of $\BJ$ trivial on $\J^1$
such that $(\nu\circ\s)\nu=\xx$.
Indeed, if this is the case,
the representation $\bk\nu$ extends $\n$ and is $\s$-selfdual.

Consider first $\xx$ as a character of $\GG$ and 
write $\xx=\varphi\circ\det$ for some character $\varphi$ of $\ee^\times$.
Then we have $\varphi\circ\s=\varphi$,
thus there is a character $\a$ of $\ee^\times$ 
such that $(\a\circ\s)\a=\varphi$.
Choosing such a $\a$, there exists a unique character $\nu$ of $\J$ 
inducing $\a\circ\det$ on $\GG$.
Since $\BJ$ is gene\-ra\-ted by $t$ and $\J$,
it remains to extend $\nu$ to $\BJ$ by choosing a scalar 
$\nu(t)\in\FC^\times$ such 
that $\nu(t)^2=\xx(t)$.
\end{proof}

\subsection{Existence of distinguished extensions of the Heisenberg 
  representation}
\label{UR3}

Let $(\BJ,\bl)$ be a $\s$-selfdual type,
with associated simple character the character $\t$ 
of \S\ref{binzou1}.

In this paragraph, we suppose that $m$ is odd. 

\begin{prop}
\label{HoMcKay}
Suppose that $m$ is odd. 
There is a $\s$-selfdual distinguished 
representation $\bk$ of $\BJ$ exten\-ding $\n$. 
\end{prop}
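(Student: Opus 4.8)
The plan is to adapt the strategy already used in Lemma \ref{GuyLodrant5} and Lemma \ref{roots}, but now combining the $\s$-selfduality input with a \emph{distinction} input coming from the finite-field results of Paragraph \ref{level0nr1}. As usual we may assume $[\aa,\b]$ satisfies the conditions of Remark \ref{StandardStableType}, so that $\J/\J^1$ is identified with $\GG=\GL_m(\ee)$ carrying the residual involution $\s\in\Gal(\ee/\ee_0)$, and $\BJ=\E^\times\J$ with $t$ a $\s$-fixed uniformizer of $\E$. Start with any irreducible $\bk$ extending $\n$; by Lemma \ref{passypassion} it determines a character $\chi$ of $\BJ\cap\G^\s$, trivial on $\J^1\cap\G^\s$, with $\Hom_{\J^1\cap\G^\s}(\n,1)=\Hom_{\BJ\cap\G^\s}(\bk,\chi^{-1})$. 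By Lemma \ref{PaulAlain} the character $\chi$ extends to a character $\phi$ of $\BJ$ trivial on $\J^1$, and then $\bk_1=\bk\phi$ is a \emph{distinguished} extension of $\n$.

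The point is now to arrange that this distinguished extension is \emph{also} $\s$-selfdual. Applying Lemma \ref{chouxfarci} to $\bk_1$, there is a character $\xx$ of $\BJ$ trivial on $\J^1$ with $\bk_1^{\s\vee}\simeq\bk_1\xx$ and $\xx\circ\s=\xx$; since $\bk_1$ is distinguished, the same computation as in the proof of Lemma \ref{HoMcKayTR}(3) (using semisimplicity of the restriction to $\BJ\cap\G^\s$ and uniqueness of $\chi$ in Lemma \ref{passypassion}) shows $\xx$ is trivial on $\BJ\cap\G^\s$, hence trivial on $\J^1(\BJ\cap\G^\s)$; writing $\xx=\varphi\circ\det$ on $\GG$, its restriction to $\GG^\s=(\J\cap\G^\s)/(\J^1\cap\G^\s)=(\GL_m)(\ee_0)$ is $(\varphi|_{\ee_0^\times})\circ\det$, which is trivial, so $\varphi$ is trivial on $\ee_0^\times$. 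I would then twist $\bk_1$ by a character $\nu$ of $\BJ$ trivial on $\J^1$ with $(\nu\circ\s)\nu=\xx$: writing $\nu=\a\circ\det$ on $\GG$ we need $(\a\circ\s)\a=\varphi$ with $\varphi$ trivial on $\ee_0^\times$; since $p$ is odd and $m$ is odd, such an $\a$ exists (the norm map $\ee^\times\to\ee_0^\times$ and the squaring map on the odd-order-away part behave well, and $\varphi$ already kills $\ee_0^\times$), and then one extends $\nu$ to $\BJ$ by a suitable scalar $\nu(t)$ with $\nu(t)^2=\xx(t)$. The twisted representation $\bk_1\nu$ is $\s$-selfdual; but a priori twisting by $\nu$ could destroy distinction. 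Here is where $m$ odd enters decisively a second time: because $\GG^\s=\GL_m(\ee_0)$ and $m$ is odd, the character $\nu|_{\J\cap\G^\s}=(\a|_{\ee_0^\times})\circ\det$ need not be trivial, so instead I would argue more carefully — choose $\nu$ from the outset so that \emph{both} conditions hold, i.e. solve simultaneously for a character $\nu$ of $\BJ$ trivial on $\J^1$ with $(\nu\circ\s)\nu=\xx$ and $\nu$ trivial on $\BJ\cap\G^\s$. On $\GG$ this asks for $\a$ with $(\a\circ\s)\a=\varphi$ and $\a\circ\N_{\ee/\ee_0}=1$, i.e. $\a$ trivial on norms; since $\varphi$ is trivial on $\ee_0^\times\supseteq\N_{\ee/\ee_0}(\ee^\times)$, one may take $\a$ to be \emph{any} square root of $\varphi$ in the character group composed appropriately, and the obstruction to making it norm-trivial lives in a $2$-group quotient which is killed because $[\ee^\times:\N(\ee^\times)]=2$ divides... — this is exactly the delicate point.

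I therefore expect the \textbf{main obstacle} to be precisely this compatibility: showing that the freedom in choosing the extension $\bk$ (a torsor under characters of $\BJ/\J^1$) is large enough that one can land on a representation which is simultaneously $\s$-selfdual and distinguished, rather than only one or the other. The cleanest route, which I would pursue, is to invoke the finite-field dichotomy directly: by Lemma \ref{L72}, since $m$ is odd there exists a $\s$-selfdual \emph{and} distinguished supercuspidal representation $\rho$ of $\GG=\GL_m(\ee)$, and by Fact \ref{cordoba} and Proposition \ref{suzanne} this $\rho$ is the reduction of some $\bt$, giving a $\s$-selfdual distinguished type $(\BJ,\bl)$ with $\bl=\bk\otimes\bt$; comparing central characters and using Lemma \ref{passypassion} forces the corresponding $\bk$ to be both $\s$-selfdual and distinguished. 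Alternatively, in characteristic $\ell>0$ one lifts $\t$ and the whole situation to $\qlb$, produces a $\s$-selfdual distinguished $\widetilde{\bk}$ there by the characteristic-$0$ argument (existence of $\s$-selfdual distinguished supercuspidals of $\GL_m(\ee)$ with $m$ odd, from Gow's theorem, Lemma \ref{L72}), checks integrality via the $\s$-selfdual central character, and reduces mod $\ell$ using \cite{MSt} Paragraph 2.11 exactly as at the end of the proof of Lemma \ref{GuyLodrant5}. The $\s$-selfduality of the reduction is automatic; distinction of the reduction follows by reducing a non-zero invariant form, as in the proof of Lemma \ref{L72r}.
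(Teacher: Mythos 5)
You correctly isolate the key difficulty: arranging a twist of $\bk$ that is \emph{simultaneously} $\s$-selfdual and distinguished. Your purely local attempt cannot close the gap, and you notice this yourself: if $\a$ is required trivial on $\N_{\ee/\ee_0}(\ee^\times)=\ee_0^\times$ then $(\a\circ\s)\a=\a\circ\N_{\ee/\ee_0}$ is automatically trivial, so the equation $(\a\circ\s)\a=\varphi$ has no norm-trivial solution unless $\varphi$ is trivial -- and you have not shown that $\varphi$ is trivial. Your ``cleanest route'' also does not close it: building $\bt$ from a $\s$-selfdual distinguished supercuspidal $\rho$ of $\GL_m(\ee)$ gives a $\s$-selfdual distinguished $\bt$, but the type $\bl=\bk\otimes\bt$ is distinguished only if $\bt$ is $\chi$-distinguished, where $\chi$ is the character attached to $\bk$ by Lemma \ref{passypassion}; when $\chi$ is non-trivial, distinction of $\bt$ gives nothing, and the phrase ``comparing central characters and using Lemma \ref{passypassion} forces the corresponding $\bk$ to be both $\s$-selfdual and distinguished'' is not justified.

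What is missing is a globalization step which the paper uses to force the obstruction to vanish. Starting (as you do) from a $\s$-selfdual $\bk$ with associated character $\chi$, and a $\s$-selfdual distinguished $\bt$ built from $\rho$, the tensor product $\bl=\bk\otimes\bt$ is a $\s$-selfdual type that is $\chi$-distinguished. Extending $\chi$ to a character $\phi$ of $\BJ$ trivial on $\J^1$ (Lemma \ref{PaulAlain}), the type $\bl'=\bl\phi^{-1}$ is \emph{distinguished}; compactly inducing $\bl'$ yields a distinguished cuspidal representation $\pi'$ of $\G$, which is therefore $\s$-selfdual by Theorem \ref{FP}. Because both $\bl'$ and $\bl'^{\s\vee}\simeq\bl'\phi(\phi\circ\s)$ occur in $\pi'$, the character $\phi(\phi\circ\s)$ must be trivial. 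It is this, and not any local computation of square roots of $\varphi$, that shows $\bk'=\bk\phi^{-1}$ is both distinguished and $\s$-selfdual. Without an argument of this kind (using the fact that distinguished cuspidals of $\G$ are $\s$-selfdual to feed back into the local problem) your proof proposal has a genuine gap.
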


\begin{proof}
We first assume that $\FC$ has characteristic $0$. 
By Lemma \ref{roots},
there is a $\s$-selfdual~repre\-sentation $\bk$ of $\BJ$ extending $\n$.
Let $\chi$ denote the character of $\BJ\cap\G^\s$ trivial on $\J^1\cap\G^\s$ 
associated to $\bk$ by Lemma \ref{passypassion}.
Since $m$ is odd, Lem\-ma \ref{L71} 
implies that $\GG$ possesses a 
$\s$-selfdual supercuspidal represen\-ta\-tion $\rho$.
Let $\bt$ be the unique re\-pre\-sentation of $\BJ$ trivial on $\J^1$ 
such that $t\in\Ker(\bt)$ and whose res\-triction to $\J$ is the inflation of 
$\rho$. 
This representation $\bt$ is $\s$-selfdual.
By Lemma \ref{L72}, it is also distinguished. 
Now let $\bl$ be the $\s$-selfdual type $\bk\otimes\bt$ on $\BJ$.
The natural~iso\-mor\-phism:
\begin{equation*}
\Hom_{\BJ\cap\G^\s}(\bk,\chi)\otimes\Hom_{\BJ\cap\G^\s}(\bt,1)
\to
\Hom_{\BJ\cap\G^\s}(\bl,\chi)
\end{equation*}
thus shows that $\bl$ is $\chi$-distinguished.

By Lemma \ref{PaulAlain}, there exists a character $\phi$ 
of $\BJ$ trivial on $\J^1$ extending $\chi$.
The representa\-tion $\bl'=\bl\phi^{-1}$ 
is thus a distinguished type.
Let $\pi'$ be the cuspidal re\-pre\-sen\-ta\-tion of $\G$ 
compactly induced from $(\BJ,\bl')$.
It is distinguished, thus $\s$-selfdual by Theorem \ref{FP}. 
Since $\bl'$ and $\bl'^{\s\vee}\simeq\bl'\phi(\phi\circ\s)$ are 
both contained in $\pi'$, it follows that $\phi(\phi\circ\s)$ is trivial.
This implies that $\bk'=\bk\phi^{-1}$ is both $\s$-selfdual and 
distinguished. 

Now assume that $\FC$ has characteristic $\ell>0$.
We then reduce to the characteristic $0$ case as in the proof of 
Lemma \ref{GuyLodrant5}.
\end{proof}

\begin{rema}
\label{chaussures2}
I don't know whether Proposition \ref{HoMcKay} holds when $m$ is even.
\end{rema}

\begin{coro}
\label{patinage}
\begin{enumerate}
\item
Any distinguished representation of $\BJ$ extending $\n$ 
is $\s$-selfdual.
\item
If $\ell=2$,
any $\s$-selfdual representation of $\BJ$ extending $\n$ is distinguished.
\end{enumerate}
\end{coro}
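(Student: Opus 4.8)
The plan is to derive both statements from Proposition \ref{HoMcKay}, which --- since $m$ is odd throughout \S\ref{UR3} --- provides an extension $\bk_0$ of $\n$ to $\BJ$ that is simultaneously $\s$-selfdual \emph{and} distinguished. Given \emph{any} extension $\bk$ of $\n$ to $\BJ$, it restricts irreducibly to $\J^1$ and has the dimension of $\n$, so comparing it with $\bk_0$ through Proposition \ref{suzanne} together with a dimension count yields $\bk\simeq\bk_0\otimes\phi$ for a unique character $\phi$ of $\BJ$ trivial on $\J^1$. In the unramified case such a character is pinned down by the scalar $\phi(t)$ and by the character $\a$ of $\ee^\times$ through which $\phi$ induces $\a\circ\det$ on $\J/\J^1\simeq\GG=\GL_m(\ee)$ (use $\J=\bb^\times\J^1$ and Lemma \ref{restonscalmes}); moreover $\s(t)=t$, so $t\in\BJ\cap\G^\s$, and $\GG^\s=\GL_m(\ee_0)$.

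To prove (1), assume $\bk$ is distinguished. Applying Lemma \ref{passypassion} to $\bk_0$ at $g=1$ and invoking Proposition \ref{MultOneEta}, for every character $\psi$ of $\BJ\cap\G^\s$ trivial on $\J^1\cap\G^\s$ the restriction map to $\J^1\cap\G^\s$ embeds $\Hom_{\BJ\cap\G^\s}(\bk_0,\psi)$ into the one-dimensional space $\Hom_{\J^1\cap\G^\s}(\n,1)$. Both $\Hom_{\BJ\cap\G^\s}(\bk_0,1)$ (as $\bk_0$ is distinguished) and $\Hom_{\BJ\cap\G^\s}(\bk_0,\phi^{-1})\simeq\Hom_{\BJ\cap\G^\s}(\bk_0\otimes\phi,1)$ (as $\bk$ is distinguished) are therefore equal to that whole line; a non-zero form lying in both forces $\phi$ to be trivial on $\BJ\cap\G^\s$. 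Restricting this triviality to $\J\cap\G^\s$, which maps onto $\GG^\s=\GL_m(\ee_0)$, gives $\a$ trivial on $\ee_0^\times$, and evaluating at $t$ gives $\phi(t)=1$. Now set $\mu=\phi(\phi\circ\s)$: on $\GG$ it equals $(\a\cdot(\a\circ\s))\circ\det=(\a\circ\N_{\ee/\ee_0})\circ\det$, which is trivial because $\a|_{\ee_0^\times}=1$ and the norm $\ee^\times\to\ee_0^\times$ is surjective; at $t$ it equals $\phi(t)^2=1$. Hence $\mu=1$, and from $\bk^{\s\vee}\simeq\bk_0^{\s\vee}\otimes(\phi\circ\s)^{-1}\simeq\bk_0\otimes(\phi\circ\s)^{-1}\simeq\bk\otimes\mu^{-1}\simeq\bk$ we conclude that $\bk$ is $\s$-selfdual. (Equivalently, $\mu^{-1}$ is the character $\xx$ attached to $\bk$ by Lemma \ref{chouxfarci}.)

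To prove (2), assume $\ell=2$ and $\bk$ is $\s$-selfdual. Writing again $\bk\simeq\bk_0\otimes\phi$ with $\bk_0$ $\s$-selfdual, the isomorphism $\bk^{\s\vee}\simeq\bk$ becomes $\bk_0\otimes(\phi\circ\s)^{-1}\simeq\bk_0\otimes\phi$, hence $\phi\circ\s=\phi^{-1}$ by Proposition \ref{suzanne}. On $\BJ\cap\G^\s$, where $\s$ acts trivially, this reads $\phi^2=1$; and since $\FC$ has characteristic $2$, $\phi(x)^2=1$ forces $\phi(x)=1$, so $\phi$ is trivial on $\BJ\cap\G^\s$. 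Consequently $\Hom_{\BJ\cap\G^\s}(\bk,1)\simeq\Hom_{\BJ\cap\G^\s}(\bk_0,\phi^{-1})=\Hom_{\BJ\cap\G^\s}(\bk_0,1)\neq\{0\}$ because $\bk_0$ is distinguished, so $\bk$ is distinguished.

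The hard part is the identity $\phi(\phi\circ\s)=1$ in part (1): it uses in an essential way that $t$ is $\s$-fixed (so that distinction of $\bk$ kills $\phi(t)$) and that the residual norm $\ee^\times\to\ee_0^\times$ governs $\a\cdot(\a\circ\s)$ --- both features of the unramified case; in characteristic $2$ this is bypassed by the elementary square-root argument. Everything ultimately rests on the existence of the $\s$-selfdual distinguished $\bk_0$ of Proposition \ref{HoMcKay}, and hence on $m$ being odd, which is where the supercuspidality hypothesis enters.
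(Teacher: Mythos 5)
Your proof is correct and follows essentially the same strategy as the paper: fix the $\s$-selfdual distinguished extension $\bk_0$ given by Proposition~\ref{HoMcKay} (this is where $m$ odd, hence supercuspidality, enters), write an arbitrary extension as $\bk_0\otimes\phi$ via Proposition~\ref{suzanne}, and analyze the character~$\phi$. For part~(1) your argument coincides with the paper's once the implicit step is unpacked: the paper asserts directly that two distinguished extensions differ by a character trivial on $(\BJ\cap\G^\s)\J^1$, while you spell out the reason via Lemma~\ref{passypassion} at $g=1$ and the one-dimensionality of $\Hom_{\J^1\cap\G^\s}(\n,1)$ from Proposition~\ref{MultOneEta}; after that the computation of $\phi(\phi\circ\s)$ on $\GG$ and at $t$ is identical. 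For part~(2) your route is a small streamlining: the paper derives $\nu^{q_0+1}=1$ and $\xi(t)\in\{\pm1\}$ from the residual structure and only then specializes to $\ell=2$, whereas you observe directly that $\phi\circ\s=\phi^{-1}$ combined with the triviality of $\s$ on $\BJ\cap\G^\s$ forces $\phi^2=1$ there, and characteristic $2$ finishes. Both versions are short and both are correct; yours for~(2) avoids the residual-field computation entirely, at the cost of giving slightly less information when $\ell\neq2$ (the paper's variant shows in all characteristics that a $\s$-selfdual extension is at worst $\ep$-distinguished, via $\xi(t)\in\{\pm1\}$).
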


\begin{proof}
Let us fix a distinguished $\s$-selfdual representation $\bk$ of $\BJ$ 
extending $\n$ given by Propo\-si\-tion \ref{HoMcKay}.
Let $\bk'$ be a distinguished representation of $\BJ$ 
extending $\n$. 
Then $\bk'=\bk\phi$ for some character $\phi$ of $\BJ$ 
trivial on $(\BJ\cap\G^\s)\J^1$.
Thus $\phi(t)=1$ and $\phi$ induces the character $\a\circ\det$ 
on $\GG$, where $\a$ is a character of $\ee^\times$ trivial on 
$\ee_{0}^\times$, or equivalently $\a^{q_0+1}=1$.
Thus we have $\phi(\phi\circ\s)=1$.
This implies that $\bk'$ is $\s$-selfdual, which proves the first assertion. 

Now suppose that $\bk'$ is a $\s$-selfdual representation of $\BJ$ 
extending $\n$.
Then $\bk'=\bk\xi$ for some character $\xi$ of $\BJ$ such that 
$\xi(\xi\circ\s)$ is trivial.
Thus $\xi(t)\in\{-1,1\}$ and there is a character $\nu$ of $\ee^\times$ 
such that $\xi$ induces $\nu\circ\det$ 
on $\GG$ and $\nu^{q_0+1}=1$. 
It follows that $\xi$ is trivial on $(\J\cap\G^\s)\J^1$.
Thus, if $\ell=2$, 
the representation $\bk'$ is distinguished. 
\end{proof}

\begin{rema}
Let $\bk$ be a $\s$-selfdual representation of $\BJ$ extending $\n$.
Then the character $\chi$ of $\BJ\cap\G^\s$
associated to $\bk$ by Lemma \ref{passypassion} is quadratic and 
unramified.
\end{rema}

\subsection{Distinction criterion in the unramified case}
\label{guille}

Let $\pi$ be a $\s$-selfdual supercuspidal representation of 
$\G$.
Associated to it by Propo\-sition \ref{TT0canonique}, 
there is a quadratic extension $\T/\T_0$.
We assume that $\T$ is un\-ra\-mi\-fied over $\T_0$. 

Recall that,
by Theorem \ref{DSTT} and Proposition \ref{classesdetypesstables7}, 
the representation $\pi$ is distin\-guish\-ed if~and only any of its 
$\s$-selfdual types is dis\-tinguished. 
The following result is the analogue of Pro\-po\-si\-tion \ref{michelesolara}. 

\begin{prop}
\label{MAINTHM3nr}
Let $\pi$ be a $\s$-selfdual {supercuspidal} 
representation of $\G$, with unramified quadratic extension $\T/\T_0$
and relative degree $m$.
Let $(\BJ,\bl)$ be a $\s$-selfdual type in $\pi$.
Let $\J^1$ be the maxi\-mal nor\-mal pro-$p$-subgroup of $\BJ$
and $\n$ be an irre\-du\-cible com\-po\-nent of the res\-triction of $\bl$ 
to $\J^1$. 
\begin{enumerate}
\item
The integer $m$ is odd.
\item
\label{MAINTHM3nr.p2}
\label{Distkappa8}
There is a distinguished representation of $\BJ$ extending $\n$,
and any such extension of $\n$ is $\s$-self\-dual.
\item
\label{MAINTHM3nr.p3}
Let $\bk$ be a distinguished representation of $\BJ$ ex\-ten\-ding $\n$,
and let $\bt$ be the unique represen\-tation of $\BJ$ trivial on $\J^1$ 
such that $\bl\simeq\bk\otimes\bt$.
Then $\pi$ is distinguished if and only if $\bt$ is~dis\-tin\-guished.
\end{enumerate}
\end{prop}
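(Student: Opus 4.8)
The plan is to deduce the three assertions from the finite-field results of Paragraph~\ref{level0nr1} together with the existence results of Paragraphs~\ref{binzou1} and~\ref{UR3}, glued by the tensor-decomposition Lemma~\ref{passypassion}. Since $\T/\T_0$ is unramified, Proposition~\ref{classesdetypesstables7}(1) says that the $\s$-selfdual types in $\pi$ form a single $\G^\s$-conjugacy class, and each property in the statement is invariant under replacing $(\BJ,\bl)$ by a $\G^\s$-conjugate; so I would first reduce to the case where $(\BJ,\bl)$ is attached to a $\s$-selfdual maximal simple stratum $[\aa,\b]$ as in Remark~\ref{StandardStableType}, and identify $\J/\J^1$ with $\GL_m(\ee)$ carrying the residual involution $\s$ generating $\Gal(\ee/\ee_0)$. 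By Remark~\ref{Turron}, $\E$ is then unramified over $\E_0$, so Paragraphs~\ref{binzou1}--\ref{UR3} apply verbatim.

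For assertion~(1), I would use Lemma~\ref{roots} to pick a $\s$-selfdual representation $\bk$ of $\BJ$ extending $\n$, and let $\bt$ be the unique representation of $\BJ$ trivial on $\J^1$ with $\bl\simeq\bk\otimes\bt$. The $\s$-selfduality of $\bk$ and of $\bl$, together with the uniqueness in Proposition~\ref{suzanne}, makes $\bt$ $\s$-selfdual, hence so is the cuspidal representation $\rho$ of $\J/\J^1\simeq\GL_m(\ee)$ induced by it; since $\pi$ is supercuspidal, $\rho$ is supercuspidal by Fact~\ref{cordoba}, and Lemma~\ref{L71} forces $m$ to be odd. Assertion~(2) is then immediate: the existence of a distinguished representation $\bk$ of $\BJ$ extending $\n$ is Proposition~\ref{HoMcKay} (this is where the oddness of $m$ enters, via the existence of a $\s$-selfdual supercuspidal representation of $\GL_m(\ee)$), and the fact that any distinguished extension of $\n$ to $\BJ$ is automatically $\s$-selfdual is Corollary~\ref{patinage}(1).

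For assertion~(3), I would fix a distinguished $\bk$ --- which is $\s$-selfdual by~(2) --- and take $\bt$ as in the statement, which is $\s$-selfdual by the argument of~(1). Applying Lemma~\ref{passypassion} with $g=1$ (legitimate because $(\BJ,\bl)$ is $\s$-selfdual), one gets a character $\chi$ of $\BJ\cap\G^\s$ trivial on $\J^1\cap\G^\s$ with $\Hom_{\J^1\cap\G^\s}(\n,1)=\Hom_{\BJ\cap\G^\s}(\bk,\chi^{-1})$ and a canonical isomorphism $\Hom_{\J^1\cap\G^\s}(\n,1)\otimes\Hom_{\BJ\cap\G^\s}(\bt,\chi)\simeq\Hom_{\BJ\cap\G^\s}(\bl,1)$. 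The key point is that, $\bk$ being distinguished, the character $\chi$ is trivial: a nonzero $\BJ\cap\G^\s$-invariant form on $\bk$ defines a nonzero element of the one-dimensional space $\Hom_{\J^1\cap\G^\s}(\n,1)$ (Proposition~\ref{MultOneEta}), which by the defining property of $\chi$ forces $\chi=1$. Hence $\Hom_{\BJ\cap\G^\s}(\bl,1)\simeq\Hom_{\J^1\cap\G^\s}(\n,1)\otimes\Hom_{\BJ\cap\G^\s}(\bt,1)$, so $\bl$ is distinguished if and only if $\bt$ is, and $\pi$ is distinguished if and only if $(\BJ,\bl)$ is by Corollary~\ref{zarathoustra}.

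The individual steps are short once the earlier machinery is in hand; the genuine content --- and the reason supercuspidality is indispensable in this section --- lies in~(1)--(2): the passage from $\s$-selfduality and supercuspidality of $\pi$ to the oddness of $m$ through Lemma~\ref{L71}, and the construction in Proposition~\ref{HoMcKay} of a distinguished $\bk$, both resting on $\GL_m(\ee)$ possessing a $\s$-selfdual (super)cuspidal representation --- precisely what can fail for even $m$ and is unavailable for cuspidal non-supercuspidal $\pi$. The remaining care is bookkeeping: making the initial $\G^\s$-conjugacy reduction clean enough that the auxiliary lemmas, stated for standard strata, apply without modification.
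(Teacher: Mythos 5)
Your argument reproduces the paper's proof step for step: the reduction via Proposition~\ref{classesdetypesstables7}(1) to a stratum as in Remark~\ref{StandardStableType}, the chain Lemma~\ref{roots} $\to$ Fact~\ref{cordoba} $\to$ Lemma~\ref{L71} to get $m$ odd, Proposition~\ref{HoMcKay} and Corollary~\ref{patinage}(1) for assertion~(2), and the tensor isomorphism from Lemma~\ref{passypassion} with the observation that a distinguished $\bk$ forces $\chi=1$. The only difference is presentational --- you spell out why $\chi$ is trivial rather than deferring, as the paper does, to the identical step in the ramified case (Proposition~\ref{michelesolara}) --- so this is the same proof.
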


\begin{proof}
We may and will assume that $\BJ=\BJ(\aa,\b)$ for some maximal simple 
stratum $[\aa,\b]$ as in Remark \ref{StandardStableType}. 
Following Remark \ref{Turron}, the extension $\E$ is unramified 
over $\E_0$.
We thus may apply the results of Paragraph \ref{binzou1}.

Let $\bk$ be a $\s$-selfdual representation of $\BJ$ extending $\n$, 
the existence of which is given by Lem\-ma \ref{roots}, 
and let $\bt$ be the ir\-re\-ducible representation of $\BJ$ trivial on $\J^1$ 
such that $\bl$ is isomorphic to $\bk\otimes\bt$.
Since $\bl$ and $\bk$ are $\s$-selfdual, $\bt$ is $\s$-selfdual.
Its restriction to $\J$ induces a cuspidal irre\-du\-cible 
representation of $\GL_{m}(\ee)$, denoted $\rho$. 
Since $\pi$ is supercuspidal, 
$\rho$ is also supercus\-pidal by Fact \ref{cordoba}.
Lemma \ref{L71} implies that $m$ is odd. 
We thus apply Proposition \ref{HoMcKay}, 
which gives us a $\s$-selfdual dis\-tin\-guished representation 
extending $\n$.

Part (2) of the proposition is given by Corollary \ref{patinage}.
For (3), 
it suffices to fix a distinguished representa\-tion $\bk$ of 
$\BJ$ extend\-ing $\n$ and to consider the canonical isomor\-phism:
\begin{equation*}
\Hom_{\BJ\cap\G^\s}(\bk,1)
\otimes\Hom_{\BJ\cap\G^\s}(\bt,1) \to \Hom_{\BJ\cap\G^\s}(\bl,1)
\end{equation*}
as in the ramified case.
\end{proof}

\begin{rema}
If one relaxes the supercuspidality assumption on $\pi$ 
(that is, we only assume $\pi$ to be $\s$-selfdual cuspidal with 
$\T/\T_0$ unramified), 
then its relative degree $m$ need not be odd, in 
which case our proof of Proposition \ref{MAINTHM3nr}\eqref{Distkappa8} 
doesn't apply 
(see Remarks \ref{chaussures} and \ref{chaussures2}).
Unlike the ramified case, 
I thus don't know whether there is a distinguished and $\s$-selfdual 
extension $\bk$ of $\n$ when $\pi$ is not supercuspidal.
\end{rema}

\begin{rema}
In both ramified and unramified cases, 
the distinguished representation $\bk$ of $\BJ$ extending $\n$ is not uni\-que 
in general, so neither is $\bt$.
If $\bk$ is a distinguished representation of $\BJ$ extending $\n$,
the other ones are exactly the 
$\bk\phi$ where $\phi$ ranges over the set of characters of $\BJ$ trivial on 
$(\BJ\cap\G^\s)\J^1$. 
\end{rema}

From now, we will thus assume that $\bk$ is a distinguished 
$\s$-selfdual re\-pre\-senta\-tion of $\BJ$ exten\-ding $\n$.
Proposition \ref{MAINTHM3nr} reduces the problem of the distinction of $\pi$ 
to that of $\bt$.
We now~inves\-ti\-gate the distinction of $\bt$. 

Let $\rho$ be the representation of $\GL_{m}(\ee)$ defined by restricting 
$\bt$ to $\J$. 
It is $\s$-selfdual. 
By Fact \ref{cordoba}, it is also super\-cuspidal.

Let $(\K/\E,\xi)$ be an admissible pair of level $0$ attached to $\bt$
in the sense of Definition \ref{defiL0AP1}.
Since $\bt$ is $\s$-selfdual, Proposition \ref{charaghdinS4} tells us that 
there is a unique involutive $\E_0$-automorphism of~$\K$,
which we denote by $\s$, 
which is non-trivial on $\E$ and satisfies $\xi\circ\s=\xi^{-1}$.
Let $\K_0$ be the $\s$-fixed points~of $\K$ and $\E_0=\K_0\cap\E$. 

\begin{lemm}
\label{jaunepoussin}
The representation $\bt$ is distinguished if and only if 
it is trivial on $\E_0^\times$.
\end{lemm}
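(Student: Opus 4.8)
The plan is to reduce the distinction of $\bt$ to a multiplicity-one statement for the finite group $\GG^\s=\GL_m(\ee_0)$, and then to detect the obstruction to extending a $\J\cap\G^\s$-invariant linear form across the remaining generator $t$ of $\BJ\cap\G^\s$ by means of a central character computation.

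The first step I would carry out is to check that $\E^\times$ acts on the space of $\bt$ through a character, which is precisely the character $\omega_{\bt}$ of the statement. Any $z\in\E^\times$ normalizes $\J$ and $\J^1$ and, being central in $\B\simeq\Mat_m(\E)$, acts trivially on $\J/\J^1$; since $\bt$ is trivial on $\J^1$ this forces $\bt(z)$ to commute with $\bt(\J)$, hence with all of $\bt(\BJ)$ because $\BJ=\E^\times\J$, so $\bt(z)$ is a scalar by Schur's lemma (as $\bt$ is irreducible). In particular $\bt(t)=\omega_{\bt}(t)$. I would also note that $\bt$ is trivial on $1+\pp_\E=\E^\times\cap\J^1$, so $\omega_{\bt}$ restricted to $\Oo_\E^\times$ factors through $\ee^\times$; combining the triviality of $\bt$ on $\J^1$ with the identification $(\J\cap\G^\s)/(\J^1\cap\G^\s)\simeq\GG^\s$ recorded in the proof of Lemma~\ref{PaulAlain}, one gets an equality $\Hom_{\J\cap\G^\s}(\bt,1)=\Hom_{\GG^\s}(\rho,1)$.

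Next, since $\rho$ is $\s$-selfdual and supercuspidal, Lemma~\ref{L72} tells us that $\Hom_{\GG^\s}(\rho,1)$ is one-dimensional, so I can fix a nonzero $\lambda$ spanning $\Hom_{\J\cap\G^\s}(\bt,1)$. Because $t$ is $\s$-fixed and normalizes $\J$, it normalizes $\J\cap\G^\s$, so $\lambda\circ\bt(t)$ again lies in this one-dimensional space; as $\bt(t)=\omega_{\bt}(t)$, the form $\lambda$ is $t$-invariant exactly when $\omega_{\bt}(t)=1$. Since $\BJ\cap\G^\s$ is generated by $t$ and $\J\cap\G^\s$ by Lemma~\ref{restonscalmes}, this already shows that $\bt$ is distinguished if and only if $\omega_{\bt}(t)=1$. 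Finally, $t$ is a uniformizer of $\E_0$ (because $\E/\E_0$ is unramified) and $\Oo_{\E_0}^\times\subseteq\bb^\times\cap\G^\s\subseteq\J\cap\G^\s$, so any $\J\cap\G^\s$-invariant form is automatically killed by $\omega_{\bt}(u)-1$ for $u\in\Oo_{\E_0}^\times$; writing $\E_0^\times=t^{\ZZ}\cdot\Oo_{\E_0}^\times$ then turns the criterion $\omega_{\bt}(t)=1$ into the asserted criterion that $\omega_{\bt}$ be trivial on $\E_0^\times$.

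I do not expect a serious obstacle here: once the finite field input is in hand, the rest is elementary bookkeeping, so the genuinely substantial ingredient is Lemma~\ref{L72} (where the supercuspidality of $\pi$ enters, via Fact~\ref{cordoba}). The only points needing a little care are the Schur argument that $\E^\times$ acts by a scalar on $\bt$, and the identification $\Hom_{\J\cap\G^\s}(\bt,1)=\Hom_{\GG^\s}(\rho,1)$, which uses both that $\bt$ is trivial on $\J^1$ and that $\J\cap\G^\s$ modulo $\J^1\cap\G^\s$ is $\GG^\s$.
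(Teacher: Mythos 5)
Your proof is correct and follows essentially the same route as the paper's: reduce to the finite field statement via Lemma~\ref{L72}, observe $\Hom_{\BJ\cap\G^\s}(\bt,1)\subseteq\Hom_{\J\cap\G^\s}(\rho,1)$ with the latter non-zero and one-dimensional, use Lemma~\ref{restonscalmes} to see that the only remaining obstruction is the action of $t$, which is scalar multiplication by $\omega_{\bt}(t)$. You helpfully spell out two points the paper's proof elides: the Schur argument establishing that $\E^\times$ acts on $\bt$ by a character (implicit in the paper's very use of the notation $\omega_{\bt}$), and the final bookkeeping converting the criterion $\omega_{\bt}(t)=1$ into triviality on all of $\E_0^\times$ (via $\Oo_{\E_0}^\times\subseteq\J\cap\G^\s$ and $t$ being a uniformizer of $\E_0$ since $\E/\E_0$ is unramified); these are genuine small gaps in the paper's exposition that your write-up closes.
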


\begin{proof}
Note that $\bt(x)=\xi(x)\cdot{\rm id}$ for all $x\in\E^\times$,
thus $\bt$ is trivial on $\E^\times_0$ if and only if $\xi$ is.
The represen\-tation $\rho$ is $\s$-selfdual, 
thus distinguished (see Lemma \ref{L72}).
We thus have:
\begin{equation*}
\Hom_{\BJ\cap\G^\s}(\bt,1) 
\subseteq \Hom_{\J\cap\G^\s}(\bt,1) 
\simeq \Hom_{\GL_m(\ee_0)}(\rho,1) 
\end{equation*}
where the space on the right hand side is non-zero (and has dimension $1$).
Since $\BJ\cap\G^\s$ is~ge\-ne\-ra\-ted by $\J\cap\G^\s$ and $t$, 
we deduce that $\bt$ is distinguished if and only if $t$ acts trivially on the 
space $\Hom_{\J\cap\G^\s}(\bt,1)$, 
that is, if and only if $\xi(t)$ is trivial.
The result follows from the fact that, since $\xi$ is $\s$-selfdual, 
it is trivial on the $\E/\E_0$-norms in $\E_0^\times$, 
thus on the units of $\E_0$.
\end{proof}

Let $\aaa_0$ denote the restriction of the character $\xi$ to $\T_0^\times$.

\begin{lemm}
\label{alfonso1}
The character $\aaa_0$ is quadratic and unramified.
\end{lemm}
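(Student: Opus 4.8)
The plan is to relate $\aaa_0$ to the central character of the supercuspidal representation $\rho$ of $\GL_m(\ee)$ obtained by restricting $\bt$ to $\J$, and then to exploit the fact that $\rho$ is $\GL_m(\ee_0)$-distinguished.

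First I would observe that $\T_0^\times$ is pointwise fixed by $\s$, while $\bt$ is $\s$-selfdual; hence its central character $\omega_{\bt}$ satisfies $\omega_{\bt}\circ\s=\omega_{\bt}^{-1}$, and restricting this identity to $\T_0^\times$ gives $\aaa_0=\aaa_0^{-1}$, so that $\aaa_0$ is quadratic. For the unramified assertion I would use that $\T_0$ is contained in $\E_0$, hence in $\E$, so that $\Oo_{\T_0}^\times\subseteq\Oo_\E^\times$. Since $\bt$ is trivial on $\J^1$ and $\Oo_\E^\times\cap\J^1$ is the group of principal units of $\E$, the restriction of $\omega_{\bt}$ to $\Oo_\E^\times$ factors through $\ee^\times$, which is the centre of $\J/\J^1\simeq\GL_m(\ee)$; under this identification $\omega_{\bt}|_{\Oo_\E^\times}$ is precisely the central character $\omega_\rho$ of $\rho$. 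It then remains to track residue fields: by Remark \ref{Turron} the extension $\E/\E_0$ is unramified, while $\E/\T$ and $\E_0/\T_0$ are totally (wildly) ramified, so $\T_0$ and $\E_0$ have the same residue field $\ee_0$, and the reduction map sends $\Oo_{\T_0}^\times$ onto the subgroup $\ee_0^\times$ of $\ee^\times$, i.e. onto the centre of $\GL_m(\ee_0)$. Consequently $\aaa_0|_{\Oo_{\T_0}^\times}$ equals the restriction of $\omega_\rho$ to the centre of $\GL_m(\ee_0)$.

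To conclude, since $\pi$ is supercuspidal the representation $\rho$ is supercuspidal by Fact \ref{cordoba}, and $m$ is odd by Proposition \ref{MAINTHM3nr}; moreover $\rho$ is $\s$-selfdual, so Lemma \ref{L72} shows that $\rho$ is $\GL_m(\ee_0)$-distinguished. In particular $\rho$ has a non-zero vector fixed by the centre $\ee_0^\times$ of $\GL_m(\ee_0)$, which forces $\omega_\rho$ to be trivial on $\ee_0^\times$. Hence $\aaa_0$ is trivial on $\Oo_{\T_0}^\times$, i.e. unramified. I expect the only mildly technical point to be the residue-field bookkeeping identifying the image of $\Oo_{\T_0}^\times$ in $\ee^\times$ with the centre of $\GL_m(\ee_0)$; once $\rho$ is known to be distinguished, the rest is formal.
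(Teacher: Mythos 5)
Your proof is correct, but it follows a noticeably heavier path than the one in the paper. The paper's argument stays at the $p$-adic level: $\bt$ being $\s$-selfdual forces its central character to be trivial on $\N_{\E/\E_0}(\E^\times)$, so $\omega_{\bt}|_{\E_0^\times}$ is trivial or equal to $\ep_{\E/\E_0}$, and since $\E/\E_0$ is unramified this character is unramified; restricting to $\T_0^\times\subseteq\E_0^\times$ then gives the claim immediately. You instead descend to the residue-field level, identify $\omega_{\bt}|_{\Oo_\E^\times}$ with the inflation of the central character $\omega_\rho$ of the finite supercuspidal $\rho$, do some careful residue-field bookkeeping (which is correct: $f(\E/\T)=f(\E_0/\T_0)=1$, so $\Oo_{\T_0}^\times$ surjects onto $\ee_0^\times$), and then invoke Lemma \ref{L72} to say $\rho$ is distinguished and hence $\omega_\rho|_{\ee_0^\times}$ is trivial. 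This works, but the invocation of Lemma \ref{L72} is overkill: $\rho$ being $\s$-selfdual already forces $\omega_\rho\circ\s=\omega_\rho^{-1}$, and since the norm map $\N_{\ee/\ee_0}:\ee^\times\to\ee_0^\times$ is surjective for finite fields, $\omega_\rho$ is automatically trivial on $\ee_0^\times$ without any distinction input at all. (Lemma \ref{L72} is itself a nontrivial lifting argument, and the paper reserves it for a later, genuinely harder step.) A small phrasing slip: distinction gives a nonzero invariant \emph{linear form}, not an invariant vector; but either way the conclusion $\omega_\rho|_{\ee_0^\times}=1$ follows. In short, your argument is valid, but the paper's norm-triviality argument is shorter, avoids the finite-group detour, and does not depend on the supercuspidality of $\pi$.
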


\begin{proof}
As has been said in the proof of Lemma \ref{jaunepoussin},
the character $\xi$ is trivial on the subgroup of $\E/\E_0$-norms in $\E_0^\times$,
since $\bt$ is $\s$-selfdual.
Thus the restriction of $\xi$ to $\E_0^\times$ is either trivial or 
(if $\ell\neq2$)
equal to $\ep_{\E/\E_0}$. 
We get the expected result by restricting to $\T_0^\times$, 
since $\E$ is unramified over $\E_0$ and 
$e(\E_0/\T_0)$ is a $p$-power with $p$ odd. 
\end{proof}

We will see below (Remark \ref{effort})
that the character $\aaa_0$ is uniquely de\-ter\-mined by $\pi$.

\begin{theo}
\label{CNSDISTUNR}
Let $\pi$ be a $\s$-selfdual supercuspidal  
representation of $\G$. 
Suppose that $\T$ is unramified over $\T_0$.
\begin{enumerate}
\item
The representation $\pi$ is distinguished if and only if $\aaa_0$ is trivial.
\item
Suppose that the characteristic of $\FC$ is not $2$.
Then $\pi$ is $\ep$-distinguished if and only if $\aaa_0$ is non-trivial.
\end{enumerate}
\end{theo}

\begin{rema}
\label{12NR}
If $\FC$ has characteristic $2$, then $\pi$ is always distinguished. 
If $\FC$ has characteristic not $2$, then $\pi$ is either distinguished 
or $\ep$-distinguished, but not both.
\end{rema}

\begin{proof}
By Proposition \ref{MAINTHM3nr},
the representation $\pi$ is distinguished if and only if $\bt$ is distinguished. 
Lemma \ref{jaunepoussin} tells us that it is 
distinguished if and only if $\xi(t)=1$. 
The restriction of $\xi$ to $\E_0^\times$ is a qua\-dra\-tic unramified 
character. 
Since the ramification index of $\E_0$ over $\T_0$ is odd (for it is a
$p$-power), $\xi$ is trivial on $\E_0^\times$ if 
and only if it is trivial on $\T_0^\times$.
The first assertion is proven.

Now suppose that $\FC$ has characteristic different from $2$,
and let $\chi$ be an unramified character of $\F^\times$ extending $\ep$. 
Note that the twisted representation $\pi'=\pi(\chi^{-1}\circ\det)$ is 
supercuspidal and $\s$-selfdual, 
and that the character associated with $\pi'$ is 
$\aaa'_0=\aaa^{}_0(\chi^{-m}\circ\N_{\E/\F})|_{\T_0^\times}$
where $\N_{\E/\F}$ is the norm map from $\E$ to $\F$.
Thus:
\begin{eqnarray*}
\text{$\pi$ is $\ep$-distinguished} 
&\Leftrightarrow& 
\text{$\pi'$ is distinguished} \\
&\Leftrightarrow& 
\text{the character $\aaa'_0$ is trivial on $\T_0^\times$} \\
&\Leftrightarrow& 
\text{the character $\aaa_0$ coincides with 
$\chi^m\circ\N_{\E/\F}$ on $\T_0^{\times}$}.
\end{eqnarray*}
The restriction of $\chi\circ\N_{\E/\F}$ to $\T_0^\times$ is equal to 
$\ep\circ\N_{\T_0/\F_0}=\ep_{\T/\T_0}$ to the power of $[\E_0:\T_0]$,
which is a $p$-power. 
The second assertion then follows from the fact that $p$ and $m$ 
are odd.
\end{proof}

\begin{coro}
Let $\pi$ be a supercuspidal  representation of $\G$.
Suppose that $\T/\T_0$ is unra\-mified,
and that the ramification index of $\T/\F$ is odd.
Then $\pi$ is distinguished if and only if it is $\s$-selfdual and its central
character is trivial on $\F_0^\times$.
\end{coro}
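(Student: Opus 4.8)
The \emph{only if} direction uses neither hypothesis: if $\pi$ is distinguished, Theorem \ref{FP} says that it is $\s$-selfdual --- so that the quadratic extension $\T/\T_0$ is defined --- and that its central character $\omega_\pi$ is trivial on $\F_0^\times$.

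For the converse I would start from a $\s$-selfdual supercuspidal $\pi$ whose central character $\omega_\pi$ is trivial on $\F_0^\times$. Since $\T/\T_0$ is unramified, Theorem \ref{CNSDISTUNR} reduces the problem to showing that the quadratic unramified character $\aaa_0$ of $\T_0^\times$ is trivial, so the whole task is to read the triviality of $\aaa_0$ off the triviality of $\omega_\pi$ on $\F_0^\times$. The first step is to relate the two. Fix a distinguished $\s$-selfdual extension $\bk$ of $\n$ to $\BJ$ and the corresponding $\bt$ (Proposition \ref{MAINTHM3nr}), which is $\s$-selfdual because $\bl$ and $\bk$ are, so that $\bl\simeq\bk\otimes\bt$ and $\omega_\pi=\omega_{\bk}\omega_{\bt}$ on $\F^\times$. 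Since $\bk$ is distinguished, $\omega_{\bk}$ is trivial on $\F_0^\times$, hence $\omega_\pi$ and $\omega_{\bt}$ agree on $\F_0^\times$. As $\aaa_0$ is by definition the restriction of $\omega_{\bt}$ to $\T_0^\times$ and $\F_0\subseteq\T_0$, the restriction of $\aaa_0$ to $\F_0^\times$ equals $\omega_\pi|_{\F_0^\times}$, which is trivial by hypothesis.

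The second step is to upgrade the triviality of $\aaa_0$ on $\F_0^\times$ to triviality on all of $\T_0^\times$. Here I would use that $\aaa_0$ is quadratic and unramified (Lemma \ref{alfonso1}): it kills $\Oo_{\T_0}^\times$, so it is trivial as soon as $\aaa_0(\varpi)=1$ for one uniformizer $\varpi$ of $\T_0$; and the value of $\aaa_0$ at a uniformizer of $\F_0$ is $\aaa_0(\varpi)^{e(\T_0/\F_0)}$. Thus, provided $e(\T_0/\F_0)$ is odd, the vanishing of $\aaa_0$ on $\F_0^\times$ forces $\aaa_0(\varpi)=1$, so $\aaa_0$ is trivial and $\pi$ is distinguished by Theorem \ref{CNSDISTUNR}. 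The only genuine point is therefore the ramification bookkeeping: one must check, comparing the ramification indices along $\F_0\subseteq\T_0\subseteq\T$ and $\F\subseteq\T$ by means of Lemma \ref{MaraDesBois}, and using that $e(\T/\F)$ is odd while $\T/\T_0$ is unramified, that $e(\T_0/\F_0)$ is indeed odd. Everything else is a direct assembly of Theorems \ref{FP} and \ref{CNSDISTUNR} with the decomposition $\bl\simeq\bk\otimes\bt$ of the type, so I expect this ramification computation to be the main --- indeed the only --- obstacle.
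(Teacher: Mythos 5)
Your proof is the paper's, step for step: the only-if direction is Theorem \ref{FP}; the if-direction uses the factorisation $\bl\simeq\bk\otimes\bt$ with $\bk$ distinguished from Proposition \ref{MAINTHM3nr}, the triviality of $\omega_\bk$ on $\F_0^\times$ to identify $\omega_\pi|_{\F_0^\times}$ with $\aaa_0|_{\F_0^\times}$, Lemma \ref{alfonso1}, and the oddness of $e(\T_0/\F_0)$ to promote triviality on $\F_0^\times$ to triviality on $\T_0^\times$. This is exactly the paper's chain of reasoning, and you have correctly isolated the ramification computation as the only remaining point.

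Since you leave that computation to the reader, let me record it, because it is less innocent than it looks. Multiplicativity along the two towers $\F_0\subset\F\subset\T$ and $\F_0\subset\T_0\subset\T$, together with $e(\T/\T_0)=1$, gives $e(\T/\F)\,e(\F/\F_0)=e(\T_0/\F_0)$. When $\F/\F_0$ is unramified this reads $e(\T_0/\F_0)=e(\T/\F)$, which is odd by hypothesis, and the argument closes as you expect. When $\F/\F_0$ is ramified, however, the same identity gives $e(\T_0/\F_0)=2\,e(\T/\F)$, which is even (consistently with Lemma \ref{MaraDesBois}(3)); the quadratic unramified character $\aaa_0$ is then automatically trivial on $\F_0^\times$ whether or not it is trivial on $\T_0^\times$, and the last step of the argument fails. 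So the ramification bookkeeping succeeds only in the $\F/\F_0$ unramified case, which is the one the paper really has in view (compare Remark \ref{memoiresnr}); your write-up inherits the same implicit restriction and you should make it explicit rather than treat the oddness of $e(\T_0/\F_0)$ as an unconditional consequence of the stated hypotheses.
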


\begin{proof}
Suppose that $\pi$ is $\s$-selfdual and that its central character
$\omega_\pi$ is trivial on $\F_0^\times$. 
By using a $\s$-selfdual type $(\BJ,\bl)$ contained in $\pi$ as above,
we can express $\omega_\pi$ as the product $\omega_{\bk}\omega_{\bt}$,
where $\omega_{\bk}$ and $\omega_{\bt}$ are the central characters of $\bk$ 
and $\bt$ on $\F^\times$, respectively. 
Since $\bk$ is distinguished, its central character is trivial 
on $\F_0^\times$, 
thus $\omega_\pi$ and $\aaa_0$ coincide on $\F_0^\times$. 
It remains to prove that $\aaa_0$ is trivial if and only if it is trivial on 
$\F_0^\times$. 

Suppose $\aaa_0$ is trivial on $\F_0^\times$. 
By Lemma \ref{alfonso1}, it is unramified, 
thus $\aaa_0^{e(\T_0/\F_0)}$ is trivial on $\T_0^\times$. 
Since $e(\T/\F)$ is odd, $e(\T_0/\F_0)$ is odd too,
and the expected result follows from the fact that $\aaa_0$ is quadratic. 
\end{proof}

\begin{rema}
\label{memoiresnr}
In particular, when $n$ is odd and $\F$ is unramified over $\F_0$, 
a supercuspidal re\-pre\-sentation of $\G$
is distinguished if and only if it is $\s$-selfdual and its 
central char\-ac\-ter is~tri\-vial on $\F_0^\times$. 
This has been proved by Prasad \cite{PrasadDMJ01} 
when $\FC$ has characteristic $0$.
Note that, since $m$ and $p$ are odd here, 
$n$ is odd if and only if $[\T:\F]$ is odd.
\end{rema}

\begin{rema}
\label{refrouge}
Note that, in the proof of Prasad \cite{PrasadDMJ01} Theorem 4, 
the isomorphism of $\pi$ with $\pi^{\s\vee}$ gives an element $g\in\G$
which has the property that $g\s(g)\in\J=\J(\aa,\b)$, 
but $g$ has \textit{a priori} no reason to normalize $\J$. 
Anyway, $g$ can be chosen in the maximal compact open subgroup 
$\aa^\times$ which contains $\J$
(which derives from \cite{BK} Theorem 3.5.11),
thus the group generated by $g$ and $\J$ will indeed be 
compact mod centre.
\end{rema}

\section{Statement of the final results}
\label{S7}

In this section we put together the main results of Sections 
\ref{SECDISTTR} to \ref{SECDIST}.
Let $\pi$ be a $\s$-selfdual~super\-cuspidal representation of $\G$.
Associated to it,
there are its relative degree $m$ and the quadratic extension $\T/\T_0$.
It is convenient to introduce the following definition,
which comes from \cite{AKMSS}.

\begin{defi}
\label{carioca}
A $\s$-selfdual type in $\pi$ is said to be 
\textit{generic} if either $\T/\T_0$ is unramified, 
or $\T/\T_0$ is ramified and this type has index $\lfloor m/2\rfloor$. 
\end{defi}

\begin{rema}
\label{cariocarema}
It is proved in \cite{AKMSS} {Proposition 5.5} 
that a $\s$-selfdual type is generic 
in the~sen\-se of Definition \ref{carioca} if and only if there are a 
$\s$-stable maximal unipotent subgroup $\N$ of $\G$ and a $\s$-self\-dual 
non-degenerate character $\psi_\N$ of $\N$ such that 
$\Hom_{\BJ\cap\N}(\bl,\psi_\N)$
is non-zero.
\end{rema}

Definition \ref{carioca} is convenient to us because of the following result, 
which subsumes Propositions \ref{MAINTHM3r} and \ref{flipflaplagirafe} 
(compare with Theo\-rem \ref{DSTT}).

\begin{theo}
\label{genericDSTT}
A $\s$-selfdual cuspidal representation of $\G$ is dis\-tin\-guished~if and 
only if~any of its generic $\s$-selfdual types is dis\-tin\-guished.
\end{theo}

Let $(\BJ,\bl)$ be a \textit{generic} $\s$-selfdual type contained in $\pi$.
Let $[\aa,\b]$ be a $\s$-selfdual simple~stra\-tum such that 
$\BJ=\BJ(\aa,\b)$. 
The restriction of $\bl$ to the maxi\-mal normal pro-$p$-subgroup
$\J^1$~is made of copies of a single irreducible representation $\n$. 
We~fix a distinguished $\s$-selfdual representation~$\bk$ of $\BJ$ 
extending $\n$,
the existence of which is given~by~Pro\-po\-sitions \ref{michelesolara} and 
\ref{MAINTHM3nr}. 
Let $\bt$ be the re\-pre\-sentation of $\BJ$ trivial on $\J^1$ such that 
$\bl$ is isomorphic to $\bk\otimes\bt$. 
Let $(\K/\E,\xi)$ be a admissible pair of level $0$ 
attached to $\bt$ and $\s$ be the involution
of $\K$ given by Pro\-position \ref{charaghdinS4}.
Let $\K_0$ be the field of $\s$-fixed points of $\K$.
We thus have $\K\simeq\K_0\otimes_{\F_0}\F$.

\begin{defi}
Let $\AP$ be the maximal tamely ramified sub-extension of $\K/\F$.
Write $\AP_0=\AP\cap\K_0$,
and let $\ap_0$ be the restriction of $\xi$ to $\D_0^\times$.
\end{defi}

It follows immediately from the definition that 
$\AP_0/\F_0$ is tamely ramified 
and the character $\ap_0$ is qua\-dratic, 
either trivial or (if $\ell\neq2$) 
equal to $\ep_{\AP/\AP_0}$.

\begin{prop}
\label{rummo}
The quadratic extension $\AP/\AP_0$ and the character $\ap_0$
are uniquely deter\-mined by $\pi$ up to $\F_0$-equivalence. 
That is, 
if $\AP'/\AP'_0$ and $\ap'_0$ are another quadratic~exten\-sion~and character 
associated to $\pi$,
then there is an $\F_0$-isomorphism $\h:\AP\to\AP'$ such that
$\h(\AP_0)=\AP'_0$ and $\ap_0^{}=\ap'_0\circ\h$.
\end{prop}

\begin{proof}
Start with a generic $\s$-selfdual type contained in $\pi$. 
Since it is unique up to $\G^\s$-con\-ju\-ga\-cy, 
we may assu\-me this is $(\BJ,\bl)$. 
Fix a $\s$-selfdual stra\-tum $[\aa',\b']$ 
such that $\BJ=\BJ(\aa',\b')$.
By~\cite{AKMSS} {Lemma 4.29}, 
we may assume that the maximal tamely ramified sub-extension of 
$\E'=\F[\b']$ over $\F$ is equal to $\T$. 
Fix a distinguished $\s$-selfdual 
represen\-ta\-tion $\bk'$ of $\BJ$ extending $\n$, 
let $\bt'$ be the represen\-tation of $\BJ$ trivial on $\J^1$ 
corresponding to this choice
and $(\K'/\E',\xi')$ be an admissible~ pair of level $0$ attached to $\bt'$.
This gives us a quadratic extension $\AP'/\AP'_0$ and a character $\ap'_0$ of 
$\AP_0'^\times$.

First, suppose that $[\aa',\b']=[\aa,\b]$ and $\K'=\K$.
We have $\bk'=\bk\phi$ for some character $\phi$ of $\BJ$ trivial on 
$(\BJ\cap\G^\s)\J^{1}$, thus $\bt'$ is isomorphic to $\bt\phi^{-1}$. 
Thus $\xi'$ is $\E$-isomorphic to $\xi\a^{-1}$ for some tamely ramified 
character $\a$ of $\K^\times$ trivial on $\K_0^\times$.
Restricting to $\AP_0$, we get $\ap'_0=\ap^{}_0$.

We now go back to the general case. 
By the previous argument, we may assume that $\bk'=\bk$,
thus $\bt'=\bt$.
Since $\AP$ and $\AP'$ are both unramified of same degree $m$ over $\T$, 
they are $\T$-isomorphic.
Let us fix a $\T$-isomorphism $f:\AP\to\AP'$.
Write $\s'$ for the involutive automorphism of $\K'$ given by Pro\-position 
\ref{charaghdinS4}. 

\begin{lemm}
\label{DDstrcuture}
We have $\s'\circ f=f\circ\s$.
\end{lemm}

\begin{proof}
Let us identify the residual fields of $\K$ and $\AP$, 
denoted~$\ll$, and those of $\E$ and $\T$, denoted $\ee$.
Note that, if $\h$ is any $\T$-automorphism of $\AP$, 
then it commutes with $\s$ since $\h$ and $\s\circ\h\circ\s^{-1}$ 
are both in $\Gal(\AP/\T)$ and have the same image in $\Gal(\ll/\ee)$.

We now consider the pair $(\AP/\T,\xi|_{\AP^\times})$. 
Since $\AP/\T$ is unramified of degree $m$,
it is admissible~of level~$0$.
Moreover, the $\ee$-regular character of $\ll^\times$ it induces 
is $\Gal(\ll/\ee)$-conjugate to the one~indu\-ced by $(\K/\E,\xi)$, 
which~doesn't depend on the identifications of residual fields 
we have made.~We 
have a similar result for $(\AP/\T,\xi'\circ f)$ and $(\K'/\E',\xi')$. 
Since $\bt'=\bt$ we deduce that $\xi'\circ f=\xi\circ\h$ for some 
$\h\in\Gal(\AP/\T)$.
Let $\a$ be the $\T$-automorphism $\s'\circ f\circ\s^{-1}\circ f^{-1}$ of 
$\AP'$. 
We have:
\begin{equation*}
\xi'\circ\a 
= \xi'^{-1}\circ f\circ\s^{-1}\circ f^{-1}
= \xi^{-1}\circ\h\circ\s^{-1}\circ f^{-1}
= \xi\circ\h\circ f^{-1}
= \xi'.
\end{equation*}
It follows from admissibility of $\xi'$ that $\a$ is trivial, 
as expected.
\end{proof}

Lemma \ref{DDstrcuture} implies that 
$\AP_0^{}$ and $\AP'_0$ are $\T_0$-isomorphic.
We thus~now may assume that $\AP=\AP'$ and $\AP^{}_0=\AP'_0$,
thus $\K$, $\K'$ have the same maximal unrami\-fied sub-extension 
$\AP$ over $\T$ and there is an automorphism $\h\in\Gal(\AP/\T)$ 
such that $\xi'(x)=\xi\circ\h(x)$ for all $x\in\AP^\times$.
Restricting to $\AP_0^\times$, we deduce that $\ap'_0=\ap^{}_0$.
\end{proof}

\begin{rema}
\label{effort}
In particular, the character $\aaa_0$ of Paragraph \ref{guille},
which is the restriction of~$\ap_0$ to $\T_0^\times$, 
is uniquely determined by $\pi$.
\end{rema}

We state the dichotomy and disjunction theorem.

\begin{theo}
\label{MAINTH1DD}
Let $\pi$ be a $\s$-selfdual supercuspidal  representation of $\G$. 
Let $\ell$ be the charac\-te\-ris\-tic of $\FC$.
\begin{enumerate}
\item
If $\ell\neq2$, then $\pi$ is either distinguished or $\ep$-distinguished, 
but not both.
\item
If $\ell=2$, then $\pi$ is always distinguished. 
\end{enumerate}
\end{theo}

\begin{proof}
See Remarks \ref{12TR} and \ref{12NR}.
\end{proof}

We now state the distinction criterion theorem.

\begin{theo}
\label{MAINTH1}
Let $\pi$ be a $\s$-selfdual supercuspidal representation of $\G$. 
Attached to it,~there are 
the quadratic extensions $\T/\T_0$ and $\AP/\AP_0$
and the character $\ap_0$.
\begin{enumerate}
\item
Suppose that $n$ is odd.
Then $\pi$ is distinguished if and only if its central character 
is trivial on $\F_0^\times$.
\item
If $\ell\neq2$, 
$\T/\T_0$ is ramified and $\AP/\AP_0$ is unramified,
then $\pi$ is distinguished if and only if the character $\ap_0$ is non-trivial. 
\item
Otherwise, $\pi$ is distinguished if and only if $\ap_0$ is trivial.
\end{enumerate}
\end{theo}

\begin{proof}
Item 1 is an immediate consequence of Theorem \ref{MAINTH1DD}
as explained in \S\ref{guderianintro}.
If $\ell\neq2$, a $\s$-selfdual supercuspidal representation $\pi$
is either distinguished or $\ep$-distinguished.
In the~lat\-ter case, the restriction of its central character
to $\F_0^\times$ is $\ep^n$, which is trivial if and only if $n$ is even.
See also Remarks \ref{memoiresdunejeunefillerangee} and \ref{memoiresnr}.

For the remaining items, 
see Theorems \ref{OttoDix} and \ref{CNSDISTUNR}:
it suf\-fices to check that, if $\T/\T_0$ is~un\-ramified,
then $\ap_0$ is tri\-vial if and only if its restriction $\aaa_0$ to $\T_0^\times$ 
is trivial, which follows from the fact that $m$ is odd in that case.
\end{proof}

\begin{rema}
\label{MAINTH1RMK}
The following conditions are equivalent:
\begin{enumerate}
\item
$\AP/\AP_0$ is ramified;
\item
$\T/\T_0$ is ramified and $m=1$.
\end{enumerate}
Indeed this follows from Remark \ref{Turron}, Lemma \ref{knr} and 
Proposition \ref{flipflaplagirafe}.
The following conditions are thus also equivalent:
\begin{enumerate}
\item
$\T/\T_0$ is ramified and $\AP/\AP_0$ is unramified;
\item
$\F/\F_0$ is ramified, $\T_0/\F_0$ has odd ramification order 
and $\AP/\AP_0$ is unramified;
\item
$\F/\F_0$ is ramified, $\T_0/\F_0$ has odd ramification order 
and $m\neq1$.
\end{enumerate}
\end{rema}

We now state the distinguished lift theorem. 
For the notion of the reduction mod $\ell$ of~an~in\-te\-gral irreducible 
$\qlb$-representation of $\G$, we refer to \cite{Vigb,Vigw}. 
If $\pi$ is an irreducible $\flb$-repre\-sen\-ta\-tion of $\G$, 
we say an integral irreducible $\qlb$-representation of $\G$ 
is a \textit{lift} of $\pi$ if its reduction mod $\ell$ is irreducible and 
isomorphic to $\pi$. 

\begin{theo}
\label{MAINTH2}
Let $\pi$ be a $\s$-selfdual supercuspidal $\flb$-representation of $\G$. 
\begin{enumerate}
\item
The representation $\pi$ admits a $\s$-selfdual supercuspidal lift to $\qlb$.
\item
Let $\widetilde{\pi}$ be a $\s$-selfdual lift of $\pi$,
and suppose that $\ell\neq2$.
Then $\widetilde{\pi}$ is distinguished if and only if 
$\pi$ is distinguished.
\end{enumerate}
\end{theo}

\begin{proof}
Let $(\BJ,\bl)$ be a $\s$-selfdual type in $\pi$.
Let $\n$ be the Heisenberg representation contained in the restriction of
$\bl$ to $\J^1$, with associated simple character $\t$. 
As in the proof of Lemma \ref{GuyLodrant5}, 
let $\widetilde{\t}$ be the lift of $\t$ with values in $\qlb$
and $\widetilde{\n}$ be the associated Heisenberg 
representation, whose reduction mod $\ell$ is isomorphic to $\n$.
Pro\-positions \ref{michelesolara} and \ref{HoMcKay} tell us that 
there is a distinguished $\s$-selfdual representation $\widetilde{\bk}$ of 
$\BJ$ which extends $\widetilde{\n}$.
Its reduction mod $\ell$, denoted $\bk$,
is a $\s$-selfdual representation of $\BJ$ extending~$\n$,
and it is distinguished thanks to Lemma \ref{aimee}.
(Note that, as in the proof of Lemma \ref{jaunepoussinr},
the fact that $\bk$ is $\s$-selfdual implies that it has finite image;
it thus can be considered as a representation of a finite group.)
Let $\bt$ be the irreducible representation of~$\BJ$ trivial on $\J^1$
such that $\bl$ is isomorphic to $\bk\otimes\bt$.
It is $\s$-selfdual.

The representation $\bt$ admits a $\s$-selfdual 
$\qlb$-lift $\widetilde{\bt}$ on $\BJ$ trivial on $\J^1$.
Indeed, let $(\K/\E,\xi)$~be an admissible pair of level $0$ attached to 
$\bt$.
Then, as in the proof of Lemma \ref{jaunepoussinr}, 
the canonical $\qlb$-lift $\widetilde{\xi}$ of $\xi$
defines a pair $(\K/\E,\widetilde{\xi})$ which is admissible of level $0$,
and the $\qlb$-representation~$\widetilde{\bt}$ of $\BJ$ trivial on $\J^1$ 
which is attached to it is both $\s$-selfdual and a lift of $\bt$.
The representation $\widetilde{\bk}\otimes\widetilde{\bt}$ is thus 
a $\s$-self\-dual $\ell$-adic type whose reduction mod $\ell$ is $\bl$.
Inducing $\widetilde{\bk}\otimes\widetilde{\bt}$ to $\G$, 
we get a $\s$-selfdual supercuspidal lift $\widetilde{\pi}$ of
$\pi$. 

Suppose that $\ell\neq2$ and let $\widetilde{\ep}$ be the canonical
$\ell$-adic lift of $\ep$,
that is, the $\qlb$-character of $\F_0^\times$ of kernel 
$\N_{\F/\F_0}(\F^\times)$. 
By Theorem \ref{MAINTH1DD}, 
since the representation $\widetilde{\pi}$ is $\s$-selfdual,
it is either distinguished or $\widetilde{\ep}$-distinguished,
but not both. 
Using the reduction argument of invariant linear forms as in Lemma ~\ref{aimee}, 
we see that if $\widetilde{\pi}$ is distinguished
(respecti\-ve\-ly, $\widetilde{\ep}$-distinguished),
then $\pi$ is distinguished
(respecti\-ve\-ly, ${\ep}$-distinguished).
By Theorem \ref{MAINTH1DD} applied to $\pi$, 
this is an~equi\-valence. 
\end{proof}

We end with the following result,
which is useful in \cite{AKMSS}.

\begin{prop}
Suppose that $\pi$ is a distinguished supercuspidal  representation 
of $\G$ and that $\ell\neq2$.
Then $\pi$ has no $\ep$-distinguished unramified twist if and only if $\AP/\AP_0$ 
is ramified, that is, if and only if $\T/\T_0$ is ramified and $m=1$. 
\end{prop}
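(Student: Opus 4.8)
The plan relies on the dichotomy/disjunction theorem \ref{MAINTH1DD}, the distinction criterion \ref{MAINTH1} (together with Theorems \ref{OttoDix} and \ref{CNSDISTUNR} and Remark \ref{MAINTH1RMK}), and the effect of unramified twists on the invariant $\ap_0$. First I would fix a \emph{tamely ramified} character $\hat\ep$ of $\F^\times$ with $\hat\ep|_{\F_0^\times}=\ep$; since $\ker\ep$ is exactly the group $\N_{\F/\F_0}(\F^\times)=\{x\s(x):x\in\F^\times\}$, such a $\hat\ep$ is trivial on every $x\s(x)$ and therefore satisfies $\hat\ep\circ\s=\hat\ep^{-1}$. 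Consequently $\sigma:=\pi\hat\ep^{-1}$ is supercuspidal and $\s$-selfdual; because $\hat\ep$ is tame it contains the same simple character as $\pi$, and because $\hat\ep^{-1}$ is $\s$-anti-invariant the $\s$-structure of a $\s$-selfdual simple stratum for $\pi$ is preserved, so $\sigma$ has the same relative degree $m$ and the same quadratic extensions $\T/\T_0$, $\AP/\AP_0$, while the associated character is $\ap_0(\sigma)=\ap_0(\pi)\cdot(\hat\ep^{-1}\circ\N_{\K/\F})|_{\AP_0^\times}=\ap_0(\pi)\cdot\ep_{\AP/\AP_0}$, the exponent $[\K:\AP]$ being an odd power of $p$. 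Finally $\Hom_{\G^\s}(\pi\chi,\ep\circ\det)=\Hom_{\G^\s}(\sigma\chi,1)$ for every unramified character $\chi$ of $\F^\times$, so $\pi$ admits an $\ep$-distinguished unramified twist if and only if $\sigma$ admits a distinguished unramified twist; by Theorem \ref{FP} this last condition says that some $\s$-selfdual unramified twist of $\sigma$ is distinguished.

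Next I would count these twists. For $\chi$ unramified one has $\chi\circ\s=\chi$, so $\pi\chi$ (equivalently $\sigma\chi$) is $\s$-selfdual exactly when $\chi^2$ lies in the group $\Sigma$ of unramified self-twists of $\pi$, which is cyclic of order prime to $\ell$; since $\ell\neq2$ the quotient $\{\chi\ \text{unramified}:\chi^2\in\Sigma\}/\Sigma$ has order $2$. Hence $\sigma$ has, up to isomorphism, exactly two $\s$-selfdual unramified twists, $\sigma$ itself and $\sigma':=\sigma\chi_0$. Moreover $\Hom_{\G^\s}(\sigma,1)=\Hom_{\G^\s}(\pi,\ep\circ\det)=0$ by the disjunction part of Theorem \ref{MAINTH1DD} (as $\pi$ is distinguished), so $\sigma$ is not distinguished, and therefore $\pi$ has an $\ep$-distinguished unramified twist if and only if $\sigma'$ is distinguished. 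Since $\ap_0(\sigma')=\ap_0(\sigma)\cdot(\chi_0\circ\N_{\K/\F})|_{\AP_0^\times}$ and the correction factor is an unramified quadratic character, Theorem \ref{MAINTH1} (cases (2) and (3), sorted out by Remark \ref{MAINTH1RMK}) reduces everything to comparing this correction factor with $\ep_{\AP/\AP_0}$.

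If $\AP/\AP_0$ is ramified, then $\ep_{\AP/\AP_0}$ is ramified; as $\pi$ is distinguished, case (3) of Theorem \ref{MAINTH1} gives $\ap_0(\pi)=1$, hence $\ap_0(\sigma)=\ep_{\AP/\AP_0}$, hence $\ap_0(\sigma')$ is a ramified character lying in $\{1,\ep_{\AP/\AP_0}\}$, so $\ap_0(\sigma')=\ep_{\AP/\AP_0}$ and $\sigma'$ is not distinguished; thus $\pi$ has no $\ep$-distinguished unramified twist. Conversely, assume $\AP/\AP_0$ is unramified. If $\F/\F_0$ is unramified then $\ep$ is the unramified quadratic character of $\F_0^\times$, and $\pi\mu$, with $\mu$ the unramified quadratic character of $\F^\times$, is $\s$-selfdual and satisfies $\Hom_{\G^\s}(\pi\mu,\ep\circ\det)=\Hom_{\G^\s}(\pi,1)\neq0$, so it is the desired twist. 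If $\F/\F_0$ is ramified I would show $\sigma'$ is distinguished: by Theorem \ref{MAINTH1} (cases (2), (3)) this amounts to $(\chi_0\circ\N_{\K/\F})|_{\AP_0^\times}=\ep_{\AP/\AP_0}$, i.e. to $\chi_0(\varpi_\F)^{2\,f(\AP_0/\F_0)[\K:\AP]}=-1$, which I would check by computing $\N_{\K/\F}$ on a uniformizer of $\AP_0$ and using the precise $2$-adic valuation of the order of $\Sigma$ expressed through $m$ and $f(\T_0/\F_0)$.

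The one genuinely delicate step is this last verification: it requires the order of $\Sigma$ — equivalently the largest order of an unramified self-twist of $\pi$ — to be pinned down closely enough, via the description of $\pi$ as an unramified base change and its level-zero datum $(\K/\E,\xi)$, to guarantee that a suitable $\chi_0$ exists. Everything else is bookkeeping with the distinction criteria already established, and Remark \ref{MAINTH1RMK} then packages ``$\AP/\AP_0$ ramified'' as ``$\T/\T_0$ ramified and $m=1$'', giving the stated form of the result.
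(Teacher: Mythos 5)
Your overall strategy is sound and shares the same skeleton as the paper's proof (twist by an unramified $\chi$, track $\ap_0$ through $\ap_0'=\ap_0\cdot(\chi\circ\N_{\K/\F})|_{\AP_0^\times}$, and observe that the correction factor is unramified while $\ep_{\AP/\AP_0}$ is ramified when $\AP/\AP_0$ is), but the detour through $\sigma=\pi\hat\ep^{-1}$ adds a layer the paper avoids. In particular you assert $\ap_0(\sigma)=\ap_0(\pi)\cdot(\hat\ep^{-1}\circ\N_{\K/\F})|_{\AP_0^\times}$ for a \emph{tame} twist, whereas the paper only establishes and uses the unramified-twist formula; making the tame-twist formula rigorous requires showing that the distinguished $\bk$ can be kept fixed and that the level-$0$ datum $(\K/\E,\xi)$ picks up precisely $\hat\ep^{-1}\circ\N_{\K/\F}$, none of which is in the paper and none of which you check. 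The paper instead works directly with $\pi'=\pi(\chi\circ\det)$: $\pi$ distinguished and the disjunction theorem reduce ``$\pi'$ is $\ep$-distinguished'' to $(\chi\circ\N_{\K/\F})|_{\AP_0^\times}=\ep_{\AP/\AP_0}$, the ramified case is then immediate, and in the unramified case one simply exhibits an unramified $\chi$ doing the job. (A small slip: $|\Sigma|=t(\pi)$ is not obviously prime to $\ell$; you don't need that, only $\ell\neq2$.)

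The genuine gap is the step you yourself flag as ``delicate'' and leave undone: in the case $\F/\F_0$ ramified, $\AP/\AP_0$ unramified, you must show $(\chi_0\circ\N_{\K/\F})(\varpi_{\AP_0})=-1$, and your proposed route (pinning down ``the precise $2$-adic valuation of the order of $\Sigma$'' via base change) misjudges what is needed. It is in fact immediate: by [MSt §3.4] one has $t(\pi)=f(\K/\F)=f(\AP/\F)$, and since $\F/\F_0$ is ramified and $\AP/\AP_0$ unramified this equals $2f(\AP_0/\F_0)$; also $v_\F\bigl(\N_{\K/\F}(\varpi_{\AP_0})\bigr)=e(\F/\F_0)f(\AP_0/\F_0)[\K:\AP]=t(\pi)[\K:\AP]$. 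Any $\chi_0$ with $\chi_0^2\in\Sigma$ but $\chi_0\notin\Sigma$ satisfies $\chi_0(\varpi_\F)^{t(\pi)}=-1$, since $\chi_0(\varpi_\F)^{2t(\pi)}=1$ while $\chi_0(\varpi_\F)^{t(\pi)}\neq1$. Hence $(\chi_0\circ\N_{\K/\F})(\varpi_{\AP_0})=\chi_0(\varpi_\F)^{t(\pi)[\K:\AP]}=(-1)^{[\K:\AP]}=-1$ because $[\K:\AP]$ is an odd $p$-power. No finer control on $|\Sigma|$ is required; once this two-line computation is supplied (and the tame-twist formula for $\ap_0$ justified or, better, avoided by twisting $\pi$ directly), the argument is complete.
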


\begin{proof}
Consider an unramified twist $\pi'=\pi(\chi\circ\det)$ of $\pi$, 
where $\chi$ is an unramified character of $\F^\times$.
We are looking for a $\chi$ such that $\pi'$ is $\ep$-distinguished. 
First, $\pi'$ is $\s$-selfdual if and only if $\pi\chi(\chi\circ\s)\simeq\pi$, 
that is, if and only if: 
\begin{equation}
\label{YUVAL0}
(\chi(\chi\circ\s))^{t(\pi)} = \chi^{2t(\pi)} = 1
\end{equation}
where $t(\pi)$ denotes the torsion number of $\pi$,
that is the number of unramified characters $\a$ of~$\G$ such that 
$\pi\a\simeq\pi$.
By \cite{MSt} \S3.4, we have $t(\pi)=f(\K/\F)=f(\AP/\F)$.
Now the quadratic character associated with $\pi'$ is 
$\ap_0'=\ap_0^{}(\chi\circ\N_{\K/\F})|_{\AP_0^\times}$ and we have:
\begin{equation}
\label{YUVAL1}
(\chi\circ\N_{\K/\F})|_{\AP_0^\times} 
= (\chi\circ\N_{\AP_0/\F_0})^{[\K:\AP]}.
\end{equation}
By Theorem \ref{MAINTH2},
the representation $\pi'$ is $\ep$-distinguished if and only if 
the character
\eqref{YUVAL1} is equal to $\ep_{\AP/\AP_0}$.
If $\AP$ is ramified over $\AP_0$, this is not possible since 
$\chi$ is unramified.
If $\AP/\AP_0$ is unramified,
choosing an unramified character $\chi$ of order $f(\AP_0/\F_0)$ 
gives us an $\ep$-distinguished twist $\pi'$. 
\end{proof}

\providecommand{\bysame}{\leavevmode ---\ }

\end{document}